\documentclass[11pt]{article}
\author{Bing-Long Chen$^{\dagger}$ $\&$ Ruo-Bin Wu$^{\dagger\dagger}$\\[8pt]}
\title{\textbf{On the existence of maximal foliations in general relativity}}
\date{Aug. 11, 2026}
\usepackage{latexsym}
\usepackage{amsmath}
\usepackage{amssymb,amscd}
\usepackage[mathscr]{eucal}
\newtheorem{theorem}{Theorem}[section]

\newtheorem{lemma}[theorem]{Lemma}
\newtheorem{proposition}[theorem]{Proposition}

\newtheorem{remark}{Remark}[section]

\numberwithin{equation}{section}
\newenvironment{proof}{{\noindent \it  Proof.}}{{\hfill$\Box$}\\}
\begin{document}
\maketitle

\let\thefootnote\relax\footnotetext{\noindent mcscbl@mail.sysu.edu.cn$^{\dagger}$;  wurb7@mail2.sysu.edu.cn$^{\dagger\dagger}$ \\ School of Mathematics, Sun Yat-sen University, Guangzhou, P.R.China\\   Mathematics Subject Classification 2020: 
83C45,  83C05, 53C44}

\begin{abstract}
It is well known that  the Einstein equations are tensor equations for a  Lorentzian metric. Hence, choosing a suitable  gauge condition is crucial for  solving them. As a powerful gauge condition, maximal foliations have  played a pivotal role in two groundbreaking works in general relativity: the  global stability of Minkowski spacetime \cite{CK93}  and the bounded  $L^2$  curvature conjecture \cite{KRS15}.  Nevertheless, if the initial hypersurface fails to be maximal, the usual elliptic  approach  for constructing maximal foliations encounters  fundamental difficulties  when   solving the Einstein equations.  

The purpose  of the paper is
to provide a construction of maximal foliations around any initial  asymptotically flat Cauchy surface satisfying vacuum Einstein constraint equations, under a suitable smallness condition on the mean curvature. 
   \end{abstract}


\section{Introduction}
\subsection{Bounded $L^2$ curvature conjecture}

 The Cauchy  problem for the vacuum Einstein equations refers to the following: Given a triplet $(\Sigma, g,h)$, where $(\Sigma,g)$ is a 3-dimensional Riemannian manifold and $h$ is a symmetric 2-tensor on $\Sigma$ satisfying the constraint equations,

\begin{equation}\label{ECE}
\left\{
\begin{aligned}
    & {R}+{H}^{2}-|{h}|^{2}=0,   \\
     &  div_{{g}}{h}-{\nabla}{H}=0.
\end{aligned}
\right. 
\end{equation} 
      The  problem  asks  whether there exist a 4-dimensional Lorentzian manifold $(\bar{M}, \bar{g})$ with 
      \begin{equation}\label{EVE}
          \bar{R}ic=0,
      \end{equation}
      and an isometric embedding  $(\Sigma, g)\subset (\bar{M}, \bar{g})$ such that $h$ is the second fundamental form.  
      
      The problem of  well-posedness  of the vacuum Einstein equations (\ref{ECE})(\ref{EVE}) asks:  for what kind of  initial data $(\Sigma, g,h)$,   the equation admits a unique solution?  Here,   the regularity requirement for $(g,h)$ is desired to be as low as possible.  The pioneering work of  Y. Choquet-Bruhat \cite{CB52} states that  the Cauchy problem for the Einstein equations is well-posed when $(g,h)$ is smooth. 
      The  classical result (see \cite{FM72}\cite{HKM77}),   as an improvement of   \cite{CB52},  asserts that the well-posedness holds  when $(g,h)\in H^{s}_{loc}(\Sigma)\times H^{s-1}_{loc}(\Sigma)$ for $s>\frac{5}{2}$.   The  bounded $L^2$  curvature conjecture, proposed by S. Klainerman in \cite{K99}, roughly states that the well-posedness holds when $Rm\in L^{2}_{loc}(\Sigma)$ and $h\in W^{1,2}_{loc}(\Sigma)$, where $Rm$ is the Riemann  curvature tensor of $g$.

After a series of groundbreaking  works (\cite{SJI},\cite{SJII},\cite{SJIII},\cite{SJIV},\cite{KRS15}),  the conjecture was ultimately proven by S. Klainerman, I. Rodnianski, and J. Szeftel in \cite{KRS15} under the existence of maximal foliations.

The proof in \cite{KRS15} consists of the following three major steps. \\
1)   Assume the spacetime  has  a maximal foliation, and each leaf is asymptotically flat.\\
2)  Using Cartan's moving frame method, rewrite the Einstein equations as a Yang-Mills equation for the connection. The time-like frame $e_0$ is naturally the unit normal to the foliation, while three space-like orthonormal frames $e_{i}$, $i=1,2,3,$  are chosen to satisfy a  Coulomb gauge condition.\\
3)  Bound the $L^2$
  integral of spacetime curvatures by applying energy estimate through the  Bel–Robinson tensor $Q$.

    For step 2),   if one  denote the connection as a $4\times 4$ matrix of 1-forms  $(A_{\alpha})_{\mu\nu}:=\bar{{g}}(D_{e_{\alpha}}e_{\nu}, e_{\mu})$, the maximal foliation enables one  to  solve an elliptic equation to obtain an orthonormal tetrad satisfying an equation of the form ${div} A=A^{2}$ for spacial component $A=\{A_i\}$.   As a result, the equation for $A_{0}$ reduces to an elliptic equation, which yields better regularity results for hyperbolic equations.  Specifically, using Littlewood-Paley theory, one can derive $W^{2,\frac{3}{2}}$ estimate  for $A_{0}$ as well as $L^{\infty}$ gradient estimate for the lapse function. 
 Furthermore, under this gauge, the hyperbolic equation for $A_{i}$ exhibits a null structure, which is crucial for the application of certain Strichartz estimates of step 3). 
  
 For step 3),  when directly computing the divergence  $D^{\alpha}(Q_{\alpha 000})$ and deriving the $L^{2}$ energy estimate for the curvature,  the result  contains some  terms of the form $\bar{R} * {h} * \bar{R}$.  It appears that the energy estimate for the curvature  requires the  boundedness of the second fundamental form,  which is incompatible with the framework of the bounded $L^2$ curvature conjecture. To overcome this difficulty, one need  to explicitly represent the solutions of hyperbolic equations by using  parametrices and derive some sharp form of Strichartz estimates (\cite{SJI},\cite{SJII},\cite{SJIII},\cite{SJIV},\cite{KRS15}).

This proof of bounded $L^2$ curvature conjecture   combines geometric PDE techniques (foliation theory) with gauge-theoretic methods (Yang-Mills reformulation) and harmonic analysis (parametrix construction and estimates).

Nevertheless,  recall that the Cauchy surfaces in the full form of the bounded $L^2$ curvature conjecture are generally not maximal and not even asymptotically flat. Because of the hyperbolicity of Einstein equations,   the bounded $L^2$ curvature conjecture asks essentially the well-posedness problem on a local surface. To exploit the  global techniques,   the  overall approach  is roughly to  glue  an asymptotically flat end to the exterior of the  surface.  This is possible if the regularity of $(g,h)$ is higher, see Remarks 2.7, 2.8 in \cite{CS06}. Here, we will not discuss the gluing precess.  Instead,  we assume the initial Cauchy surface is already  asymptotically flat.

By solving the  Cauchy problem  on the asymptotically flat initial surface,  it   yields a short-time solution, i.e., a 4-dimensional vacuum 
$(\bar{M}, \bar{g})$ with $\bar{M}\supset \Sigma$. 
 Naturally,  one may  desire to   construct a maximal hypersurface $L\subset \bar{M}$ such that 
$L$ lies within a foliation of maximal surfaces. 

The  issue is the following. To ensure the existence of the maximal hypersurface $L$, when applying the usual  construction based on elliptic techniques, we need some  barrier conditions (e.g.,  \cite{B84},\cite{BCM90}) to be satisfied in some   relevant regions. However, a priori we do not  know how large $\bar{M}$ is.  It appears that the elliptic techniques on constructing maximal foliations  have  some  unavoidable difficulties.  There are  a number of   works on constraint equations (\ref{ECE})  which aim to  construct maximal hypersurfaces within the conformal class of the initial metric, for related results we may  refer to references \cite{Li44}, \cite{CB74}, \cite{I95}, \cite{CIJ00} etc..  Nevertheless, these results appear insufficient to resolve the present problem. 
   
  Note that  the  global  stability of the Minkowski spacetime in \cite{CK93} was also proved under the gauge of maximal foliations (\cite{LR10} gave a proof under the harmonic gauge and higher regularity assumptions).   As we remarked previously, the construction   of  maximal foliations  is a nontrivial issue  when the initial Cauchy surface  is not maximal.

\subsection{Our strategy and main results}

\indent In the followings, we discuss our  strategy adopted in this paper.

The rough idea is as follows.  We attempt to deform $\Sigma$ in the direction of the mean curvature, i.e., we consider the mean curvature flow:
\begin{equation} \label{mcf}
\frac{\partial X}{\partial t}= \overset{\rightarrow}{H}
\end{equation}
where $X(\cdot, t):\Sigma\hookrightarrow \bar{M}$ is a one-parameter family of embeddings, $\overset{\rightarrow}{H}$ is the mean curvature vector of $X_t(\Sigma)$ in $\bar{M}$.  
As long as there is an open background spacetime $\bar{M}\supset\Sigma$,  one can try to deform 
$\Sigma$ within $\bar{M}$ by using the  mean curvature flow.

The point is that $\bar{M}$ can contain a large region at infinity of $\Sigma$, which ensures the mean curvature flow (\ref{mcf}) may  exist at least for a short time interval $[0,t_0)$ for $t_0>0$.   Actually,  the asymptotically flat end glued to the initial surface can be chosen to be a canonical spacelike slice in the Kerr spacetime. Hence, due to the  hyperbolicity of Einstein equations,  a large region of $\bar{M}$ at infinity is a part of the Kerr spacetime.

Let $[0,t_0)$ be the largest possible interval for which the mean curvature flow (\ref{mcf}) exists in $\bar{M}$. If $t_0<\infty$, as time $t$ increases to $t_0$, $X_t(\Sigma)$  may gradually approach the boundary of $\bar{M}$.  By taking limits, one can treat 
$X_{t_0}(\Sigma)$ as a new Cauchy surface. By solving the Cauchy problem with $X_{t_0}(\Sigma)$ as a new initial data,  the spacetime 
$\bar{M}$ can be extended to $\bar{M}^{\prime}\supset \bar{M}$ which contains a large region at infinity of $X_{t_0}(\Sigma)$.  Within $\bar{M}^{\prime}$,  we continue to deform  $X_{t_0}(\Sigma)$ by means of  the mean curvature flow (\ref{mcf}). 

 One may  repeat this process and expect that  within finite number of steps,  the mean curvature flow will  exist for all time and converge to a maximal hypersurface as  $t\rightarrow \infty$.

For more general results and further applications,  instead of having a Kerr end,   we may  assume  the initial data  $(M_0, g,h)$  is only asymptotically flat  in the following  very weak sense:  the complement  $M_{0}\setminus C_0$ of a compact set $C_0$ is diffeomorphic to $\mathbb{R}^3\setminus B_a$ for some $a>0$, let $(x^1,x^2,x^3)$ be the standard coordinate system in $\mathbb{R}^3$, as $r=|x|\rightarrow \infty$,  we assume  $(M_0\setminus C_0, g,h)$ satisfies the following  asymptotic conditions (for some small $\epsilon>0$):
\begin{equation}\label{AC}
    \begin{split}
        g_{ij}=\delta_{ij}+O_{4}(r^{-\epsilon}),\  h_{ij}=O_{3}(r^{-1-\epsilon}), \ H=O_3(r^{-2-\epsilon}),
    \end{split}
\end{equation}
where $F=O_i(r^j)$ indicates  that $F$ is a function or tensor satisfying  $|\nabla^IF|\leq Cr^{j-|I|} $ for all  $|I|\leq i$. 

We introduce two constants $\alpha_0, \alpha_1$ for $(M_0,g)$. 

The first constant $\alpha_0$ is the Sobolev constant:
\begin{equation} \label{II}
 ||f||_{L^{\frac{3}{2}}} \leq \alpha_0||\nabla f||_{L^1},  \ \ \forall \ \  f\in C^{\infty}_0(M_0).
 \end{equation}
 
 The second constant $\alpha_1$ is the volume ratio constant 
\begin{equation} \label{III}
\alpha_1^{-1}\leq \frac{vol(B(x_0,r))}{r^3} \leq \alpha_1,   \ \ \forall \ \  r>0,
\end{equation}
whose definition  may depend on the choice of a particular point $x_0$.

From (\ref{AC}), one can easily prove the integrability of the following integrals: 
\begin{equation}\label{IC} \sum_{j=0}^2||\nabla^j {R}m||_{L^2}+\sum_{j=1}^3||\nabla^j h||_{L^2} +\sum_{j=0}^2||\nabla^j \bar{R}m||_{L^2}<\infty,
\end{equation}
where $\bar{R}m$ is the spacetime curvature tensor which can be defined by  Gauss and Codazzi equations,
\begin{equation}\label{Gauss}
  \bar{R}_{ijkl}=R_{ijkl}+ h_{ik}h_{jl}-h_{il}h_{jk},
\end{equation} 
\begin{equation} \label{Codazzi}
\bar{R}_{ijk0}=\nabla_ih_{jk}-\nabla_jh_{ik}.
\end{equation}
 The component $\bar{R}_{i0j0}$ can be obtained by taking trace on (\ref{Gauss}): 
\begin{equation}\label{tr Gauss}
 \bar{R}_{i0j0} =R_{ij}+H h_{ij}-h^{2}_{ij}.
    \end{equation} 

The covariant derivative  $\nabla^j\bar{R}m$ in (\ref{IC}) is the j-th covariant derivative of  $\bar{R}m$ with respect to the intrinsic metric $g$  (the unit normal can be regarded as parallel with respect to the covariant differentiation $\nabla$).    

For the convenience of stating  the main result,  we introduce two quantities  (or notations): 
 \begin{equation} \label{E1}
      E^2_1 \triangleq  \int_{M_0}|\bar{R}m|^2+|\nabla h|^2+|h|^4,
\end{equation}
and 
\begin{equation} \label{E_2}
 E_2^2\triangleq \int_{M_0}|\nabla \bar{R}m|^2+|\nabla^2 h|^2+|h|^6+|\bar{R}m|^3+|\nabla h|^3,
\end{equation}
both of which are  finite by (\ref{AC}).  

From (\ref{AC}) again, there exists an exponent  $p_0\in (1,\frac{3}{2})$ such that $||H||_{L^{p_0}}<\infty$, e.g.,  all $p_0\in (\frac{3}{2+\epsilon}, \frac{3}{2})$ fulfills the requirement.  Let us fix such an exponent  $p_0$.

The main  result  of the  paper is  the following theorem:

\begin{theorem}\label{maxfol}
          Let $(M_{0},{g},{h})$ be an asymptotically flat initial data set (\ref{AC})  which satisfies the vacuum constraint equations (\ref{ECE}). For any $1<p_0<\frac{3}{2}$,     there exist  constants     $\epsilon_{0}>0$, $C>0$,  depending only on $p_0$ and $\alpha_0, \alpha_1$ in (\ref{II}) and (\ref{III})  satisfying the following properties.  If  
 \begin{equation}\label{HL_p1}
\|{H}\|_{L^{p_0}(M_{0})} (E_1E_2)^{\frac{3-p_0}{2p_0}}<\epsilon_0,
   \end{equation}        
\begin{equation}\label{HL_p2}
  \begin{split}
 \|{H}\|_{L^{p_0}(M_{0})}  E_1^{\frac{6}{p_0}-2}<\epsilon_0,\end{split}
      \end{equation}
       \begin{equation}\label{dHL_2}
   (||\nabla H||^2_{L^2(M_0)}+ ||H||_{L^{\infty}(M_0)})( ||\nabla^2 H||^2_{L^2(M_0)}+E_1E_2||H||_{L^{\infty}(M_0)})<\epsilon^4_0 E_1^8,
      \end{equation}
           then  there exist a vacuum spacetime $(\bar{M},\bar{g})$ in the maximal development $\mathcal{M}$ of $M_0$, and  a time function ${\tau}: \bar{M}\rightarrow (-\tau_0,\tau_0)$,   the level sets $\{\Sigma_\tau: \tau\in (-\tau_0,\tau_0)\}$ form  a foliation of maximal and asymptotic flat surfaces.  Moreover, the maximal foliation covers $M_0$, i.e.,  $M_0\subset \bar{M}$, and  we have 
  \begin{equation} \label{1.16}
\begin{split}
    &  \int_{\Sigma_{\tau}}|\bar{R}m|^2+|\nabla h|^2+|h|^4 \leq CE^2_1\\
& \int_{\Sigma_{\tau}}|\nabla \bar{R}m|^2+|\nabla^2 h|^2+|h|^6+|\bar{R}m|^3+|\nabla h|^3 \leq CE^2_2.
\end{split}
\end{equation}

 \end{theorem}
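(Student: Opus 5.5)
The plan follows the strategy of the introduction: deform $M_0$ by the Lorentzian mean curvature flow (\ref{mcf}) inside successively extended vacuum developments, show that it exists for all time and converges to a maximal slice $\Sigma_0$, and then build the foliation around $\Sigma_0$ by the implicit function theorem. Along the flow, with $X_t$ the evolving spacelike hypersurfaces, the mean curvature satisfies (up to sign conventions) a heat-type equation
\begin{equation*}
\partial_t H=\Delta H - (|h|^2+\bar{R}ic(\nu,\nu))H=\Delta H-|h|^2H ,
\end{equation*}
since $\bar{R}ic=0$. The reaction term has a good sign, so every $L^p$ norm of $H$ is nonincreasing. Combining this with the Sobolev inequality (\ref{II}) and the volume bound (\ref{III}), I expect decay $\|H(t)\|_{L^\infty}\lesssim t^{-3/(2p_0)}\|H\|_{L^{p_0}}$. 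The exponent $p_0<\frac32$ makes $\int_1^\infty\|H\|_{L^\infty}\,dt<\infty$, so the total Lorentzian displacement of the leaves, which is $\int |H|\,dt$, is finite and small. I would carry this out in the following order.

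\emph{Step 1 (a priori estimates).} On an interval where the flow exists and the Sobolev and volume constants stay comparable to $\alpha_0,\alpha_1$, I would derive evolution equations for $|\bar{R}m|^2$, $|\nabla h|^2$, $|h|^4$ and their next-order analogues. The geometric evolution is $\partial_t g=2Hh$ and $\partial_t h=\nabla^2H+H\,(h*h+\bar{R}m)$, so every error term carries a factor of $H$ or $\nabla^2H$. Integrating in time, the growth of the quantities in (\ref{1.16}) is then controlled by $\int_0^\infty(\|H\|_{L^\infty}+\|\nabla^2 H\|)\,dt$ multiplied by $E_1$ and $E_2$. The smallness conditions (\ref{HL_p1}), (\ref{HL_p2}) and (\ref{dHL_2}) are exactly what make these products small after scaling: the exponents $(3-p_0)/(2p_0)$ and $6/p_0-2$ come from interpolating $L^{p_0}$ decay against the scale set by $E_1$ and $E_2$. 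I would close this by a bootstrap, assuming the bounds in (\ref{1.16}) hold with $2C$ and recovering them with $C$. This also preserves $\alpha_0$ and $\alpha_1$ up to a factor, because the metric changes by at most $\int|H||h|\,dt$.

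\emph{Step 2 (extension of the spacetime).} Suppose the flow can only be continued up to a time $t_0<\infty$ in the current development $\bar{M}$. The uniform bounds of Step 1 give $H^2\times H^1$ control of the data on $X_{t_0}(M_0)$. By the local well-posedness theory (\cite{FM72},\cite{HKM77}) we can then solve the Einstein equations from that slice, and the existence time is uniform because it depends only on $E_1,E_2$ and the constraint bounds. This enlarges $\bar{M}$ within the maximal development $\mathcal{M}$. Since the total displacement is small compared with that uniform time, finitely many extensions suffice and the flow exists for all $t$. Asymptotic flatness is preserved because $H=O(r^{-2-\epsilon})$ propagates, and the flow decays at infinity.

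\emph{Step 3 (convergence and foliation).} The decay of $H$ together with the uniform bounds gives smooth subconvergence, and by finite length actual convergence, to a maximal asymptotically flat $\Sigma_0$ that satisfies (\ref{1.16}). Since $M_0$ lies within Lorentzian distance $\int|H|\,dt\ll 1$ of $\Sigma_0$, a foliation by maximal slices $\Sigma_\tau$ over a range of $\tau$ larger than that distance covers $M_0$. I would produce it by prescribing the asymptotic time shift $\tau$ at infinity and solving the maximal surface equation as a graph over $\Sigma_0$. The linearization is $\Delta-|h|^2$, which is invertible on weighted spaces. For uniformity in $\tau$ I would instead run the same flow from nearby slices $X_{t}(M_0)$ boosted by $\tau$, and check transversality via the lapse equation $\Delta N=|h|^2N$ with $N\to1$.

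The main obstacle is Step 1. The curvature energy estimate contains $\bar{R}*h*\bar{R}$-type terms coming from the flow speed, and closing it requires the time integrability of $\|\nabla^2H\|$ and $\|H\|_{L^\infty}$ at the exact scaling given by (\ref{dHL_2}). I would attack it by first proving parabolic smoothing estimates for $H$, namely $\|\nabla^k H(t)\|$ decaying like $t^{-k/2}$ times its $L^\infty$ decay, and then using these only through time integrals.
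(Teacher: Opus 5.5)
Your overall architecture is the paper's: MCF inside successively extended developments, $t^{-3/(2p_0)}$ decay of $H$ from its $L^{p_0}$ norm, finitely many restarts, and a foliation around the limit. However, Step 1 as you describe it does not close. Along the flow, $\partial_t\bar{R}m$ contains $H\,D_N\bar{R}m$, which the Bianchi identities turn into $H\,\nabla\bar{R}m$. Hence the $L^2$ energy of $\nabla^k\bar{R}m$ produces $\int H\,\nabla^{k+1}\bar{R}m\ast\nabla^{k}\bar{R}m$, a loss of one derivative at every level (at the $E_2$ level you would need $\|\nabla^2\bar{R}m\|_{L^2}$). This term is not of the form ``$H$ or $\nabla^2H$ times $E_1,E_2$''. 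The missing idea is the divergence (Bel--Robinson/null) structure. With the specific weights one has $\partial_t\big(2|\bar R_{i0j0}|^2+|\bar R_{ijk0}|^2\big)=\nabla_k\big(4H\bar R_{ikj0}\bar R_{i0j0}\big)+\bar{R}m\ast\bar{R}m\ast(\nabla H+Hh)$. The top-order term therefore integrates away, and the growth is governed by $\int(\|\nabla H\|_{L^\infty}+\|H\|_{L^\infty}\|h\|_{L^\infty})\,dt$. The paper then recovers $\bar R_{ijkl}$ algebraically (Gauss equation plus the 3D expression of $Rm$ through $Ric$), and gets $\nabla h$, $\nabla^2 h$ elliptically from Simons' identity rather than from their own evolution. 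The $\bar R\ast h\ast\bar R$ terms are not the real difficulty. Along the flow they carry a factor $H$, and $\|h\|_{L^\infty}\lesssim(E_1E_2)^{1/2}$ is exactly what produces the exponent in (\ref{HL_p1}).

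Three smaller repairs are also needed in Steps 1--2:
\begin{itemize}
\item $L^p$ norms of $H$ are not all monotone. Since $\partial_t d\mu=H^2d\mu$, one gets $\frac{d}{dt}\int|H|^p\le(1-\frac{p}{3})\int|H|^{p+2}-p(p-1)\int|H|^{p-2}|\nabla H|^2$. So only $p\ge3$ comes for free, and $p_0<\frac{3}{2}$ requires smallness of $\|H\|_{L^3}$.
\item The restart argument must control the tilt $\nu$ of $M_t$ against the current foliation, and its evolution involves $\nabla H$. You therefore need $\int\|\nabla H\|_{L^\infty}dt$ small on late intervals, not just $\int|H|\,dt$.
\item $H^2\times H^1$ control is below the $s>\frac{5}{2}$ threshold of \cite{FM72,HKM77}. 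The paper also proves the higher-order bounds of Theorem \ref{t8.3} to get $L^\infty$ curvature and hence a uniform foliation width.
\end{itemize}

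The second gap is in Step 3, and it is the reason the paper invokes \cite{KRS15}. To put $M_0$ inside the foliation, the maximal foliation around $\Sigma_0$ must exist on $|\tau|<\tau_0$ with lapse bounds $n_0\le n\le n_0^{-1}$. After normalizing $E_1=1$, $\tau_0$ and $n_0$ must depend only on $\alpha_0,\alpha_1$.

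The reason is scaling. The displacement $\int_0^\infty\|H\|_{L^\infty}dt\lesssim\|H\|_{L^{p_0}}^{p_0/(3-p_0)}$ is bounded by the hypotheses only by $\epsilon_0^{p_0/(3-p_0)}\min\{E_1^{-2},(E_1E_2)^{-1/2}\}$. This is small against the $L^2$-curvature length scale $E_1^{-2}$; that is precisely (\ref{HL_p2}). But when $E_2\ge E_1^3$, it exceeds the scale $E_2^{-2/3}$ accessible to classical local theory or a quantitative implicit function argument, by the unbounded factor $(E_2/E_1^3)^{1/6}$.

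Your weighted implicit function theorem gives only some unquantified interval. Running the flow from ``boosted'' slices provides no mechanism for disjointness, smooth dependence on $\tau$, or a uniform range. Moreover, the lapse equation $\Delta N=|h|^2N$ with $N\to1$ yields only positivity and $N\le1$ by the maximum principle. The covering step needs the lower bound $N\ge n_0$, because $|\partial_t f|\le N^{-1}|H|$ is what converts the proper-time displacement $\int|H|\,dt$ into a $\tau$-displacement. The paper gets the uniform interval and both lapse bounds from the bounded $L^2$ curvature theorem applied at scale $E_1=1$. Then (\ref{HL_p2}) gives $|f|\le C\epsilon_0^{p_0/(3-p_0)}\le\tau_0$.
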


\begin{remark}
In Theorem \ref{maxfol},  the  only nontrivial conditions are (\ref{HL_p1}) (\ref{HL_p2}) and  (\ref{dHL_2}), which are scaling invariant and can be regarded as some smallness conditions on the mean curvature of the  initial Cauchy surface.  They are used to ensure the existence and smallness of the following  two integrals 
\begin{equation} \label{integrals}
\int_{0}^{\infty}||Hh||_{L^{\infty}}dt, \ \ \  \int_{0}^{\infty}||\nabla H||_{L^{\infty}}dt 
\end{equation}
during the mean curvature flow. The conditions  (\ref{HL_p1}) (\ref{HL_p2}) and  (\ref{dHL_2}) can be replaced by other conditions which can guarantee  the smallness of the two integrals in (\ref{integrals}).
\end{remark}

\begin{remark}     We remark that the  first inequality of (\ref{1.16}) is required   when applying the current  result to bounded $L^2$ curvature conjecture (see \cite{KRS15}). 
  \end{remark}

\begin{remark} As we mentioned previously,   the celebrated global stability of Minkowski spacetime was proven in \cite{CK93}  under the gauge of the maximal foliations.  From Theorem \ref{maxfol},  the existence of  maximal foliation can be guaranteed by conditions  (\ref{HL_p1}) (\ref{HL_p2}) and  (\ref{dHL_2}). As a consequence, the main result in \cite{CK93} can be correspondingly extended. 
\end{remark}

To prove Theorem \ref{maxfol} by adopting  the strategy of the mean curvature flow (\ref{mcf}),   the main task is  to prove the long time existence and  global convergence of the solution to (\ref{mcf}). The limit maximal hypersurface is uniquely and intrinsically determined by  the initial Cauchy surface.   This result has an   independent interest.  
\begin{theorem} \label{t1.5}

Under the assumptions of Theorem \ref{maxfol},  there exist a vacuum  spacetime $(\bar{M},\bar{{g}})\supset M_0$, so that the mean curvature flow (\ref{mcf}) admits a long time solution $X_t$ in $\bar{M}$,  converging  to a maximal spacelike hypersurface in $\bar{M}$ as $t\rightarrow \infty$.         
\end{theorem}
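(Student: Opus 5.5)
The plan is to run the mean curvature flow (\ref{mcf}) for spacelike hypersurfaces in a vacuum spacetime. Along the flow the evolving data $(g_t,h_t)$ on the fixed manifold $M_0$ satisfy
\begin{equation*}
\partial_t g_{ij}=2Hh_{ij},\qquad \partial_t H=\Delta H - H(|h|^2+\bar{R}ic(\nu,\nu))=\Delta H-H|h|^2 ,
\end{equation*}
with analogous parabolic equations for $h$ and for $\bar{R}m$ restricted to the leaves. The spacetime needed to host the flow is produced by the continuation argument sketched in the introduction. First I would solve the Cauchy problem from $M_0$. Using (\ref{AC}) and finite speed of propagation, a uniform neighbourhood of the asymptotic end is contained in the development, so the flow exists on a short interval $[0,t_1)$. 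At the maximal time of existence inside the current development, I take the limit leaf as new Cauchy data and re-solve the Einstein equations. This is possible as long as the relevant norms stay controlled, which the a priori estimates below provide. Iterating gives a development $\bar{M}$ in which the flow lives for all $t\ge 0$, provided those estimates hold uniformly in $t$. Existence therefore reduces to a bootstrap on a priori estimates.

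The bootstrap assumptions are the two smallness statements
\begin{equation*}
\int_0^T\|Hh\|_{L^\infty}dt\le\delta,\qquad \int_0^T\|\nabla H\|_{L^\infty}dt\le\delta .
\end{equation*}
The first gives uniform equivalence of the metrics $g_t$ and $g_0$, since $|\log(g_t/g_0)|\le 2\delta$. Hence the Sobolev constant $\alpha_0$, the volume ratio $\alpha_1$ and the asymptotic flatness are preserved up to factor $2$. Under this control, I would derive energy estimates for $E_1(t)$ and $E_2(t)$ by differentiating $\int|\bar{R}m|^2+|\nabla h|^2+|h|^4$ in time. The error terms all carry a factor $H$ or $\nabla H$, so Gronwall with these integrals yields the bounds (\ref{1.16}) with constant $e^{C\delta}$.

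Next, using $\partial_tH=\Delta H-|h|^2H$ and Kato, $|H|$ is a subsolution of the heat equation on a manifold with uniform Sobolev constant. Nash–Moser / heat kernel bounds (Gaussian upper bounds follow from $\alpha_0,\alpha_1$) then give the decay rates
\begin{equation*}
\|H\|_{L^\infty}\lesssim t^{-3/(2p_0)}\|H_0\|_{L^{p_0}},\qquad \|H\|_{L^2}\lesssim t^{-\frac32(\frac1{p_0}-\frac12)}\|H_0\|_{L^{p_0}} .
\end{equation*}
Since $p_0<3/2$, the exponent $3/(2p_0)>1$, so these are integrable at infinity. Combining with $L^\infty$ bounds on $h$ coming from $E_1,E_2$ (via Sobolev: $\|h\|_{L^\infty}\lesssim (E_1E_2)^{1/2}$ after scaling), conditions (\ref{HL_p1}) and (\ref{HL_p2}) give $\int_1^\infty\|Hh\|_{L^\infty}\le \delta/2$. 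For small times, $t\le t_*$ with $t_*$ a scale set by $E_1$, I would use the bound $\|H\|_{L^\infty}\le\|H_0\|_{L^\infty}$ from the maximum principle. To bound $\nabla H$, I would estimate $\nabla H$ and $\nabla^2H$ in $L^2$ by energy estimates, then interpolate via Sobolev, $\|\nabla H\|_{L^\infty}\lesssim\|\nabla^2H\|_{L^2}^{1/2}\|\nabla^3 H\|_{L^2}^{1/2}$ (or use parabolic smoothing for $t>0$). The decay comes from the heat-type equation. This is where (\ref{dHL_2}) enters, closing $\int_0^\infty\|\nabla H\|_{L^\infty}\le\delta/2$ once $\epsilon_0$ is small. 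This improves the bootstrap, so $T=\infty$.

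Convergence then follows. $\int_0^\infty\|Hh\|_{L^\infty}<\infty$ makes $g_t$ Cauchy in $C^0$. The normal speed is $H$, and with $\int_0^\infty\|H\|_{L^\infty}dt<\infty$ the embeddings $X_t$ converge. Uniform higher bounds from the $E_2$ estimate and parabolic regularity upgrade this to smooth convergence on compacts. Since $H\to0$ in $L^\infty$, the limit is a maximal spacelike hypersurface; it stays spacelike because the tilt relative to the initial normal is controlled by $\int\|\nabla H\|_{L^\infty}$.

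\textbf{Main obstacle.} I expect the hardest step to be the $\nabla H$ integral, in particular its behaviour near $t=0$ and at large $t$, where no $L^{p}$ decay is directly available. The first attempt would be to use the energy identity for $\int|\nabla H|^2$, whose dissipation gives $\int_0^\infty\|\nabla^2H\|_{L^2}^2\le\|\nabla H_0\|_{L^2}^2+$ errors, together with a weighted $t$-energy for $\nabla^2H$. Cauchy–Schwarz in time would then give the integrability, with (\ref{dHL_2}) supplying exactly the product of the two needed quantities. A secondary delicate point is making the spacetime-extension step quantitative: the development must contain the flow for a uniform time at each stage, so that the iteration really does reach all $t\ge0$.
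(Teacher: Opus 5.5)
The gap is in the step you call secondary. Your a priori core matches the paper's Sections 5--9: a bootstrap, curvature energies whose error terms carry factors of $H$, $L^{p_0}\to L^\infty$ decay of $H$, and $\|\nabla H\|_{L^\infty}\lesssim\|\nabla^2H\|_{L^2}^{1/2}\|\nabla^3H\|_{L^2}^{1/2}$ with (\ref{dHL_2}) controlling early times. But the paper's proof of this theorem consists largely of turning those estimates into a flow that lives in a spacetime for all $t$ and converges, and your sketch does not supply that. Three ingredients are missing.

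(i) After each restart at $M_{t_{i-1}}$, the flow is continued as a graph with $\nu\le 2$ inside a normal-geodesic foliation of width $\delta_i$. Both $\delta_i$ and the short-time existence constant are controlled by $L^\infty$ bounds on $\bar{R}m$ and $h$. Your estimates stop at $E_2$, i.e.\ $\bar{R}m\in W^{1,2}$, which in dimension three does not give $L^\infty$. The paper needs a further layer of uniform energy estimates for $\nabla^2\bar{R}m$ and $\nabla^3h$, which in turn require $\nabla^3H$ and $\nabla^4H$ bounds (Theorems \ref{t8.3}, \ref{t8.4}). The same estimates are what give $C^k$ convergence; ``$E_2$ plus parabolic regularity'' does not.

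(ii) A uniform $\delta_i$ does not by itself make the iteration reach $t=\infty$; you need an exit mechanism. In the paper a step ends only if the height reaches $\delta_i$ or $\nu$ reaches $2$. From $\partial_tf=\nu^{-1}H$ and $\partial_t\nu=H\,D^2\tau(N-\nu T,N-\nu T)+\langle T-\nu N,\nabla H\rangle$, either event forces $\min\{\delta_i,1\}\le C\int_{t_{i-1}}^{t_i}(\|H\|_{L^\infty}+\|\nabla H\|_{L^\infty})\,dt$. This gives $t_i-t_{i-1}\ge C^{-1}$. The tail bound $\lesssim t_{i-1}^{1-3/(2p_0)}\to0$ then shows only finitely many restarts occur, so the flow eventually stays in one foliation forever and its graph converges there.

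(iii) The leaves are noncompact, and finite speed of propagation does not give short-time existence or control at infinity. The paper obtains the flow as a limit of Dirichlet problems on $\{\tilde r<i\}$, using boundary barriers and a Moser gradient bound. It then uses Bartnik-type barriers $\pm w^{\sigma_i}$, with $\sigma_i$ shrinking at each restart, to keep $|f|\le Cr^{-\sigma_i}$ and $|\nabla f|$ decaying at infinity. This keeps the flow inside the region where the foliation exists. It is also what justifies discarding cutoff terms at infinity (via $|H|\le C/a$ on annuli) in the integral monotonicity formulas you rely on.

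Two smaller repairs are needed in the analytic part. First, the $E_2$ energy is not Gronwall-controlled by $\int\|Hh\|_{L^\infty}dt$ and $\int\|\nabla H\|_{L^\infty}dt$ alone. Terms such as $\nabla\bar{R}m\ast\bar{R}m\ast\nabla^2H$ and $|\nabla h||\nabla^2h||\nabla^2H|$ appear, so your bootstrap must include $\int\|\nabla^2H\|_{L^2}^{1/2}\|\nabla^3H\|_{L^2}^{1/2}dt$, as the paper controls in the proof of Lemma \ref{l5.3}. Second, since $\partial_t d\mu=H^2d\mu$, the pointwise subsolution property of $|H|$ does not make $\int|H|^{p_0}$ monotone for $p_0<3$. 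The paper restores monotonicity using the smallness of $\|H_0\|_{L^3}$ that follows from (\ref{HL_p1}). In your heat-kernel route you must absorb this volume growth, e.g.\ via $\int\|H\|_{L^\infty}^2dt\lesssim\int\|Hh\|_{L^\infty}dt$.
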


We mention  one  close  reference \cite{E93},  which dealt with the convergence of the mean curvature flow  under the existence of barrier functions on a given  ambient manifold.

  The arrangement of this paper is as follows. In Section 2, we establish the short time existence of the graphical  mean curvature flow.   In Section 3,  we  derive some preliminary asymptotic estimates of graphical mean curvature flow  at infinity.  In Section 4, we deduce  some fundamental  mean curvature estimates, and  transform the solutions of graphical mean curvature flow into those of the mean curvature flow.  In Section 5, we  reformulate    Theorem \ref{t1.5} in  small  data  version,  and set   up  the  necessary bootstrap assumptions for our arguments.   In Section 6,  we improve the mean curvature estimate  via the De Giorgi-Nash-Moser iteration.  In Section 7,  we obtain   the $L^2$ estimates of the mean curvature up to third-order derivatives  by means of   energy estimates.  In Section 8, we employ  the mean curvature estimates  obtained in Sections 4, 6  and 7 to refine  the energy estimates for the curvature and the second fundamental form. This is based on the null structure of the Einstein equations, which essentially allows us  to form a closed system of estimates when coupling with the mean curvature flow. In Section 9, we argue  that the uniform bootstrap assumptions  can be justified as long as the solution of the mean curvature flow exists.  In Section 10, we develop  some  higher order derivative estimates.  In  Section 11,  we accomplish  the convergence of the mean curvature flow  and  construct  the desired maximal foliations.  

We conclude this section  with  a remark.   In view of Theorem \ref{maxfol},    the justification of (\ref{HL_p1}) (\ref{HL_p2}) (\ref{dHL_2}) under the gluing process becomes crucial.  This problem  merits  further investigation.                          

\paragraph{Acknowledgment} This work is partially supported by grants  National Key R$\&$D Program of China(No.2022YFA1005400) and NSFC 12141106.

\section{Mean curvature flow}

To solve equation (\ref{mcf}),    we  first solve a graphical  mean curvature flow and  show  that its  solution must exist for   at least a 
short time interval $[0, \bar{T})$ with $\bar{T}>0$.  To this end, we begin with   some preliminary work.

\subsection{Preliminaries}\label{Pre}

Let $\mathcal{M}\supset M_0$ be the vacuum maximal development of $M_0$ in the sense of \cite{CG69}. Let $\tau$ be the function of affine parameters of all normalized timelike geodesics emanating from $M_0$ and normal to $M_0$.  
From condition (\ref{IC}), for any $\delta>0$, there exist $\delta_0, C_0$ depending $\delta,\alpha_0$ and $\sum_{j=0}^2||\nabla^jRm||_{L^2}$ such that for any point $P\in M_0$, we have  a harmonic coordinate system $\{ x\in \mathbb{R}^3:|x|<2\delta_0\}$ around $P$, i.e., $x^i(P)=0$, $\triangle_g x^i=0$, so that $g_{ij}=g(\frac{\partial}{\partial x^i}, \frac{\partial}{\partial x^j})$ satisfies 
\begin{equation} \label{Har}
||g_{ij}-\delta_{ij}||_{C^{1,\frac{1}{4}}(\{|x|<2\delta_0\})}<\frac{1}{2}\delta,\ \ \ \  ||g_{ij}-\delta_{ij}||_{W^{4,2}(\{|x|<2\delta_0\})}\leq \frac{1}{2}C_0. 
\end{equation}

We extend $x^i,i=1,2,3,$ constantly along geodesics perpendicular to $M_0$ so that $x^i,i=1,2,3$ can be defined on  spacetime  around $P$.  

 By solving the Cauchy problem (\ref{ECE}) (\ref{EVE}) (see \cite{HKM77}), there exists a coordinate system $\{ (y,z)\in \mathbb{R}^3\times\mathbb{R}:|y|<\delta_0, |z|<\delta_0\}$ around $P$ in $\mathcal{M}$ so that $z\mid_{M_0}=\tau\mid_{M_0}=0$,   $y^i\mid_{M_0}=x^i\mid_{M_0}$, $dy^i\mid_{M_0}=dx^i\mid_{M_0}$, $dz\mid_{M_0}=d\tau\mid_{M_0}$ and under this coordinate system we have 
\begin{equation}\label{wave}
||\bar{g}-\eta||_{C^{1,\frac{1}{4}}(\{|y|<\delta_0, |z|<\delta_0\})}<\delta, \ \ \  \sum_{|\alpha|=1}^4||\partial^{\alpha}\bar{g}||_{L^{2}(\{|y|<\delta_0\})} \leq C_0,
\end{equation}
where $\eta=diag\{-1,1,1,1\}$ is the Minkowski metric.

In particular, the components of the spacetime curvature $\bar{R}m$ in coordinates $\{y^{i},z\}$ are  uniformly  bounded. 

Let $N_0$ be the unit future timelike normal vector of  $M_0$ in $\mathcal{M}_1$. Let $\gamma(P,\tau)=\exp_P(\tau N_0)$,  be the normal geodesic starting from a point $P\in M_0$, where $\exp_{P}$ is  the exponential map at $P$ of the Lorentzian manifold $(\mathcal{M},\bar{g})$.

\textbf{Claim}: there exists $\delta_1>0$ such that for any fixed $\tau\in [-\delta_1, \delta_1]$, $\gamma(\cdot,\tau): M_0\rightarrow \mathcal{M}$ is injective.  

Denote by $T=\gamma^{\prime}$ the tangent vectors of these timelike  geodesics  $\{\gamma(P,\tau): P\in M_0, \tau\in [-\delta_1, \delta_1]\}$, $T$ will become a vector field if the \textbf{Claim} is true. 

Write the equation of geodesic $\gamma$ in coordinates $\{y^{\alpha}\}$, $\alpha=0,1,2,3$, with $y^0=z$:
\begin{equation} \label{2.3}
\frac{d^2 y^{\alpha}}{d\tau^2}+\bar{\Gamma}^{\alpha}_{\beta\gamma}\frac{dy^{\beta}}{d\tau}\frac{ dy^{\gamma}}{d\tau}=0,
\end{equation}
where  $\frac{dy^0}{d\tau}\mid_{\tau=0}=0$, $\frac{dz}{d\tau}\mid_{\tau=0}=1$. 

Since $\bar\Gamma^{\alpha}_{\beta\gamma}$ is bounded (\ref{wave}), integrating the above equation (\ref{2.3}) we obtain  
\begin{equation} \label{2.4}
|\frac{dz}{d\tau}-1|\leq Cr^{-1-\epsilon}|\tau|, \ \ \  |\frac{dy^i}{d\tau}|\leq Cr^{-1-\epsilon}|\tau|,
\end{equation}
i.e., $|T-\frac{\partial}{\partial z}| \leq  C\delta_1$, which implies that the spacetime curvature $\bar{R}m$,  measured by using the norm induced from  $T$, is also uniformly bounded.   

Along a geodesic $\gamma(P,\tau)$, the second fundamental form $\bar{h}_{ij}=\frac{1}{2}\frac{\partial \bar{g}_{ij}}{\partial \tau}$ of local foliations $\{\tau=const.\}$ (see (\ref{nor g})) satisfies the  Riccati equation \begin{equation} \label{nor evo}
\frac{d}{d\tau}\bar{h}_{ij}= \bar{g}^{pq}\bar{h}_{ip} \bar{h}_{jq}-\bar{R}_{i0j0}.
\end{equation}
By integrating the equation (\ref{nor evo}), since $\bar{R}m$ is uniformly bounded,  one can deduce  that $\bar{h}$ is bounded when $\tau$ is suitably small.  Combining with   $|DT|\leq |\bar{h}|$ (using the norm induced by $T$) we obtain   
\begin{equation}\label{DT}
    |DT|\leq C.
\end{equation} 
Consequently, for any two distinct  points $P_1, P_2 \in M_0$ in one coordinate system   (\ref{wave}), we have
\begin{equation}\label{TLip}
|T(P_1,\tau)-T(P_2,\tau)|\leq C |\gamma(P_1,\tau)-\gamma(P_2,\tau)|. 
\end{equation}
If $\gamma(P_1,\tau_0)=\gamma(P_2,\tau_0)$ for suitably small $\tau_0$, (\ref{TLip}) implies  $T(P_1,\tau_0)=T(P_2,\tau_0)$. By uniqueness of geodesics, we know $P_1=P_2$, which is a contradiction.  Thus,  \textbf{Claim} is proved. 

Now 
\begin{equation}\label{tau}
    \Sigma_{\tau}\triangleq \{\tau=const.\}, \ \ \tau\in (-\delta_1, \delta_1)
\end{equation}
defines a smooth  foliation around $M_0$,  and  the metric $\bar{g}$ on this foliation has the following expression: 
\begin{equation} \label{nor g}
\bar{g}=-d\tau^2+\bar{g}_{ij}(x,\tau)dx^idx^j,
\end{equation}
where $\bar{g}_{ij}(x,t)dx^idx^j$ is the induced Riemannian metric on $\Sigma_{\tau}$.

Due to the  asymptotically flat condition (\ref{AC}),   one can actually deduce   some decay estimates for  $\bar{R}m$ and $\bar{h}$. The argument is as follows.

 For any point $x_0$ in $M_0 \setminus C_0$,  in the asymptotically flat coordinates $\{x^1,x^2,x^3\}$ in (\ref{AC}),   we have 

 \begin{equation}\label{AC0}
\sum_{i=1}^{4}||\partial^i g||_{L^2(\{|x-x_0|<\frac{r}{2}\})}r^{i}+\sum_{i=0}^{3}||\partial^i h||_{L^2(\{|x-x_0|<\frac{r}{2}\})}r^{i+1}\leq C r^{-\epsilon} r^{\frac{3}{2}}.
\end{equation}

 By solving the Cauchy problem (\ref{EVE}),  there exists a local wave  coordinate system $\{(y,z)\in \mathbb{R}^3\times\mathbb{R}:|y|<\delta_0 r, |z|<\delta_0 r\}$ on $\mathcal{M}$ around $x_0$  so that $y^i\mid_{M_0}=x^i-x^i(x_0)$, $z\mid_{M_0}=0$, $dy^i\mid_{M_0}=dx^i\mid_{M_0}$ (extending $x^i$ constantly along $\tau$-geodesics),  $dz\mid_{M_0}=d\tau\mid_{M_0}$ and under this coordinate system we have 

  \begin{equation}\label{AC est}
||\bar{g}-\eta||_{L^{\infty}(|y|<\delta_0{r})}\leq Cr^{-\epsilon},  \ \ \ \ \sum_{|\alpha|=1}^{4}||\partial^{\alpha} \bar{g}||_{L^2(\{|y|<\delta_0{r}\})}r^{|\alpha|}\leq C r^{-\epsilon} r^{\frac{3}{2}}.
\end{equation}

In particular, by Sobolev embedding theorem, in the above wave coordinates $\{|y|<\delta_0 r, |z|<\delta_0 r\}$, we obtain 
\begin{equation} \label{ACR}
|\bar{R}m| \leq C r^{-2-\epsilon}.
\end{equation}

Because of (\ref{2.4}),  the estimate (\ref{ACR}) still holds in the frame given by $\tau-$ foliations (\ref{tau}).

By solving the equation (\ref{nor evo}) again, the estimate of the second fundamental form $\bar{h}$ in foliation (\ref{tau}) can also be improved: 
\begin{equation}\label{ACh}
|\bar{h}|\leq C r^{-1-\epsilon}.
\end{equation}

Moreover,  taking trace on (\ref{nor evo}) gives 
\begin{equation}\label{2.14}
    \frac{d}{d\tau}\bar{H}=-|\bar{h}|^2 
\end{equation}
which together  with (\ref{AC})  implies 
\begin{equation}\label{2.15}
|\bar{H}|\leq C r^{-2-\epsilon}.
\end{equation}

\subsection{Short time existence}\label{exist}

We consider the mean curvature flow equation (\ref{mcf}). 
Suppose   $M_{t}=X_t(M_0)$ can be written as an entire graph over  $M_{0}$ in $\tau$-foliation (\ref{tau}),  i.e.,  $M_t=\{(P, f(P,t)): P\in M_{0}, \tau=f(P,t)\}$.   Let $g$ be the induced  metric on $M_{t}$. Under  the  natural frame $DX( \frac{\partial}{\partial x^{i}})=\frac{\partial }{\partial x^{i}}+f_{i} \frac{\partial }{\partial \tau}$,  we have  $g_{ij}=\bar{g}_{ij}-f_{i}f_{j}$.
Let  $T=\frac{\partial}{\partial \tau}$ and $N$ be the unit future-directed normal vector fields  of $\varSigma_{\tau}$ and $M_t$ respectively.  Let    $\nabla f=(D\tau)^{\|} \in TM_t$, then $\nabla f=\nu N-T$ and  $\nu=-<N,  T>=\sqrt{1+|\nabla f|^2}$.   We   use $D$ and $\nabla$ to denote the covariant derivatives on spacetime $\bar{M}$ and submanifold $M_t$ respectively.

Note that  $N=\nu^{-1}(\nabla f+T)$,   $H=div_{M_{t}}(N)=\nu^{-1}div_{M_{t}}(\nabla f+T)$, $\frac{\partial X}{\partial t}=\frac{\partial f}{\partial t}T$.    The  deformation  $(\frac{\partial X(x,t)}{\partial t})^{\perp}=\overset{\rightarrow}{H}\mid_{{X}(x,t)}$ is equivalent to the following equation (graphical mean curvature flow equation): 
\begin{equation} \label{mcfg}
\left\{
\begin{aligned}
    &\frac{\partial f}{\partial t}=\nu^{-2}(div_{M_{t}}(\nabla f)+div_{M_{t}}(T)),  \\
     &f(P,0)=0,  \qquad P\in M_{0}.
\end{aligned}
\right. 
\end{equation}

On the other hand, suppose we have a solution  $X(\cdot,t)$ to the deformation $(\frac{\partial X(x,t)}{\partial t})^{\perp}=\overset{\rightarrow}{H}\mid_{{X}(x,t)}$,   after reparameterizing the domain manifold  with  a family of diffeomorphisms $\phi_t$,  the tangential part of the deformation can be killed, see (\ref{410})(\ref{411}). That is to say,  $\tilde{X}(x,t)=X(\phi_t(x),t)$ satisfies $\frac{\partial \tilde{X}(x,t)}{\partial t}=\overset{\rightarrow}{H}\mid_{\tilde{X}(x,t)}$. For this reason,   it suffices   to   solve the equation (\ref{mcfg}). 

If we extend $f\mid_{M_t}$ constantly along $\tau$-geodesics, $f$ can be defined on spacetime.  We have   $Df\perp T$, $ (Df)^{||}=\nabla f$, $\sqrt{1-|Df|^2}=\nu^{-1}$, and $N=\frac{1}{\sqrt{1-|Df|^2}}(T+Df)$. Consequently,  
\begin{equation} \label{215}
\nu^{-2}(div_{M_{t}}(\nabla f)+div_{M_{t}}(T)) =div_{M_t}(Df+T)=tr_{M_t} D^2f+div_{M_t}(T).
\end{equation}

\begin{theorem} \label{shorttime}
There exists $\bar{T}>0$, so that the graphical mean curvature flow (\ref{mcfg}) admits a short time  solution $f(P,t)$, for $t\in [0,\bar{T}]$. 
\end{theorem}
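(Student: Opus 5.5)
We write (\ref{mcfg}) as a quasilinear parabolic equation for $f$ on the fixed manifold $M_0$, working entirely inside the normal-geodesic foliation (\ref{tau}), where $|\tau|<\delta_1$. There the metric has the form (\ref{nor g}) and, by (\ref{ACR}), (\ref{ACh}), (\ref{2.15}) and (\ref{wave}), the coefficients $\bar g_{ij}(x,\tau)$ together with $\bar h$ and $\bar Rm$ are uniformly bounded and decay at infinity. By (\ref{215}) the equation reads
\begin{equation*}
\partial_t f = g^{ij}(x,f,\partial f)\big(\partial_{ij} f-\bar\Gamma^{k}_{ij}(x,f)\partial_k f\big)+ g^{ij}(x,f,\partial f)\,\bar h_{ij}(x,f),
\end{equation*}
with $g_{ij}=\bar g_{ij}(x,f)-f_if_j$. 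The term $g^{ij}\bar h_{ij}$ is exactly $div_{M_t}T$, because $T$ is geodesic and $\langle D_X T,Y\rangle=\bar h(X,Y)$ for the $\tau$-foliation.

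The equation is uniformly parabolic as long as $|\tau|$ stays below $\delta_1$ and $|\nabla f|$ stays bounded, say $|Df|^2\le 1/2$ in the $\bar g$-norm. Uniform ellipticity follows because $g_{ij}$ is then uniformly equivalent to $\bar g_{ij}$. The initial data is $f\equiv 0$, which is smooth with zero gradient. The inhomogeneous term at $t=0$ is $\bar H(x,0)=H$, which is bounded with its derivatives by (\ref{AC}).

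For existence I would use the standard linearization and fixed-point argument in parabolic Hölder spaces. On a noncompact manifold with bounded geometry, the cleanest route is to work in the uniform harmonic charts of radius $\delta_0$ from (\ref{Har}), which exist at every point with uniform $C^{1,1/4}$ control. Concretely:
\begin{itemize}
\item Define the map $\Phi$ on the set $\{f\in C^{2+\alpha,1+\alpha/2}(M_0\times[0,\bar T]) : f(\cdot,0)=0,\ \|f\|_{C^{2+\alpha,1+\alpha/2}}\le 1\}$.
\item Freeze the coefficients at $(x,f,\partial f)$ and solve the linear equation
\begin{equation*}
\partial_t u - a^{ij}(x,t)\partial_{ij}u = b(x,t).
\end{equation*}
The linear problem on $M_0\times[0,\bar T]$ is solved by exhausting $M_0$ with compact domains carrying zero boundary data. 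Interior Schauder estimates in the uniform charts give bounds independent of the domain, and a diagonal subsequence converges.
\item Set $\Phi(f)=u$. The initial compatibility is automatic, since $\partial_t u(0)=H$ is bounded in $C^{\alpha}$ with derivatives.
\item For small $\bar T$, the estimate $\|u\|_{C^{2+\alpha}}\le C(\|b\|_{C^\alpha})$ together with $\|u\|_{C^{1+\alpha}}\le C\bar T^{\alpha/2}(\dots)$ shows that $\Phi$ maps the ball into itself and is a contraction in a weaker norm. In particular $|f|\le C\bar T<\delta_1$ and $|\nabla f|\le C\bar T^{1/2}$ remain small, so the graph stays in the foliated region and stays spacelike.
\end{itemize}

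The regularity needed for the Hölder coefficients comes from the $W^{4,2}$ bounds in (\ref{wave}) and Sobolev embedding, which give $C^{2,\alpha}$ control of $\bar g$ in $(x,\tau)$, uniformly on $M_0$. This in turn requires converting the wave-coordinate bounds into bounds in the Gaussian normal coordinates $(x,\tau)$, using (\ref{2.4}) and (\ref{DT}).

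The main obstacle is this uniformity in the noncompact setting. Two things must hold with constants independent of the base point:
\begin{itemize}
\item the Hölder norms of $\bar g_{ij}(x,\tau)$ and $\bar h_{ij}(x,\tau)$ in the charts;
\item the resulting Schauder constants, since otherwise the existence time $\bar T$ could shrink to zero at infinity.
\end{itemize}
I would handle this by noting that (\ref{wave}) holds with $\delta_0, C_0$ uniform over $P\in M_0$, and that the Riccati equation (\ref{nor evo}), differentiated, propagates derivative bounds of $\bar h$ uniformly for $|\tau|\le\delta_1$.

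Finally, a maximum-principle argument gives uniqueness in the bounded class. The decay estimates (\ref{ACh}) and (\ref{2.15}) then show that $f=O(r^{-2-\epsilon})t$ at infinity, although this decay is not needed for existence itself.
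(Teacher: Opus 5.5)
\textbf{There is a genuine gap, and it sits exactly at the obstacle you single out.}

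Your scheme (Schauder estimates in uniform charts plus a contraction) needs the coefficients of the equation, written in the Gaussian normal coordinates $(x,\tau)$, to be uniformly H\"older over $M_0$. This includes $\bar\Gamma^k_{ij}(x,\tau)$, i.e.\ first $x$-derivatives of $\bar g_{ij}(x,\tau)$, and, for the contraction, their $\tau$-derivatives. Under the paper's hypotheses this is not available.

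\begin{itemize}
\item The bounds (\ref{wave}) are $W^{4,2}$ bounds in the wave coordinates $(y,z)$. They make $\bar g$ of class $C^{2,\alpha}$ there, so $\bar Rm$ is bounded and H\"older. But $D\bar Rm$ is only in $W^{1,2}\subset L^6$.
\item The change of variables $(x,\tau)\mapsto(y,z)$ is the normal geodesic flow. Its $x$-derivatives are Jacobi fields driven by $\bar Rm$. Hence $\partial_x\bar g_{ij}(x,\tau)$ and $\nabla\bar h$ are, up to Gronwall-type terms, their initial values plus time integrals of $D\bar Rm$.
\item Equivalently, differentiating (\ref{nor evo}) gives $\partial_\tau\nabla\bar h=\bar h\ast\nabla\bar h-\nabla\bar R_{\cdot 0\cdot 0}+\dots$. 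So ``propagating derivative bounds of $\bar h$'' presupposes $\sup|D\bar Rm|<\infty$.
\end{itemize}

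The paper says explicitly that $D\bar Rm$ need not be uniformly bounded under its assumptions. That is precisely why it cannot bound $\nabla(div_{M_t}T)$ pointwise. Consequently:
\begin{itemize}
\item $W^{4,2}$ control in wave coordinates does not give $C^{2,\alpha}$ control in $(x,\tau)$, and in general not even $C^{1,\alpha}$.
\item The first-order coefficient $\bar\Gamma^k_{ij}(x,f)$ in your equation need not be uniformly H\"older.
\item Your Schauder and contraction constants are therefore not uniform in the base point, and nothing prevents the existence time from degenerating at infinity.
\end{itemize}

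The paper's proof is built to avoid any pointwise use of $D\bar Rm$:
\begin{itemize}
\item It solves Dirichlet problems on $\Omega_i=\{\tilde r<i\}$ and gets $|f|\le t\max|\bar H|$ from the maximum principle.
\item It gets a boundary gradient bound from the barriers $\pm\delta(i^{1-\frac{\epsilon}{2}}-\tilde r^{1-\frac{\epsilon}{2}})$, built via Lemma \ref{l2.2}.
\item It bounds $|\nabla f|$ in the interior by a Nash--Moser iteration for $(|\nabla f|^2-k)_+$. There the term $\langle\nabla div_{M_t}T,\nabla f\rangle$ is integrated by parts, so only $\||\bar R|+|\bar h|^2\|_{L^2}$ and $\||\bar R|+|\bar h|^2\|_{L^\infty}$ enter.
\item This gives $\nu\le\frac32$ on a time interval independent of $i$. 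Local parabolic regularity plus compactness then produce the solution.
\end{itemize}

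To repair your route you would need either a derivative-free gradient (uniform parabolicity) estimate of this kind, or a formulation in coordinates where the metric is genuinely $C^{2,\alpha}$, rather than Schauder theory in the Gaussian coordinates.

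A minor point: $tr_{M_t}D^2f$ also contains $-2g^{ij}\bar h_j^{\ k}f_if_k$, coming from $D^2f(\partial_i,T)=-\bar h_i^{\ k}f_k$, which your formula omits. This term is lower order and harmless.
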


In the proof of Theorem \ref{shorttime}, we need an auxiliary function $\tilde{r}$, whose construction will be given as follows. We solve a linear wave equation

\begin{equation}
\left\{
\begin{aligned}
    &\Box  z^i=0,  \\
     & z^i\mid_{M_0\setminus K}=x^i,\\
     & dz^i\mid_{M_0\setminus K}=dx^i\mid_{M_0\setminus K},
\end{aligned}
\right. \end{equation}
where $\Box$ is the Laplace Beltrami operator of the Lorentzian metric $\bar{g}$,  the functions $x^i$  are obtained by constantly extending the  coordinate functions $x^i$ in (\ref{AC}) along $\tau$-geodesics, $K$ is a big compact set containing $C_0$.  We define a spacetime function $\tilde{r}=\sqrt{(z^1)^2+(z^2)^2+(z^3)^2}$. It is of importance to notice that in each coordinate system where  (\ref{AC est}) holds, we always have 
\begin{equation} \label{217}
z^i=y^i+ Const., 
\end{equation}
on   $\{|y|\leq \delta_0 r-C\}$. (\ref{217}) follows from the uniqueness of  the solutions of  hyperbolic equations.

\begin{lemma} \label{l2.2} The function $\tilde{r}$ satisfies the following properties: 

i) \ $|\frac{\partial z^{\alpha}}{\partial x^{\beta}}-\delta_{\alpha\beta}|\leq C r^{-1-\epsilon},$

ii) \  $r-Cr^{-1-\epsilon}\leq \tilde{r}\leq r+Cr^{-1-\epsilon}$,

iii) \  $ ||D\tilde{r}|^2-1|\leq C r^{-\epsilon}$,

iv) \  $ D^2\tilde{r}(\frac{\partial}{\partial y^i}, \frac{\partial}{\partial y^j})=\frac{1}{\tilde{r}}(\delta_{ij}-\frac{z_iz_j}{\tilde{r}^2})+O(r^{-1-\epsilon}),$  $D^2\tilde{r}(\frac{\partial}{\partial y^i}, \frac{\partial}{\partial y^0})=O(r^{-1-\epsilon}),$    

\ \  \ \ \ $D^2\tilde{r}(\frac{\partial}{\partial y^0}, \frac{\partial}{\partial y^0})=O(r^{-1-\epsilon}).$ 
\end{lemma}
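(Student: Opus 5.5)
The plan is to work entirely in the local wave coordinate charts $\{|y|<\delta_0 r,\ |z|<\delta_0 r\}$ around points $x_0\in M_0\setminus C_0$ constructed above. By (\ref{217}), $z^i=y^i+c^i$ there, with constants $c^i$. Every statement of the lemma then becomes a statement about the coordinate functions $y^i$ and their relation to $x^i$ and $\tau$. These are controlled by (\ref{AC est}), (\ref{2.4}), (\ref{ACh}), and the Sobolev-embedding consequences of (\ref{AC est}) on balls of radius comparable to $r$. Away from the asymptotic region, that is on the compact part, all quantities are bounded and the estimates hold trivially after enlarging $C$. So I only treat $r$ large.

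For i), I first record what happens on $M_0$. There $y^i=x^i-x^i(x_0)$ and $dy^i=dx^i$, so the spatial Jacobian is exactly $\delta$ on $M_0$, and $z=\tau=0$ there. Off $M_0$ I integrate along the $\tau$-geodesics. The relevant rates are $\frac{d y^\alpha}{d\tau}$ along $\gamma$, controlled by (\ref{2.4}): $|\frac{dz}{d\tau}-1|+|\frac{dy^i}{d\tau}|\le Cr^{-1-\epsilon}|\tau|$. For their spatial derivatives I differentiate the geodesic equation (\ref{2.3}) in the initial point and use the bounds $|\partial\bar g|\le Cr^{-1-\epsilon}$ and $|\partial^2\bar g|\le Cr^{-2-\epsilon}$. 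These follow from (\ref{AC est}) by Sobolev embedding (or Morrey) on the ball of radius $\delta_0 r$. A Gronwall argument over $|\tau|\le\delta_1$ then gives $|\partial z^\alpha/\partial x^\beta-\delta_{\alpha\beta}|\le Cr^{-1-\epsilon}$, with $x^0=\tau$.

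For ii), I write $\tilde r^2=\sum (y^i+c^i)^2$. On $M_0$ the constants satisfy $y^i+c^i=x^i$, so there $\tilde r=r$ exactly. Off $M_0$, I integrate the derivative bound from i) along the geodesic for $|\tau|\le\delta_1$; since $|\tau|$ is bounded, this yields the error $Cr^{-1-\epsilon}$. For iii), I compute $|D\tilde r|^2=\bar g^{\alpha\beta}\partial_\alpha\tilde r\partial_\beta\tilde r$ in $y$-coordinates. Here $\partial_i\tilde r=z_i/\tilde r$, $\partial_0\tilde r=0$, and $\bar g^{ij}=\delta^{ij}+O(r^{-\epsilon})$ by (\ref{AC est}), so $|D\tilde r|^2=1+O(r^{-\epsilon})$.

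For iv), I write $D^2\tilde r(\partial_\alpha,\partial_\beta)=\partial_\alpha\partial_\beta\tilde r-\bar\Gamma^\gamma_{\alpha\beta}\partial_\gamma\tilde r$. The flat Hessian of $\tilde r=|y+c|$ gives exactly $\frac1{\tilde r}(\delta_{ij}-z_iz_j/\tilde r^2)$ in the spatial block, and it vanishes in the mixed and $00$ blocks because $\tilde r$ is independent of $y^0$. The Christoffel term is bounded by $|\partial\bar g|\,|D\tilde r|\le Cr^{-1-\epsilon}$.

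The main obstacle is the pointwise first-derivative bound $|\partial\bar g|\le Cr^{-1-\epsilon}$ uniformly on the full chart, together with the second-derivative bound used in i). These come from the scaled $L^2$ bounds in (\ref{AC est}): I rescale the chart by $r$ to unit size, apply $W^{4,2}\subset C^{2}$ there, and scale back. Each derivative then gains a factor $r^{-1}$ on top of $r^{-\epsilon}$.
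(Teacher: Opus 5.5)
Your proposal is correct. For i), ii) and iv) it follows the paper's argument: you differentiate the geodesic equation (\ref{2.3}) in the initial point and integrate twice using (\ref{2.4}). You integrate $\partial_\tau\tilde r=\frac{z^i}{\tilde r}\frac{\partial z^i}{\partial\tau}$ along the normal geodesics. And you read off the coordinate Hessian of $\tilde r=|y+c|$ in a chart where (\ref{217}) holds, with the Christoffel symbols giving the $O(r^{-1-\epsilon})$ error.

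The real difference is in iii). The paper argues along the foliation: it uses $\frac{d}{d\tau}|D\tilde r|^2=2D^2\tilde r(D\tilde r,T)$, the value $1+O(r^{-\epsilon})$ on $M_0$, and iv). So in the paper iii) comes after iv), and it actually shows that $|D\tilde r|^2$ changes only by $O(r^{-1-\epsilon})$ off $M_0$. You instead compute $|D\tilde r|^2=\bar g^{ij}z_iz_j/\tilde r^2$ directly in the wave chart, using $\partial_{y^0}\tilde r=0$ and $\bar g^{ij}=\delta^{ij}+O(r^{-\epsilon})$. This uses only the $C^0$ part of (\ref{AC est}), does not depend on iv), and is shorter.

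You are also more precise than the paper about where the decay comes from. The paper cites (\ref{wave}), which only gives smallness of $\partial\bar g$. The rates $|\partial\bar g|\le Cr^{-1-\epsilon}$ and $|\partial^2\bar g|\le Cr^{-2-\epsilon}$ needed in i) and iv) do come from the rescaled Sobolev embedding of (\ref{AC est}) that you describe.

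One small correction: your remark that on the compact part everything holds after enlarging $C$ is not right for iv). There $\tilde r$ can vanish, and the $y$-charts exist only near infinity. The lemma should be read as a statement for large $r$, which is the only way the paper uses it.
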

\begin{proof} Differentiating (\ref{2.3}) with respect to $x^{i}$, we obtain 
$$ \frac{d^2}{d\tau^2} \frac{\partial y^{\alpha}}{\partial x^i}=-\partial_{\xi}\bar{\Gamma}^{\alpha}_{\beta\gamma}\frac{\partial y^{\xi}}{\partial x^i}\frac{d y^{\beta}}{d\tau}\frac{d y^{\gamma}}{d\tau}-2\bar\Gamma^{\alpha}_{\beta\gamma}\frac{d}{d\tau}(\frac{\partial y^{\beta}}{\partial x^{i}})\frac{d y^{\gamma}}{d\tau}.$$
Because  of  $ \frac{d}{d\tau}\frac{\partial y^{\alpha}}{\partial x^i}\mid_{\tau=0}=\frac{\partial }{\partial x^i}(\frac{d y^{\alpha}}{d\tau}\mid_{\tau=0})=0$, integrating the above equation yields  $|\frac{d}{d \tau} (\frac{\partial y^{\alpha}}{\partial x^i})|\leq C r^{-1-\epsilon}|\tau|$, whose further integration together with (\ref{2.4}) produces    i).

Differentiating $\tilde{r}$ with respect to $\tau$, we have   $\frac{\partial \tilde{r}}{\partial \tau}= \frac{z^i}{\tilde{r}} \frac{\partial y^i}{\partial \tau}$, which together with (\ref{2.4}) implies $|\frac{\partial \tilde{r}}{\partial \tau}|\leq Cr^{-1-\epsilon}|\tau|$. Integrating this inequality deduces  $|\tilde{r}-r|\leq C\tau^2r^{-1-\epsilon}$,  we obtain ii).   iv)  is a direct  consequence  of (\ref{wave}) (\ref{2.4}) (\ref{217}). Using  $\frac{d}{d\tau}|D\tilde{r}|^2=2D^2\tilde{r}(D\tilde{r},T)$, $|D\tilde{r}|^2\mid_{\tau=0}=1+O(r^{-\epsilon})$ and iv), we obtain iii). 
\end{proof}

\begin{proof} of Theorem \ref{shorttime}. 

Let $\Omega_i=\{ \tilde{r}<i\} \subset \mathcal{M}_1$, we consider the Cauchy-Dirichlet problem on $\Omega_i$:
\begin{equation} \label{CDP}
\left\{
\begin{aligned}
    &\frac{\partial f}{\partial t}=\nu^{-2}(\triangle f+div_{M_{t}}T),  \ \ \ \text{on} \ \Omega_i,\\
     &f(P,0)=0,  \qquad P\in \Omega_i\cap M_0,\\
     & f(P,t)=0, \qquad P\in \partial_{s} \Omega_i,
\end{aligned}
\right. 
\end{equation}
where $\partial_s\Omega_i=\partial \Omega_i \cap\{-\delta_1<\tau<\delta_1\}$ is the spacial boundary of $\Omega_i$. 

Clearly,   (\ref{CDP}) is a quasi-linear parabolic equation. It admits a  smooth solution on some time interval $[0, \bar{T}_i)$. We may assume $\bar{T}_i$ is maximal. 

Let $T_i=\sup\{ t<\bar{T}_i: \nu\leq 2\}$. We claim  that there is a positive constant $\bar{T}$ independent of $i$  such that  $T_i\geq \bar{T}$.  The argument is as follows. 

For each $t$, at an interior  maximal (minimal)  point of $f$, we have $\triangle f\leq 0 (\geq 0)$ respectively, $\nu=1$ and $div_{M_t}T=\bar{H}$.  From  (\ref{CDP}), we have  
\begin{equation}\label{fbdt} 
|f(P,t)|\leq \max_{\tau}|\bar{H}| t
\end{equation}
for all $P\in \Omega_i$, $t\in [0,T_i)$. 

We will use a barrier function technique to estimate the gradient of $f$ on the boundary.

Let domain  $\tilde{\Omega}_{i}=\{\frac{i}{2}<\tilde{r}<i\}$. From  Lemma \ref{l2.2},   for $i>>1$, if $\nu\leq 2$, we have 
\begin{equation}
\frac{1}{100}r^{-1}\leq  tr_{M_t}D^2 \tilde{r} \leq 100  r^{-1},
\end{equation}
which implies 
\begin{equation}
\frac{1}{C}\tilde{r}^{-1-\frac{\epsilon}{2}}\leq  tr_{M_t}D^2 \tilde{r}^{1-\frac{\epsilon}{2}} \leq C  \tilde{r}^{-1-\frac{\epsilon}{2}},
\end{equation}
provided that  $\epsilon$ is not too large.  It follows from  (\ref{ACh}) that 
\begin{equation} \label{ACdivT}
|div_{M_t}T|(P,t) \leq C |\bar{h}|(X(P,t)) \leq C r^{-1-\epsilon}.
\end{equation}

Note that  under the graphical mean  curvature flow  (\ref{CDP}), $\tilde{r}=\tilde{r}(X(p,t))$ satisfies  
\begin{equation}
\frac{\partial \tilde{r}}{\partial
t}=\frac{z^i}{\tilde{r}}\frac{\partial z^i}{\partial\tau} \frac{\partial f}{\partial t}+ \frac{z^i}{\tilde{r}}\frac{\partial z^i}{\partial x^k} \frac{\partial x^k}{\partial t}=\frac{z^i}{\tilde{r}}\frac{\partial z^i}{\partial\tau} \frac{\partial f}{\partial t}\end{equation}
since $\frac{\partial x^k}{\partial t}=0$ holds for the deformation $X=(x^1,x^2,x^3, f(x^1,x^2,x^3,t))$. 

Fix a  $\delta>0$.  On  $\tilde{\Omega}_i$ for $i>>1$,  we have 
\begin{equation}\label{f-i}
\begin{split}
& \frac{\partial}{\partial t}(f-\delta(i^{1-\frac{\epsilon}{2}}-\tilde{r}^{1-\frac{\epsilon}{2}}))\\
&= [1+\delta(1-\frac{\epsilon}{2})\frac{z^i}{\tilde{r}}\frac{\partial z^i}{\partial\tau}r^{-\frac{\epsilon}{2}} ](tr_{M_t} D^2 (f+\delta r^{1-\frac{\epsilon}{2}})-\delta tr_{M_t}D^2 \tilde{r}^{1-\frac{\epsilon}{2}}+div_{M_t}T) \\
&\leq [1+O(r^{-\frac{\epsilon}{2}})](tr_{M_t}D^2 (f+\delta r^{1-\frac{\epsilon}{2}}))-C^{-1}\delta \tilde{r}^{-1-\frac{\epsilon}{2}}+Cr^{-1-\epsilon})\\
&\leq [1+O(r^{-\frac{\epsilon}{2}}) ]tr_{M_t} D^2 (f-\delta(i^{1-\frac{\epsilon}{2}}-\tilde{r}^{1-\frac{\epsilon}{2}})).
\end{split}
\end{equation}

Analogously, we have 
\begin{equation}\label{f+i}
\frac{\partial}{\partial t}(f+\delta(i^{1-\frac{\epsilon}{2}}-\tilde{r}^{1-\frac{\epsilon}{2}}))
 \geq [1+O(r^{-\frac{\epsilon}{2}}) ]tr_{M_t}D^2 (f+\delta(i^{1-\frac{\epsilon}{2}}-\tilde{r}^{1-\frac{\epsilon}{2}}))
\end{equation}
on $\tilde{\Omega}_i$, for $i>>1$.

On the other hand,  owing to (\ref{fbdt}) and the boundary value in  (\ref{CDP}),   it is easy to see 
\begin{equation}\label{f-ibdy}
f(P,t)-\delta(i^{1-\frac{\epsilon}{2}}-\tilde{r}^{1-\frac{\epsilon}{2}})\mid_{\partial \tilde{\Omega}_i}\leq 0, \ \  f(P,t)-\delta(i^{1-\frac{\epsilon}{2}}-\tilde{r}^{1-\frac{\epsilon}{2}})\mid_{t=0}\leq 0 
\end{equation}
and 
\begin{equation}\label{f+ibdy}
f(P,t)+\delta(i^{1-\frac{\epsilon}{2}}-\tilde{r}^{1-\frac{\epsilon}{2}})\mid_{\partial \tilde{\Omega}_i}\geq  0,\ \ \   f(P,t)+\delta(i^{1-\frac{\epsilon}{2}}-\tilde{r}^{1-\frac{\epsilon}{2}})\mid_{t=0}\geq 0. 
\end{equation}

By Lemma \ref{l2.2},(\ref{f-i})(\ref{f+i})(\ref{f-ibdy})(\ref{f+ibdy}) and maximum principle, we have 
\begin{equation}\label{fbdr}
f(P,t)-\delta(i^{1-\frac{\epsilon}{2}}-\tilde{r}^{1-\frac{\epsilon}{2}})\leq 0,   f(P,t)+\delta(i^{1-\frac{\epsilon}{2}}-\tilde{r}^{1-\frac{\epsilon}{2}})\geq 0 
\end{equation}
on $\tilde{\Omega}_i$. In particular, we have 
\begin{equation}\label{dfbd1}
|\nabla f|=|\frac{\partial f}{\partial n}| \leq \delta(1-\frac{\epsilon}{2}) \tilde{r}^{-\frac{\epsilon}{2}}|\frac{\partial \tilde{r}}{\partial n}| \leq C\delta r^{-\frac{\epsilon}{2}},
\end{equation}
on $\partial \Omega_i$, for all $t\in [0,T_i)$, i.e., we have derived the gradient estimate of $f$ on the boundary.

In order to estimate $||\nabla f||_{L^{\infty}}$, we need to calculate the evolution equation of $\nabla f$. 

Differentiating (\ref{CDP}) we have   
\begin{equation}
\begin{split}
     \frac{\partial}{\partial t}\nabla_if
     =&\nu^{-2}(\triangle \nabla_if-R_{ik}\nabla_kf+ \nabla_i div_{M_t}T)-2\nu^{-3}\nabla_i\nu(\triangle f+div_{M_{t}}T)\\
     =&\nu^{-2}(\triangle \nabla_if-(\bar{R}_{i0k0}+h^{2}_{ik}-Hh_{ik})\nabla_kf+ \nabla_i div_{M_t}T)\\
     &-2\nu^{-3}\nabla_i\nu(\triangle f+div_{M_{t}}T)
\end{split}
\end{equation}
which implies 
\begin{equation}\label{|df|2 evo}
\begin{split}    
    \frac{\partial}{\partial t}|\nabla f|^2=& \nu^{-2}\triangle|\nabla f|^2-2\nu^{-2}|\nabla^2 f|^2-2\nu^{-2}(\bar{R}_{i0k0}+h^{2}_{ik})\nabla_kf\nabla_i f\\
    &+ 2\nu^{-2} \langle \nabla (div_{M_t}T), \nabla f\rangle-4\nu^{-3}(\triangle f+div_{M_{t}}T)\langle\nabla\nu,\nabla f\rangle. 
\end{split}
\end{equation}

Note that  from our assumptions,  $D\bar{R}$ is not necessarily uniformly bounded. In view of (\ref{nor evo}), we can not readily draw the conclusion  that the derivative  $\nabla (div_{M_t}T) $  is uniformly bounded. For this reason,   we adopt the method of  Nash-Moser iteration   to derive  the   $L^{\infty}$  bound of   $|\nabla f|$.

Let $v=(|\nabla f|^2-k)_+$, where $k=\sup_{\partial\Omega_i}|\nabla f|^2$. 
 With the aid of   (\ref{|df|2 evo}),  via  direct computations, we obtain 
\begin{equation} \label{dv}
\begin{split}
     \frac{d}{d t}\int_{\Omega_i\cap M_t}v^q\leq & \int_{\Omega_i\cap M_t} -2q\nu^{-2}v^{q-1}|\nabla^2 f|^2-q(q-1)\nu^{-2}|\nabla v|^2v^{q-2}+2q\nu^{-3}\langle\nabla \nu,\nabla |\nabla f|^2\rangle v^{q-1}\\
     &-2q \int_{\Omega_i\cap M_t}(\bar{R}_{i0k0}+h^{2}_{ik})\nu^{-2}\nabla_kf\nabla_i fv^{q-1}+2q\int_{\Omega_i}\nu^{-2} <\nabla div_{M_t}T,\nabla f>v^{q-1}\\
     &+\int_{\Omega_i\cap M_t}-4q\nu^{-3}(\triangle f+div_{M_{t}}T)\langle\nabla\nu,\nabla f\rangle v^{q-1}+(H^2-div(\nu^{-1}H\nabla f))v^q.
\end{split}
\end{equation}

Since $H= \nu^{-1}(\triangle f+div_{M_t}T)$, for $q\geq 8$, we may employ  integration   by parts in (\ref{dv}) to deduce  
\begin{equation} \label{dv1}
\begin{split}
  \frac{d}{d t}\int_{\Omega_i\cap M_t}v^q
     \leq & \int_{\Omega_i\cap M_t} -\frac{q(q-1)}{2}\nu^{-2}|\nabla v|^2v^{q-2}-q\nu^{-2}|\nabla^2 f|^2v^{q-1}\\
     &+ C\int_{\Omega_i\cap M_t}q \nu^{-2}|\bar{R}||\nabla f|^2v^{q-1}+q\nu^{-2}v^{q-1}|\bar{h}|^2 +q(q-1)\nu^{-2}|\nabla f|^2v^{q-2} |\bar{h}|^2\\
     &+\int_{\Omega_i\cap M_t}q\nu^{-4}|\nabla v|^2v^{q-1}+2q\nu^{-4}\langle\nabla v,\nabla f\rangle\triangle f v^{q-1}\\
     \leq &\int_{\Omega_i\cap M_t}-C^{-1}|\nabla v^{\frac{q}{2}}|^2+Cq|\bar{R}|v^{q-1}+Cq^2|\bar{h}|^2v^{q-2}.
\end{split}
\end{equation}

In view of  $\nu\leq 2$ and  $|\nabla f|^2\leq 3$, we have
\begin{equation} \label{dv2}
\frac{d}{d t}\int_{\Omega_i\cap M_t}v^q
     \leq  -C^{-1}||\nabla v^{\frac{q}{2}}||_{L^2}^2 +Cq^2|||\bar{R}|+|\bar{h}|^2||_{L^{\frac{q}{2}}}  ||v||_{L^q}^{q-2}.
\end{equation}
Integrating (\ref{dv2}) from $0$ to $T_i$ yields 
\begin{equation} \label{dv3}
\begin{split}
 ||v||_{L^q(\Omega_i\cap M_t)}^2 \leq Cq \int_{0}^{t}|||\bar{R}|+|\bar{h}|^2||_{L^{\frac{q}{2}}(\Omega_i\cap M_t)} dt. 
 \end{split}
\end{equation}

Employing the Sobolev inequality (see (\ref{StSob})) and integrating (\ref{dv2}) from $0$ to $T_i$ again,  we obtain 
\begin{equation}\label{v Moser}
\begin{split}
(\int_0^{T_i}\int_{\Omega_i\cap M_t}|v^{\frac{q}{2}}|^{\frac{10}{3}})^{\frac{3}{5}}& \leq Cq^2\int_0^{T_{i}}\int_{\Omega_i\cap M_t}(|\bar{R}|+|\bar{h}|^2)v^{q-2}\\
&\leq Cq^2T_i^{\frac{2}{q}} \max_{\tau}|||\bar{R}|+|\bar{h}|^2||_{L^{2}}^{\frac{4}{q}}\max_{\tau}|||\bar{R}|+|\bar{h}|^2||_{L^{\infty}}^{1-\frac{4}{q}}
(\int_0^{T_{i}}\int_{\Omega_i\cap M_t}v^{q})^{\frac{q-2}{q}}\\
&\leq Cq^2T_i^{\frac{2}{q}} \max_{\tau}(|||\bar{R}|+|\bar{h}|^2||_{L^{2}}+|||\bar{R}|+|\bar{h}|^2||_{L^{\infty}})
(\int_0^{T_{i}}\int_{\Omega_i\cap M_t}v^{q})^{\frac{q-2}{q}}.
\end{split}
\end{equation}

Note that our purpose is to bound  $T_i$ from below,  so we may assume $T_i\leq 1$. 

 Then (\ref{v Moser}) yields
\begin{equation}
\begin{split}
(\int_0^{T_i}\int_{\Omega_i\cap M_t}|v|^{\frac{5q}{3}})^{\frac{3}{5q}}\leq& \bar{C}^{\frac{1}{q}}q^{\frac{2}{q}}
(\int_0^{T_{i}}\int_{\Omega_i\cap M_t}v^{q})^{\frac{1}{q}-\frac{2}{q^2}},\\   
\end{split}
\end{equation}
where $\bar{C}=C\max_{\tau}(|||\bar{R}|+|\bar{h}|^2||_{L^{2}}+|||\bar{R}|+|\bar{h}|^2||_{L^{\infty}})+1\geq 1$.

Let $q_l=(\frac{5}{3})^l q$, $\beta_l=(\int_0^{T_i}\int_{\Omega_i\cap M_t}|v|^{q_l})^{q_l^{-1}}$, $y_l=\log \beta_l$, we have 
\begin{equation} \label{2.31}
    \beta_{l+1}\leq \bar{C}_l\beta_{l}^{1-2q_l^{-1}},\ \ \ \bar{C}_l=\bar{C}^{\frac{1}{q_l}}q_l^{\frac{2}{q_l}},
\end{equation}
and 
\begin{equation} \label{iter}
\begin{split}
    y_{l+1}\leq & \log \bar{C}_l+(1-\frac{2}{q_l})y_{l}\\
    \leq &\prod_{k=0}^{l}(1-(\frac{3}{5})^k2q^{-1})y_0+ \sum_k [\prod_{s=0}^{l-k}(1-(\frac{3}{5})^s2q^{-1})](\frac{\log \bar{C}+2\log q_k}{q_k}).
    \end{split}
\end{equation}
By letting $q=16$ in (\ref{dv3}), we have 

\begin{equation}
||v||^2_{L^{16}}\leq C t \max_{\tau} |||\bar{R}|+|\bar{h}|^2||_{L^8}
\end{equation}
and 
\begin{equation} \label{b0}
\beta_0\leq CT_i^{\frac{9}{16}} \max_{\tau} |||\bar{R}|+|\bar{h}|^2||^{\frac{1}{2}}_{L^8}\leq 1
\end{equation}
 if $T_i$ is not too large.

It is not hard to see 
\begin{equation} \label{2.35}
  \sum_k [\prod_{s=0}^{l-k}(1-(\frac{3}{5})^s2q^{-1})](\frac{\log \bar{C}+2\log q_k}{q_k})\leq  \sum_k (\frac{\log \bar{C}+2\log q_k}{q_k})\leq\frac{5}{2q}(\log \bar{C}+2\log q),
\end{equation}
and 
\begin{equation} \label{2.36}
    e^{-10q^{-1}}\leq \prod(1-(\frac{3}{5})^l2q^{-1})\leq 1.
\end{equation}
From (\ref{iter}) (\ref{b0}) (\ref{2.35}) and (\ref{2.36}), it  follows that 
\begin{equation}
   y_{l+1}\leq e^{-\frac{5}{8}} y_0+\frac{5}{32} \log \bar{C}+\frac{5}{4}\log 2.
\end{equation}
Let $l\rightarrow \infty$, we have 
\begin{equation} \label{2.38}
    ||v||_{L^{\infty}}\leq \bar{C}^{\frac{5}{32}}2^{\frac{5}{4}}(\int_{0}^{T_i}\int_{\Omega_i\cap M_t}v^{16} )^{\frac{e^{-\frac{5}{8}}}{16}},
\end{equation}
and 
\begin{equation}
\begin{split}
         ||\nabla f||_{L^{\infty}}^2&\leq \sup_{\partial_s\Omega_i}|\nabla f|^2+C\bar{C}^{\frac{5}{32}}(T_i^{\frac{9}{16}}\bar{C}^{\frac{1}{2}})^{e^{-\frac{5}{8}}}\\
         &\leq C\delta i^{-\epsilon}+C T_i^{\frac{9}{16}e^{-\frac{5}{8}}} \bar{C}^{\frac{5}{32}+\frac{1}{2}e^{-\frac{5}{8}}}.
\end{split}
\end{equation}
Therefore, 
\begin{equation}\label{nubd1}
\nu=\sqrt{1+|\nabla f|^2}\leq 1+ C\delta i^{-\epsilon}+C T_i^{\frac{9}{16}e^{-\frac{5}{8}}} \bar{C}^{\frac{5}{32}+\frac{1}{2}e^{-\frac{5}{8}}}\leq \frac{3}{2}\end{equation}
provided 
\begin{equation}\label{bart} T_i\leq \bar{C}^{-\frac{8}{9}-\frac{5}{18}e^{\frac{5}{8}}}(4C)^{-\frac{16}{9}e^{\frac{5}{8}}}, \ \ \  C\delta i^{-\epsilon}\leq \frac{1}{4}.
\end{equation}

Combining (\ref{nubd1}) (\ref{bart}) and  (\ref{fbdt}), we  conclude that 
\begin{equation} \label{2.48}
T_i\geq \bar{T}=\min\{\bar{C}^{-\frac{8}{9}-\frac{5}{18}e^{\frac{5}{8}}}(4C)^{-\frac{16}{9}e^{\frac{5}{8}}}, C^{-1}\bar{C}^{-1} \delta_1\}.
\end{equation}

(\ref{nubd1}) says that the equation (\ref{CDP}) is  uniformly parabolic (w.r.t. $\Omega_i$) in $[0, \bar{T}]$.   By standard  regularity estimates of parabolic equations and compactness theorem, one can extract a convergent subsequence $f_{i_k}$ of solutions $f_i$ over compact subsets of $M_0$ and over time interval   $[0,\bar{T}]$.  The limit $f=\lim f_{i_k}$  is a solution to the graphical mean curvature flow (\ref{mcfg}) defined on  $M_0\times [0,\bar{T}]$. 
\end{proof}

\begin{remark}  One can show  that  $\sum_{i=0}^2||\bar{\nabla}^i \bar{R}m||_{L^2(\Sigma_{\tau})}$ is bounded for each time $\tau\in [-\delta_1, \delta_1]$.   By differentiating (\ref{nor evo}) one or two times and using (\ref{tau}), we obtain $\sum_{i=1}^2||\bar{\nabla}^i \bar{h}||_{L^2(\Sigma_{\tau})}$ is bounded for each $\tau$.  In view of   $\nu\leq 2$  on the time interval $[0,\bar{T}]$,  we know $\sum_{i=0}^2||\nabla^i \bar{R}m||_{L^2(M_t)}$ is also bounded for each time $t\in [0,\bar{T}]$.

\end{remark}

\section{Asymptotic  estimates at infinity}\label{S2.3}

The aim of this section is to show that the solution to the graphical mean curvature flow is  sufficiently close to the initial data at infinity.

\begin{theorem}\label{t2.4}
    There exist some  large $\hat{r}_{0}$ and  small $\sigma_0>0$ such that  

    \begin{equation}\label{ACf}
        |f|(x,t)\leq Cr^{-\sigma_0},
    \end{equation}
when   $r(x)\geq \hat{r}_0$, for all $t\in [0, \bar{T}]$.
   \end{theorem}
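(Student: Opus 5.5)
A comparison (barrier) argument for the graphical flow (\ref{mcfg}) on the exterior region $\{\tilde r\geq \hat r_0\}\times[0,\bar T]$ should work here, in the spirit of the boundary barrier in the proof of Theorem \ref{shorttime}, but with the barrier tuned to give decay. The available inputs are as follows. The solution satisfies $\nu\leq \frac32$ on $[0,\bar T]$, so the operator $\nu^{-2}(tr_{M_t}D^2\,\cdot)$ is uniformly parabolic. By (\ref{fbdt}) we have the crude bound $|f|\leq \max|\bar H|\,t\leq C\bar T$ everywhere. From (\ref{2.15}) and (\ref{ACh}), the forcing term satisfies $|div_{M_t}T|\leq C|\bar h|\,|\nabla f|^2+|\bar H|$; more crudely $|div_{M_t}T|\leq Cr^{-1-\epsilon}$ by (\ref{ACdivT}). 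Lemma \ref{l2.2} gives the precise behaviour of $D^2\tilde r$.

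I would take as barriers $w_\pm=\pm\big(A\tilde r^{-\sigma_0}+Bt\,\tilde r^{-1-\epsilon/2}\big)$, or simply $\pm A\tilde r^{-\sigma_0}$ with $\sigma_0<\epsilon$ small. Using Lemma \ref{l2.2} iv) and $\nu\leq 2$, compute
$tr_{M_t}D^2\tilde r^{-\sigma_0}=-\sigma_0\tilde r^{-\sigma_0-1}tr_{M_t}D^2\tilde r+\sigma_0(\sigma_0+1)\tilde r^{-\sigma_0-2}|\nabla\tilde r|^2$.
The first term is $\leq -c\sigma_0\tilde r^{-\sigma_0-2}$, because $tr_{M_t}D^2\tilde r\geq \frac{1}{100}r^{-1}$ and in fact is $\approx 2/r$. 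Since $|\nabla \tilde r|^2\leq 1+O(r^{-\epsilon})+|\nabla f|^2$, the second term is smaller provided $\sigma_0(\sigma_0+1)(1+|\nabla f|^2)<\sigma_0\cdot tr\cdot r$, that is, it is dominated once $\sigma_0$ is small and $\nu\leq\frac32$ keeps $|\nabla f|^2\leq \frac54$. So $A\tilde r^{-\sigma_0}$ is a strict supersolution of the homogeneous part, with margin $\approx cA\sigma_0\tilde r^{-2-\sigma_0}$.

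The forcing $|div_{M_t}T|\lesssim r^{-1-\epsilon}$ decays only like $r^{-1-\epsilon}$, which is worse than $r^{-2-\sigma_0}$. This is the main obstacle. I would deal with it by adding the time-linear term $Bt\tilde r^{-1-\epsilon}$ to the barrier: its time derivative $B\tilde r^{-1-\epsilon}$ absorbs the forcing once $B\geq C$. Its spatial second derivatives are $O(Bt\,\tilde r^{-3-\epsilon})$, which is harmless. Alternatively, I would refine the forcing using $div_{M_t}T=\nu^{-1}\bar H+O(|\bar h||\nabla f|^2)$ together with the gradient smallness (\ref{dfbd1}) near infinity, which gives decay $r^{-2-\epsilon}+r^{-1-\epsilon}|\nabla f|^2$. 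The chain-rule term from $\partial_t\tilde r$ (which is $O(r^{-1-\epsilon}|\partial_tf|)$, by the computation before (\ref{f-i})) multiplies the barrier's $\tilde r$-derivative and is lower order.

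The comparison itself would run as follows. On the parabolic boundary of $\{\tilde r>\hat r_0\}\times[0,\bar T]$ we have $f=0\leq w_+$ at $t=0$, and on $\{\tilde r=\hat r_0\}$ we have $|f|\leq C\bar T\leq A\hat r_0^{-\sigma_0}$ by choosing $A=C\bar T\hat r_0^{\sigma_0}$. This is fine, since $\hat r_0$ is fixed and $A$ is allowed to depend on it; the constant $C$ in (\ref{ACf}) absorbs it. The region is unbounded, so I would run the maximum principle on the approximating domains $\Omega_i$ of the proof of Theorem \ref{shorttime}, where $f_i=0$ on $\partial_s\Omega_i$. This makes the outer boundary trivially compatible with $w_\pm$, which are positive there. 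The bounds $|f_i|\leq w_+$ are uniform in $i$ and so pass to the limit $f$. Since $w_+\leq Cr^{-\sigma_0}$ for $t\leq\bar T\leq 1$, this gives (\ref{ACf}). The delicate point I expect is to verify the sign of the linearized operator on $f-w_\pm$. The equation is quasilinear, so I would write the difference via $tr_{M_t}D^2$ with the metric of $M_t$ (as in (\ref{f-i})) and check that the error factors $[1+O(r^{-\epsilon/2})]$ do not spoil strict super/subsolution margins for $\hat r_0$ large.
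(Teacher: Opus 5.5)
Your argument is correct, but it takes a different route from the paper.

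\textbf{The paper's route.} It uses a time-independent geometric barrier: Bartnik's radial profile $w$ solving (\ref{253}), a Minkowski graph with mean curvature $-r^{-2-\sigma}$, placed in wave coordinates as $M_w=\{z^0=w(\tilde r)\}$. Using $\Box z^0=0$, the paper shows the $z^0$-level sets have mean curvature $O(r^{-2-\epsilon})$, so $H_{M_w}=-r^{-2-\sigma}+O(r^{-2-\epsilon})<0$. It then applies the avoidance principle at a first tangency. The inner boundary is handled by choosing $\tilde r_0$ where $M_w$ leaves the slab $|\tau|<\delta_1$. This is possible because $w(r_0)\to\infty$ as $\sigma\to0$, so the flow can never touch the barrier there.

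\textbf{Your route.} You run a scalar comparison for (\ref{mcfg}) with the time-dependent barriers $\pm(A\tilde r^{-\sigma_0}+Bt\,\tilde r^{-1-\epsilon})$. The $Bt$ term absorbs the crude forcing $|div_{M_t}T|\le Cr^{-1-\epsilon}$. You therefore need neither wave coordinates nor the refined $O(r^{-2-\epsilon})$ control of the ambient mean curvature. You handle the inner boundary with (\ref{fbdt}) and $A\sim \bar T\hat r_0^{\sigma_0}$.

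\textbf{What each buys.} Your argument is more elementary and fully suffices for the statement on $[0,\bar T]$. The price is that your constants depend on $\bar T$ and your bound grows linearly in $t$. The paper's stationary barrier confines the flow for as long as it stays in the slab, with no dependence on time. That uniformity is what Section 11 reuses: Theorems \ref{t2.4} and \ref{t2.5} on $[0,t_1)$ and in later steps, and the fact that the limit $S$ lies between $M_{\pm w^{\sigma}}$, which gives its asymptotic flatness.

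\textbf{A wrong but harmless step.} Your sign check contains an inequality that is false as written. The bound $(1+\sigma_0)(1+|\nabla f|^2)<r\,tr_{M_t}D^2\tilde r$ does not follow from $\nu\le\frac32$. For a radially tilted graph, $r\,tr_{M_t}D^2\tilde r\approx 2$ while $|\nabla\tilde r|^2\approx\nu^2$ can reach $\frac94$, so $tr_{M_t}D^2\tilde r^{-\sigma_0}$ can be positive. Two things save you:
\begin{itemize}
\item At a first touching point the slope of $M_t$ equals the barrier's slope, which is $O(A\sigma_0\tilde r^{-1-\sigma_0}+B\bar T\tilde r^{-2-\epsilon})$. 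Hence $\nu\approx1$ at that point.
\item In any case $B\tilde r^{-1-\epsilon}$ dominates every $O(A\tilde r^{-2-\sigma_0})$ error once $\hat r_0$ is large, since $A\tilde r^{-1-\sigma_0+\epsilon}\le C\bar T\hat r_0^{-1+\epsilon}$.
\end{itemize}
Similarly, (\ref{dfbd1}) is only a boundary estimate on $\partial\Omega_i$, not a bound near infinity. Your ``refined forcing'' alternative should instead rely on the equality of gradients at the touching point.
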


\begin{proof}  The idea is to find a barrier function in the asymptotically flat region. Motivated by  Bartnik's construction (\cite{B84}, \S5), we consider a positive function $w=w(x)$ satisfying the differential equation 
\begin{equation}\label{253}
    x^{-2}(\frac{{x}^{2}w'}{\sqrt{1-w'^{2}}})'=-{x}^{-2-\sigma},
\end{equation}
which is the hight function  of a  graphical spacelike submanifold   with negative mean curvature ($-r^{-2-\sigma}$) in Minkowski space. 

A solution of (\ref{253}) can be given explicitly by the formula
\begin{equation}\label{254}
    w(x)=\int_{x}^{\infty}\frac{s^{1-\sigma}-{r}_0^{1-\sigma}}{\sqrt{(1-\sigma)^2s^4 +({r}_0^{1-\sigma}-s^{1-\sigma})^2}}ds
\end{equation}
on $[{r}_0, \infty).$
 One can prove that $w(x)$ is convex  on $[e {r}_0, \infty)$, non-increasing on  $[{r}_0, \infty)$ and    satisfies 
\begin{equation}\label{wbd}
     \frac{1}{\sqrt{2}(1-\sigma)}[\sigma^{-1}x^{-\sigma}-{r}_0^{1-\sigma}{x}^{-1}]\leq w(x)\leq \frac{1}{1-\sigma}[\sigma^{-1}{x}^{-\sigma}-{r}_0^{1-\sigma}{x}^{-1}],
\end{equation}
when ${x}\geq \max\{(1-\sigma)^{-\frac{1}{1+\sigma}},{r}_0\}$.  In particular, 
\begin{equation}\label{256}
    w ({r}_0)\geq\frac{{r}_0^{-\sigma}}{\sqrt{2}\sigma}\rightarrow \infty,  \ \  \text{as} \ \  \sigma\rightarrow 0. 
\end{equation}

Under wave coordinates $\{z^{i},z^{0}\}$, let $\tilde{r}=\sqrt{(z^1)^2+(z^2)^2+(z^3)^2}$, we consider the submanifold $M_w\triangleq  \{z^0=w(\tilde{r})\}$.  Since $C^{-1} \sigma^{-1}r^{-\sigma}\leq\tau\mid_{M_w}\leq C\sigma^{-1}r^{-\sigma}$,   for fixed large $r_0$ and small $\sigma$, there exists a $\tilde{r}_0=\tilde{r}_0(\sigma,{r}_0) \geq \frac{3{r}_0}{2}$ such that $\tau\mid_{M_{w}} \leq \delta_1$ when $\tilde{r}\geq \tilde{r}_0$, and $\max \tau_{M_{w}\cap\{\tilde{r}=\tilde{r}_0\}}=\delta_1$, where $\delta_1$ is the constant in (\ref{tau}).

 Since $w(\tilde{r})$ is non-increasing in $\tilde{r}$ and $\frac{\partial \tau}{\partial z^0}>0$ (Lemma \ref{l2.2}), we know  the part of the graph $\{(z^i, w(\tilde{r})): \tilde{r}\geq \tilde{r}_0\}$  lies  entirely in the region in (\ref{tau}).

Now we calculate the mean curvature $H_{M_w}$ of $M_{w}$.

First of all,  we present the spacetime metric $\bar{g}$ in  wave coordinates $\{z^{\alpha}\}$ as follows 
\begin{equation} \label{2.57}
    \bar{g}= -(\alpha^2-|\beta|^2)(dz^{0})^2+2\beta_idz^{i}dz^0+\bar{g}^{z}_{ij}dz^{i}dz^{j}.
\end{equation}

We  denote  the level sets  $\{z^0=const.\}$ of function $z^0$ by $M_{z^0}$, whose unit future directed normal vector can be given by $Z=\alpha^{-1}(\frac{\partial}{\partial z^{0}}-\beta)$. Then 
 \begin{equation}
 N=\nu(Z+U)
 \end{equation}
 is the unit normal vector of $M_{w}$, where  $U=(1+\langle D^{M_{z^0}} w, \beta \rangle )^{-1}\alpha D^{M_{z^0}} w$, $D^{M_{z^0}}w= (\bar{g}^z)^{ij} \frac{\partial w}{\partial z^i}\frac{\partial}{\partial z^j}$ and $\nu=(1-|U|^2)^{-\frac{1}{2}}$.

A straightforward  calculation gives 
\begin{equation} \label{2.59}
\begin{split}
H_{M_{w}}=&  div_{M_w} N=  div_{\bar{M}} N\\
=& div_{\bar{M}} \frac{U}{\sqrt{1-|U|^2}}+div_{M_{z^0}}( \frac{Z}{\sqrt{1-|U|^2}})-\langle D_{Z}(\frac{Z}{\sqrt{1-|U|^2}}), Z\rangle\\
=&  div_{\bar{M}} \frac{U}{\sqrt{1-|U|^2}}+\frac{H_{M_{z^0}}}{\sqrt{1-|U|^2}}+\frac{1}{2}(1-|U|^2)^{-\frac{3}{2}}Z(|U|^2).\end{split}
\end{equation}

Now we estimate  the mean curvature  $H_{M_{z^0}}$ of $\{z^0=const.\}$ by utilizing the formula
\begin{equation}
\begin{split}
H_{M_{z^0}}&=-{|D z^0|^{-2}} tr_{M_{z^0}} D^2 z^0=-|D z^0|^{-2}(\Box z^0-|Dz^0|^{-2} D^2 z^0(Dz^0,Dz^0))\\
&=(|Dz^0|^2)^{-2}D^2 z^0(Dz^0,Dz^0),
\end{split}
\end{equation}
where we have used $\Box z^0=0$.

Since $D^2 z^0=\partial \bar{g}\ast \bar{g}^{-1}=O(r^{-1-\epsilon})$, $\frac{\partial}{\partial z^0} D^2 z^0=O(r^{-2-\epsilon})$, we have 
\begin{equation}\label{260}
\frac{\partial}{\partial z^0} H_{M_{z^0}}=O(r^{-2-\epsilon})+O(r^{-2-2\epsilon})=O(r^{-2-\epsilon}).
\end{equation}

Integrating (\ref{260}) with  initial value $H_{M_{z^0}}\mid_{z^0=0}=O(r^{-2-\epsilon})$ (see (\ref{AC}))  yields 
\begin{equation}
H_{M_{z^0}}=O(r^{-2-\epsilon}).
\end{equation}

Since $w^{\prime}=O(r^{-1-\sigma})$, $w^{\prime\prime}=O(r^{-2-\sigma})$,  by direct computations, we have   $|U|^2=O(r^{-2-2\sigma})$, $Z(|U|^2)=O(r^{-3-2\sigma})$, $ div_{\bar{M}} \frac{U}{\sqrt{1-|U|^2}}=\frac{1}{\sqrt{1-|U|^2}}div_{\bar{M}}{U}+O(r^{-4-3\sigma})=div_{\bar{M}} {U}+O(r^{-4-3\sigma})$.

Since  $\{z^{\alpha}\}$ are wave coordinates,  we have  \begin{equation}
div_{\bar{M}}U=\bar{g}^{\alpha\beta}(\partial_{\beta}U_{\alpha}-\bar{\Gamma}^{\gamma}_{\beta\alpha}U_{\gamma})= \bar{g}^{\alpha\beta} \partial_{\beta}U_{\alpha}. \end{equation}
Then (\ref{2.59}) becomes 
\begin{equation}
\begin{split}
H_{M_{w}}&= \bar{g}^{\alpha\beta} \partial_{\beta}U_{\alpha}+O(r^{-2-\epsilon})\\
&= \alpha (1+\langle D^{M_{z^0}} w, \beta \rangle )^{-1} \bar{g}^{i\beta} \partial_{\beta}(D^{M_{z^0}}w)_{i}+O(r^{-2-\epsilon})+O(r^{-2-\epsilon-\sigma})\\
&=\partial_{i}\partial_{i}w+O(r^{-2-\epsilon})\\
&=w^{\prime\prime}+ \frac{2}{\tilde{r}}w^{\prime}+O(r^{-2-\epsilon}).
\end {split}
\end{equation}

Meanwhile,  the equation (\ref{253}) implies 
\begin{equation} \label{2.65}
\frac{2}{\tilde{r}}w'+w''= -\tilde{r}^{-2-\sigma}+O(\tilde{r}^{-3-\sigma}).\end{equation}

Consequently, 
\begin{equation}\label{2.66}
H_{M_{w}}= -r^{-2-\sigma}+O(r^{-2-\epsilon})<0
\end{equation}
provided  $\sigma<\epsilon$.

Now we proceed to prove the theorem.  Note that from the proof of Theorem \ref{shorttime}, the solution $f$ is obtained as the limit of a sequence of $f_i$ satisfying the Dirichlet boundary condition (\ref{CDP}).

 \textbf{Claim}:   $f_i(x,t)< \tau\mid_{M_w\cap\bar{M}_1}$ for all $t\in [0,\bar{T}]$,   $\tilde{r}_0\leq \tilde{r}(x)\leq i$.

  Note that on  $\{\tilde{r}=i\}$,  $\tau\mid_{M_w}\geq C^{-1}w(i)>0$.  Suppose the \textbf{Claim} is not true.  There exists a first  time $0<\tilde{T}_i\leq \bar{T}$  such that $f_i(\cdot,\cdot) <\tau\mid_{M_w}$ on  $[0,\tilde{T}_i)$, and $\tau\mid_{M_w}=f_i$ at some point  $P_0$ at time $\tilde{T}_i$.  

If $P_0$ is an interior point of $M_{w}\cap \bar{M}_1$, then the graph of $f_i(\cdot, \tilde{T}_i)$ is tangent to $M_{w}$ at  $P_0$.  Since the mean curvature of $M_{\tilde{T}_i}$ is not greater than  $H_{M_w}$ at $P_0$, the latter is negative according to  (\ref{2.66}), we have    $\frac{\partial f_i}{\partial t}\mid_{\tilde{T}_i}<0$ at $P_0$.   This implies that for slightly earlier time than $\tilde{T}_i$, the graph of  $f_i$ is not entirely lying below $M_w\cap{\bar{M}_1}$.  This contradicts  with the definition of  $\tilde{T}_i$, which is the first time when  the graph of $f_i(\cdot, \tilde{T}_i)$ touches  $M_{w}$ from below.  

Hence $P_0$ lies on the boundary of $M_{w}\cap \bar{M}_1$, i.e. $\tau(P_0)=\delta_1$.  In this case,  $P_0$ is still an interior point of $M_{w}$. Since $f_i\leq \delta_1$, the graph of $f_i(\cdot,\tilde{T}_i)$ is still tangent to $M_{w}$ at $P_0$. This leads to the same contradiction as above.  
 The \textbf{Claim} is proved. 

Let $i\rightarrow \infty$, by taking a subsequence limit, we obtain $f(x,t)\leq \tau\mid_{M_w}$ when $\tilde{r}(x)\geq \tilde{r}_0$ and $t\in [0,\bar{T}]$.

By taking the graph $\{(z^i,-w(\tilde{r}))\}$ as a  barrier  from below, we conclude that  $f(x,t) \geq  \tau\mid_{M_{-w}}$  when $\tilde{r}(x)\geq \tilde{r}_0$ and $t\in [0,\bar{T}]$. The result (\ref{ACf}) can be obtained from the estimate (\ref{wbd}) and  Lemma \ref{l2.2}. 
\end{proof}

\begin{theorem}\label{t2.5}
 There exists  large  $\hat{r}_{0}$  such that   
\begin{equation} \label{316}
\begin{split}
     |\nabla f|(x,t) \leq Cr^{-e^{-\frac{5}{8}}(\frac{2229}{2048}+\frac{21}{32}\epsilon)} \max\{\bar{T},1\}^{\frac{1177}{2048}}
\end{split}
    \end{equation} 
    holds for $t\in [0,\bar{T}]$ and  $r(x)\geq \max\{\hat{r}_0, \sqrt{\bar{T}}\}$. 
 \end{theorem}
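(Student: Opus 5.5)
Theorem \ref{t2.5} will be proved by localizing the Nash--Moser argument from the proof of Theorem \ref{shorttime} to parabolic cylinders far out in the asymptotically flat end. Once localized, it will use the decay $|\bar{R}m|\leq Cr^{-2-\epsilon}$ from (\ref{ACR}), $|\bar{h}|\leq Cr^{-1-\epsilon}$ from (\ref{ACh}), and the $C^0$ decay $|f|\leq Cr^{-\sigma_0}$ from Theorem \ref{t2.4}. Fix $x_0$ with $r_0=r(x_0)\geq\max\{\hat r_0,\sqrt{\bar T}\}$ and work on $B(x_0,\rho)\times[0,\bar T]$ with $\rho\simeq r_0/2$. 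On this set all the decay estimates hold with $r$ comparable to $r_0$, and by (\ref{nubd1}) we already know $\nu\leq 3/2$.

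\textbf{Step 1: an integral (energy) bound for $|\nabla f|^2$.} Multiply (\ref{CDP}) by $f\phi^2$, where $\phi$ is a cutoff supported in $B(x_0,\rho)$ with $|\nabla\phi|\leq C/\rho$, and integrate in space and time. The right-hand side contains two terms. The term $\mathrm{div}_{M_t}T$ is controlled using (\ref{ACdivT}), giving a contribution of size $r_0^{-1-\epsilon}$. The Laplacian term, after integration by parts, gives $\int|\nabla f|^2\phi^2$ plus cross terms bounded by $\rho^{-2}\int f^2$. By Theorem \ref{t2.4}, $|f|\leq Cr_0^{-\sigma_0}$. The result is a bound on $\int_0^{\bar T}\!\int_{B_\rho}|\nabla f|^2$ of the form $C r_0^{3}\big(r_0^{-2\sigma_0-2}\bar T+r_0^{-2\sigma_0}+r_0^{-1-\epsilon-\sigma_0}\bar T\big)$. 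Normalizing by the volume $\sim r_0^3$ then shows that the space-time average of $|\nabla f|^2$ decays like a positive power of $r_0$, times a power of $\max\{\bar T,1\}$.

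\textbf{Step 2: local Moser iteration.} Redo (\ref{dv})--(\ref{dv2}) with $v=|\nabla f|^2$ (no truncation at the boundary value is needed) and with a space-time cutoff $\psi_l$ on a nested sequence of cylinders. Each cutoff produces extra terms of size $\int v^q|\nabla\psi_l|^2$ and $\int v^q|\partial_t\psi_l|$. The source terms are $q|\bar R|v^{q-1}$ and $q^2|\bar h|^2v^{q-2}$, which are now pointwise bounded by $q^2 r_0^{-2-2\epsilon}v^{q-2}$ rather than merely bounded in $L^2\cap L^\infty$. With the Sobolev inequality (\ref{StSob}), the recursion (\ref{2.31}) acquires explicit powers of $r_0$, and iterating as in (\ref{iter})--(\ref{2.38}) gives
\[
\|v\|_{L^\infty}\leq \bar C^{\frac{5}{32}}2^{\frac54}\Big(\int\!\!\int v^{16}\Big)^{\frac{e^{-5/8}}{16}} ,
\]
where the constant $\bar C$ now decays in $r_0$. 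Since $|\nabla f|^2\leq 3$, we can interpolate: $\int\!\!\int v^{16}\leq 3^{15}\int\!\!\int v$, and Step 1 controls $\int\!\!\int v$. Tracking the exponents through the factor $e^{-5/8}$, with the $r_0^{-2-\epsilon}$ and $r_0^{-1-\epsilon}$ decay entering via $\bar C^{5/32}$ and the Step 1 bound entering via the power $e^{-5/8}/16$, should produce the odd fractions $\frac{2229}{2048}$, $\frac{21}{32}$ and $\frac{1177}{2048}$ in (\ref{316}).

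\textbf{Main obstacle.} The delicate point is the bookkeeping of the $r_0$-powers and $\bar T$-powers through the nonhomogeneous iteration. The cutoff terms scale like $\rho^{-2}$, and the time interval $\bar T$ may be short relative to $\rho^2$. This is why the hypothesis $r\geq\sqrt{\bar T}$ is needed: it keeps the parabolic cylinder of height $\bar T$ inside the spatial scale $\rho^2$, so that the space-time volume normalization is consistent. If a pure power law fails to emerge at first, I would first try to use the initial condition $\nabla f(\cdot,0)=0$ to avoid needing a time cutoff, which removes the $\partial_t\psi$ terms. After that, I would choose $\rho=r_0/2$ and match the exponents a posteriori.
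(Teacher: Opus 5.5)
\textbf{Where your proposal agrees with the paper.} Your Step 2 is essentially the paper's second step. It runs a Moser iteration on shrinking balls of radius comparable to $r(x)$, with spatial cutoffs only (no time cutoff is needed since $\nabla f(\cdot,0)=0$). It uses the pointwise decaying sources from (\ref{ACR}) and (\ref{ACh}). And the restriction $r\ge\sqrt{\bar T}$ serves to keep $\bar T r_1^{-2}\lesssim 1$ when terms are merged.

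\textbf{The gap: Step 1 and the claimed exponents.} The exponent in (\ref{316}) comes from a much stronger input than Step 1 can supply. The paper never tests the equation for $f$, and never uses Theorem \ref{t2.4}. Instead it tests the evolution equation (\ref{|df|2 evo}) of $|\nabla f|^2$ against $\phi(r/r_1)|\nabla f|^{q-2}$. It integrates the $\langle\nabla\,\mathrm{div}_{M_t}T,\nabla f\rangle$ term by parts and uses $|\bar Rm|\le Cr^{-2-\epsilon}$ and $|\bar h|\le Cr^{-1-\epsilon}$. This gives $\frac{d}{dt}\int\phi|\nabla f|^q\le Cq^2 r_1^{-2+\frac6q-\epsilon}(\int\phi|\nabla f|^q)^{\frac{q-2}{q}}$, hence (\ref{2.50}) for every $q$ after integrating from $\nabla f(\cdot,0)=0$. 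That estimate is used twice:
\begin{enumerate}
\item At every level $q_l$ it turns the cutoff term $(\frac53)^{2l}r_1^{-2}\iint|\nabla f|^{q_l}$ into the homogeneous form (\ref{2.52})--(\ref{2.53}).
\item At $q=16$ it gives $\int_0^{\bar T}\!\int|\nabla f|^{16}\le C\bar T^{9}r_1^{-13-8\epsilon}$.
\end{enumerate}
Raise the second bound to the power $e^{-5/8}/16$ and multiply by the iteration constant $r_1^{e^{-5/8}[\frac{75}{2048}-\frac{5}{32}(2+\epsilon)]}$. The result is exactly $r_1^{-e^{-5/8}(\frac{2229}{2048}+\frac{21}{32}\epsilon)}$, and $\frac{25}{2048}+\frac{9}{16}=\frac{1177}{2048}$ gives the time exponent.

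Your Step 1 feeds in only $|f|\le Cr^{-\sigma_0}$ and then uses $\iint v^{16}\le 3^{15}\iint v$. With your own bound this gives $\iint v^{16}\lesssim r_0^{3-2\sigma_0}$. The best version of that energy estimate gives only $\iint v\lesssim \bar T r_0^{2-\epsilon-\sigma_0}$. Either way this is a growing quantity whose $r_0$-dependence is governed by the barrier constant $\sigma_0$, compared with $\bar T^9 r_1^{-13-8\epsilon}$ in the paper. Whatever the bookkeeping, your final exponent will depend on $\sigma_0$ and will be far weaker than (\ref{316}). At best you get some decay $|\nabla f|\le Cr^{-\gamma(\sigma_0,\epsilon)}$, not the theorem as stated, and ``matching the exponents a posteriori'' is not an argument.

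\textbf{Step 1 is also not justified as written.}
\begin{itemize}
\item In $\partial_t f=\nu^{-2}(\Delta f+\mathrm{div}_{M_t}T)$, integrating by parts against $f\phi^2$ produces $\int f\phi^2\nabla(\nu^{-2})\cdot\nabla f=-2\int f\phi^2\nu^{-4}\nabla^2 f(\nabla f,\nabla f)$. This contains uncontrolled second derivatives.
\item $\int f\phi^2\partial_t f$ is not $\frac12\frac{d}{dt}\int_{M_t}f^2\phi^2$, because the measure of $M_t$ evolves.
\item Repairing either point requires a bound on $\partial_t f=\nu^{-1}H$, which you do not supply.
\end{itemize}
Finally, a small correction: the $\bar Rm$ source term is $O(r^{-2-\epsilon})$, not $O(r^{-2-2\epsilon})$.
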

\begin{proof} We need to adapt and localize the argument in Theorem \ref{shorttime}.  Let  $\phi$ be a cutoff function  such that $\phi=1$ on $[1,2]$, $\phi=0$ on $[0,\frac{1}{2}]\cup [\frac{5}{2},+\infty)$, and $|\phi'|\leq C \phi^{\frac{3}{4}}$.   From   (\ref{|df|2 evo}), integration by parts and H$\ddot{o}$lder inequality,  we obtain  
\begin{equation} \label{df L^4 evo}
\begin{split}
     \frac{d}{d t}\int_{M_t}\phi(\frac{r}{r_1})|\nabla f|^{q}\leq & \int_{M_t} -q\phi\nu^{-2}|\nabla^2 f|^2|\nabla f|^{q-2}-\frac{1}{4}q(q-2)\phi \nu^{-2}|\nabla|\nabla f|^2|^2|\nabla f|^{q-4}\\
     &-\int_{M_t}\nu^{-2}\langle\nabla \phi(\frac{r}{r_1}),\nabla|\nabla f|^{q}\rangle+ \int_{M_t}\frac{q}{2}\phi\nu^{-4}|\nabla|\nabla f|^2|^2|\nabla f|^{q-2}\\
     &+ \int_{M_t}C q\phi\nu^{-2}|\bar{R}||\nabla f|^{q}+q\phi\nu^{-2}|\nabla f|^{q-2}\langle \nabla div(T), \nabla f\rangle\\
     &-\int_{M_t} q \phi \nu^{-4}\langle \nabla |\nabla f|^2,\nabla f\rangle |\nabla f|^{q-2} (\triangle f+div_{M_t}T)\\
     \leq &\int_{M_t}-C^{-1}\phi|\nabla |\nabla f|^{\frac{q}{2}}|^2+\frac{Cq}{r_1^2}(\int_{M_t}|\nabla f|^{q})^{\frac{2}{q}}(\int_{M_t}\phi|\nabla f|^{q})^{\frac{q-2}{q}}\\
     &+C q^2(\int_{M_t}\phi(|\bar{R}|^{\frac{q}{2}}+|\bar{h}|^{q}))^{\frac{2}{q}}(\int_{M_t}\phi|\nabla f|^{q})^{\frac{q-2}{q}}\\
     \leq & Cq^2r_{1}^{-2+\frac{6}{q}-\epsilon}(\int_{M_t}\phi|\nabla f|^{q})^{\frac{q-2}{q}}.
\end{split}
\end{equation}
Integrating (\ref{df L^4 evo}) yields
\begin{equation} \label{2.50}
    (\int_{B_{2r_1}-B_{r_1}}\phi(\frac{r}{r_1})|\nabla f|^{q})^{\frac{2}{q}}\leq Cq\bar{T}r_{1}^{-2+\frac{6}{q}-\epsilon}.
\end{equation}
Fix a point  $x_1 \in M_t$ with  $ r(x_1)\triangleq 10 r_1\geq 100 r_0 $. Let $R_{0}=\frac{6r_1}{5},  R_{l}=\frac{3r_1}{5}+(\frac{3}{5})^{l+1}r_1$,  $\xi_l$ be a  cutoff function with  $\xi_{l}=1$ on $B(x_0,R_{l+1})$,  and $\xi_l=0$ on  $B(x_1,\frac{R_{l}+R_{l+1}}{2})^{c}$,  $|\nabla \xi_{l}|^{2}\leq \frac{C\xi_l^{\frac{3}{2}}}{(R_{l}-R_{l+1})^{2}}$. Then the same argument as in (\ref{dv})-(\ref{v Moser}) gives 
\begin{equation}
\begin{split}
& (\int_0^{\bar{T}}\int_{M_t}\xi_l|\nabla f|^{\frac{5q}{3}})^{\frac{3}{5}}\\&  \leq Cq^2\int_0^{\bar{T}}\int_{M_t}\xi_l(|\bar{R}|+|\bar{h}|^2)|\nabla f|^{q-2}+C\int_0^{\bar{T}}\int_{M_t}(R_l-R_{l+1})^{-2}\xi_l^{\frac{1}{2}}|\nabla f|^q\\
& \leq Cq^2\bar{T}^{\frac{2}{q}} r_1^{\frac{6}{q}-2-\epsilon}(\int_0^{\bar{T}}\int_{M_t}\xi_l|\nabla f|^{q})^{\frac{q-2}{q}}+C(\frac{5}{3})^{2l}r_1^{-2}\int_0^{\bar{T}}\int_{M_t}\xi_l^{\frac{1}{2}}|\nabla f|^q,
\end{split}
\end{equation}
which deduces  
\begin{equation} \label{2.52}
\begin{split}
       &  (\int_0^{\bar{T}}\int_{B(x_1,R_{l+1})}|\nabla f|^{\frac{5q}{3}})^{\frac{3}{5}}\\ &  \leq  Cq^2\bar{T}^{\frac{2}{q}} r_1^{\frac{6}{q}-2-\epsilon}(\int_0^{\bar{T}}\int_{B(x_1,R_l)}|\nabla f|^{q})^{\frac{q-2}{q}} +C(\frac{5}{3})^{2l}r_1^{-2}\int_0^{\bar{T}}\int_{B(x_1,R_l)}|\nabla f|^q\\
       & \leq  [Cq^2\bar{T}^{\frac{2}{q}} r_1^{\frac{6}{q}-2-\epsilon}+ Cq (\frac{5}{3})^{2l}\bar{T}^{1+\frac{2}{q}}  r_1^{\frac{6}{q}-2-\epsilon}r_1^{-2}](\int_0^{\bar{T}}\int_{B(x_1,R_l)}|\nabla f|^{q})^{\frac{q-2}{q}}       
       \end{split}
\end{equation}
taking (\ref{2.50}) into account. Let $q=q_l=16\times (\frac{5}{3})^l$ in (\ref{2.52}),   we have 
\begin{equation} \label{2.53}
      (\int_0^{\bar{T}}\int_{B(x_1,R_{l+1})}|\nabla f|^{\frac{5q_l}{3}})^{\frac{3}{5q_l}} \leq C^{\frac{1}{q_l}}(\frac{5}{3})^{\frac{l}{q_l}}(\bar{T})^{\frac{2}{q^2_l}}q_l^{\frac{2}{q_l}}r_1^{\frac{6}{q_l^2}-\frac{(2+\epsilon)}{q_l}}(\int_0^{\bar{T}}\int_{B(x_1,R_l)}|\nabla f|^{q_l})^{\frac{q_l-2}{q_l^2}}.
\end{equation}

Iterating (\ref{2.53}) and using same  argument from (\ref{2.31}) to (\ref{2.38}), we obtain: 
\begin{equation} \label{2.54}
\begin{split}
     ||\nabla f||_{L^{\infty}(B(x_1,\frac{3r_1}{5}))}&\leq Cr_1^{e^{-\frac{5}{8}}[\frac{75}{2048}-\frac{5}{32}(2+\epsilon)]}\max\{\bar{T},1\}^{\frac{25}{2048}}(\int_{0}^{\bar{T}}\int_{B(x_1,\frac{6r_1}{5})}|\nabla f|^{16} )^{\frac{e^{-\frac{5}{8}}}{16}}\\
     &\leq Cr_1^{-(\frac{2229}{2048}+\frac{21}{32}\epsilon)e^{-\frac{5}{8}}}\max\{\bar{T},1\}^{\frac{25}{2048}+\frac{9}{16}e^{-\frac{5}{8}}},
\end{split}
    \end{equation}
provided $\int_{0}^{\bar{T}}\int_{B(x_1,\frac{6r_1}{5})}|\nabla f|^{16}\leq 1$.  If $\int_{0}^{\bar{T}}\int_{B(x_1,\frac{6r_1}{5})}|\nabla f|^{16}\geq  1$,  the  iteration argument deduces  
\begin{equation} \label{2.55}
\begin{split}
     ||\nabla f||_{L^{\infty}(B(x_1,\frac{3r_1}{5}))}
     &\leq Cr_1^{-(\frac{565}{2048}+\frac{5}{32}\epsilon)e^{-\frac{5}{8}}-\frac{13}{16}-\frac{\epsilon}{2}}\max\{\bar{T},1\}^{\frac{1177}{2048}}.
\end{split}
    \end{equation}
The  combination of   (\ref{2.54})  and (\ref{2.55}) gives  (\ref{316}).  This completes the proof.

\end{proof}

\section{A fundamental  mean curvature estimate}

From this section onward, we will derive several a priori estimates for the mean curvature. 

To begin with,  we observe that   the following    mean curvature estimate is fundamental.  This estimate  may have been noted in previous literature  in different circumstances (e.g. \cite{E93}).

\begin{proposition}\label{P41} Let $f(x,t)$ be a solution to (\ref{mcfg}) on $[0,\bar{T})$. Then    \begin{equation}\label{41}
    \mathop{sup}\limits_{M_{t}}|H| \leq  \frac{1}{\sqrt{\frac{2t}{3}+\frac{1}{\sup\limits_{M_0}|H|^2}}}.
    \end{equation}
\end{proposition}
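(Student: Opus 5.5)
The plan is to derive a parabolic evolution inequality for $H^2$ along the flow and compare it with the ODE $y'=-\frac{2}{3}y^2$, whose solution with $y(0)=\sup_{M_0}|H|^2$ is exactly $\left(\frac{2t}{3}+\frac{1}{\sup_{M_0}|H|^2}\right)^{-1}$. Taking square roots of the resulting bound on $H^2$ gives (\ref{41}).

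\emph{Step 1: evolution of $H$.} For the normal flow $\frac{\partial X}{\partial t}=\overset{\rightarrow}{H}$ of spacelike hypersurfaces in a Lorentzian manifold, I will compute the variation of the second fundamental form. Differentiating $H=div_{M_t}N$ under a normal variation of speed $H$ in the timelike direction, and using the Lorentzian sign conventions of Section 2, I expect
\begin{equation*}
\frac{\partial H}{\partial t}=\triangle H - H\left(|h|^2+\bar{R}ic(N,N)\right).
\end{equation*}
The signs differ from the Riemannian case. The $|h|^2$ term has a damping sign here, consistent with (\ref{2.14}) for the geodesic foliation. In a vacuum spacetime $\bar{R}ic=0$, so the curvature term drops out.

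The solution of (\ref{mcfg}) differs from the normal flow only by a time-dependent tangential reparametrization (see the discussion after (\ref{mcfg})). Hence along the graphical flow an extra transport term appears:
\begin{equation*}
\frac{\partial H}{\partial t}=\triangle H+\langle V,\nabla H\rangle-H|h|^2
\end{equation*}
for some tangent vector field $V$, which is built from $\nabla f$ and $T$. This first-order term is invisible at spatial extrema and therefore harmless for the maximum principle.

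\emph{Step 2: differential inequality.} By Cauchy--Schwarz on the $3$-dimensional leaf, $|h|^2\ge \frac{1}{3}H^2$. Hence
\begin{equation*}
\frac{\partial H^2}{\partial t}\le \triangle H^2-2|\nabla H|^2+\langle V,\nabla H^2\rangle-\frac{2}{3}H^4.
\end{equation*}
At a spatial maximum of $u=H^2$, this gives $\frac{d}{dt}\max u\le -\frac{2}{3}(\max u)^2$ in the sense of forward difference quotients. By ODE comparison, $\max u(t)\le \left(\frac{2t}{3}+\frac{1}{\max u(0)}\right)^{-1}$.

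\emph{Step 3: the noncompact maximum principle, which I expect to be the main obstacle.} Since $M_t$ is noncompact, a supremum need not be attained. I would handle this using the asymptotics of Section 3. By Theorems \ref{t2.4} and \ref{t2.5} together with (\ref{2.15}) and (\ref{ACdivT}), the flow stays close to the $\tau$-foliation at infinity. Standard interior parabolic estimates on the graph equation then give $H(\cdot,t)\to 0$ uniformly as $r\to\infty$, uniformly in $t\in[0,\bar{T}]$.

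Now fix any $\eta>0$ and compare $u$ with the strict supersolution $y_\eta(t)=\left(\frac{2t}{3}+\frac{1}{\sup_{M_0}|H|^2+\eta}\right)^{-1}$. Suppose $u$ first touches $y_\eta$ at some time. The touching point cannot lie near infinity, because $u$ is small there, so it is an interior maximum point. At that point Step 2 gives a contradiction with the strict inequality $u<y_\eta$ at earlier times. Letting $\eta\to0$ yields (\ref{41}).

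If the decay of $H$ at infinity is hard to justify directly, the fallback is an Ecker--Huisken type localization. I would multiply $u$ by a cutoff in $\tilde r$, using Lemma \ref{l2.2} to control $tr_{M_t}D^2\tilde r$, and then let the cutoff radius tend to infinity.
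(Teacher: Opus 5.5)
Your Steps 1--2 match the paper. It uses the graphical evolution $\partial_t H=\triangle H-H|h|^2-\nu^{-1}H\langle\nabla H,\nabla f\rangle$ from Proposition \ref{P42}, so your $V$ is $-\nu^{-1}H\nabla f$, and it derives the same inequality for $H^2$ from $|h|^2\ge\frac{1}{3}H^2$.

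\textbf{Where you differ: noncompactness.} The paper never shows that $H$ decays at infinity before this point. It applies the maximum principle to $u=\xi(\tilde r/a)H^2$, which has compact support, so the maximum is attained.
\begin{itemize}
\item At that maximum, $\nabla u=0$ turns the transport term into $\nu^{-1}H^3\langle\nabla\xi,\nabla f\rangle$. This is a cubic error $Ca^{-1}\xi^{3/4}|H|^3$ that is not pointwise small, since only $\nu^{-1}|\nabla f|\le 1$.
\item There is also a quadratic error $Ca^{-2}\xi^{1/2}H^2$ coming from $\triangle\tilde r$ and the derivatives of $\xi$.
\item Both errors are absorbed into $-\frac{2}{3}\xi H^4$ by Young's inequality, at the cost of a factor $1-\delta$. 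Hamilton's lemma then gives $u_{\max}(s)\le\max\{C\delta^{-2}a^{-2},(\frac{2(1-\delta)s}{3}+u_{\max}(0)^{-1})^{-1}\}$, and one lets $a\to\infty$, then $\delta\to0$.
\end{itemize}
This argument needs only Lemma \ref{l2.2} and $\nu\le 2$.

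Your primary route instead needs uniform decay of $H$, i.e. of $\nabla^2 f$, at spatial infinity for all $t\in[0,\bar T]$. Theorems \ref{t2.4} and \ref{t2.5} control only $f$ and $\nabla f$. To get second derivatives you would need $C^{1,\alpha}$ estimates and then Schauder estimates for the quasilinear graph equation on unit cylinders. These must hold up to $t=0$, not just in the interior, and they require H\"older control of $\bar g$ and $\bar h$ from (\ref{AC est}). That is feasible, but it is where the real work in your version sits. What it buys you is a maximum principle with no error terms. Your fallback is exactly the paper's proof. The paper gets the decay you assume, $\sup_{a\le\tilde r\le 2a}|H|\le C/a$ (Theorem \ref{t44}), only afterwards, from the same cutoff computation.

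\textbf{One slip: $y_\eta$ is not a strict supersolution.} It solves $y'=-\frac{2}{3}y^2$ exactly, so at a first touching point you only get $\partial_t u=y_\eta'$, which is not a contradiction. There are two easy fixes:
\begin{itemize}
\item Compare with $(\frac{2(1-\delta)t}{3}+\frac{1}{\sup_{M_0}|H|^2+\eta})^{-1}$. This is strict because $y>0$; then let $\delta\to0$.
\item Argue as in your own Step 2. Once the maximum is attained, $\phi(t)=\max_{M_t}u$ satisfies $\frac{d^{+}}{dt}\phi\le-\frac{2}{3}\phi^2$ by Hamilton's lemma. Comparison for a Dini derivative with a locally Lipschitz right-hand side needs no strictness: it shows $\phi-y$ cannot cross zero.
\end{itemize}
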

  To accomplish the proof by virtue of the  maximum principle,  we need to calculate the evolution equations of the metric $g_{ij}$,  the second fundamental form $h_{ij}$ and the  mean curvature $H$,  under  the deformation (\ref{mcfg}).  These evolution equations are standard, but for completeness, we will  give a brief proof.
  
  \begin{proposition} \label{P42}
Let $f(x,t)$ be a solution to (\ref{mcfg}) on $[0,\bar{T})$.  Then 
\begin{equation} \label{42}
\begin{split}
 \frac{\partial g_{ij}}{\partial t} =&2Hh_{ij}-\nabla_i(\nu^{-1}H\nabla_jf)- \nabla_j(\nu^{-1}H\nabla_if)\\
 \frac{\partial h_{ij}}{\partial t}=& \nabla_{i}\nabla_j H+H(h^2_{ij}-\bar{R}_{i0j0})-\nabla_{k}h_{ij}\nu^{-1}H \nabla_kf-h_{kj}\nabla_{i}(\nu^{-1}H\nabla_kf)\\ & -h_{kj}\nabla_{i}(\nu^{-1}H\nabla_kf)\\
\frac{\partial H}{\partial t}=& \triangle H -H|h|^{2}-\nu^{-1}H \langle \nabla H,\nabla f\rangle. 
\end{split}
\end{equation}
\end{proposition}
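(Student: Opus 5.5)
The plan is to split the graphical velocity into a normal part and a tangential part. For the normal part we use the standard first-variation formulas for a spacelike hypersurface in a Lorentzian manifold. The tangential part contributes Lie-derivative terms.

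\textbf{Step 1: decompose the velocity.} The deformation is $X(x,t)=(x,f(x,t))$ in the $\tau$-foliation, so $\frac{\partial X}{\partial t}=\frac{\partial f}{\partial t}T$. From (\ref{mcfg}) and (\ref{215}) we have $\frac{\partial f}{\partial t}=\nu^{-2}(\triangle f+div_{M_t}T)=\nu^{-1}H$. Using $T=\nu N-\nabla f$ this gives
$$\frac{\partial X}{\partial t}=\nu^{-1}H\,T=HN+V,\qquad V\triangleq-\nu^{-1}H\nabla f\in TM_t .$$
So (\ref{mcfg}) is the flow $\overset{\rightarrow}{H}$ (with the paper's sign conventions $H=div_{M_t}N$ and $\langle N,N\rangle=-1$) plus the tangential field $V$. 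Every geometric quantity $Q\in\{g_{ij},h_{ij},H\}$ then evolves by the pure normal variation plus $\mathcal{L}_VQ$.

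\textbf{Step 2: the normal variation with speed $H$.} We work in the fixed coordinates $x^i$ and use $\partial_t\partial_iX=D_{\partial_i}(HN)$ (torsion-freeness of $D$), together with $h_{ij}=-\langle D_{\partial_i}\partial_jX,N\rangle$ or the equivalent convention fixed by $H=div_{M_t}N$.
\begin{itemize}
\item Differentiating $g_{ij}=\langle\partial_iX,\partial_jX\rangle$ gives $H(\langle D_iN,\partial_j\rangle+\langle\partial_i,D_jN\rangle)=2Hh_{ij}$.
\item For $h_{ij}$, we differentiate $\langle D_{\partial_i}\partial_jX,N\rangle$. We commute $D_t$ and $D_{\partial_i}$, which produces the curvature term $H\bar{R}(\partial_i,N,\partial_j,N)=H\bar{R}_{i0j0}$. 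We also use $D_tN=\nabla H$, which is tangential because $\langle N,N\rangle=-1$; here the Lorentzian sign matters.
\item Collecting terms gives $\nabla_i\nabla_jH+H(h^2_{ij}-\bar{R}_{i0j0})$.
\item Tracing, with $\partial_tg^{ij}=-2Hh^{ij}$, gives $\triangle H-H|h|^2-H\bar{R}_{00}$. Since $\bar{R}ic=0$ by (\ref{EVE}), this reduces to $\triangle H-H|h|^2$.
\end{itemize}

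\textbf{Step 3: the tangential variation.} Reparametrising along $V$ adds
\begin{itemize}
\item $\mathcal{L}_Vg_{ij}=\nabla_iV_j+\nabla_jV_i$, which gives the two divergence terms in the first line;
\item $\mathcal{L}_Vh_{ij}=V^k\nabla_kh_{ij}+h_{kj}\nabla_iV^k+h_{ik}\nabla_jV^k$, which gives the $\nabla_kh_{ij}$ and $h\ast\nabla(\nu^{-1}H\nabla f)$ terms;
\item $\mathcal{L}_VH=\langle V,\nabla H\rangle=-\nu^{-1}H\langle\nabla H,\nabla f\rangle$.
\end{itemize}
Substituting $V=-\nu^{-1}H\nabla f$ yields exactly (\ref{42}). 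The second $h_{kj}\nabla_i$ term in the statement should read $h_{ik}\nabla_j(\nu^{-1}H\nabla_kf)$.

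\textbf{Main obstacle.} The only delicate point is sign bookkeeping in the Lorentzian setting. This covers $\langle N,N\rangle=-1$, the resulting sign of $D_tN$, and the sign of the curvature commutator, so that one lands on $+H(h^2-\bar{R}_{i0j0})$ and $-H|h|^2$. We will fix these signs by checking them against (\ref{nor evo}) and (\ref{2.14}). Those are exactly the $\nu=1$ normal-geodesic specialisation of the same computation, with unit speed instead of speed $H$.
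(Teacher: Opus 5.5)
Your proof is correct, but it is organized differently from the paper's.

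\textbf{The paper's route.} The paper does not split the velocity. It computes $\partial_t g_{ij}$ and $\partial_t h_{ij}$ directly with the full field $X_t=HN-\nu^{-1}H\nabla f$ and derives $D_{X_t}N=\nabla H-h(\nu^{-1}H\nabla f,\cdot)$. It then uses the Codazzi equation (\ref{Codazzi}) to convert the curvature term $\bar{R}(\nu^{-1}H\nabla f,X_i,N,X_j)$, which comes from the tangential part of the velocity, into the transport term $-\nu^{-1}H\nabla_kf\,\nabla_kh_{ij}$.

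\textbf{Your route.} You replace that step with two structural facts. First, the first variation is linear in the velocity field. Second, a tangential velocity is an infinitesimal reparametrization, so it acts by $\mathcal{L}_V$.

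\textbf{What each buys.} With your decomposition, Codazzi never appears, and it is clear why the extra terms have the form they do. It also makes Proposition \ref{P43} immediate, since composing with the flow (\ref{410}) of $-V$ removes exactly these Lie-derivative terms. The price is that you should state the two facts explicitly, since they are your only non-computational inputs; the paper's computation is self-contained.

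Your corrections are right. The last term should read $-h_{ik}\nabla_j(\nu^{-1}H\nabla_kf)$, and the trace uses $\bar{R}ic(N,N)=0$, which the paper leaves implicit.

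One small caveat on your sign check: (\ref{nor evo}) and (\ref{2.14}) only fix the signs of the zeroth-order terms $H(h^2_{ij}-\bar{R}_{i0j0})$. The sign of $\nabla_i\nabla_jH$ comes from $D_tN=\nabla H$, which you do compute separately.
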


\begin{proof}
Let $X(\cdot,t)=(x^1,x^2,x^3, f(x^1,x^2,x^3,t))$ be the graph  map of $M_t$, where $\{x^i\}$ is local coordinate system  of $M_0$. Then  
\begin{equation} \label{43}
\frac{\partial X}{\partial t}=\frac{\partial f}{\partial t}\frac{\partial}{\partial \tau}=HN-\nu^{-1}H \nabla f.
\end{equation}

Denote $X_i=\frac{\partial X}{\partial x^i}$, $X_t=\frac{\partial X}{\partial t}$, $g_{ij}=\bar{g}(X_i,X_j)$, $h_{ij}=\bar{g}(D_{X_i}N,X_j)$, $H=g^{ij}h_{ij}$. Then 
\begin{equation}\begin{split} \frac{\partial g_{ij}}{\partial t}&=\langle D_{X_t}X_i,X_j\rangle+\langle X_i,  D_{X_t}X_j\rangle\\
&=\langle D_{X_i}(HN-\nu^{-1}H\nabla f),X_j\rangle+\langle X_i,  D_{X_j}(HN-\nu^{-1}H\nabla f)\rangle\\
&=2Hh_{ij}-\nabla_i(\nu^{-1}H\nabla_jf)- \nabla_j(\nu^{-1}H\nabla_if),\end{split}
\end{equation}
which gives the first formula. 
By straightforward computations,  
\begin{equation} \label{44}
\begin{split} 
\frac{\partial h_{ij}}{\partial t}=&\langle D_{X_t} D_{X_i}N,X_j\rangle
+\langle D_{X_i}N,  D_{X_j} X_t\rangle\\
=&-\bar{R}(HN-\nu^{-1}H\nabla f, X_i, N,X_j)+\langle D_{X_i}N,  D_{X_j} (HN-\nu^{-1}H\nabla f)\rangle\\
&+\langle D_{X_i} D_{X_t}N, X_j\rangle.\end{split}
\end{equation}
From  $\langle D_{X_t} N,X_j\rangle=-\langle N, D_{X_j} (HN-\nu^{-1}H\nabla f)\rangle=X_j(H)-h_{jk}\nu^{-1}H\nabla_kf$,  we  know  $D_{X_t} N=\nabla H-h(\nu^{-1}H\nabla f, \cdot)$. Substituting it into (\ref{44}) and employing Codazzi equation $\bar{R}_{ijk0}=\nabla_ih_{jk}-\nabla_jh_{ik}$,  the second  equation in (\ref{42}) follows. The third  equation  is obtained by  taking trace on the second equation.  
\end{proof}

\begin{proof} of Proposition \ref{P41}. From (\ref{42}), one can prove 
\begin{equation}\label{H2evo}
\begin{split}
(\frac{\partial }{\partial t}-\triangle)H^{2}& =-2|\nabla H|^{2}-2H^{2}|h|^{2}-\nu^{-1}H\langle \nabla H^2,\nabla f\rangle\\
&\leq -2|\nabla H|^{2}-\frac{2}{3}H^{4}-\nu^{-1}H\langle \nabla H^2,\nabla f\rangle,
\end{split}
\end{equation}
 where we have used the inequality  $|h|^{2}\geq \frac{H^{2}}{3}$. 
 
Let $\xi$ be a smooth cut-off function on $\mathbb{R}$ with $0\leq \xi\leq 1$,  which is equal to 1 on $[-\frac{1}{2}, \frac{1}{2}]$,  $0$ outside $[-1,1]$.  For any large  $a$, we consider the function $u=\xi(\frac{\tilde{r}}{a})H^2$. For each time $t$,  the maximum of $u$ must  be achieved at some interior point $P\in \{\tilde{r}<a\}$. The  maximum principle asserts that  $\triangle u(P,t)\leq 0$ and $\nabla u(P,t)=0$.  Now  we compute the evolution equation of $u$ at $(P,t)$:
\begin{equation}\label{47}
\begin{split}
(\frac{\partial }{\partial t}-\triangle)u&=H^2(\frac{\partial }{\partial t}-\triangle)\xi +\xi (\frac{\partial }{\partial t}-\triangle)H^2-2 \nabla \xi \cdot \nabla H^2\\
&\leq H^2(a^{-1}\xi^{\prime}(\frac{\partial}{\partial t}-\triangle)\tilde{r}-a^{-2}\xi^{\prime\prime}|\nabla \tilde{r}|^2 )-\frac{2}{3} \xi H^4-\nu^{-1}H\xi \langle \nabla H^2,\nabla f\rangle+2H^2 \frac{|\nabla \xi|^2}{\xi}\\
&\leq-\frac{2}{3}\xi H^4+\nu^{-1}H^3 \langle \nabla\xi,\nabla f\rangle+C a^{-2-\epsilon} |\xi^{\prime}||H|^3+C a^{-2}(|\xi^{\prime}|+|\xi^{\prime\prime}|+(\xi^{\prime})^2\xi^{-1})H^2 \\
&\leq - \frac{2}{3}\xi H^4+ Ca^{-1} \xi^{\frac{3}{4}}|H|^3+Ca^{-2}\xi^{\frac{1}{2}}H^2,
\end{split}
\end{equation}
where we have used  $|\nabla \tilde{r}|+\tilde{r}|\triangle \tilde{r}| \leq C$, $|\xi^{\prime}|+|\xi^{\prime\prime}|+(\xi^{\prime})^2\xi^{-1} \leq C\xi^{\frac{3}{4}}$,  and $|\frac{\partial \tilde{r}}{\partial t}|\leq Cr^{-1-\epsilon}\nu^{-1}|H|$. The last inequality  follows from the chain rule  $$\frac{\partial \tilde{r}}{\partial t}=\frac{z^{i}}{\tilde{r}}\frac{\partial z^{i}}{\partial \tau} \frac{\partial f}{\partial t}.$$

Applying H$\ddot{o}$lder inequality in (\ref{47}), for any $0<\delta<1$,  we have 
\begin{equation}
\begin{split}
(\frac{\partial }{\partial t}-\triangle)u
\leq - \frac{2-2\delta}{3}\xi H^4- ( \frac{\delta}{3}u^2-C\delta^{-3}a^{-4}).
\end{split}
\end{equation}

One of the followings must be true:

i) $\frac{\delta}{3}u^2(P,t)\leq C \delta^{-3}a^{-4}$;  ii) $\frac{\delta}{3}u^2(P,t)> C\delta^{-3}a^{-4}$ and $\frac{\partial}{\partial t}u(P,t)\leq -\frac{2(1-\delta)}{3}u^2(P,t).$

Let $u_{\max}(s)=\sup_{\{\tilde{r}<a\}} u(\cdot,s)$, define $\frac{d^{+}}{ds}u_{\max}\triangleq \limsup_{h\searrow 0} \frac{u_{\max}(s+h)-u_{max}(s)}{h}$. In the case ii),  we have $\frac{d^{+}}{dt}u_{\max}(t)\leq -\frac{2(1-\delta)}{3}u^2_{\max}(t)$ (see Lemma 3.5 in \cite{H86}). Integrating this   differential inequality we obtain 
\begin{equation}\label{u est}
u_{\max}(s)\leq \max\{ \frac{\sqrt{3C}}{\delta^2 a^2},  \frac{1}{\frac{2(1-\delta)s}{3}+\frac{1}{u_{\max}(0)}}\}.
\end{equation}
The result (\ref{41}) is  obtained by first letting  $a\rightarrow \infty$, then letting  $\delta\rightarrow 0$. 
\end{proof}

Now we are in a position to find a  solution to (\ref{mcf}) from graphical mean curvature flow (\ref{mcfg}).  

  Let $\phi(x,t)$ be a family of diffeomorphisms of $M_0$ generated from the vector field $-\nu^{-1}H \nabla f$,  with $\phi(\cdot,0)=id$,  i.e. 
\begin{equation} \label{410}
\frac{\partial \phi^i(x,t)}{\partial t}= \nu^{-1} H g^{ik} f_k, \ \ \ \phi^i\mid_{t=0}=x^i
\end{equation}
in local coordinates $\{x^i\}$. Because of Proposition \ref{P41} and Theorem \ref{t2.5}, the equation (\ref{410}) can always be solved on $M_0\times [0,\bar{T}]$.  Let $\tilde{X}(x,t)=X(\phi(x,t), t)$,  from (\ref{43}) and direct computations, we obtain 
\begin{equation} \label{411}
\frac{\partial \tilde{X}}{\partial t} = \overset{\rightarrow}{H}, \ \ \ \tilde{X}\mid_{t=0}=X_0.
\end{equation}

That is to say, $\tilde{X}(x,t)$ is a solution to  (\ref{mcf}). Under the deformation (\ref{411}), we have 
\begin{proposition}\label{P43} Let $g_{ij}$, $h_{ij}, H$ be the induced metric, the  second fundamental form and the mean curvature of deformation (\ref{411}) respectively.  Then  
\begin{equation} \label{412}
\begin{split}
& \frac{\partial g_{ij}}{\partial t} =2Hh_{ij},\\
& \frac{\partial h_{ij}}{\partial t} =\nabla_{i}\nabla_j H+H(h^2_{ij}-\bar{R}_{i0j0}),\\
&(\frac{\partial }{\partial t}-\triangle)H =-H|h|^{2}.
\end{split}
\end{equation}
\end{proposition}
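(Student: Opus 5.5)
Since $\tilde X(x,t)=X(\phi(x,t),t)$ is obtained from the graphical solution by a time-dependent reparametrization, the quickest route is to compute directly from the definitions, as in the proof of Proposition \ref{P42}, using the pure normal velocity $\frac{\partial\tilde X}{\partial t}=HN$ from (\ref{411}). Set $\tilde X_i=\frac{\partial \tilde X}{\partial x^i}$, $g_{ij}=\langle \tilde X_i,\tilde X_j\rangle$, $h_{ij}=\langle D_{\tilde X_i}N,\tilde X_j\rangle$. Two facts are used throughout: $D_{\tilde X_t}\tilde X_i=D_{\tilde X_i}\tilde X_t$, because coordinate fields of a map commute, and $\langle N,N\rangle=-1$.

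\textbf{Metric.} We have $\partial_t g_{ij}=\langle D_{\tilde X_i}(HN),\tilde X_j\rangle+\langle \tilde X_i,D_{\tilde X_j}(HN)\rangle$. Since $N\perp \tilde X_j$, the derivative $\tilde X_i(H)$ drops out, and this equals $2Hh_{ij}$.

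\textbf{Normal.} Differentiating $\langle N,\tilde X_j\rangle=0$ in $t$ gives $\langle D_{\tilde X_t}N,\tilde X_j\rangle=-\langle N,D_{\tilde X_j}(HN)\rangle=\nabla_jH$. Because $\langle D_{\tilde X_t}N,N\rangle=0$, it follows that $D_{\tilde X_t}N=\nabla H$. This is precisely the step where the term $-h(\nu^{-1}H\nabla f,\cdot)$ of Proposition \ref{P42} disappears.

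\textbf{Second fundamental form.} Write
\[
\partial_t h_{ij}=\langle D_{\tilde X_t}D_{\tilde X_i}N,\tilde X_j\rangle+\langle D_{\tilde X_i}N,D_{\tilde X_j}(HN)\rangle .
\]
Commute the derivatives in the first term: it becomes $\langle D_{\tilde X_i}\nabla H,\tilde X_j\rangle$ plus the curvature term $-H\bar R(N,\tilde X_i,N,\tilde X_j)$. The term $\langle D_{\tilde X_i}\nabla H,\tilde X_j\rangle$ equals $\nabla_i\nabla_jH$, since its normal part is invisible after pairing with $\tilde X_j$. In the second term, $D_{\tilde X_j}(HN)=(\nabla_jH)N+H\,D_{\tilde X_j}N$; the first piece pairs to zero, and the second gives $Hh_{ik}h_{kj}=Hh^2_{ij}$. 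The curvature term, with the paper's sign convention, is $-H\bar R_{i0j0}$. The sign has to be matched against (\ref{42}), and the safest check is to compare with the $\nabla f=0$ case of Proposition \ref{P42}, where the two flows coincide. This gives the second line of (\ref{412}).

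\textbf{Mean curvature.} Take the trace: $\partial_tH=g^{ij}\partial_th_{ij}-\partial_tg_{ij}\,h^{ij}$, which gives
\[
\triangle H+H|h|^2-H\,\bar{R}_{0i0i}-2H|h|^2 .
\]
Tracing (\ref{tr Gauss}) shows the vacuum spacetime has $\bar R_{00}=0$, and this kills the curvature trace. The result is $(\partial_t-\triangle)H=-H|h|^2$.

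\textbf{Alternative route.} One could instead pull back (\ref{42}) by $\phi_t$, using $\partial_t(\phi_t^*T)=\phi_t^*(\partial_tT+\mathcal L_VT)$ with $V=-\nu^{-1}H\nabla f$. The Lie derivative terms cancel exactly the extra transport terms in (\ref{42}), which is a consistency check.

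The main obstacle is purely bookkeeping of signs: the curvature sign convention, and the Lorentzian sign $\langle N,N\rangle=-1$ in $D_{\tilde X_t}N$. I would fix both by matching with Proposition \ref{P42} at points where $\nabla f=0$.
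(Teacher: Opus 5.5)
Your proof is correct and follows the paper's route. The paper only says that (\ref{412}) is deduced from Proposition~\ref{P42}, and your computation is exactly the proof of Proposition~\ref{P42} with the tangential velocity $-\nu^{-1}H\nabla f$ removed. In your pullback variant, the generator must be $V=+\nu^{-1}H\nabla f$ as in (\ref{410}), because $\partial_t\tilde X=\partial_tX+dX(\partial_t\phi)$; with $V=-\nu^{-1}H\nabla f$ the Lie-derivative terms double rather than cancel.

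For the curvature sign, the cleanest check is the Riccati equation (\ref{nor evo}), which is your formula for unit-speed normal flow. Comparing with (\ref{42}) is only valid on a slice where $\nabla f\equiv 0$ identically (e.g.\ $t=0$), since at a point where merely $\nabla f=0$ the transport terms still contain $\nabla^2 f$.
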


Proposition \ref{P43} can be readily deduced from Proposition \ref{P42}. 

For simplicity of notations, in the subsequent sections, we still use $X(x,t)$ to denote the solutions to (\ref{mcf}).

  Note that under (\ref{411}) or (\ref{mcf}), we have $\frac{\partial \tilde{r}}{\partial t}=HN(\tilde{r} )$, and     
\begin{equation}
    (\partial_t-\triangle)\tilde{r}=-tr_{M_t}D^2\tilde{r}. 
\end{equation}

Let $\xi$ be a smooth cut-off function on $\mathbb{R}$ with $0\leq \xi\leq 1$,  which is equal to 1 on $[1, 2]$,  $0$ outside $[\frac{1}{2},\frac{5}{2}]$.  We observe that the calculations in (\ref{47}) also hold  for $u=\xi(\frac{\tilde{r}}{a})H^2$, i.e., 

\begin{equation}
    (\frac{\partial }{\partial t}-\triangle)u\leq - \frac{2}{3}\xi H^4+ Ca^{-1} \xi^{\frac{3}{4}}|H|^3+Ca^{-2}\xi^{\frac{1}{2}}H^2. 
\end{equation}

Applying  the maximum  principle as before, we arrive at the estimate  

\begin{equation}
\begin{split}
\sup_{\{a \leq \tilde{r}\leq 2a \}}|H|(\cdot,t) \leq & \max\{C{a}^{-1}, (\frac{2t}{3}+\frac{1}{\sup_{\{\frac{a}{2} \leq \tilde{r}\leq \frac{5a}{2} \}}|H|^2(\cdot,0)})^{-\frac{1}{2}}\}\\
\leq & \max\{{C}{a}^{-1}, \sup |H|(\cdot,0)\mid_{\{\frac{a}{2} \leq \tilde{r}\leq \frac{5a}{2} \}}\},
\end{split}
\end{equation}
where $C$ is a universal constant.

Combining with the initial data (\ref{AC}), we obtain:

\begin{theorem}\label{t44} The mean curvature of the solution $X(x,t)$ to (\ref{mcf}) satisfies  \begin{equation}\label{416}
    \sup_{\{a \leq \tilde{r}\leq 2a \}}|H|(\cdot,t)\leq \frac{C}{a},
    \end{equation}
when $a$ is large, for all $t\in [0,\bar{T})$.  
\end{theorem}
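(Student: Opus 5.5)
The estimate (\ref{416}) follows from the localized maximum principle bound displayed just before the statement, together with the initial decay $H=O_3(r^{-2-\epsilon})$ from (\ref{AC}). The only work is to make sure the localization is legitimate under the flow (\ref{mcf}) and that the constants are uniform in $a$ and $t\in[0,\bar{T})$.

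\textbf{Step 1: the evolution of $u$.} For the annular cutoff $u=\xi(\tilde{r}/a)H^2$, with $\xi$ supported in $[\frac12,\frac52]$, I would verify the differential inequality
\[
(\partial_t-\triangle)u\leq -\tfrac23\xi H^4+Ca^{-1}\xi^{3/4}|H|^3+Ca^{-2}\xi^{1/2}H^2 .
\]
Under (\ref{412}), the evolution (\ref{H2evo}) holds without the transport term $\nu^{-1}H\langle\nabla H^2,\nabla f\rangle$. Also $(\partial_t-\triangle)\tilde{r}=-tr_{M_t}D^2\tilde{r}$. Lemma \ref{l2.2} iv) gives $|tr_{M_t}D^2\tilde{r}|\leq C\tilde{r}^{-1}$ and iii) gives $|\nabla\tilde{r}|\leq C$, both as long as $\nu$ stays bounded. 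The bound on $\nu$ comes from Theorem \ref{t2.5} on the region $\tilde{r}\sim a$ with $a$ large, and from $\nu\le 2$ on $[0,\bar{T}]$ elsewhere. The cross term $-2\langle\nabla\xi,\nabla H^2\rangle$ is absorbed using $\nabla u=0$ at the maximum point, which leaves $2H^2|\nabla\xi|^2/\xi\le Ca^{-2}\xi^{1/2}H^2$.

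\textbf{Step 2: maximum principle.} At an interior maximum I would apply Hölder in the form
\[
Ca^{-1}\xi^{3/4}|H|^3+Ca^{-2}\xi^{1/2}H^2\leq \tfrac{\delta}{3}\xi H^4+\tfrac{\delta}{3}u^2\cdot 0+C\delta^{-3}a^{-4},
\]
exactly as in the proof of Proposition \ref{P41}, and fix $\delta=\frac12$. Using $\xi^2\le\xi$, this gives $\frac{d^+}{dt}u_{\max}\le -c\,u_{\max}^2$ whenever $u_{\max}\ge Ca^{-2}$. Hence
\[
u_{\max}(t)\le\max\{Ca^{-2},\,u_{\max}(0)\}.
\]

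\textbf{Step 3: initial data.} At $t=0$ the support of $\xi$ lies in $\{\frac a2\le\tilde{r}\le\frac{5a}{2}\}$. By Lemma \ref{l2.2} ii), this region is comparable to $\{r\sim a\}$ on $M_0$, where (\ref{AC}) gives $|H|\le Ca^{-2-\epsilon}\le Ca^{-1}$. Since $\xi=1$ on $[1,2]$, taking square roots yields $\sup_{\{a\le\tilde r\le2a\}}|H|\le C/a$, with $C$ independent of $t$.

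\textbf{Main obstacle.} The delicate point is making the maximum principle valid on a noncompact $M_t$ and uniform in $t$. Because $u$ has compact support on each $M_t$, the maximum is attained in the interior, so the only issue is regularity. I would use the smoothness and local boundedness of the solution from Theorem \ref{shorttime}, together with the argument of \cite{H86}, Lemma 3.5, for the upper Dini derivative. I would also have to check that the constant in Lemma \ref{l2.2} applies to $M_t$ and not only to the $\tau$-slices. This holds because $M_t$ stays inside the $\tau$-foliation region with $|f|\le Cr^{-\sigma_0}$, by Theorem \ref{t2.4}.
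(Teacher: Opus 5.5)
Your proposal is correct and follows the paper's own argument. The paper also uses the localized maximum principle for $u=\xi(\tilde{r}/a)H^2$ on the annulus, the identity $(\partial_t-\triangle)\tilde{r}=-\mathrm{tr}_{M_t}D^2\tilde{r}$, Young's inequality, and Hamilton's Dini-derivative lemma to get $u_{\max}(t)\le\max\{Ca^{-2},u_{\max}(0)\}$, and then the decay $H=O(r^{-2-\epsilon})$ from (\ref{AC}). Dropping the $t$-decay factor right away is only cosmetic, since the paper discards it as well in the second inequality of its display.
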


\section{Bootstrap assumptions}

A  meaningful  observation is  that the Cauchy problem  for the vacuum Einstein equation is scaling invariant.  More explicitly,  if $(M_0,g,h)$ is a initial Cauchy surface  satisfying the constraint equations (\ref{ECE}), and $(\bar{M}, \bar{g})$ is a  vacuum spacetime developed from $(M_0,g,h)$, then for any $\lambda>0$, $(M_0, \lambda g, \sqrt{\lambda}h)$  satisfies   the constraint equations  and $(\bar{M}, \lambda \bar{g})$ is the corresponding  vacuum spacetime developed from  $(M_0, \lambda g, \sqrt{\lambda}h)$.

Moreover,  if $X(x,t)$ is the solution to the mean curvature flow (\ref{mcf}), let $\tilde{t}=\lambda t$, then $\tilde{X}(x,\tilde{t})=X(x,\lambda^{-1}\tilde{t})$ solves the same  mean curvature flow equation (\ref{mcf}) in  the ambient manifold $(\bar{M}, \lambda \bar{g})$. 

Based on this observation, 
we will first prove Theorem \ref{t1.5} when initial data are  suitably small.  Namely, we will prove the following: 

\begin{theorem} \label{max exist}
    Under the assumptions of Theorem \ref{maxfol}, if    
\begin{equation}\label{d2gM0}
\begin{split}
        \int_{M_0}|\bar{R}m|^2+|\nabla h|^2+|h|^4<\epsilon_0^2,
\end{split}
\end{equation}
\begin{equation}\label{d3gM0}
\begin{split}
 \int_{M_0}|\nabla \bar{R}m|^2+|\nabla^2 h|^2+|h|^6+|\bar{R}m|^3+|\nabla h|^3< \epsilon_0^2,
\end{split}
\end{equation}

  then there exist a vacuum  spacetime $(\bar{M},\bar{{g}})\supset M_0$, so that the mean curvature flow (\ref{mcf}) admits a long time solution $X_t$ in $\bar{M}$, which converges to a maximal spacelike hypersurface in $\bar{M}$ as $t\rightarrow \infty$.     

\end{theorem}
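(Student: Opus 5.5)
Under the smallness (\ref{d2gM0})(\ref{d3gM0}) together with the scale-invariant conditions (\ref{HL_p1})(\ref{HL_p2})(\ref{dHL_2}) on $H$, I would run a continuity/bootstrap argument along the mean curvature flow, restarting the Cauchy problem whenever the flow approaches the boundary of the current development. Let $[0,T^*)$ be the maximal interval on which the flow exists in some vacuum development $\bar M\supset M_0$ and on which the bootstrap assumptions hold:
\[
\int_{M_t}|\bar Rm|^2+|\nabla h|^2+|h|^4\le 2C_1\epsilon_0^2,\qquad
\int_{M_t}|\nabla\bar Rm|^2+|\nabla^2h|^2+|h|^6+|\bar Rm|^3+|\nabla h|^3\le 2C_1\epsilon_0^2 ,
\]
\[
\int_0^{t}\bigl(\|Hh\|_{L^\infty}+\|\nabla H\|_{L^\infty}\bigr)\,ds\le \epsilon_0^{1/2} .
\]
I would also assume the Sobolev and volume-ratio constants of $M_t$ stay within a factor $2$ of $\alpha_0,\alpha_1$.

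Short-time existence comes from Theorem \ref{shorttime}, and asymptotic flatness of each $M_t$ from Theorems \ref{t2.4}, \ref{t2.5} and \ref{t44}. The metric evolves by $\partial_tg=2Hh$, so the $L^1_t$ bound on $\|Hh\|_{L^\infty}$ keeps the metrics uniformly equivalent. Hence the Sobolev constant (\ref{II}) and the volume ratio (\ref{III}) are preserved.

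The first improvement concerns $H$. I would run De Giorgi--Nash--Moser iteration on $(\partial_t-\triangle)H=-H|h|^2$ with the uniform Sobolev inequality. Starting from $\|H(0)\|_{L^{p_0}}$, this should give the decay
\[
\|H(t)\|_{L^\infty}\lesssim t^{-3/(2p_0)}\|H(0)\|_{L^{p_0}} \quad\text{for large } t,
\]
and for small $t$ the bound of Proposition \ref{P41}. Since $p_0<\tfrac32$, the exponent exceeds $1$ and $t^{-3/(2p_0)}$ is integrable at infinity. Next I would derive $L^2$ energy estimates for $\nabla H,\nabla^2H,\nabla^3H$ from the commuted equations, where the curvature terms are controlled by the bootstrap energies. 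Interpolating these (Gagliardo--Nirenberg) gives decay of $\|\nabla H\|_{L^\infty}$ and $\|Hh\|_{L^\infty}$, integrable in time. Conditions (\ref{HL_p1}), (\ref{HL_p2}) and (\ref{dHL_2}) are exactly what makes these time integrals at most $\epsilon_0$, which improves the third bootstrap assumption.

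To improve the curvature energies, I would compute $\partial_t$ of $\int_{M_t}|\bar Rm|^2$, $\int|\nabla h|^2$ and $\int|h|^4$ using Proposition \ref{P43}. The normal variation of the frame is $\nabla H$, and the spacetime curvature satisfies the Bianchi identities, so with the Bel--Robinson-type structure all error terms should be of the form
\[
(|Hh|+|\nabla H|)\times \text{energy}.
\]
I expect this to be the main obstacle: one must verify, using the null/Codazzi structure, that no term like $\bar R*h*\bar R$ appears without an $H$ or $\nabla H$ factor. Given that, Gronwall with the $L^1_t$ bounds gives energy $\le C_1\epsilon_0^2(1+O(\epsilon_0^{1/2}))$, and the second-level energy is handled the same way. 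This closes the bootstrap.

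Long-time existence then follows from Section-2-type arguments. The uniform energies give uniform local harmonic/wave coordinates (\ref{Har}) and (\ref{wave}) with a uniform $\delta_0$, so the Cauchy problem can be re-solved from $M_{t}$ on a uniform time slab. Since $\int_0^\infty|H|\,dt$ is finite pointwise, each point moves a bounded proper time, and the flow cannot reach the boundary after finitely many extensions. Higher derivative estimates come by induction on the commuted equations. Convergence follows because $\int_0^\infty\|\partial_tX\|\,dt=\int_0^\infty\|H\|_{L^\infty}dt<\infty$ and $H\to0$, so $X_t\to X_\infty$ smoothly on compact sets. The limit is a maximal asymptotically flat hypersurface, and the energy bounds pass to it.
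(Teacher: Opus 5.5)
Your outline follows the paper's own route: an energy bootstrap, parabolic decay of $H$, $L^2$ bounds on $\nabla^kH$, a Bel--Robinson-type energy identity, and repeated restarts of the Cauchy problem. The structural point you flag as the main obstacle is exactly what the paper verifies in (\ref{6.4}). There $2|\bar R_{i0j0}|^2+|\bar R_{ijk0}|^2$ evolves by the divergence $\nabla_k(4H\bar R_{ikj0}\bar R_{i0j0})$ plus $\bar Rm*\bar Rm*(\nabla H+Hh)$. The remaining quantities $\bar R_{ijkl}$, $\nabla h$ and $\nabla^2h$ are recovered algebraically and elliptically (Gauss equation, Simons' identity); $\int|\nabla h|^2$ is not evolved directly.

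\textbf{The genuine gap is the decay of $H$.} A De Giorgi--Nash--Moser iteration only converts space-time integrals of $|H|^{p_0}$ into a pointwise bound. You therefore need $\int_{M_t}|H|^{p_0}$ not to grow, and for this flow that is not automatic. The area form expands, $\partial_t\, d\mu=H^2\,d\mu$, and one gets $\frac{d}{dt}\int_{M_t}|H|^{p_0}\le(1-\frac{p_0}{3})\int_{M_t}|H|^{p_0+2}-p_0(p_0-1)\int_{M_t}|H|^{p_0-2}|\nabla H|^2$, whose reaction coefficient is positive for $p_0<3$. Using only Proposition \ref{P41} ($H^2\le 3/(2t)$), the best you get is $\int_{M_t}|H|^{p_0}\le Ct^{(3-p_0)/2}$. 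The iteration then returns only $|H|\le Ct^{-1/2}$, which is not integrable in time. Metric equivalence, your Gronwall step and the count of restarts all hinge on $\int_0^\infty\|H\|_{L^\infty}\,dt<\infty$.

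The missing idea is smallness of the scale-invariant quantity $\|H\|_{L^3}$:
\begin{itemize}
\item $\int|H|^3$ is non-increasing because $|h|^2\ge H^2/3$ (Theorem \ref{HLpbd1}).
\item It is small at $t=0$, by interpolating (\ref{HL_p1}) with $\|H\|_{L^\infty(M_0)}\le C(E_1E_2)^{1/2}$.
\item Then $\int|H|^{p_0+2}\le\|H\|_{L^3}^2\,\||H|^{p_0/2}\|_{L^6}^2\le C\|H\|_{L^3}^2\,\|\nabla|H|^{p_0/2}\|_{L^2}^2$ is absorbed by the gradient term.
\end{itemize}
This gives monotonicity of $\int|H|^{p_0}$ and hence the rate $t^{-3/(2p_0)}$ of Theorem \ref{HNash}. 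It is one of the places where (\ref{HL_p1}) genuinely enters.

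Three further points need repair.
\begin{itemize}
\item \emph{Bootstrap normalization.} Your energy bootstrap at the absolute level $2C_1\epsilon_0^2$ cannot be closed by (\ref{HL_p1})--(\ref{dHL_2}), which are calibrated to the initial $E_1,E_2$. For example, you would only obtain $\int_0^t\|H\|_{L^\infty}\|h\|_{L^\infty}\,ds\le C\epsilon_0\|H\|_{L^{p_0}(M_0)}^{p_0/(3-p_0)}$, which need not be small when $E_1E_2\ll\epsilon_0^2$. Bootstrap relative to the data instead ($\le P^2E_1^2$, $\le P^2E_2^2$), as the paper does.
\item \emph{Slab thickness at restarts.} The thickness $\delta_i$ of the geodesic $\tau$-foliation is governed by $\sup|\bar Rm|$ through the Riccati equation. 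In dimension three this is not controlled by $E_1,E_2$, so "uniform energies give a uniform slab" is not enough. You need the $W^{2,2}$ curvature bound of Theorem \ref{t8.3} before the restart argument, not only for convergence.
\item \emph{Control at infinity.} You must redo the barrier argument at infinity with smaller $\sigma_i$. This keeps Theorems \ref{t2.4}, \ref{t2.5} and \ref{t44} valid beyond the first slab, and every noncompact maximum-principle step you use rests on them.
\end{itemize}

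With these in place, your counting argument works, provided you use $\int_0^\infty(\|H\|_{L^\infty}+\|\nabla H\|_{L^\infty})\,dt<\infty$ rather than pointwise finiteness of $\int|H|$. A slab can also be exited by the tilt $\nu$ reaching $2$, which costs $\int\|\nabla H\|_{L^\infty}$. The argument is then a slightly cleaner version of the paper's (\ref{9.8})--(\ref{9.11}).
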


From  Section 2.2, one may solve the Cauchy problem for the vacuum Einstein equation so that the spacetime admits a foliation (\ref{nor g}) for $\tau\in [-\delta_1, \delta_1]$.  Moreover, one can  solve the mean curvature flow equation  (\ref{mcf})  for a short time interval $[0,T]$ for some $T>0$.  Let $M_t=X_t(M_0)$, $\mathcal{M}_t=\cup_{s\in[0,  t]}M_{s}$.

We introduce the following bootstrap assumptions  

\begin{equation}\label{bootd1g}
      \frac{1}{2} g_{ij}(x,0)\leq g_{ij}(x,t)\leq 2 g_{ij}(x,0),
\end{equation}
\begin{equation}\label{bootd2g}
        \int_{M_t}|\bar{R}m|^2+|\nabla h|^2+|h|^4\leq  P^2 E_1^2,
\end{equation}
\begin{equation}\label{bootd3g}
    \int_{M_t}|\nabla \bar{R}m|^2+|\nabla^2 h|^2+|h|^6+|\bar{R}m|^3+|\nabla h|^3\leq P^2E_2^2
\end{equation}
on $[0,\hat{T})$ for some $\hat{T}>0$ and  $P>1$.

   One can use $E^2_1(M_t)$ and $E^2_2(M_t)$ to denote the left hand side quantities in (\ref{bootd2g})  and (\ref{bootd3g}) respectively. We will prove eventually that   $\frac{d}{dt}E^2_1(M_t)$ and $\frac{d}{dt}E^2_2(M_t)$ are bounded from above. In particular,  (\ref{bootd2g}) and (\ref{bootd3g}) must hold for some $\hat{T}$ and $P>1$. 

   In Section 9\textit{}, we will prove that  (\ref{bootd1g}) (\ref{bootd2g}) (\ref{bootd3g}) will always hold if  we choose $P=\epsilon_0^{-\frac{1}{8}}$.

   In the remainder of this section, we will derive some Sobolev inequalities, which will be employed frequently in the subsequent sections.

   These inequalities are guaranteed by our bootstrap assumptions.

\begin{lemma} \label{Sobfun}
    For any  $f\in W^{1,1}(M_{t}),$ we have     $$\|f\|_{L^{\frac{3}{2}}(M_{t})}\leq 100\alpha_0\|\nabla f\|_{L^{1}(M_{t})}.$$
\end{lemma}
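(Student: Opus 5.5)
The idea is to transfer the Sobolev inequality (\ref{II}) from $(M_0,g(\cdot,0))$ to $(M_t,g(\cdot,t))$ using only the uniform equivalence of metrics in the bootstrap assumption (\ref{bootd1g}). Every $M_t$ is parametrized by the same manifold $M_0$ through $X_t$. Under this identification a function $f$ on $M_t$ is a function on $M_0$. Only the metric changes, from $g(0)$ to $g(t)$.

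The first step compares volume forms. From $\frac12 g_{ij}(x,0)\le g_{ij}(x,t)\le 2g_{ij}(x,0)$, at each point the eigenvalues $\lambda_k$ of $g(t)$ relative to $g(0)$ lie in $[\frac12,2]$. Since $d\mu_t=\sqrt{\lambda_1\lambda_2\lambda_3}\,d\mu_0$, we get
\[
2^{-3/2}d\mu_0\le d\mu_t\le 2^{3/2}d\mu_0 .
\]

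The second step compares gradient norms. For a function, $|\nabla f|^2_{g(t)}=g^{ij}(t)\partial_if\partial_jf$. Inverting the matrix inequality gives $\frac12 g^{ij}(0)\le g^{ij}(t)\le 2g^{ij}(0)$. Hence
\[
2^{-1/2}|\nabla f|_{g(0)}\le |\nabla f|_{g(t)}\le 2^{1/2}|\nabla f|_{g(0)} .
\]

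The third step chains these bounds. For $f\in C_0^\infty$,
\[
\|f\|_{L^{3/2}(M_t)}\le (2^{3/2})^{2/3}\|f\|_{L^{3/2}(M_0)}\le 2\alpha_0\|\nabla f\|_{L^1(g(0))}.
\]
Continuing,
\[
2\alpha_0\|\nabla f\|_{L^1(g(0))}\le 2\alpha_0\cdot 2^{1/2}\cdot 2^{3/2}\|\nabla f\|_{L^1(M_t)}=8\alpha_0\|\nabla f\|_{L^1(M_t)},
\]
which is well within the constant $100\alpha_0$.

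The last step extends the inequality from $C_0^\infty$ to all of $W^{1,1}(M_t)$ by density. The point needing some care is that $(M_t,g(t))$ is complete with uniformly equivalent metric. Therefore $C_0^\infty$ is dense in $W^{1,1}(M_t)$, for example by cutting off with $\xi(\tilde r/a)$, whose gradient is $O(a^{-1})$, and then mollifying. Taking $a\to\infty$ and using Fatou's lemma on the left-hand side gives the inequality for all $f\in W^{1,1}(M_t)$. None of this is a serious obstacle; the only substantive input is (\ref{bootd1g}), which holds on $[0,\hat T)$ by assumption.
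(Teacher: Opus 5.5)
Your proposal is correct and follows the paper's route: the paper's proof just says the lemma follows from (\ref{bootd1g}) and (\ref{II}). You carry out that comparison explicitly, with the volume-form and gradient-norm bounds giving the sharper constant $8\alpha_0$, and you add the standard density step from $C_0^\infty$ to $W^{1,1}(M_t)$.
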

\begin{proof} This follows from (\ref{bootd1g}) and (\ref{II}). 
\end{proof}

\begin{lemma}\label{L3Sob}
   For  any $W^{1,\frac{3}{2}}$ or $W^{1,2}$- tensor field $F$ on $M_{t}$, we have: 
    \begin{equation}
    \|F\|_{L^{3}(M_{t})}\leq C\|\nabla F\|_{L^{\frac{3}{2}}(M_{t})},
    \end{equation}
    or 
    \begin{equation}\label{L6Sob}
    \|F\|_{L^{6}(M_{t})}\leq C\|\nabla F\|_{L^{2}(M_{t})}.
    \end{equation}
\end{lemma}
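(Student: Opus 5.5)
The plan is to apply Lemma \ref{Sobfun} to a suitable power of the pointwise norm $|F|$. By Kato's inequality, $|\nabla |F||\leq |\nabla F|$ almost everywhere. So any scalar Sobolev inequality for $|F|$ transfers to $F$, with $|\nabla F|$ replacing the gradient of the scalar.

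For the first inequality, put $u=|F|^{2}$. Then $|\nabla u|\leq 2|F||\nabla F|$. Lemma \ref{Sobfun} gives
$$\||F|^2\|_{L^{\frac32}}\leq 100\alpha_0\int_{M_t}2|F||\nabla F|\leq 200\alpha_0\|F\|_{L^3}\|\nabla F\|_{L^{\frac32}},$$
where the last step is Hölder with exponents $3$ and $\frac32$. The left side equals $\|F\|_{L^3}^2$. Dividing by $\|F\|_{L^3}$, which is finite and nonzero in the nontrivial case, gives $\|F\|_{L^3}\leq C\|\nabla F\|_{L^{\frac32}}$ with $C=200\alpha_0$.

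For (\ref{L6Sob}), put $u=|F|^{4}$. Then $|\nabla u|\leq 4|F|^3|\nabla F|$, and Lemma \ref{Sobfun} gives
$$\|F\|_{L^6}^4=\||F|^4\|_{L^{\frac32}}\leq 400\alpha_0\int|F|^3|\nabla F|\leq 400\alpha_0\|F\|_{L^6}^3\|\nabla F\|_{L^2},$$
again by Hölder. Dividing by $\|F\|_{L^6}^3$ yields the claim.

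The main obstacle is justifying the division, i.e. knowing a priori that $\|F\|_{L^3}$ or $\|F\|_{L^6}$ is finite. It is also not obvious that Lemma \ref{Sobfun}, stated for $W^{1,1}$ functions, applies to $|F|^2$ or $|F|^4$. To handle this, I would first prove the inequality for compactly supported smooth $F$, where everything is finite. To avoid the non-smoothness of $|F|$ at zeros, I would work with $(|F|^2+\eta)^{1/2}-\eta^{1/2}$ and let $\eta\to0$. For general $F$ in $W^{1,\frac32}$ or $W^{1,2}$, I would approximate by compactly supported smooth tensors, using cutoffs $\phi(\tilde r/a)$ followed by mollification. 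The cutoff error in the gradient is controlled by $a^{-1}\|F\|_{L^p(\{a<\tilde r<2a\})}$, which tends to zero since $F\in L^p$. The inequalities then pass to the limit by Fatou on the left side.

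Finally, the constants depend only on $\alpha_0$. This is because Lemma \ref{Sobfun} already absorbs the metric comparison (\ref{bootd1g}).
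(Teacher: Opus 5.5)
Your proposal is correct and is the paper's argument: the paper's proof consists of substituting $|F|^2$ and $|F|^4$ into Lemma \ref{Sobfun}, and you carry this out with the right H\"older pairings before dividing. Your approximation remarks supply details the paper omits; the $\eta$-regularization is unnecessary, since $|F|^2$ and $|F|^4$ are already smooth for smooth $F$ and $|\nabla |F|^2|\le 2|F||\nabla F|$ follows from Cauchy--Schwarz alone.
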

\begin{proof}
    The proof is accomplished by substituting   $f=|F|^{2}$ and  $f=|F|^4$ in Lemma \ref{Sobfun}. 
   
\end{proof}

\begin{lemma}\label{LinftySob}
   For  any $W^{1,q}$- tensor field $F$ on $M_{t}$ ($q>3$), we have 
    \begin{equation}\label{inftySob1}
    \|F\|_{L^{\infty}(M_{t})}\leq C||F||^{1-\frac{3}{q}}_{L^q(M_t)}||\nabla F||^{\frac{3}{q}}_{L^{q}(M_t)}\leq C(\|\nabla F\|_{L^{q}(M_{t})}+\|F\|_{L^{q}(M_{t})}),
    \end{equation}
    where $C=C(q,\alpha_0, \alpha_1)$.   
\end{lemma}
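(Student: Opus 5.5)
We will prove (\ref{inftySob1}) by combining a local Morrey-type estimate, valid at the natural scale given by the volume ratio bound (\ref{III}), with a scaling argument in the radius. By Kato's inequality $|\nabla|F||\leq|\nabla F|$, it suffices to treat the scalar function $u=|F|$, which lies in $W^{1,q}$. The second inequality in (\ref{inftySob1}) then follows from the first by Young's inequality, $a^{1-3/q}b^{3/q}\leq a+b$. So the real content is the multiplicative bound
\[
\|u\|_{L^\infty}\leq C\|u\|_{L^q}^{1-\frac3q}\|\nabla u\|_{L^q}^{\frac3q}.
\]

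The first step is to establish the local Morrey inequality with the correct scaling. For any $x\in M_t$ and $r>0$ we aim for
\[
|u(x)|\leq C\Big(r^{-\frac3q}\|u\|_{L^q(B(x,r))}+r^{1-\frac3q}\|\nabla u\|_{L^q(B(x,r))}\Big).
\]
We plan to derive this from Moser iteration applied to the Sobolev inequality of Lemma \ref{Sobfun}. Since Lemma \ref{Sobfun} holds globally for compactly supported functions, take cutoffs $\eta_k$ supported in balls $B(x,r_k)$, with $r_k$ decreasing from $r$ to $r/2$, and apply Lemma \ref{Sobfun} to $(\eta_k u)^{p_k}$. The gradient term is controlled by Hölder's inequality using $\nabla u\in L^q$ with $q>3$; this gives a gain in integrability at each step. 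The volume bound (\ref{III}) (transferred to $M_t$ via (\ref{bootd1g}), which changes volumes and distances by at most fixed factors) converts the $L^p$ norms on balls into averaged norms. It also ensures that the constants do not depend on $x$. Because everything is scale invariant under $r\mapsto\lambda r$, the constant depends only on $q,\alpha_0,\alpha_1$.

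An alternative route is a Campanato/Morrey argument. This requires a Poincaré inequality on balls, which does not follow directly from Lemma \ref{Sobfun} alone. So the Moser route, using only the global Sobolev inequality together with cutoffs, is preferable.

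The second step optimizes over $r$. Set $A=\|u\|_{L^q(M_t)}$ and $B=\|\nabla u\|_{L^q(M_t)}$, and choose $r=A/B$. This choice is allowed for every $r>0$ precisely because (\ref{III}) holds for all radii, with noncompactness handled by the asymptotic flatness. The two terms then balance, giving $|u(x)|\leq CA^{1-3/q}B^{3/q}$, which is the first inequality.

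The main obstacle is the Moser iteration in the first step. We must make sure the iteration closes with only the global $L^{3/2}$–$L^1$ Sobolev inequality and no lower-order term. This works because the iteration produces geometric factors $2^{k}/r$ and growing exponents with $\sum_k p_k^{-1}<\infty$. If closing it proves awkward, the fallback is to first obtain from Lemma \ref{Sobfun} the $L^3$/$L^6$ versions in Lemma \ref{L3Sob}. We would then iterate with $|u|^{p}$, using $\|\eta u^{p}\|_{L^3}\leq C\|\nabla(\eta u^p)\|_{L^{3/2}}$ and Hölder's inequality against $\|\nabla u\|_{L^q}$.
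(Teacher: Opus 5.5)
Your proposal is correct and follows essentially the paper's route. Both arguments iterate Lemma \ref{Sobfun} to get a localized bound $\|F\|_{L^\infty(B(x_1,a/2))}\le C(\|\nabla F\|_{L^q}+a^{-1}\|F\|_{L^q})\,a^{1-3/q}$, using (\ref{III}) and (\ref{bootd1g}) to control ball volumes, and then choose $a=\|F\|_{L^q}/\|\nabla F\|_{L^q}$. The only difference is cosmetic: the paper avoids nested cutoffs. It applies the Gilbarg--Trudinger $W^{1,q}_0$ estimate $\|\xi F\|_{L^\infty}\le C\|\nabla(\xi F)\|_{L^q}|\Omega|^{\frac13-\frac1q}$ once, to the single compactly supported function $\xi F$ with $\Omega=B(x_1,a)$.
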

\begin{proof}  From  Lemma \ref{Sobfun} and an iteration argument,  we have  (e.g. see \cite{GT77}, P157)
\begin{equation} \label{xf}
||\xi F||_{L^{\infty}(\Omega)}\leq C ||\nabla (\xi F)||_{L^q(\Omega)}|\Omega|^{\frac{1}{n}-\frac{1}{q}},
\end{equation}
where $\xi$ is a cutoff function on $\Omega$. By substituting  $\Omega=B(x_1,a)$ in (\ref{xf}) and making use of  (\ref{III}) and (\ref{bootd1g}),   we obtain 
\begin{equation} \label{3.12}
\|F\|_{L^{\infty}(B(x_1,\frac{a}{2}))}
\leq C(\|\nabla F\|_{L^{q}(M_{t})}+a^{-1}\|F\|_{L^{q}(M_{t})})a^{1-\frac{n}{q}},
\end{equation}
for any $a>0$. Letting $a=\frac{||F||_{L^q}}{||\nabla F||_{L^q}}$ in (\ref{3.12}) and using the arbitrariness of $x_1$, the first inequality in (\ref{inftySob1}) follows. The second  is just the H$\ddot{o}$lder inequality. The proof is completed. 
\end{proof}

 Letting   $q=6$ in (\ref{inftySob1}),  from  (\ref{L6Sob}),  we obtain \begin{equation}\label{inftySob2}
    \|F\|_{L^{\infty}(M_{t})}\leq C \|\nabla^{2} F\|^{\frac{1}{2}}_{L^{2}(M_{t})}\|\nabla F\|^{\frac{1}{2}}_{L^{2}(M_{t})}.
    \end{equation}    

    Under bootstrap assumptions, one can even establish  a stronger statement: 

    \begin{lemma}\label{l3.5}
   For  any $W^{2,2}$-tensor field $F$ on $M_{t}$, we have      \begin{equation}\label{inftySob5}
    \|F\|_{L^{\infty}(M_{t})}\leqslant C\|\Delta F\|^{\frac{1}{2}}_{L^{2}(M_{t})} \|\nabla F\|^{\frac{1}{2}}_{L^{2}(M_{t})},
    \end{equation}
    where  $C=C(\alpha_0, \alpha_1)$.
    \end{lemma}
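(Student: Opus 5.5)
The plan is to reduce the statement to (\ref{inftySob2}), which already gives
\[
\|F\|_{L^\infty(M_t)}\le C\|\nabla^2F\|_{L^2(M_t)}^{1/2}\|\nabla F\|_{L^2(M_t)}^{1/2}.
\]
It then suffices to prove an integral Bochner-type estimate $\|\nabla^2F\|_{L^2}\le C\|\Delta F\|_{L^2}$ on $M_t$, using the smallness of curvature guaranteed by the bootstrap assumptions (\ref{bootd2g}), (\ref{bootd3g}) with $P=\epsilon_0^{-1/8}$. By a standard density argument, using the decay at infinity from Section 3 and the finiteness of the relevant norms, I would work with compactly supported smooth $F$ and justify all integrations by parts that way.

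\textbf{Bochner identity.} Integrating by parts twice and commuting covariant derivatives gives
\[
\int_{M_t}|\nabla^2F|^2=-\int\langle\nabla_jF,\nabla_i\nabla_i\nabla_jF\rangle=\int|\Delta F|^2+\int Rm*\nabla F*\nabla F+\int \nabla Rm*F*\nabla F,
\]
where $Rm$ is the intrinsic curvature of $M_t$. I would integrate the last term by parts once more, turning it into $Rm*\nabla F*\nabla F+Rm*F*\nabla^2F$. This way only $Rm$, and never $\nabla Rm$, appears. By the Gauss equation (\ref{Gauss}), $|Rm|\le|\bar Rm|+|h|^2$, so (\ref{bootd2g}) and (\ref{bootd3g}) give $\|Rm\|_{L^2}\le CPE_1$ and $\|Rm\|_{L^3}\le C(PE_2)^{2/3}$, both small since $PE_i\le\epsilon_0^{7/8}$.

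\textbf{Estimating the error terms.} Write $X=\|\nabla^2F\|_{L^2}$ and $Y=\|\nabla F\|_{L^2}$. Using Kato's inequality with Lemma \ref{L3Sob}, we have $\|\nabla F\|_{L^6}\le CX$, and (\ref{inftySob2}) gives $\|F\|_{L^\infty}\le CX^{1/2}Y^{1/2}$. Hence
\[
\Big|\int Rm*\nabla F*\nabla F\Big|\le\|Rm\|_{L^2}\|\nabla F\|_{L^4}^2\le C\|Rm\|_{L^2}Y^{1/2}X^{3/2},
\qquad
\Big|\int Rm*F*\nabla^2F\Big|\le\|Rm\|_{L^2}\|F\|_{L^\infty}X\le C\|Rm\|_{L^2}Y^{1/2}X^{3/2}.
\]
So $X^2\le\|\Delta F\|^2_{L^2}+C\epsilon_0^{7/8}Y^{1/2}X^{3/2}$.

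\textbf{Main obstacle.} The difficulty is absorbing the $Y^{1/2}X^{3/2}$ term. Young's inequality leaves a residual $C\epsilon_0^{7/2}Y^2$, and this term is not obviously controlled by $\|\Delta F\|$; it reflects the possibility of nontrivial $L^2$-harmonic-type tensors. My first attempt would be to choose the Hölder splitting so that the error is homogeneous of degree $2$ in $X$ alone. For this I would use $\int|Rm||\nabla F|^2\le\|Rm\|_{L^{3/2}}\|\nabla F\|_{L^6}^2$ and similarly for the $F*\nabla^2F$ term via $\|F\|_{L^6}\le CY$ replaced by $L^\infty$ control, controlling $\|Rm\|_{L^{3/2}}$ by interpolating the bootstrap bounds in the scale-invariant way. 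This would give $X^2\le\|\Delta F\|^2+c\,X^2$ with $c$ small, and hence $X\le C\|\Delta F\|_{L^2}$. If a scale-invariant $L^{3/2}$ bound on $Rm$ is unavailable, my fallback is the following: combine the above with the identity $Y^2=-\int\langle F,\Delta F\rangle\le\|F\|_{L^\infty}\|\Delta F\|_{L^1_{loc}}$ on large balls, together with the volume control (\ref{III}), to trade the residual $Y^2$ for $\|\Delta F\|$. Once $X\le C\|\Delta F\|_{L^2}$ is established, substituting into (\ref{inftySob2}) yields (\ref{inftySob5}) with $C=C(\alpha_0,\alpha_1)$.
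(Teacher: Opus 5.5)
Your reduction to (\ref{inftySob2}) and your Bochner computation are correct, but the proof never closes. The step you call the main obstacle is exactly the missing piece, and neither proposed fix works.

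Your first fix needs a small, scale-invariant bound on $\|Rm\|_{L^{3/2}}$, and the bootstrap assumptions do not give one:
\begin{itemize}
\item (\ref{bootd2g}) and (\ref{bootd3g}) control curvature only in $L^2$ and $L^3$, and interpolation reaches only $2\le p\le 3$.
\item On an infinite-volume manifold, $L^2\cap L^3\not\subset L^{3/2}$.
\item Neither available norm is scale-invariant: under $g\mapsto\lambda g$, $\|Rm\|_{L^2}$ and $\|Rm\|_{L^3}$ scale like $\lambda^{-1/4}$ and $\lambda^{-1/2}$.
\end{itemize}
Even granting such a bound, the zeroth-order term $\int Rm*F*\nabla^2F$, which is present for tensors, cannot be bounded by $\|Rm\|_{L^{3/2}}\|\nabla^2F\|_{L^2}^2$. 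H\"older would require $F\in L^p$ with $1/p=1-\frac23-\frac12<0$. In dimension three $\|\nabla^2F\|_{L^2}$ alone does not control $\|F\|_{L^\infty}$. So you are thrown back on $\|F\|_{L^\infty}\le CX^{1/2}Y^{1/2}$ and the same residual.

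Your fallback gives $Y^2\le\|F\|_{L^\infty}\|\Delta F\|_{L^1(B_R)}\le CR^{3/2}\|F\|_{L^\infty}\|\Delta F\|_{L^2}$, where $R$ is the support radius of $F$. This is not uniform and degenerates in the density argument.

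What your computation actually proves, after Young's inequality on $X^{3/2}Y^{1/2}$, is
\[
\|F\|_{L^\infty}\le C\|\Delta F\|_{L^2}^{1/2}\|\nabla F\|_{L^2}^{1/2}+C\,\big\||\bar Rm|+|h|^2\big\|_{L^2}\|\nabla F\|_{L^2},
\]
which is weaker than (\ref{inftySob5}).

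The paper never absorbs the $Y^2$ term either. It bounds the error terms by $C\||\bar Rm|+|h|^2\|_{L^2}(X^2+Y^2)$, using the inhomogeneous bound $\|F\|_{L^\infty}\le C(X+Y)$. It then absorbs $X^2$ via $CPE_1\le C\epsilon_0^{7/8}<\frac12$ to get (\ref{inftySob3}), which your estimates also give. Finally it applies (\ref{inftySob3}) to $\lambda g$ with $\lambda=\|\Delta F\|_{L^2}^2/\|\nabla F\|_{L^2}^2$, citing the scale invariance of (\ref{II}) and (\ref{III}).

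That rescaling does not supply the missing idea, because the absorption hypothesis is not scale-invariant. Under $g\mapsto\lambda g$, $\||\bar Rm|+|h|^2\|_{L^2}$ becomes $\lambda^{-1/4}\||\bar Rm|+|h|^2\|_{L^2}$; compare $E_1\mapsto\lambda^{-1/4}E_1$ in the proof of Theorem \ref{t8.3}. Hence (\ref{inftySob4}) is justified only for $\lambda\gtrsim(PE_1)^4$. When $\|\Delta F\|_{L^2}/\|\nabla F\|_{L^2}$ falls below $C(PE_1)^2$, the paper's argument yields exactly the residual term displayed above.

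So you have found a real difficulty that the paper's scaling step passes over. Removing it requires a genuinely scale-invariant input that the bootstrap assumptions do not contain. Later sections use (\ref{inftySob2}) and the identity (\ref{3.13}), but not (\ref{inftySob5}) itself.
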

     \begin{proof}
    To prove (\ref{inftySob5}), we may assume the tensor field $F$ has compact support. By integration by parts, exchanging derivatives and making use of  Gaussian equations,  we have
    \begin{equation} \label{3.13}
    \begin{split}
            \int_{M_{t}}|\nabla^{2}F|^{2} 
            &=-\int_{M_{t}}\nabla_{j}F\nabla^{j}\nabla_{i}\nabla^{i}F+\nabla^{2}F*F*Rm+Rm*\nabla F*\nabla F\\
            &=\int_{M_{t}}(\Delta F)^{2}+(\bar{R}+Hh+h^{2})*(\nabla F*\nabla F+\nabla^{2}F*F).
    \end{split}
    \end{equation}
     On the other hand,  employing  (\ref{L6Sob}) and (\ref{inftySob2}) we obtain    
     \begin{equation}\label{3.14}
    \begin{split}
            &\int_{M_{t}}(\bar{R}+Hh+h^{2})*(\nabla F*\nabla F+\nabla^{2}F*F)\\ & \leq C|||\bar{R}m|+|h|^2||_{L^2}(||\nabla F||^{\frac{1}{2}}_{L^2}||\nabla F||^{\frac{3}{2}}_{L^6}+|| F||_{L^{\infty}}||\nabla^2 F||_{L^2})\\
             & \leq C|||\bar{R}m|+|h|^2||_{L^2}[||\nabla F||^{\frac{1}{2}}_{L^2}||\nabla^2F||^{\frac{3}{2}}_{L^2}+(||\nabla^2 F||_{L^2}+||\nabla F||_{L^2})||\nabla^2 F||_{L^2}]\\     
            &\leq C|||\bar{R}m|+|h|^2||_{L^2}(||\nabla^2 F||^2_{L^2}+||\nabla F||^2_{L^2}).
    \end{split}
    \end{equation}
    Since $ C|||\bar{R}m|+|h|^2||_{L^2}\leq CPE_1\leq C\epsilon_0^{\frac{7}{8}}<\frac{1}{2}$, we have 
\begin{equation}\nonumber
 ||\nabla^{2}F||_{L^2}^{2} 
            \leq C (||\triangle F||_{L^2}^{2}+  ||\nabla F||^2_{L^{2}}).
    \end{equation} 
    Consequently, 
\begin{equation}\label{inftySob3}
 ||F||_{L^{\infty}}
            \leq C (||\triangle F||_{L^2}+  \|\nabla F\|_{L^{2}}).
    \end{equation}     
Note that the constant $C$ in (\ref{inftySob3}) depends only on $\alpha_0$ and $\alpha_1$. Thanks to the scaling invariance of the  conditions (\ref{II}) and (\ref{III}), the estimate (\ref{inftySob3}) also holds for all metrics $\lambda g$ for any $\lambda>0$,  which  concludes  
\begin{equation}\label{inftySob4}
    \|F\|_{L^{\infty}(M_{t})}\leq C(\lambda^{\frac{n}{4}-1}\|\Delta F\|_{L^{2}(M_{t})}+\lambda^{\frac{n}{4}-\frac{1}{2}} \|\nabla F\|_{L^{2}(M_{t})}).
    \end{equation}  
By choosing $\lambda=\frac{||\triangle F||^2_{L^2}}{||\nabla F||^2_{L^2}}$ in (\ref{inftySob4}),  (\ref{inftySob5})  follows.

\end{proof}

\begin{lemma}\label{StSob}
    For any $L^2_tW^{1,2}(M_t)$-tensor field $F$, we have 
    \begin{equation} \label{3.17}
        \|F\|_{L^{\frac{10}{3}}(M_{0}\times [0,  T])}\leq C \|F\|^{\frac{2}{5}}_{L^{\infty}_{t}L^{2}(M_{t})}\|\nabla F\|^{\frac{3}{5}}_{L^{2}(M_{0}\times [0,  T])},
    \end{equation}
    where  $\|F\|_{L^{\frac{10}{3}}(M_{0}\times [0,  T])}=[\int_{0}^{T}\int_{M_t}|F|^{\frac{10}{3}}]^{\frac{3}{10}}$,  $C=C(\alpha_0)$.    
    \end{lemma}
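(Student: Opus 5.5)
The proof will be a pointwise-in-time application of Lemma \ref{L3Sob} (the $L^6$ Sobolev inequality (\ref{L6Sob})), followed by H\"older interpolation in space and then in time. For each fixed $t\in[0,T]$ I interpolate $L^{\frac{10}{3}}$ between $L^2$ and $L^6$. Since $\frac{3}{10}=\frac{\theta}{2}+\frac{1-\theta}{6}$ gives $\theta=\frac{2}{5}$, H\"older's inequality on $M_t$ yields
\begin{equation}\nonumber
\int_{M_t}|F|^{\frac{10}{3}}=\int_{M_t}|F|^{\frac{4}{3}}|F|^{2}\leq \Big(\int_{M_t}|F|^2\Big)^{\frac{2}{3}}\Big(\int_{M_t}|F|^6\Big)^{\frac{1}{3}}.
\end{equation}

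Next I use (\ref{L6Sob}) on $M_t$, namely $\|F\|_{L^6(M_t)}\leq C\|\nabla F\|_{L^2(M_t)}$. Its constant depends only on $\alpha_0$, because it comes from Lemma \ref{Sobfun}, which uses only (\ref{II}) and the bootstrap assumption (\ref{bootd1g}). This gives
\begin{equation}\nonumber
\int_{M_t}|F|^{\frac{10}{3}}\leq C\|F\|_{L^2(M_t)}^{\frac{4}{3}}\|\nabla F\|_{L^2(M_t)}^{2}\leq C\|F\|_{L^\infty_tL^2(M_t)}^{\frac{4}{3}}\|\nabla F\|_{L^2(M_t)}^{2}.
\end{equation}
I then integrate over $t\in[0,T]$. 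The time integral of $\|\nabla F\|^2_{L^2(M_t)}$ is exactly $\|\nabla F\|^2_{L^2(M_0\times[0,T])}$, with the spacetime norm understood, as in the statement, by integrating over $M_t$ at each time. Taking the $\frac{3}{10}$ power gives exponent $\frac{4}{3}\cdot\frac{3}{10}=\frac{2}{5}$ on the $L^\infty_tL^2$ factor and $2\cdot\frac{3}{10}=\frac35$ on the gradient factor, which is (\ref{3.17}).

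The only point needing care is justifying (\ref{L6Sob}) for a general $W^{1,2}$ tensor field on $M_t$ rather than a compactly supported one. I would do this first for compactly supported smooth $F$, where Lemma \ref{L3Sob} applies directly. Then I would pass to $W^{1,2}$ fields by density, using cutoffs $\xi(\tilde r/a)$ and letting $a\to\infty$. The cutoff error term $a^{-1}\|F\|_{L^2(\{a\le\tilde r\le2a\})}$ tends to zero because $F\in L^2$. I would also use Kato's inequality $|\nabla|F||\leq|\nabla F|$, so that the scalar inequality of Lemma \ref{Sobfun}, applied to $|F|^4$, gives the tensor version with a constant depending only on $\alpha_0$. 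The rest is routine.
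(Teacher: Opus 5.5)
Your proposal is correct and is the same argument as the paper: Hölder on each slice gives $\int_{M_t}|F|^{\frac{10}{3}}\leq \|F\|_{L^2(M_t)}^{\frac43}\|F\|_{L^6(M_t)}^{2}$, then (\ref{L6Sob}) and integration over $[0,T]$. Your density and Kato remarks only make explicit what the paper leaves implicit by citing Lemma \ref{L3Sob} for $W^{1,2}$ tensor fields.
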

\begin{proof}
    By  H$\ddot{o}$lder inequality and  (\ref{L6Sob}), 
    \begin{equation}
           \|F\|_{L^{\frac{10}{3}}(M_{t})}^{\frac{10}{3}}\leq \|F\|_{L^{2}(M_{t})}^{\frac{4}{3}}\|F\|_{L^{6}(M_{t})}^{2}\leq   C\|F\|_{L^{2}(M_{t})}^{\frac{4}{3}}\|\nabla F\|_{L^{2}(M_{t})}^{2}.
    \end{equation}
    
   Integrating the above inequality over  $[0,T]$ we obtain  (\ref{3.17}). 
\end{proof}

\section{Improved mean curvature estimate}

Let $X(x,t)$ be the  solution to the mean curvature flow (\ref{mcf}) constructed in Section 2-4.  The fundamental  mean curvature estimate in Proposition \ref{P41} clearly also holds  for $X(x,t)$, i.e., we have   

\begin{theorem}\label{} The mean curvature of the solution $X(x,t)$ to (\ref{mcf}) satisfies  \begin{equation}\label{61}
    \mathop{sup}\limits_{M_{t}}|H| \leq  \frac{1}{\sqrt{\frac{2t}{3}+\frac{1}{\sup\limits_{M_0}|H|^2}}}.
    \end{equation}
\end{theorem}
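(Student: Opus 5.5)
Since the graphical flow (\ref{mcfg}) and the flow (\ref{mcf}) differ only by the tangential diffeomorphisms $\phi_t$ of (\ref{410}), the estimate (\ref{61}) can be obtained either by pulling back Proposition \ref{P41} or by rerunning its maximum-principle argument directly with the evolution equations of Proposition \ref{P43}. I will do the latter, since it is cleaner: by (\ref{412}),
\begin{equation}\nonumber
(\partial_t-\triangle)H^2=-2|\nabla H|^2-2H^2|h|^2\leq -2|\nabla H|^2-\tfrac{2}{3}H^4,
\end{equation}
using $|h|^2\geq \frac{1}{3}H^2$. The transport term $\nu^{-1}H\langle\nabla H^2,\nabla f\rangle$ of (\ref{H2evo}) is now absent.

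Because $M_t$ is noncompact, I localize with $u=\xi(\tilde r/a)H^2$, where $\xi$ is the cutoff used in the proof of Proposition \ref{P41}. Under (\ref{mcf}) we have $(\partial_t-\triangle)\tilde r=-tr_{M_t}D^2\tilde r$. By Lemma \ref{l2.2}, $|tr_{M_t}D^2\tilde r|\leq C\tilde r^{-1}$ and $|\nabla\tilde r|\leq C$ as long as $\nu$ stays bounded, which holds on $[0,\bar T]$ by (\ref{nubd1}). The cutoff terms are then bounded by $Ca^{-2}\xi^{1/2}H^2$, and the gradient cross term $-2\nabla\xi\cdot\nabla H^2$ is handled at an interior maximum where $\nabla u=0$, exactly as in (\ref{47}). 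This gives
\begin{equation}\nonumber
(\partial_t-\triangle)u\leq -\tfrac{2}{3}\xi H^4+Ca^{-2}\xi^{1/2}H^2 .
\end{equation}

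The maximum of $u$ over $\{\tilde r<a\}$ is attained at an interior point, since $u$ vanishes near the boundary. Applying Young's inequality and Hamilton's lemma on $\frac{d^+}{dt}u_{\max}$ as in (\ref{u est}), I obtain
\begin{equation}\nonumber
u_{\max}(t)\leq\max\Big\{C\delta^{-2}a^{-2},\Big(\tfrac{2(1-\delta)t}{3}+u_{\max}(0)^{-1}\Big)^{-1}\Big\}.
\end{equation}
Letting $a\to\infty$ and then $\delta\to0$ gives (\ref{61}), with $\sup_{M_0}|H|$ taken from the initial surface because $u_{\max}(0)\leq\sup_{M_0}H^2$.

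The only real obstacle is justifying the cutoff bounds uniformly in $t$, namely the control of $\tilde r$ along the reparametrized flow. Here I rely on $\nu\leq 2$ and on Theorem \ref{t44} and Theorem \ref{t2.5}, which keep $M_t$ inside the asymptotically flat region where Lemma \ref{l2.2} applies. If needed, one can bypass this entirely: $H$ is a geometric quantity, so $\sup_{M_t}|H|$ is invariant under the diffeomorphism $\phi_t$, and (\ref{61}) follows immediately from Proposition \ref{P41}.
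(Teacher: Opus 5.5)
Your proposal is correct, and its closing remark is exactly the paper's proof: the paper simply observes that Proposition \ref{P41} carries over, because $X(\phi(\cdot,t),t)$ has the same image $M_t$ as the graphical solution and $\phi(\cdot,0)=id$. Your main route reruns the cutoff maximum principle for $\xi(\tilde r/a)H^2$ using the transport-free equation (\ref{412}) and $(\partial_t-\triangle)\tilde r=-tr_{M_t}D^2\tilde r$; this is the same computation the paper carries out after Proposition \ref{P43} for Theorem \ref{t44}, so it is a valid though redundant re-derivation, and it does not actually need Theorem \ref{t44}, only that $M_t$ stays a graph in the $\tau$-foliation with $\nu\le 2$, which holds on $[0,\bar T]$ by construction.
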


 Nevertheless, the desired convergence of the position function  demands the mean curvature to have a higher decay rate in time (faster  than $1/t$), which certainly requires a little more efforts to make.  The purpose of this section is to derive such an improved mean curvature estimate.

 Our starting point is some  preliminary monotonicity formulas   for the mean curvature integrals.

\begin{theorem}\label{HLpbd1}
For any $p\geq 3$, we have 
\begin{equation}\label{HLpbd2}
 \frac{d^{+}}{dt}\int_{M_{t}} |H|^{p}
     \leq  0.
     \end{equation}
\end{theorem}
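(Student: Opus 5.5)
We differentiate $\int_{M_t}|H|^p$ along the flow (\ref{mcf}), using the evolution equations of Proposition \ref{P43}. Since $\partial_t g_{ij}=2Hh_{ij}$, the volume form satisfies $\partial_t d\mu=H^2 d\mu$. Combined with $(\partial_t-\triangle)H=-H|h|^2$, this gives formally
\begin{equation}\nonumber
\frac{d}{dt}\int_{M_t}|H|^p=\int_{M_t} p|H|^{p-2}H\triangle H-p|H|^p|h|^2+|H|^{p+2}.
\end{equation}
Integrating by parts turns the first term into $-p(p-1)\int|H|^{p-2}|\nabla H|^2\le 0$. For the remaining terms we use $|h|^2\geq \frac{H^2}{3}$, so that $-p|H|^p|h|^2+|H|^{p+2}\leq (1-\frac{p}{3})|H|^{p+2}\leq 0$ exactly when $p\geq 3$. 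This is the source of the threshold $p\ge3$. Note that $|H|^p$ is $C^1$ for $p\ge 3>1$, so there are no regularity issues at zeros of $H$.

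The only real work is justifying the integration by parts and the finiteness of $\int_{M_t}|H|^p$ on the noncompact $M_t$; this is why the statement is phrased with $\frac{d^+}{dt}$. We will localize with the cutoff $\xi(\frac{\tilde r}{a})$ used in the proof of Proposition \ref{P41}, and compute $\frac{d}{dt}\int\xi|H|^p$. This produces the good terms above multiplied by $\xi$, plus three error terms:
\begin{itemize}
\item $\int p|H|^{p-1}|\nabla\xi||\nabla H|$;
\item $\int |H|^p\,|\partial_t\xi|$, controlled through $|\partial_t\tilde r|=|HN(\tilde r)|\le C|H|$;
\item a term from the time dependence of the volume form, already included in $|H|^{p+2}$.
\end{itemize}
The gradient term is absorbed by Cauchy–Schwarz into half of $p(p-1)\int\xi|H|^{p-2}|\nabla H|^2$, leaving $C\int |H|^p\frac{|\nabla\xi|^2}{\xi}\le Ca^{-2}\int_{\{a/2\le\tilde r\le a\}}|H|^p$.

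To make these errors vanish as $a\to\infty$, we use the decay from Theorem \ref{t44}, namely $|H|\le C/\tilde r$ on $\{a\le\tilde r\le2a\}$. Together with the volume growth (\ref{III}) and (\ref{bootd1g}), this gives $\int_{\{a\le \tilde r\le 2a\}}|H|^p\le Ca^{3-p}$, which is bounded for $p\ge3$. Hence the error terms are $O(a^{-2}\cdot a^{3-p})+O(a^{-1}a^{3-p})\to 0$. The same bound shows $\int_{M_t}|H|^p$ is finite for $p>3$. At $p=3$ we need slightly better decay, such as $|H|=O(r^{-1-\sigma})$ from the initial decay (\ref{AC}) and the localized maximum principle; otherwise we can treat $p=3$ as a limit of $p>3$.

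Integrating the localized inequality in time gives $\int\xi|H|^p(t_2)\le\int\xi|H|^p(t_1)+o(1)$ as $a\to\infty$. Monotone convergence then yields that $t\mapsto\int_{M_t}|H|^p$ is nonincreasing, which is (\ref{HLpbd2}) in the upper Dini derivative sense.

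The main obstacle is this cutoff step at the borderline exponent $p=3$. There the naive $1/r$ decay gives only boundedness, not smallness, of the annular integrals, so we will have to use either the improved decay or an approximation by $p\downarrow 3$.
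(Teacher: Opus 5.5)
This is essentially the paper's proof. Both arguments localize with $\xi(\tilde r/a)$, use $(\partial_t-\triangle)H=-H|h|^2$, $\partial_t d\mu=H^2d\mu$ and $|h|^2\ge \frac{H^2}{3}$, control the cutoff errors with Theorem \ref{t44}, and then integrate in time and let $a\to\infty$. The only difference is immaterial: the paper moves the Laplacian entirely onto $\xi$ and uses $(\partial_t-\triangle)\tilde r=-tr_{M_t}D^2\tilde r$, where you absorb a cross term by Cauchy--Schwarz.

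Your closing worry about $p=3$ is unfounded, and neither improved decay nor a limit $p\downarrow 3$ is needed. Your own bound already gives errors of size $O(a^{-2}a^{3-p})+O(a^{-1}a^{3-p})\to 0$ at $p=3$; the paper notes that $a^{-2}\int_0^{\bar T}\int_{\{a/2\le\tilde r\le a\}}|H|^p\le Ca^{1-p}\to 0$ for every $p>1$. Finiteness of $\int_{M_t}|H|^3$ follows from $\int_{M_0}|H|^3<\infty$ (since $H=O(r^{-2-\epsilon})$ by (\ref{AC})) together with the monotonicity itself.
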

\begin{proof}
 From (\ref{412}), for $\alpha> 0$, we have 
 \begin{equation} \label{63}
 \begin{split}
 (\frac{\partial}{\partial t}-\triangle)|H|^{2\alpha}=&\alpha|H|^{2\alpha-2}(\frac{\partial}{\partial t}-\triangle)|H|^{2}-4\alpha(\alpha-1)|H|^{2\alpha-2}|\nabla|H||^2\\
 = &\alpha|H|^{2\alpha-2}(-2|\nabla H|^2-2H^2|h|^2)-4\alpha(\alpha-1)|H|^{2\alpha-2}|\nabla|H||^2\\ \leq & -2\alpha|H|^{2\alpha}|h|^{2}+(2\alpha-4\alpha^2) |H|^{p-2}|\nabla |H||^{2}.
 \end{split}
 \end{equation}

Let $\xi:\mathbb{R}\rightarrow \mathbb{R}$ be  the cut-off function in (\ref{47}).  Applying (\ref{63}) for $p=2\alpha$ and Lemma \ref{l2.2}, we obtain
\begin{equation}\label{HLpcut}
\begin{split}
 \frac{d}{d t}\int_{M_{t}} \xi(\frac{\tilde{r}}{a})|H|^{p}
     \leq & \int_{M_{t}}\xi(\frac{\tilde{r}}{a})[-p|H|^{p}|h|^{2}-p(p-1)|H|^{p-2}|\nabla H|^{2}+|H|^{p+2}]\\
     &+\int_{M_t} |H|^p(\xi^{\prime} a^{-1}(\frac{\partial}{\partial t}-\triangle) \tilde{r}+\xi^{\prime\prime}\frac{|\nabla \tilde{r}|^2}{a^2})\\
     \leq & \int_{M_{t}}\xi(\frac{\tilde{r}}{a})[-(\frac{p}{3}-1)|H|^{p+2}-p(p-1)|H|^{p-2}|\nabla H|^{2}]\\
     &+ \frac{C}{a^2}\int_{\{\frac{a}{2}\leq \tilde{r} \leq a\}} |H|^p.     \end{split}
   \end{equation}

   On the other hand, from Theorem \ref{t44}, if $p>1$, we have 
\begin{equation}
\frac{1}{a^2}\int_{0}^{\bar{T}}\int_{\{\frac{a}{2}\leq \tilde{r} \leq a\}} |H|^p \leq C a^{1-p} \rightarrow 0, \ \ \text{as}\ \  a\rightarrow \infty.
\end{equation}
   
If $p>1$, integrating (\ref{HLpcut})  over a time interval $[t_1,t_2]$, and letting $a\rightarrow \infty$, we obtain  

\begin{equation}\label{66}
\int_{M_{t_2}}|H|^p-\int_{M_{t_1}}|H|^p\leq \int_{t_1}^{t_2}\{\int_{M_t}-(\frac{p}{3}-1)|H|^{p+2}-p(p-1)|H|^{p-2}|\nabla H|^{2}\} dt.\end{equation}
If $p\geq 3$, the right-hand side of (\ref{66}) is non-positive. This  deduces  (\ref{HLpbd2}). 
 \end{proof}

\begin{theorem}\label{H Lp 1<p<2}
    Under the bootstrap assumptions (\ref{bootd1g}) (\ref{bootd2g}) (\ref{bootd3g}), if $\epsilon_0$ is small,  then
    \begin{equation}\label{Hpm}
    \frac{d^{+}}{dt}\int_{M_t}|H|^{p_0}\leq 0,
    \end{equation}
    where $p_0\in (1,\frac{3}{2})$ is the exponent in Theorem \ref{maxfol}. 
\end{theorem}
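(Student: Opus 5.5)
The plan is to rerun the localized computation behind Theorem \ref{HLpbd1} with $p=p_0$, and then absorb the bad zero-order term into the gradient term using the Sobolev inequality of Lemma \ref{L3Sob}. The needed smallness will come from the $L^3$ norm of $H$, which is controlled by the monotonicity for $p=3$ together with the hypothesis (\ref{HL_p1}).

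\textbf{Step 1: the basic inequality.} As in (\ref{HLpcut})–(\ref{66}), I multiply by the cutoff $\xi(\tilde r/a)$, integrate (\ref{63}) with $p=p_0>1$, and let $a\to\infty$. Theorem \ref{t44} kills the cutoff error, since $a^{1-p_0}\to 0$. Keeping the $|h|^2$ term rather than discarding it, this gives, for $t_1<t_2$,
\[
\int_{M_{t_2}}|H|^{p_0}-\int_{M_{t_1}}|H|^{p_0}\leq \int_{t_1}^{t_2}\int_{M_t}\Big(H^2-p_0|h|^2\Big)|H|^{p_0}-p_0(p_0-1)|H|^{p_0-2}|\nabla H|^2 .
\]
Put $w=|H|^{p_0/2}$. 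The gradient term equals $-\frac{4(p_0-1)}{p_0}\int|\nabla w|^2$, with coefficient bounded below by a positive constant depending on $p_0$. For the zero-order term I drop $-p_0|h|^2|H|^{p_0}\le 0$, so it is at most $\int H^2w^2$. By H\"older and (\ref{L6Sob}),
\[
\int H^2w^2\le \|H\|_{L^3(M_t)}^2\|w\|_{L^6}^2\le C\|H\|_{L^3(M_t)}^2\|\nabla w\|_{L^2}^2 .
\]
The Sobolev constant here is uniform by the bootstrap assumption (\ref{bootd1g}). Hence it suffices to show that $C\|H\|^2_{L^3(M_t)}<\frac{4(p_0-1)}{p_0}$ for all $t$.

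\textbf{Step 2: smallness of $\|H\|_{L^3(M_t)}$.} By Theorem \ref{HLpbd1} with $p=3$, we have $\|H\|_{L^3(M_t)}\le\|H\|_{L^3(M_0)}$. Interpolating on $M_0$,
\[
\|H\|_{L^3(M_0)}^3\le \|H\|_{L^{p_0}(M_0)}^{p_0}\,\|H\|_{L^\infty(M_0)}^{3-p_0}.
\]
To bound $\|H\|_{L^\infty}\le\sqrt3\,\|h\|_{L^\infty}$, I apply Lemma \ref{l3.5} to $F=h$:
\[
\|h\|_{L^\infty}\le C\|\Delta h\|_{L^2}^{1/2}\|\nabla h\|_{L^2}^{1/2}\le C(E_1E_2)^{1/2}\quad\text{on }M_0 .
\]
On $M_0$ this uses only the initial bounds $E_1,E_2$, so no factor $P$ enters, although even at a later time $t$ a factor $P=\epsilon_0^{-1/8}$ would be harmless. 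Therefore
\[
\|H\|_{L^3}^3\le C\Big(\|H\|_{L^{p_0}}(E_1E_2)^{\frac{3-p_0}{2p_0}}\Big)^{p_0}\le C\epsilon_0^{p_0}
\]
by (\ref{HL_p1}). This quantity is scale invariant, consistent with the rescaling reduction of Section 5. Taking $\epsilon_0$ small then makes the right-hand side of the Step 1 inequality nonpositive, which gives (\ref{Hpm}).

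\textbf{Main obstacle.} The delicate point is the justification of the limits. The cutoff/limit argument of Theorem \ref{HLpbd1} must be valid at $p_0<3/2$, and the limiting inequality must hold in the $\frac{d^+}{dt}$ sense. The former needs $\int_{M_t}|H|^{p_0}<\infty$, which I would get from the decay $|H|\le C/\tilde r$ in Theorem \ref{t44} combined with the asymptotic decay of $H$ at infinity along the flow; note that $1/\tilde r$ alone is not $L^{p_0}$ for $p_0<3/2$. If that decay is not directly available, the fallback is to work with the cutoff version throughout. In that version one absorbs $\int\xi H^2w^2$ via Sobolev applied to $\xi^{1/2}w$, which produces an extra error term $a^{-2}\int_{\{a/2\le\tilde r\le a\}}|H|^{p_0}$; this is then handled by integrating in time and using the finiteness of $\int_{M_0}|H|^{p_0}$ together with a Gr\"onwall-type comparison.
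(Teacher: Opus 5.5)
Your proposal is correct and is essentially the paper's proof: inequality (\ref{66}) with $p=p_0$, H\"older plus (\ref{L6Sob}) to absorb $\int|H|^{p_0+2}$ into $\int|\nabla|H|^{p_0/2}|^2$ using $\|H\|_{L^3(M_t)}\le\|H\|_{L^3(M_0)}$ from Theorem \ref{HLpbd1}, and then the interpolation $\|H\|^3_{L^3(M_0)}\le\|H\|^{p_0}_{L^{p_0}}\|H\|^{3-p_0}_{L^\infty}$ with $\|H\|_{L^\infty(M_0)}\le C(E_1E_2)^{1/2}$ and (\ref{HL_p1}). The differences are immaterial: you discard the $|h|^2$ term instead of using $|h|^2\ge H^2/3$, and you bound $\|H\|_{L^\infty}$ via Lemma \ref{l3.5} applied to $h$ rather than (\ref{inftySob2}) applied to $H$. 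Your finiteness worry is also resolved by taking $t_1=0$ in the cutoff inequality, since $\int_{M_0}|H|^{p_0}<\infty$ and $\int_{M_t}|H|^{p_0+2}$ is uniformly bounded by Theorem \ref{t44} (because $p_0+2>3$).
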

\begin{proof}  From (\ref{66}), we have 
\begin{equation}\label{HLpevo}
    \begin{aligned}
    \int_{M_{t}}|H|^{p_0}\mid_{t_1}^{t_2}
    &\leq \int_{t_1}^{t_2}[\int_{M_{t}}(1-\frac{p_0}{3})|H|^{p_0+2}-p_0(p_0-1)|H|^{p_0-2}|\nabla H|^{2}]dt\\
      &\leq \int_{t_1}^{t_2}[(1-\frac{p_0}{3})(\int_{M_{t}}|H|^{3p_0})^{\frac{1}{3}}(\int_{M_{t}}|H|^{3})^{\frac{2}{3}}-4\frac{p_0-1}{p_0}\int_{M_{t}}|\nabla |H|^{\frac{p_0}{2}}|^{2}]dt\\
      & \leq [C (1-\frac{p_0}{3})(\int_{M_{0}}|H|^{3})^{\frac{2}{3}}-4\frac{p_0-1}{p_0}]\times \int_{M_{t}}|\nabla |H|^{\frac{p_0}{2}}|^{2},
      \end{aligned}
\end{equation}
where we have used  (\ref{L6Sob}) and (\ref{HLpbd2}). 

On the other hand, from  (\ref{inftySob2}) (\ref{HL_p1}),  we have 
\begin{equation}||H||_{L^{\infty}(M_0)}\leq C||\nabla H||^{\frac{1}{2}}_{L^2(M_0)}||\nabla^2H||^{\frac{1}{2}}_{L^2(M_0)}\leq CE^{\frac{1}{2}}_1E^{\frac{1}{2}}_2,
\end{equation}
\begin{equation}
 ||H||_{L^3(M_0)}\leq C (\|{H}\|_{L^{p_0}(M_{0})} (E_1E_2)^{\frac{3-p_0}{2p_0}})^{\frac{p_0}{3}}\leq C \epsilon_0^{\frac{p_0}{3}}.
\end{equation}
Consequently, 
\begin{equation}\label{Hp1}
  \int_{M_{t}}|H|^{p_0}\mid_{t_1}^{t_2}  \leq - \frac{2p_0-2}{p_0}\int_{t_1}^{t_2} [\int_{M_{t}}|\nabla |H|^{\frac{p_0}{2}}|^{2}]dt \leq 0   
  \end{equation}
 provided $\epsilon_0$ is small. 
This gives a   proof of (\ref{Hpm}).
  
\end{proof}

\begin{theorem}\label{HNash}
    Under the bootstrap assumptions(\ref{bootd1g}) (\ref{bootd2g}) (\ref{bootd3g}), we have 
    \begin{equation}\label{4.17}
        |H|(\cdot, t)\leq C ||H||_{L^{p_0}(M_0)} t^{-\frac{3}{2p_0}},
    \end{equation}
    where $C=C(\alpha_0,\alpha_1)$.
\end{theorem}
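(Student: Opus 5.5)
We prove (\ref{4.17}) by De Giorgi--Nash--Moser iteration for the subsolution $|H|$ of the heat equation. The iteration is run in parabolic cylinders whose spatial extent is all of $M_t$, with the $L^{p_0}$ monotonicity of Theorem \ref{H Lp 1<p<2} serving as the base step. The constants come from Lemma \ref{Sobfun} and Lemma \ref{StSob}, so they depend only on $\alpha_0,\alpha_1$.

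\emph{Step 1: the differential inequality.} From (\ref{412}) we have $(\partial_t-\triangle)|H|\le -|H||h|^2\le 0$ in the distributional sense. Moreover $\partial_t d\mu_t=-H^2 d\mu_t$, so the volume element is non-increasing. For $q\ge p_0$, multiply by $q|H|^{q-1}$ and integrate, using the cutoffs $\xi(\tilde r/a)$ and Theorem \ref{t44} to discard the boundary terms as $a\to\infty$, exactly as in the proof of Theorem \ref{HLpbd1}. Both the reaction term and the volume term have a favorable sign. Introduce a time cutoff $\eta(t)$ vanishing for $t\le s_1$ and equal to $1$ for $t\ge s_2$. This gives
\begin{equation*}
\sup_{[s_2,T]}\int_{M_t}|H|^q+c\,\frac{q-1}{q}\int_{s_2}^{T}\int_{M_t}|\nabla |H|^{q/2}|^2\le \frac{C}{s_2-s_1}\int_{s_1}^{T}\int_{M_t}|H|^q .
\end{equation*}
For $q$ near $p_0$ the factor $(q-1)/q\ge (p_0-1)/p_0$ is bounded below. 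Here we use $p_0>1$ essentially.

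\emph{Step 2: the iteration.} Apply Lemma \ref{StSob} to $F=|H|^{q/2}$. This gives the gain $\| |H|\|_{L^{5q/3}(Q_{k+1})}\le (C\,4^k/t)^{1/q}\,\||H|\|_{L^{q}(Q_k)}$ on nested time intervals $Q_k=[t/2-t2^{-k-2},t]$, with $q_k=p_0(5/3)^k$. Iterating as in (\ref{2.31})--(\ref{2.38}), but now without the lower-order terms since the equation has no source, yields
\begin{equation*}
\sup_{M_t}|H|\le C\Big(t^{-1}\int_{t/4}^{t}\int_{M_s}|H|^{p_0}\,ds\Big)^{1/p_0}.
\end{equation*}
The exponents come out as the standard parabolic scaling: with $n=3$, $\sum_k q_k^{-1}$ times the factor $t^{-1}$ combined with the $L^{p_0}$ volume gives $t^{-\frac{n+2}{2p_0}}\cdot t^{1/p_0}=t^{-\frac{3}{2p_0}}$.

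\emph{Step 3: conclusion.} By Theorem \ref{H Lp 1<p<2}, $\int_{M_s}|H|^{p_0}\le \|H\|^{p_0}_{L^{p_0}(M_0)}$. Substituting this into Step 2 gives (\ref{4.17}).

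The main obstacle is Step 1 at low exponents $q\in[p_0,2)$. There $|H|^{q-2}$ is singular at zeros of $H$, so I would first work with $(H^2+\delta)^{q/2}$ and let $\delta\to 0$. A second concern is the justification of the time-localized energy inequality on the noncompact $M_t$, which requires the decay from Theorem \ref{t44}.

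Note that Sobolev is used on a metric varying in time, through (\ref{bootd1g}). This is the only place the bootstrap assumptions enter; the smallness $\epsilon_0$ is needed only through Theorem \ref{H Lp 1<p<2}.
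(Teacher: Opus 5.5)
There is a genuine error in Step 1: the volume form does not decrease. From (\ref{412}), $\partial_t g_{ij}=2Hh_{ij}$, so $\partial_t\,d\mu_t=+H^2\,d\mu_t$. Spacelike mean curvature flow in a Lorentzian manifold increases area. For $u=|H|$, the computation you describe therefore gives
\[
\frac{d}{dt}\int_{M_t}\eta\,u^q\le\int_{M_t}\eta' u^q-\frac{4(q-1)}{q}\int_{M_t}\eta\,|\nabla u^{q/2}|^2+\int_{M_t}\eta\bigl(u^{q+2}-q\,u^q|h|^2\bigr).
\]
Only $|h|^2\ge H^2/3$ is available, so the last term is bounded by $(1-\frac q3)\int\eta\,u^{q+2}$, which is \emph{positive} for $q<3$. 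This is exactly the right-hand side of (\ref{66}). Your levels $q_0=p_0$, $q_1=\frac53 p_0$ (and $q_2$ if $p_0<\frac{27}{25}$) all lie below $3$. So the energy inequality in Step 1 does not follow from the argument you give, and the iteration cannot start.

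The missing idea is the absorption the paper performs right after (\ref{Nashevo1}). By H\"older and (\ref{L6Sob}),
\[
\int_{M_t}u^{q+2}\le\|H\|_{L^3(M_t)}^2\,\|u^{q/2}\|_{L^6(M_t)}^2\le C\,\|H\|_{L^3(M_0)}^2\int_{M_t}|\nabla u^{q/2}|^2 .
\]
Here you use the monotonicity of $\int|H|^3$ (Theorem \ref{HLpbd1}), and $\|H\|_{L^3(M_0)}\le C\epsilon_0^{p_0/3}$, which follows from (\ref{HL_p1}) by interpolating with $\|H\|_{L^\infty(M_0)}\le CE_1^{1/2}E_2^{1/2}$. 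Since $(q-1)/q\ge(p_0-1)/p_0$, this term is absorbed into the gradient term uniformly in $q\ge p_0$ once $\epsilon_0$ is small depending on $p_0$. So, contrary to your closing remark, the smallness of $\epsilon_0$ enters the energy inequality at every level below $3$, not only through the $L^{p_0}$ monotonicity.

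With this repair, your global-in-space Moser iteration on convex powers $|H|^{q_k}$ is a valid and somewhat simpler alternative to the paper's route. The paper runs a local De Giorgi iteration on $(|H|^{p_0/2}-k)_+$ in cylinders $B_0(x_1,\sqrt T)\times[T/4,T]$, using truncation to control the concavity of $s\mapsto s^{p_0/2}$. You avoid that concavity by working with powers $q>1$, and you avoid spatial cutoffs because the $L^{p_0}$ bound is already global.

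Finally, the display in Step 2 should read $\sup_{M_t}|H|\le C\bigl(t^{-5/2}\int_{t/4}^{t}\int_{M_s}|H|^{p_0}\bigr)^{1/p_0}$. With $t^{-1}$ there is no decay at all, although your verbal exponent count $t^{-5/(2p_0)}\cdot t^{1/p_0}=t^{-3/(2p_0)}$ is correct.
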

\begin{proof} According to (\ref{63}), we have 
\begin{equation}\label{Hp/2evo}
    (\frac{\partial}{\partial t}-\Delta)|H|^{\frac{p_0}{2}}\leq -\frac{p_0}{2}|H|^{\frac{p_0}{2}}|h|^{2}+\frac{p_0}{2}(1-\frac{p_0}{2})|H|^{\frac{p_0}{2}-2}|\nabla |H||^{2}.
\end{equation}

Choose a  time cutoff function $\phi$ satisfying $\phi(t)=0$ for $t\in [0,\frac{1}{8}]$, and $\phi(t)=1$ for $t\in [\frac{1}{4},1]$, and  
 choose a spatial cutoff function $\psi$ satisfying $\psi(x)=1$ for $x\leq \frac{1}{2}$,  and $\psi(x)=0$ for $x \geq \frac{3}{4}$. 
Clearly,   $|\phi^{\prime}|\leq C \sqrt{\phi}$ and $ |\psi^{\prime}|\leq C\sqrt{\psi}$ hold for some universal constant $C$.

Fix a point $x_1\in M_0$,  let   $\zeta(t, x):=\phi(\frac{t}{T})\psi(\frac{d_0(x_1,x)}{R})$, where $d_0(x_1,\cdot)$ is the distance function  from $x_1$ at initial time. 

For any   $k> 0$, by direct computations and integration by parts, we have  

\begin{equation}\label{Htde}
\begin{split}
    \frac{d}{d t} \int_{M_t}\zeta^{2}(|H|^{\frac{p_0}{2}}-k)_{+}^2=&\int_{M_{t}}2\zeta^{2}(|H|^{\frac{p_0}{2}}-k)_{+}\frac{\partial}{\partial t}|H|^{\frac{p_0}{2}}+\frac{\partial \zeta}{\partial t} (|H|^{\frac{p_0}{2}}-k)_{+}^{2}\\
    & + \zeta^{2}|H|^{2}(|H|^{\frac{p_0}{2}}-k)_{+}^{2},   \end{split} \end{equation}
and 
\begin{equation}\label{Hspde}
    \int_{M_{t}}\zeta^{2}(|H|^{\frac{p_0}{2}}-k)_{+}\triangle  |H|^{\frac{p_0}{2}}=-\int_{M_{t}}\zeta^{2}|\nabla(|H|^{\frac{p_0}{2}}-k)_{+}|^{2}+(|H|^{\frac{p_0}{2}}-k)_{+}\nabla \zeta^2\cdot \nabla |H|^{\frac{p_0}{2}}.
\end{equation}

Combining (\ref{Hp/2evo})(\ref{Htde}) and (\ref{Hspde}), we have 

\begin{equation}\label{Hp-k cut}
\begin{split}
    &  \frac{d}{d t} \int_{M_t}\zeta^{2}(|H|^{\frac{p_0}{2}}-k)_{+}^2  \leq \int_{M_{t}}\frac{\partial \zeta}{\partial t} (|H|^{\frac{p_0}{2}}-k)_{+}^{2}+ \zeta^{2}|H|^{2}(|H|^{\frac{p_0}{2}}-k)_{+}^{2}
    \\& \ \ \ -2\zeta^{2}|\nabla(|H|^{\frac{p_0}{2}}-k)_{+}|^{2} +2 (|H|^{\frac{p_0}{2}}-k)_{+}\nabla \zeta^2\cdot \nabla |H|^{\frac{p_0}{2}}\\
    &\ \ \ \ +2\zeta^{2}(|H|^{\frac{p_0}{2}}-k)_{+}[-\frac{p_0}{2}|H|^{\frac{p_0}{2}}|h|^{2}+\frac{p_0}{2}(1-\frac{p_0}{2})|H|^{\frac{p_0}{2}-2}|\nabla |H||^{2}].\end{split} \end{equation}

    Note that 
    \begin{equation}
    \begin{split}
&\frac{p_0}{2}(1-\frac{p_0}{2})(|H|^{\frac{p_0}{2}}-k)_{+}|H|^{\frac{p_0}{2}-2}|\nabla |H||^{2}\\ &= (\frac{2}{p_0}-1)(|H|^{\frac{p_0}{2}}-k)_{+} |H|^{-\frac{p_0}{2}}|\nabla |H|^{\frac{p_0}{2}}|^{2}\\
& \leq (\frac{2}{p_0}-1)|\nabla(|H|^{\frac{p_0}{2}}-k)_{+}|^{2}. \end{split} \end{equation}

Applying  Cauchy-Schwarz inequality in (\ref{Hp-k cut}), for any $0<\sigma <4-\frac{4}{p_0}$,  we have 
\begin{equation}\label{Nashevo1}
\begin{split}
    \frac{d}{d t} \int_{M_t}\zeta^{2}(|H|^{\frac{p_0}{2}}-k)_{+}^2\leq & C_{\sigma}\int_{M_{t}}(|\frac{\partial \zeta}{\partial t}|+|\nabla \zeta|^2) (|H|^{\frac{p_0}{2}}-k)_{+}^{2}\\& -(4-\frac{4}{p_0}-\sigma)\int_{M_{t}}|\nabla (\zeta(|H|^{\frac{p_0}{2}}-k)_{+})|^{2}\\
    &+\int_{M_{t}}\zeta^{2} H^{2}(|H|^{\frac{p_0}{2}}-k)_{+}^2. \end{split} \end{equation}

On the other hand, by Sobolev embedding theorem, we have 
\begin{equation}
\begin{split}
    \int_{M_{t}}\zeta^{2}H^{2}(|H|^{\frac{p_0}{2}}-k)_{+}^{2}
    &\leq (\int_{M_{t}}\zeta^{6}(|H|^{\frac{p_0}{2}}-k)_{+}^{6})^{\frac{1}{3}}(\int_{M_{t}}|H|^{3})^{\frac{2}{3}}\\
    & \leq  C \mathop{sup}\limits_{t}(\int_{M_{t}}|H|^{3})^{\frac{2}{3}} \int_{M_{t}}|\nabla (\zeta (|H|^{\frac{p_0}{2}}-k)_{+})|^{2}\\
    &\leq  C\epsilon_0^{\frac{2p_0}{3}} \int_{M_{t}}|\nabla (\zeta (|H|^{\frac{p_0}{2}}-k)_{+})|^{2}.
    \end{split}
\end{equation}
Since $\epsilon_0$ is small,  (\ref{Nashevo1}) becomes

\begin{equation}\label{Nashevo2}
\begin{split}
    \frac{d}{d t} \int_{M_t}\zeta^{2}(|H|^{\frac{p_0}{2}}-k)_{+}^2\leq & C \int_{M_{t}}(|\frac{\partial \zeta}{\partial t}|+|\nabla \zeta|^2) (|H|^{\frac{p_0}{2}}-k)_{+}^{2}\\& -\delta\int_{M_{t}}|\nabla (\zeta(|H|^{\frac{p_0}{2}}-k)_{+})|^{2},
    \end{split} \end{equation}
for some  $\delta>0$.  Integrating (\ref{Nashevo2}) over $[0,T]$ yields  
\begin{equation}\label{HV2}
\begin{split}
    &\int_{M_{T}}\zeta^{2}(|H|^{\frac{p_0}{2}}-k)_{+}^{2}+\int_{\frac{T}{8}}^{T}[\int_{M_{t}}|\nabla (\zeta(|H|^{\frac{p_0}{2}}-k)_{+})|^{2}]dt\\
    &\leq C\int_{\frac{T}{8}}^{T}[\int_{M_{t}}(|\nabla \zeta|^{2}+|\partial_{t}\zeta|)(|H|^{\frac{p_0}{2}}-k)_{+}^{2}]dt.
    \end{split}
\end{equation}

In what follows,  we will employ  the De Giorgi-Nash-Moser iteration technique. 

Let $R_{0}=\sqrt{T},  R_{l}=\frac{\sqrt{T}}{2}+\frac{\sqrt{T}}{2^{l+1}},  k_{l}=k(2-\frac{1}{2^{l}})$, $Q_{l}=[T-R_{l}^{2},  T]\times B_{0}(x_1, R_{l})$, for $l\geq 1$. Let   $\zeta_{l}$ be a  cutoff function on $Q_l$,   which is equal to  1 on $Q_{l+1}$,  and  zero outside  $Q_{\frac{R_{l}+R_{l+1}}{2}}$,  satisfying  $|\nabla \zeta_{l}|^{2}+|\partial_{t}\zeta_{l}|\leqslant \frac{C}{(R_{l}-R_{l+1})^{2}}$. 

Replacing  $\zeta$ with   $\zeta_{l}$ in (\ref{HV2}) gives   
\begin{equation}
    \int_{M_{T}}\zeta_{l}^{2}(|H|^{\frac{p_0}{2}}-k_{l})^2_{+}+\int_{\frac{T}{8}}^{T}[\int_{M_{t}}|\nabla (\zeta_{l}|H|^{\frac{p_0}{2}})|^{2}]dt
    \leq  C\frac{4^{l}}{T}\|(|H|^{\frac{p_0}{2}}-k_{l})_{+}\|_{L^{2}(Q_{l})}^{2}.
\end{equation}
Thanks to   Lemma \ref{StSob},   we obtain  

\begin{equation}\label{HStSob}
        \|\zeta_{l}(|H|^{\frac{p_0}{2}}-k_{l})_{+}\|_{L^{\frac{10}{3}}(Q_{l})}^{2}
    \leq  C\frac{4^{l}}{T}\|(|H|^{\frac{p_0}{2}}-k_{l})_{+}\|_{L^{2}(Q_{l})}^{2}.
\end{equation}

Let $\beta_{l}=\|(|H|^{\frac{p_0}{2}}-k_{l})_{+}\|_{L^{2}(Q_{l})}^{2},  A_{l}(k)=|Q_{l}\cap \{|H|^{\frac{p_0}{2}}>k\}|$. 

Applying (\ref{HStSob}) and H$\ddot{o}$lder inequality deduces 
$$
\beta_{l+1}
\leq \|\zeta_{l}(|H|^{\frac{p_0}{2}}-k_{l})_{+}\|_{L^{\frac{10}{3}}(Q_{l})}^{2}A_{l}(k_{l+1})^{\frac{2}{5}}\leq C\frac{4^{l}}{T}\beta_{l}A_{l}(k_{l+1})^{\frac{2}{5}}.
$$
In view of 
$$
\beta_{l}
\geq (k_{l+1}-k_{l})^{2}A_{l}(k_{l+1})=\frac{k^{2}}{2^{2l+2}}A_{l}(k_{l+1}),
$$
we have 
\begin{equation}
\beta_{l+1}
\leq C (2^{\frac{14}{5}})^l\frac{\beta_{l}^{\frac{7}{5}}}{Tk^{\frac{4}{5}}}.
\end{equation}
If we denote $y_{l}=\frac{\beta_{l}}{k^{2}T^{\frac{5}{2}}}$, then 
\begin{equation}\label{yl ind}
y_{l+1}
\leq C (2^{\frac{14}{5}})^ly_{l}^{\frac{7}{5}}.
\end{equation}

\textbf{Claim}: 
\begin{equation}\label{y_lbd}
y_l\leq y_0 B^l
\end{equation} 
 holds for all $l\geq 0$, for $B=2^{-\frac{14}{5}}$, provided that   $y_0$ is suitably small.

The argument is as follows.  Suppose (\ref{y_lbd}) is true for $l=l_0$.  By (\ref{yl ind}), we have 
 \begin{equation}
 y_{l_0+1}\leq  C(2^{\frac{14}{5}})^{l_0} (y_0 B^{l_0})^{\frac{7}{5}}
 \leq  (CB^{-1}y_{0}^{\frac{2}{5}})(2^{\frac{14}{5}}B^{\frac{2}{5}})^{l_0} y_0 B^{l_0+1}
 \leq  y_0 B^{l_0+1},
 \end{equation}
 if  $y_0$ satisfies  $CB^{-1} y_0^{\frac{2}{5}}\leq 1$. 

 If we  choose $k= A (T^{-\frac{5}{2}}\int_{B_{0}(x_1,\sqrt{T})\times [\frac{T}{4}, T]} |H|^{p_0})^{\frac{1}{2}}$, then 
 $y_0\leq CA^{-2}$ can  be  small if  $A$ is large. \textbf{Claim}  is proved.  From (\ref{y_lbd}), it follows  that $\lim_{l\rightarrow \infty}y_{l}=0$, which implies 

\begin{equation}
\begin{split}
  \sup_{ B_{0}(x_1,\frac{\sqrt{T}}{2})\times  [\frac{T}{2},T]}|H|^{\frac{p_0}{2}}(x,t)& \leq k
   \leq C (T^{-\frac{5}{2}}\int_{ B_{0}(x_1, \sqrt{T})\times [\frac{T}{4},  T]}|H|^{p_0})^{\frac{1}{2}}\\
  &\leq C||H||_{L^{p_0}(M_0)}^{\frac{p_0}{2}} T^{-\frac{3}{4}}.
  \end{split}
\end{equation}

Therefore, 
\begin{equation}
 \sup_{B_{0}(x_1, \frac{\sqrt{T}}{2})\times  [\frac{T}{2},T]}|H|
\leq C||H||_{L^{p_0}(M_0)} T^{-\frac{3}{2p_0}}.
\end{equation}
The proof is completed. 

\end{proof}

\section{The estimates of the derivatives  of the mean curvature}

Since the mean curvature and its derivatives appear in the evolution equations for the curvature and the second fundamental form,  the  estimate of mean curvatures  is  of great importance. The purpose  of this section is to  establish  $L^2$ estimates for  the mean curvature  up to third-order derivatives  by means of  the  energy estimates.  
 
\begin{lemma}\label{dH st L2}
    Under the bootstrap assumptions,  we have
    \begin{equation}
    \label{5.1}\quad\int_{\frac{T}{4}}^{T}[\int_{M_{t}} |\nabla H|^{2}]dt\leq C||H||_{L^{p_0}(M_0)}^{2} T^{\frac{3}{2}-\frac{3}{p_0}}, \end{equation} 
 \begin{equation}\label{5.2} 
 \quad\int_{M_{T}}|\nabla H|^{2}+\int_{\frac{3}{8}T}^{T}[\int_{M_{t}} |\nabla^{2} H|^{2}]dt
    \leq  C P^2  E_1^{2}||H||_{L^{p_0}(M_0)}^{2} T^{1-\frac{3}{p_0}}+C||H||_{L^{p_0}(M_0)}^{2} T^{\frac{1}{2}-\frac{3}{p_0}}.\end{equation}
    \end{lemma}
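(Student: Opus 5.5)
The plan is a pair of localized-in-time energy estimates driven by the evolution $(\partial_t-\triangle)H=-H|h|^2$ from Proposition \ref{P43}. Throughout, the pointwise decay of Theorem \ref{HNash}, $|H|\le C\|H\|_{L^{p_0}(M_0)}t^{-\frac{3}{2p_0}}$, and the monotonicity $\int_{M_t}|H|^{p_0}\le\int_{M_0}|H|^{p_0}$ from Theorem \ref{H Lp 1<p<2} supply all the size information on $H$. The bootstrap assumptions (\ref{bootd2g}) control $h$, and with it the curvature terms that arise when derivatives are commuted. Spatial cutoffs are removed at the end by letting their radius go to infinity, using the decay at infinity from Theorem \ref{t44}, as in the proof of Theorem \ref{HLpbd1}.

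\textbf{Step 1: estimate (\ref{5.1}).} From (\ref{412}) together with $\partial_t d\mu=-H^2d\mu$ we get
\[
\frac{d}{dt}\int_{M_t}H^2=-2\int_{M_t}|\nabla H|^2-2\int_{M_t}H^2|h|^2-\int_{M_t}H^4\le -2\int_{M_t}|\nabla H|^2 .
\]
Integrating over $[\frac{T}{8},T]$ against a time cutoff $\phi$ that vanishes at $\frac{T}{8}$ and equals $1$ on $[\frac{T}{4},T]$ gives
\[
\int_{T/4}^T\int_{M_t}|\nabla H|^2\le \frac{C}{T}\int_{T/8}^T\int_{M_t}H^2 .
\]
The right side is handled by interpolation: $\int_{M_t} H^2\le \|H\|_{L^\infty}^{2-p_0}\int_{M_t}|H|^{p_0}\le C\|H\|_{L^{p_0}(M_0)}^{2}\,t^{-\frac{3(2-p_0)}{2p_0}}$. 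After the time integration and multiplication by $T^{-1}$, this yields exactly $T^{\frac{3}{2}-\frac{3}{p_0}}$.

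\textbf{Step 2: estimate (\ref{5.2}).} Differentiate $\int_{M_t}|\nabla H|^2$ in time. Commuting $\partial_t$ with $\nabla$ brings in $\partial_t\Gamma=\nabla(Hh)$, and commuting $\nabla$ with $\triangle$ brings in $Rm=\bar Rm+h*h$ through the Gauss equation (\ref{Gauss}). The result is
\[
\frac{d}{dt}\int|\nabla H|^2\le -2\int|\nabla^2H|^2+C\int\big(|\bar Rm|+|h|^2\big)|\nabla H|^2+C\int|\nabla(H|h|^2)||\nabla H|+C\int |H||\nabla h||h||\nabla H| .
\]
The term $\int(|\bar Rm|+|h|^2)|\nabla H|^2$ is bounded via Hölder as $\|\,|\bar Rm|+|h|^2\|_{L^2}\|\nabla H\|_{L^4}^2$. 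Then $L^4$ is interpolated between $L^2$ and $L^6$, and (\ref{L6Sob}) is applied, giving $CPE_1\|\nabla H\|_{L^2}^{1/2}\|\nabla^2H\|_{L^2}^{3/2}$. Young's inequality absorbs the $\nabla^2H$ part, leaving $C(PE_1)^4\int|\nabla H|^2$. I would instead keep it as $CPE_1(\|\nabla^2 H\|^2+\ldots)$, using the smallness $PE_1\le\epsilon_0^{7/8}$ as in Lemma \ref{l3.5}, so that it is absorbed into $\int|\nabla^2H|^2$. I would also integrate the term $\nabla(H|h|^2)\cdot\nabla H$ by parts to $H|h|^2\triangle H$ and bound it by $\|H\|_{L^\infty}\|h\|_{L^4}^2\|\nabla^2 H\|_{L^2}$. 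This produces a source $C\|H\|_\infty^2 P^2E_1^2$, which after time integration gives the term $CP^2E_1^2\|H\|^2_{L^{p_0}}T^{1-\frac{3}{p_0}}$. Finally, with a time cutoff supported in $[\frac{T}{4},T]$ and equal to $1$ on $[\frac{3T}{8},T]$, the cutoff's derivative produces $\frac{C}{T}\int_{T/4}^T\int|\nabla H|^2$. By Step 1 this is $C\|H\|^2_{L^{p_0}}T^{\frac12-\frac3{p_0}}$, which is the second term of (\ref{5.2}).

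\textbf{Main obstacle.} I expect the hard part to be the cross terms involving $\nabla h$. One is $H\nabla h*h*\nabla H$; another is the term coming from $\partial_t\Gamma$, namely $\nabla(Hh)*\nabla H*\nabla H$. These have to be bounded with only $L^2$ control on $\nabla h$ and $L^4$ control on $h$. The plan is to place $H$ in $L^\infty$ using the decay from Theorem \ref{HNash}, put $\nabla h$ in $L^2$ and $\nabla H$ in $L^4$, and then interpolate to $\nabla^2H$ and absorb. For the term with $\nabla H\,h\,\nabla H\,\nabla H$, I would use $\|h\|_{L^4}\|\nabla H\|_{L^4}^2\|\nabla H\|_{L^\infty}$ only if needed; more likely, integrating by parts moves the derivative off $h$ onto $\nabla H$, after which the same smallness argument as above closes the estimate.
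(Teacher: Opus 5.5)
Your Step 2 has a genuine gap at the curvature term $-2\int_{M_t}\bar R_{l0m0}\nabla^lH\nabla^mH$ that comes from commuting $\nabla$ with $\triangle$.

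Any H\"older--Sobolev bound of this term, for example $\|\bar Rm\|_{L^2}\|\nabla H\|_{L^4}^2\le CPE_1\|\nabla H\|_{L^2}^{1/2}\|\nabla^2H\|_{L^2}^{3/2}$ followed by Young, leaves $C(PE_1)^4\int_{M_t}|\nabla H|^2$. Its coefficient does not decay in $t$. Integrating over $[\frac T4,T]$ and using (\ref{5.1}) gives $C(PE_1)^4\|H\|^2_{L^{p_0}(M_0)}T^{\frac32-\frac3{p_0}}$. This exceeds the right side of (\ref{5.2}) by the factor $x^2/(1+x)$ with $x=(PE_1)^2T^{1/2}$. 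It is therefore uncontrolled for $T\gg(PE_1)^{-4}$, which is exactly the regime in which Lemma \ref{l5.3} applies (\ref{5.2}).

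Absorbing via $PE_1\le\epsilon_0^{7/8}$ does not avoid this. The quantity $\|\nabla H\|_{L^2}^{1/2}\|\nabla^2H\|_{L^2}^{3/2}$ is not bounded by $\|\nabla^2H\|_{L^2}^2$ alone, and the leftover $CPE_1\int|\nabla H|^2$ has the same defect; it is not even scale-consistent with (\ref{5.2}).

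The missing idea is to integrate by parts and use the vacuum contracted Bianchi identity. It gives $\nabla^m\bar R_{m0l0}=h\ast\bar Rm$, with no derivative of the curvature. Hence
\[
\int_{M_t}\bar R_{m0l0}\nabla^mH\nabla^lH=-\int_{M_t}\bar R_{m0l0}H\nabla^m\nabla^lH-\int_{M_t}h^{mn}\bar R_{m0ln}H\nabla^lH\le\frac1{100}\int_{M_t}\big(|\nabla^2H|^2+|h|^2|\nabla H|^2\big)+C\int_{M_t}H^2|\bar Rm|^2 .
\]
The source term now carries the decay of $H$: $\int H^2|\bar Rm|^2\le CP^2E_1^2\|H\|^2_{L^{p_0}(M_0)}t^{-3/p_0}$. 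It integrates to exactly the first term of (\ref{5.2}). A generic integration by parts without Bianchi would instead bring in $\nabla\bar Rm$ and $E_2$, again giving the wrong form.

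The $h$-terms should not be lumped into $\int(|\bar Rm|+|h|^2)|\nabla H|^2$ either.
\begin{itemize}
\item The terms $-2\int|h|^2|\nabla H|^2$ and $-2\int h^2_{lm}\nabla^lH\nabla^mH$ are nonpositive and should be kept. The first absorbs the $|h|^2|\nabla H|^2$ above, and also the one produced by Cauchy--Schwarz on $-2H\nabla_l|h|^2\nabla^lH$, which leaves $C\int H^2|\nabla h|^2\le C\|H\|^2_{L^\infty}P^2E_1^2$.
\item The term $+2Hh_{lm}\nabla^lH\nabla^mH$ from the Gauss equation cancels against the $-2Hh(\nabla H,\nabla H)$ coming from $\partial_tg^{ij}=-2Hh^{ij}$.
\item Since $H$ is a scalar, $\partial_t\nabla H=\nabla\partial_tH$ has no $\partial_t\Gamma$ term. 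The obstacle you single out, $\nabla(Hh)\ast\nabla H\ast\nabla H$, does not occur.
\end{itemize}

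There is also a sign error: $\partial_tg=2Hh$ gives $\partial_td\mu=+H^2d\mu$, not $-H^2d\mu$. In Step 1 this adds $\int H^4-2\int H^2|h|^2\le\frac13\int H^4\le\frac1{2t}\int H^2$ by (\ref{61}). That is of the same form as your cutoff term. In Step 2 it adds $\int H^2|\nabla H|^2\le\frac{3}{2t}\int|\nabla H|^2$, which is handled by (\ref{5.1}) in the same way. With this correction your Step 1 is a valid alternative to the paper's argument. The paper instead combines the dissipation bound $\int_0^T\int_{M_t}|H|^{p_0-2}|\nabla H|^2\le C\int_{M_0}|H|^{p_0}$ from (\ref{Hp1}) with the pointwise decay (\ref{4.17}).
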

\begin{proof}
    Thanks to   (\ref{Hp1}),   we have

    \begin{equation}\label{HLpbd3}
    \int_{0}^{T}[\int_{M_{t}} |H|^{p_0-2}|\nabla |H||^{2}]dt \leq   C \int_{M_0}|H|^{p_0},
    \end{equation}
which together  with (\ref{4.17}) implies 
    \begin{equation}
    \begin{split}
        \int_{\frac{T}{8}}^{T}\int_{M_{t}}|\nabla |H||^{2}
    &= \int_{\frac{T}{8}}^{T}\int_{M_{t}} |H|^{2-p_0}|H|^{p_0-2}|\nabla H|^{2}\\
    &\leq C||H||_{L^{p_0}(M_0)}^{2-p_0} T^{\frac{3}{2}-\frac{3}{p_0}}\int_{0}^{T}\int_{M_{t}} |H|^{p_0-2}|\nabla H|^{2} \\
    &\leq C ||H||_{L^{p_0}(M_0)}^{2}T^{\frac{3}{2}-\frac{3}{p_0}}.
    \end{split}
    \end{equation}

   Thus,    (\ref{5.1}) follows by using the fact  that  $|\nabla H|=|\nabla |H||$ a.e..

To prove (\ref{5.2}), we need to compute the evolution equation of $\nabla H$ by differentiating the last equation in  (\ref{412}): 
\begin{equation} \label{75}
\begin{split}
  (\frac{\partial}{\partial t}-\triangle)\nabla_{l}H&=-\nabla_{l}H|h|^{2}-H\nabla_{l}|h|^{2}-R_{lm}\nabla^{m}H\\
  &=-\nabla_{l}H|h|^{2}-2H\nabla_{l}h_{ij}h^{ij}-\bar{R}_{l0m0}\nabla^{m}H+Hh_{lm}\nabla^{m}H-h^{2}_{lm}\nabla^{m}H.
\end{split}
\end{equation}

By using similar cutoff function technique as in (\ref{HLpcut}) (\ref{66}), 
\begin{equation}\label{dH L2 evo}
    \begin{split}
       \frac{d^{+}}{d t}\int_{M_{t}}|\nabla H|^{2}
       \leq & \int_{M_{t}}-2Hh_{ij}\nabla^{i}H\nabla^{j}H-2|\nabla^{2}H|^{2} -2|\nabla H|^{2}|h|^{2}\\& -2H\nabla_{l}|h|^{2}\nabla^{l}H -2\bar{R}_{m0l0}\nabla^{m}H\nabla^{l}H-2\nabla^{l}Hh_{lm}^{2}\nabla^{m}H\\& +2Hh_{ij}\nabla^{i}H\nabla^{j}H  +H^{2}|\nabla H|^{2}.
    \end{split}
\end{equation}

Note that
\begin{equation}\nabla^{m}\bar{R}_{m0l0}=D^{m}\bar{R}_{m0l0}+h^{mn}\bar{R}_{mnl0}+h^{mn}\bar{R}_{m0ln}, 
\end{equation}  which together   with the Bianchi identity gives 
\begin{equation}\nabla^{m}\bar{R}_{m0k0}=h^{ml}\bar{R}_{mlk0}+h^{ml}\bar{R}_{m0kl}.
\end{equation} 
By integration by parts, 
\begin{equation}\label{dH L2 est1}
\begin{split}
    |\int_{M_{t}}\bar{R}_{m0l0}\nabla^{m}H\nabla^{l}H |
    &=|-\int_{M_{t}}\bar{R}_{m0l0}H\nabla^{m}\nabla^{l}H+ h^{mn}\bar{R}_{m0ln}H\nabla^{l}H |\\
    &\leq \frac{1}{100}\int_{M_{t}}|\nabla^{2}H|^{2}+|\nabla H|^{2}|h|^{2}+C \int_{M_{t}} H^{2}|\bar{R}|^{2}.
    \end{split}
\end{equation}

The  Cauchy-Schwarz inequality asserts 
\begin{equation}\label{dH L2 est2}
|\int_{M_{t}}H\nabla_{l}h_{ij}h^{ij}\nabla^{l}H  |\leq  C\int_{M_{t}}H^{2}|\nabla h|^{2}d\mu_{t}+C^{-1}\int_{M_{t}}|h|^{2}|\nabla H|^{2} .\end{equation}  

Combining (\ref{dH L2 evo}) (\ref{dH L2 est1}) (\ref{dH L2 est2}), we have
\begin{equation} \label{5.11}
 \frac{d^{+}}{d t}\int_{M_{t}}|\nabla H|^{2} +\int_{M_{t}}|\nabla^{2}H|^{2}+|\nabla H|^2 |h|^2 
 \leq  C \int_{M_{t}}|H|^2(|\nabla h|^{2}+|\bar{R}|^2).
\end{equation}

Let  $\phi:\mathbb{R}\rightarrow \mathbb{R}$ be a nonnegative and non-decreasing function,   satisfying $\phi(t)=0$ when  $t\leq \frac{1}{4}$, and $\phi(t)=1$ when  $t\geq \frac{3}{8}$. 
Using (\ref{5.11}), a direct computation yields 
\begin{equation}\label{5.12}
\begin{split}
& \frac{d^{+}}{d t}\int_{M_{t}}\phi(\frac{t}{T})|\nabla H|^{2}
+\phi(\frac{t}{T})\int_{M_{t}}|\nabla^{2}H|^{2}+|\nabla H|^2 |h|^2 
 \\ \leq & C \int_{M_{t}}\phi(\frac{t}{T})|H|^2(|\nabla h|^{2}+|\bar{R}|^2)+\frac{C}{T}\sqrt{\phi(\frac{t}{T})}\int_{M_t}|\nabla H|^2\\
 \leq & C P^2E_1^2||H||_{L^{p_0}(M_0)}^{2} t^{-\frac{3}{p_0}}+\frac{C}{T}\sqrt{\phi(\frac{t}{T})}\int_{M_t}|\nabla H|^2,
 \end{split}\end{equation}
where we have used (\ref{4.17}) and bootstrap assumption (\ref{bootd2g}). 

The desired  estimate  (\ref{5.2}) follows from integrating  (\ref{5.12})  from $\frac{T}{4}$ to $T$ and using (\ref{5.1}). 

\end{proof}

\begin{lemma}
Under the bootstrap assumptions, we have:
\begin{equation}\label{5.13}
\begin{aligned}
    &\int_{M_{T}}|\nabla^{2}H|^{2}+ \int_{\frac{3T}{8}}^{T}\int_{M_{t}}|\nabla^{3}H|^{2} \\ & \leq  C ||H||_{L^{p_0}(M_0)}^2 (P||H||_{L^{p_0}(M_0)} E^{\frac{1}{2}}_1E^{\frac{1}{2}}_2{T^{-\frac{3}{2p_0}}}+T^{-1} + P^4E_1^4) (P^2E_1^2T^{1-\frac{3}{p_0}}+ T^{\frac{1}{2}-\frac{3}{p_0}})\\& \ \ \ + C||H||_{L^{p_0}(M_0)}^2 P^4E_1^3E_2 T^{1-\frac{3}{p_0}}.  \end{aligned}
\end{equation}

\end{lemma}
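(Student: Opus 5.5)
The argument is a second-order version of the energy estimate in Lemma \ref{dH st L2}. Differentiating (\ref{75}) once more and commuting derivatives gives
\[
(\partial_t-\triangle)\nabla^2 H=\nabla^2(H|h|^2)+Rm*\nabla^2H+\nabla Rm*\nabla H ,
\]
where $Rm$ is replaced through the Gauss equation by $\bar{R}m+h*h$. We then compute $\frac{d^+}{dt}\int_{M_t}|\nabla^2H|^2$, controlling the spatial infinity with the same cutoff argument as in (\ref{HLpcut}) and (\ref{66}); this is legitimate because of Theorem \ref{t44}. The outcome should be
\[
\frac{d^+}{dt}\int_{M_t}|\nabla^2H|^2+\int_{M_t}|\nabla^3H|^2\le \int_{M_t}\mathcal{E},
\]
where the error $\mathcal{E}$ is a sum of terms of three types:
\begin{itemize}
\item[(a)] $(|\bar{R}m|+|h|^2+|H||h|)\,|\nabla^2H|^2$;
\item[(b)] $(|\nabla\bar{R}m|+|h||\nabla h|)\,|\nabla H|\,|\nabla^2H|$, together with the contributions from $\nabla^2(H|h|^2)$, namely $|\nabla^2H||h|^2$, $|\nabla H||h||\nabla h|$ and $|H|(|\nabla h|^2+|h||\nabla^2h|)$, each paired with $|\nabla^2H|$;
\item[(c)] $H^2|\nabla^2H|^2$, which comes from the measure variation.
\end{itemize}

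The error terms are estimated as follows.
\begin{itemize}
\item \emph{Type (a).} By H\"older, Sobolev (\ref{L6Sob}) and the bootstrap bound (\ref{bootd2g}),
\[
\int_{M_t}(|\bar{R}m|+|h|^2)|\nabla^2H|^2\le \||\bar{R}m|+|h|^2\|_{L^{3/2}}\,\|\nabla^2H\|_{L^6}^2 .
\]
A cleaner route is to write the product as an $L^2$ norm times $\|\nabla^2H\|_{L^4}^2$ and interpolate: $\|\nabla^2 H\|_{L^4}^2\le \|\nabla^2H\|_{L^2}^{1/2}\|\nabla^3H\|_{L^2}^{3/2}$. Young's inequality then absorbs part of $\|\nabla^3H\|^2$ and leaves $P^4E_1^4\|\nabla^2H\|_{L^2}^2$. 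This is the source of the factor $P^4E_1^4$ in (\ref{5.13}).
\item \emph{Term with $\nabla\bar{R}m$.} Here we cannot afford $\nabla\bar{R}m\in L^\infty$, so we integrate by parts to move the derivative, as in (\ref{dH L2 est1}). Alternatively we estimate directly:
\[
\|\nabla \bar{R}m\|_{L^2}\|\nabla H\|_{L^\infty}\|\nabla^2H\|_{L^2}
\le \|\nabla \bar{R}m\|_{L^2}\|\nabla^2H\|_{L^2}^{3/2}\|\nabla^3 H\|_{L^2}^{1/2},
\]
using (\ref{inftySob2}) applied to $\nabla H$. Young's inequality gives $\|\nabla\bar{R}m\|_{L^2}^{4/3}\|\nabla^2H\|^2$-type terms. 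Since (\ref{bootd3g}) supplies $E_2$, the resulting mixed factor should be $E_1^3E_2$, obtained by splitting $|h||\nabla h|$ with $\|h\|_{L^6}\|\nabla h\|_{L^3}\lesssim P E_1\cdot(PE_2)^{2/3}$. The part that is not absorbed is then multiplied by $\int|\nabla H|^2$ or $\int |H|^2$-type quantities, which are bounded via (\ref{4.17}) and (\ref{5.2}).
\item \emph{Terms with $H$ undifferentiated} (including type (c)). These are bounded using $\|H\|_{L^\infty}\le C\|H\|_{L^{p_0}}t^{-3/(2p_0)}$ from (\ref{4.17}) together with $\|h\|_{L^\infty}\lesssim (PE_1)^{1/2}(PE_2)^{1/2}$, which follows from (\ref{inftySob2}) and the bootstrap bounds. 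This produces the factor
\[
P\|H\|_{L^{p_0}}E_1^{1/2}E_2^{1/2}T^{-3/(2p_0)}
\]
multiplying $\int|\nabla^2 H|^2$. The remaining pure source term $|H||\nabla^2h||\nabla^2H|$ contributes $H^2|\nabla^2h|^2\le C\|H\|^2_{L^{p_0}}t^{-3/p_0}P^2E_2^2$. This has to be reorganized into the stated $E_1^3E_2$ form, by interpolating $\nabla^2 h$ between (\ref{bootd2g}) and (\ref{bootd3g}) and absorbing into $\nabla^3H$ after one integration by parts.
\end{itemize}

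Collecting these bounds gives
\[
\frac{d^+}{dt}\int_{M_t}|\nabla^2H|^2+\frac12\int_{M_t}|\nabla^3H|^2\le C\Big(P\|H\|_{L^{p_0}}E_1^{1/2}E_2^{1/2}t^{-\frac{3}{2p_0}}+P^4E_1^4\Big)\int_{M_t}|\nabla^2H|^2+C\|H\|^2_{L^{p_0}}P^4E_1^3E_2\,t^{-\frac{3}{p_0}} .
\]
We multiply by a time cutoff $\phi(t/T)$ that vanishes for $t\le 3T/8\cdot\frac{2}{3}$, say on $[0,T/4]$, equals one after $3T/8$, and has $|\phi'|\le C\sqrt{\phi}$. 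This generates an extra term $T^{-1}\int|\nabla^2H|^2$. Integrating from $T/4$ to $T$, every coefficient multiplies $\int_{3T/8}^{T}\!\int|\nabla^2H|^2$, which is controlled by (\ref{5.2}) as $\|H\|^2_{L^{p_0}}(P^2E_1^2T^{1-3/p_0}+T^{1/2-3/p_0})$. (Slightly shifted time windows are handled by applying Lemma \ref{dH st L2} with a comparable $T$.) The final source term integrates to $\|H\|^2_{L^{p_0}}P^4E_1^3E_2T^{1-3/p_0}$. Together these give exactly the form of (\ref{5.13}).

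The main obstacle is the term containing $\nabla \bar{R}m$, together with the $\nabla^2 h$ source. Both must be handled using only $L^2$ and $L^3$ control of $\nabla\bar{R}m$ and $\nabla^2h$, and they must yield precisely the power structure $E_1^3E_2$ rather than $E_2^2$. My first attempt would be to integrate by parts once, moving the derivative onto $\nabla^2H$ to produce $\nabla^3H$, and then to use the Bianchi identity to rewrite $\mathrm{div}\,\bar{R}m$ in terms of $h*\bar{R}m$, as was done in (\ref{dH L2 est1}).
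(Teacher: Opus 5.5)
You have the paper's overall scheme: the energy inequality for $\int_{M_t}|\nabla^2H|^2$, spatial and time cutoffs, then (\ref{4.17}), (\ref{5.21}), (\ref{5.22}) and (\ref{5.2}). The gap is in how you treat the top-order error terms, and the remedy you propose would not work.

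\textbf{The $\nabla^2(H|h|^2)$ term.} If you expand $\nabla^2(H|h|^2)$ and estimate piece by piece, you are left with $H\,h*\nabla^2h*\nabla^2H$ and $H\,\nabla h*\nabla h*\nabla^2H$. These can only be controlled through $\|\nabla^2h\|_{L^2}\le PE_2$, or through $\|\nabla h\|_{L^4}^2\lesssim\|\nabla h\|_{L^2}^{1/2}\|\nabla^2h\|_{L^2}^{3/2}$. Every Young split then gives a source of the wrong shape. One example is $\|H\|_{L^\infty}\|h\|_{L^\infty}\|\nabla^2h\|_{L^2}^2\lesssim P^3\|H\|_{L^{p_0}(M_0)}E_1^{1/2}E_2^{5/2}t^{-\frac{3}{2p_0}}$, which is only linear in $\|H\|_{L^{p_0}}$. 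Another is $\|H\|^2_{L^{p_0}(M_0)}P^4E_1E_2^3T^{2-\frac{3}{p_0}}$ after time integration. Neither is dominated by the right side of (\ref{5.13}). ``Interpolating $\nabla^2h$ between (\ref{bootd2g}) and (\ref{bootd3g})'' cannot lower the $E_2$-power, because $\nabla^2h$ is the top-order quantity in (\ref{bootd3g}) and (\ref{bootd2g}) gives no control on it.

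\textbf{The $\nabla Rm$ term.} Your direct route for $\nabla Rm*\nabla H*\nabla^2H$ leaves a coefficient $(PE_2)^{4/3}$ on $\|\nabla^2H\|_{L^2}^2$. That coefficient is absent from (\ref{5.13}); it is the structure of (\ref{8.11}), one order higher. Also, the $E_1^3E_2$ factor does not come from $\|h\|_{L^6}\|\nabla h\|_{L^3}$.

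\textbf{The fix: integrate by parts before expanding.} Doing so means no $\nabla^2 h$ and no $\nabla Rm$ ever appear:
\[
-2\int_{M_t}\nabla_i\nabla_jH\,\nabla_i\nabla_j(|h|^2H)=2\int_{M_t}\triangle\nabla_jH\,\nabla_j(|h|^2H)\le\delta\|\nabla^3H\|_{L^2}^2+C_\delta\int_{M_t}\big(|h|^4|\nabla H|^2+H^2|h|^2|\nabla h|^2\big).
\]
The last integral is at most $\|H\|^2_{L^\infty}\|h\|^2_{L^\infty}\|\nabla h\|^2_{L^2}\le C\|H\|^2_{L^{p_0}(M_0)}P^4E_1^3E_2\,t^{-3/p_0}$, by (\ref{4.17}), (\ref{5.21}) and (\ref{bootd2g}). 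This is where the $E_1^3E_2$ term actually comes from. The other term satisfies $\int|h|^4|\nabla H|^2\le\|\nabla H\|^2_{L^\infty}\|h\|^4_{L^4}\le C\|\nabla^2H\|_{L^2}\|\nabla^3H\|_{L^2}P^2E_1^2$ by (\ref{5.22}), so it only feeds the $P^4E_1^4$ coefficient.

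Treat the other terms the same way:
\begin{itemize}
\item Integrate $\nabla Rm*\nabla H*\nabla^2H$ by parts to get $Rm*(\nabla H*\nabla^3H+\nabla^2H*\nabla^2H)$. No Bianchi identity is needed, since the derivative simply leaves $Rm$. Then bound $\int|Rm|^2|\nabla H|^2\le\|\nabla H\|^2_{L^\infty}\||\bar Rm|+|h|^2\|^2_{L^2}$ via (\ref{5.22}).
\item The term $(\Gamma^p_{ij})'\partial_pH$ with $\Gamma'=\nabla(Hh)$, which you dropped from the evolution equation, is handled by the same integration by parts.
\item Use $H^2|\nabla^2H|^2\le\sqrt3\,|H||h||\nabla^2H|^2$ to put the measure-variation term into the $\|H\|_{L^\infty}\|h\|_{L^\infty}$ coefficient.
\end{itemize}
With these changes the differential inequality (\ref{d2H L2 est}) has exactly the coefficients you wrote. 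Your cutoff and time-integration step, including the shifted windows for (\ref{5.2}), then yields (\ref{5.13}).
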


\begin{proof} We need to  calculate the evolution equation of $\nabla^2 H$ by differentiating  the equation (\ref{75}): 
    \begin{equation}\label{d2H evo}
    \begin{split}
        (\frac{\partial}{\partial t}-\triangle)\nabla_{i}\nabla_{j}H
       =& 2R_{ipjq}\nabla_p\nabla_qH-R_{ip}\nabla_{p}\nabla_jH-R_{jp}\nabla_{p}\nabla_iH\\& 
       +(\nabla_kR_{ij}-\nabla_iR_{jk}-\nabla_jR_{ik})\nabla_kH-\nabla_i\nabla_j(|h|^2H)-(\Gamma^{p}_{ij})^{\prime}\frac{\partial H}{\partial x^p} .  \end{split}
\end{equation}
Using (\ref{d2H evo})  and a  cutoff function technique yields   
\begin{equation}\label{d2H L2 evo}
\begin{aligned}
   \frac{d^{+}}{d t}\int_{M_{t}}|\nabla^{2}H|^{2} \leq& \int_{M_{t}}-2|\nabla^{3}H|^{2}+ \nabla Rm \ast \nabla^2 H \ast \nabla H+Rm\ast \nabla^2H \ast \nabla^2H\\ 
           &-2\nabla_i\nabla_jH \nabla_i\nabla_j(|h|^2H)+\nabla H\ast \nabla (Hh) \ast \nabla^2 H\\
        &-4 Hh_{pq}\nabla_{p}\nabla_{l}H\nabla_{q}\nabla_{l}H+H^{2}|\nabla^{2}H|^2.
\end{aligned}
\end{equation}
Now we handle the  terms in the right hand side of  (\ref{d2H L2 evo}). 
After integration by parts and applying Cauchy-Schwarz inequality, we have 
\begin{equation}\label{d2H est1}
\begin{split}
    \int_{M_{t}} \nabla Rm \ast \nabla H \ast \nabla^{2}H &\leq \int_{M_{t}} Rm \ast \nabla H \ast \nabla^{3}H+ Rm \ast \nabla^2 H \ast \nabla^{2}H\\
    & \leq \int_{M_{t}}\delta  |\nabla^3 H|^2+C_{\delta} |Rm|^2|\nabla H|^2+ Rm \ast \nabla^2 H \ast \nabla^{2}H,\end{split}
    \end{equation}
\begin{equation}\label{d2H est2}
    -2 \int_{M_{t}} \nabla_i\nabla_jH\nabla_i\nabla_j(|h|^2H)
     \leq  \int_{M_{t}}\delta |\nabla^3 H|^2+C_{\delta} (|\nabla H|^2 |h|^4+|\nabla h|^2|h|^2|H|^2),
    \end{equation}
\begin{equation}\label{d2H est3}
    \int_{M_{t}} \nabla H\ast \nabla (Hh)\ast \nabla^2H
     \leq  \int_{M_{t}}\delta |\nabla^3 H|^2+C_{\delta} |\nabla H|^2 |h|^2|H|^2+C|h||H||\nabla^2 H|^2.
    \end{equation}

On the other hand, by H$\ddot{o}$lder inequality and Sobolev embedding theorem, we have 
\begin{equation}\label{d2H est4}
\begin{split}
    |\int_{M_{t}}Rm \ast \nabla^2H \ast \nabla^2H| & \leq  C (\int_{M_t}|Rm|^2)^{\frac{1}{2}}(\int_{M_t} |\nabla^{2}H|^{2})^{\frac{1}{4}}\int_{M_t} |\nabla^{2}H|^{6})^{\frac{1}{4}} \\ & \leq CPE_1 (\int_{M_t}|\nabla^{2}H|^{2})^{\frac{1}{4}}(\int_{M_t}|\nabla^{3}H|^{2})^{\frac{3}{4}}\\
    & \leq \delta \int_{M_t}|\nabla^3 H|^2 +C_{\delta}P^4E_1^4 \int_{M_t}|\nabla^2 H|^2.
    \end{split}
\end{equation}

Combining (\ref{d2H L2 evo})(\ref{d2H est1})(\ref{d2H est2})(\ref{d2H est3}) and (\ref{d2H est4}), we obtain 
\begin{equation}\label{d2H L2 est}
\begin{aligned}
   \frac{d^{+}}{d t}\int_{M_{t}}|\nabla^{2}H|^{2} \leq & -\frac{3}{2}||\nabla^{3}H||_{L^2}^{2}+C\max_{M_t}|\nabla H|^2 \cdot ||Rm|+|h|^2||_{L^2}^2\\
   &+C (\max_{M_t}|H||h|+P^4E_1^4)\cdot  ||\nabla^2 H||_{L^2}^2+C \max_{M_t}|H|^2|h|^2 \cdot||\nabla h||_{L^2}^2\\
   \leq &-||\nabla^{3}H||_{L^2}^{2}+C||H||_{L^{p_0}(M_0)}^2 P^4E_1^3E_2 t^{-\frac{3}{p_0}}\\
   &+C(P||H||_{L^{p_0}(M_0)} E^{\frac{1}{2}}_1E^{\frac{1}{2}}_2{T^{-\frac{3}{2p_0}}}+P^4E_1^4)||\nabla^2 H||_{L^2}^2,\end{aligned}
\end{equation}
where we have used (\ref{bootd2g}) (\ref{inftySob2}) and 
\begin{equation} \label{5.21}
||h||_{L^{\infty}(M_t)}\leq C||\nabla h||^{\frac{1}{2}}_{L^2(M_t)}||\nabla^2 h||^{\frac{1}{2}}_{L^2(M_t)} \leq CPE^{\frac{1}{2}}_1E^{\frac{1}{2}}_2, 
\end{equation}
\begin{equation} \label{5.22}
||\nabla H||^2_{L^{\infty}(M_t)}\leq C||\nabla^2 H||_{L^2(M_t)}||\nabla^3 H||_{L^2(M_t)}.
\end{equation} 

The result  (\ref{5.13}) follows from  integrating  (\ref{d2H L2 est}) and exploiting   similar cutoff function technique as in (\ref{5.12}).  

\end{proof}

\begin{lemma} \label{l5.3}
 Under bootstrap assumptions, we have: 

\begin{equation} \label{5.23}
\int_{0}^{\hat{T}} ||H||_{L^{\infty}}||h||_{L^{\infty}} \leq CP \epsilon_0^{\frac{p_0}{3-p_0}},\end{equation}
\begin{equation}\label{5.24}
    \int^{\hat{T}}_{0} ||\nabla H||_{L^\infty}ds \leq  CP^{5}\epsilon_0.
      \end{equation}
     
\end{lemma}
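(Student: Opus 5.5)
The plan is to split each time integral at a crossover time. Before that time I use bounds coming from the initial data (through the maximum principle or unweighted energy estimates). After it I use the decaying bounds of Theorem \ref{HNash} and Lemmas \ref{dH st L2}--5.2, and I choose the crossover to balance the two contributions. Throughout, write $A=\|H\|_{L^{p_0}(M_0)}$ and $K=E_1E_2$.

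\emph{Proof of (\ref{5.23}).}
\begin{itemize}
\item By (\ref{5.21}), $\|h\|_{L^\infty(M_t)}\le CPK^{1/2}$ uniformly in $t$.
\item For $\|H\|_{L^\infty}$ I use two bounds:
  \begin{itemize}
  \item for small $t$, the bound $\sup_{M_t}|H|\le\sup_{M_0}|H|\le CK^{1/2}$, which comes from (\ref{61}) together with (\ref{inftySob2}) on $M_0$, since $\|\nabla H\|_{L^2}\le C\|\nabla h\|_{L^2}$ and similarly for $\nabla^2 H$;
  \item for large $t$, the bound $\|H\|_{L^\infty}\le CAt^{-3/(2p_0)}$ from (\ref{4.17}).
  \end{itemize}
\item Since $3/(2p_0)>1$, the tail is integrable, and
\[
\int_0^{\hat T}\|H\|_{L^\infty}\|h\|_{L^\infty}\le CPK^{1/2}\bigl(t_*K^{1/2}+A\,t_*^{1-\frac{3}{2p_0}}\bigr).
\]
\item Choosing $t_*=(AK^{-1/2})^{2p_0/3}$ gives the bound
\[
CP\,A^{2p_0/3}K^{1-p_0/3}=CP\bigl(AK^{\frac{3-p_0}{2p_0}}\bigr)^{2p_0/3}\le CP\epsilon_0^{2p_0/3}
\]
by (\ref{HL_p1}).
\item Since $p_0<\frac32$, we have $\frac{2p_0}{3}\ge\frac{p_0}{3-p_0}$, so this is at most $CP\epsilon_0^{p_0/(3-p_0)}$ for $\epsilon_0<1$. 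This proves (\ref{5.23}).
\end{itemize}

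\emph{Proof of (\ref{5.24}).}
\begin{itemize}
\item Start from the interpolation (\ref{5.22}), $\|\nabla H\|_{L^\infty}\le C\|\nabla^2H\|_{L^2}^{1/2}\|\nabla^3H\|_{L^2}^{1/2}$.
\item Work on dyadic intervals $[T/2,T]$. By Cauchy--Schwarz in time,
\[
\int_{T/2}^T\|\nabla H\|_{L^\infty}\le CT^{1/2}\Bigl(\sup_{[T/2,T]}\|\nabla^2H\|_{L^2}^2\Bigr)^{1/4}\Bigl(\int_{3T/8}^T\|\nabla^3H\|_{L^2}^2\Bigr)^{1/4}.
\]
\item Both factors are controlled by (\ref{5.13}) (applied with $T$ replaced by each $s\in[T/2,T]$). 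Using (\ref{5.13}) and (\ref{5.2}), the right-hand side is a sum of terms of the form $A\cdot(\text{powers of }P,E_1,E_2,A)\cdot T^{-\gamma}$, each with $\gamma>0$ for large $T$. These terms come from $T^{1/2}\cdot T^{-1/2}\cdot T^{\frac14-\frac{3}{2p_0}}$ and similar products, and every exponent is negative because $3/p_0>2$.
\item Summing over dyadic $T\ge t_1$ therefore gives a geometric series bounded by its first term.
\end{itemize}

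For the short-time range $[0,t_1]$ I redo the energy estimates (\ref{5.11}) and (\ref{d2H L2 est}) without time cutoff, starting from the initial data.
\begin{itemize}
\item This gives $\sup_t\|\nabla H\|_{L^2}^2\lesssim\|\nabla H\|_{L^2(M_0)}^2+\|H\|_{L^\infty(M_0)}(\ldots)$.
\item It also gives $\sup_t\|\nabla^2H\|^2+\int\|\nabla^3H\|^2\lesssim\|\nabla^2H\|_{L^2(M_0)}^2+K\|H\|_{L^\infty(M_0)}(\ldots)$, up to Gronwall factors $e^{CP^4E_1^4t_1}$, which are harmless after rescaling.
\item The product of these two quantities is exactly what (\ref{dHL_2}) bounds by $\epsilon_0^4E_1^8$. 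Hence
\[
\int_0^{t_1}\|\nabla H\|_{L^\infty}\le Ct_1^{1/2}(\ldots)^{1/4}(\ldots)^{1/4}\lesssim t_1^{1/2}\epsilon_0E_1^2.
\]
\end{itemize}

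The crossover $t_1$ is then chosen as a power of $E_1$ (of order $E_1^{-4}$, natural by scaling) so that the short-time piece is at most $C\epsilon_0$. With that choice, (\ref{HL_p2}) (the condition $AE_1^{6/p_0-2}<\epsilon_0$) makes the dyadic tail evaluated at $t_1$ at most $CP^5\epsilon_0$. The $P^5$ accumulates from the $P^4E_1^4$ and $P^2E_1^2$ factors in (\ref{5.13}) under the fourth roots.

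\emph{Main obstacle.} The main difficulty is the exponent bookkeeping: verifying that every term of (\ref{5.13}), after the $T^{1/2}$ and fourth-root manipulations, has a negative power of $T$, and that at $T=t_1$ each term reduces to a scaling-invariant combination dominated by (\ref{HL_p1}) or (\ref{HL_p2}) times at most $P^5$. If some term fails to close, I would first try choosing the crossover differently for that term, since only $\sup_t\|H\|_{L^\infty}\le\|H\|_{L^\infty(M_0)}$ is needed at short times.
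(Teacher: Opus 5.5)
Your proof of (\ref{5.23}) is correct and differs only cosmetically from the paper's. The paper bounds $\int_0^a\|H\|_{L^\infty}dt\le C\sqrt a$ using the $(2s/3)^{-1/2}$ decay in (\ref{61}), so it never needs $\sup_{M_0}|H|\le C(E_1E_2)^{1/2}$; your version even gives the better power $\epsilon_0^{2p_0/3}$. For (\ref{5.24}) your architecture is the paper's: a crossover time, unweighted energy estimates from the data closed by (\ref{dHL_2}) before it, and a dyadic decomposition with (\ref{5.2}), (\ref{5.13}) after it. However, two steps fail as written.

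\textbf{The dyadic H\"older step.} With $\sup_{[T/2,T]}\|\nabla^2H\|_{L^2}^2$ in the first factor, the correct power is $T^{3/4}$, not $T^{1/2}$. You can see this by testing on time-independent norms, or by noting that the left side is invariant under parabolic rescaling while your right side is not. Write $A=\|H\|_{L^{p_0}(M_0)}$. With $T^{3/4}$, the $P^4E_1^4$-contribution $A^2P^6E_1^6T^{1-\frac{3}{p_0}}$ of (\ref{5.13}) enters both factors and gives
\[
T^{3/4}\bigl(A^2P^6E_1^6T^{1-\frac{3}{p_0}}\bigr)^{1/2}=AP^3E_1^3\,T^{\frac54-\frac{3}{2p_0}} .
\]
Since $\frac54-\frac{3}{2p_0}\ge 0$ for $p_0\in[\frac65,\frac32)$, the dyadic bounds do not sum in part of the admissible range. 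The fix is what the paper does in (\ref{5.35}), and it is also the form you implicitly use on $[0,t_1]$. Take no sup; instead use
\[
\int_I\|\nabla^2H\|_{L^2}^{1/2}\|\nabla^3H\|_{L^2}^{1/2}\,dt\le |I|^{1/2}\Bigl(\int_I\|\nabla^2H\|_{L^2}^2\Bigr)^{1/4}\Bigl(\int_I\|\nabla^3H\|_{L^2}^2\Bigr)^{1/4},
\]
and bound the $\nabla^2H$ factor by the time-integrated estimate (\ref{5.2}), which carries no $P^4E_1^4$ loss. The exponents then become $1-\frac{3}{2p_0}$, $\frac34-\frac{3}{2p_0}$ and $\frac78-\frac{3}{2p_0}$, all negative.

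\textbf{The crossover time.} It must be $t_1=(PE_1)^{-4}$, not merely of order $E_1^{-4}$. The integrating factor produced by the $P^4E_1^4\|\nabla^2H\|_{L^2}^2$ term in (\ref{d2H L2 est}) is $e^{CP^4E_1^4t_1}$. This is $e^{C}$ for $t_1=(PE_1)^{-4}$, but $e^{CP^4}=e^{C\epsilon_0^{-1/2}}$ for $t_1=E_1^{-4}$, and no rescaling removes $P$.

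Also, the source terms become linear in $\|H\|_{L^\infty(M_0)}$, which is what makes the product match (\ref{dHL_2}), only through the decay in (\ref{61}):
\[
\int_0^{t_1}\sup_{M_t}|H|^2\,dt\le C\log\bigl(1+\|H\|^2_{L^\infty(M_0)}t_1\bigr)\le C\|H\|_{L^\infty(M_0)}\,t_1^{1/2} .
\]
With $t_1=(PE_1)^{-4}$, the prefactors $P^2E_1^2$ and $P^4E_1^3E_2$ then turn into $1$ and $P^2E_1E_2$, as in (\ref{5.31}) and (\ref{5.33}). The bare bound $\sup_{M_t}|H|\le\|H\|_{L^\infty(M_0)}$ would instead give a term quadratic in $\|H\|_{L^\infty(M_0)}$, which is not of the form in (\ref{dHL_2}).

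Finally, some tail terms, such as $AE_1^{\frac{6}{p_0}-\frac{11}{4}}E_2^{\frac14}$, are controlled only by interpolating between (\ref{HL_p1}) and (\ref{HL_p2}), with weight $\frac{p_0}{2(3-p_0)}$ on (\ref{HL_p1}). Neither condition alone suffices.
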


\begin{proof}
    Note that from (\ref{61})(\ref{4.17}),  we have 
\begin{equation}\label{5.25}
\int_{0}^{\hat{T}} ||H||_{L^{\infty}}\leq C\int_{0}^{a}{\frac{ds}{\sqrt{\frac{2s}{3}}}}+C||H||_{L^{p_0}(M_0)}  \int_{a}^{\infty} s^{-\frac{3}{2p_0}}ds\leq C \sqrt{a}+C ||H||_{L^{p_0}(M_0)} a^{1-\frac{3}{2p_0}}
\end{equation}
since we require  $p_0\in (1,\frac{3}{2})$.   Let $a= ||H||_{L^{p_0}(M_0)}^{\frac{2p_0}{3-p_0}}$ in (\ref{5.25}), we obtain:

\begin{equation}\label{5.26}
\int_{0}^{\hat{T}} ||H||_{L^{\infty}}\leq C||H||_{L^{p_0}(M_0)}^{\frac{p_0}{3-p_0}},
\end{equation}
which together with (\ref{5.21})(\ref{HL_p1}) gives (\ref{5.23}).

We  now turn  to   the proof of  (\ref{5.24}).    We need to use (\ref{5.22}). 

We divide the argument into two cases. Case i)  $\hat{T}\geq (PE_1)^{-4}$, and Case ii)   $\hat{T} < (PE_1)^{-4}$.

In Case i),  for any $a \leq  \hat{T}$, there exists $k\in \mathbb{Z}_{+}$ with $\hat{T}\in [2^{k}a, 2^{k+1}a)$. Then    
\begin{equation}
   \int^{\hat{T}}_{0} ||\nabla^2 H||^{\frac{1}{2}}_{L^2}||\nabla^3 H||^{\frac{1}{2}}_{L^2}ds\leq I_1+I_2
      \end{equation}
      where 
\begin{equation} \label{5.28}
   I_1= a^{\frac{1}{2}} (\int^a_{0} ||\nabla^2 H||^{2}_{L^2}ds)^{\frac{1}{4}}(\int_{0}^a||\nabla^3 H||^{2}_{L^2}ds)^{\frac{1}{4}},
      \end{equation}   
\begin{equation} \label{5.29}
  \begin{split}
 I_2= &  \sum_{i=0}^{k-1} 2^{\frac{i}{2}}a^{\frac{1}{2}} (\int^{2^{i+1}a}_{2^ia} ||\nabla^2 H||^2_{L^2}ds)^{\frac{1}{4}} (\int^{2^{i+1}a}_{2^ia} |\nabla^3 H||^2_{L^2} ds)^{\frac{1}{4}}\\& + 2^{\frac{k}{2}}a^{\frac{1}{2}}(\int^{\hat{T}}_{2^ka} ||\nabla^2 H||^2_{L^2}ds)^{\frac{1}{4}} (\int^{\hat{T}}_{2^ka} |\nabla^3 H||^2_{L^2} ds)^{\frac{1}{4}}.
      \end{split}
      \end{equation}      
      
  On the other hand, from (\ref{dH L2 evo}) we have 

\begin{equation}\label{5.30}
 \frac{d^{+}}{d t}\int_{M_{t}}|\nabla H|^{2} +\int_{M_{t}}|\nabla^{2}H|^{2}+|\nabla H|^2 |h|^2 
 \leq  C P^2 E_1^2 ({||H||^{-2}_{L^{\infty}(M_0)}+t})^{-1}.
 \end{equation}

  Choose $a=(PE_1)^{-4}$.   Integrating (\ref{5.30}) from $0$ to $a$, 
 \begin{equation}\label{5.31} 
 \begin{split}
 ||\nabla H||^2_{L^2(M_a)}+\int_{0}^{a} ||\nabla^{2}H||_{L^2}^{2} \leq & C ||\nabla H||^2_{L^2(M_0)}+ CP^2E_1^2 \log (1+||H||^{2}_{L^{\infty}(M_0)}a)\\
 \leq & C ||\nabla H||^2_{L^2(M_0)}+C ||H||_{L^{\infty}(M_0)},
 \end{split}
 \end{equation}
where we have used $\log (1+x) \leq \frac{1}{\alpha} x^{\alpha}$ for $x\geq 0$ and $0<\alpha\leq 1$. 

From (\ref{d2H L2 est}), we have 
\begin{equation} \label{5.32}
\begin{aligned}
  & \frac{d^{+}}{d t}[ e^{-C\int_0^t||H h||_{L^{\infty}(M_s)}ds-CP^4E_1^4t }\int_{M_{t}}|\nabla^{2}H|^{2}]\\ 
  &  \leq e^{-C\int_0^t||H h||_{L^{\infty}(M_s)}ds-CP^4E_1^4t }[-\frac{1}{2}\int_{M_{t}}|\nabla^{3}H|^{2}+CP^4E_1^3E_2 ({||H||^{-2}_{L^{\infty}(M_0)}+t})^{-1}].\end{aligned}
\end{equation}  

Integrating (\ref{5.32}) from $0$ to $a$ gives 
\begin{equation} \label{5.33}
\begin{aligned}
   \int_{0}^a||\nabla^{3}H||_{L^2}^{2} & \leq C ||\nabla^2 H||^2_{L^2(M_0)}+CP^4E_1^3E_2 \log (1+||H||^{2}_{L^{\infty}(M_0)}a)\\
   & \leq  C ||\nabla^2 H||^2_{L^2(M_0)}+CP^2E_1E_2||H||_{L^{\infty}(M_0)}.   \end{aligned}
\end{equation} 
Combining (\ref{5.31}) and (\ref{5.33}), we have 
\begin{equation} \label{5.34}
  I_1 \leq P^2 (||\nabla H||^2_{L^2(M_0)}+ ||H||_{L^{\infty}(M_0)})^{\frac{1}{4}}( ||\nabla^2 H||^2_{L^2(M_0)}+E_1E_2||H||_{L^{\infty}(M_0)})^{\frac{1}{4}}E_1^{-2}.
      \end{equation}

Now we proceed to handle $I_2$. 

Owing to  (\ref{5.2}) and (\ref{5.13}), we  obtain  

\begin{equation} \label{5.35}
\begin{aligned}
& (\int_{2^ia}^{2^{i+1 }a} ||\nabla^{2}H||^2_{L^2}dt)^{\frac{1}{4}}(\int_{2^ia}^{2^{i+1}a}||\nabla^{3}H||_{L^2}^{2}dt)^{\frac{1}{4}} (2^i a)^{\frac{1}{2}}\\ & \leq   C(P||H||_{L^{p_0}(M_0)} E^{\frac{1}{2}}_1E^{\frac{1}{2}}_2 a^{-\frac{3}{2p_0}}+ P^4E_1^4)^{\frac{1}{4}}\\ & \ \ \times (||H||_{L^{p_0}(M_0)}^2 P^{2}E_1^{2}2^{(1-\frac{3}{p_0})i}a^{1-\frac{3}{p_0}}+||H||_{L^{p_0}(M_0)}^2 2^{i(\frac{1}{2}-\frac{3}{p_0})}a^{\frac{1}{2}-\frac{3}{p_0}})^{\frac{1}{2}}(2^i a)^{\frac{1}{2}}\\ & \ + C(||H||_{L^{p_0}(M_0)}^2 P^4E_1^3E_2 2^{i(1-\frac{3}{p_0})}a^{1-\frac{3}{p_0}})^{\frac{1}{4}}\\ & \ \ \times (||H||_{L^{p_0}(M_0)}^2 P^{2}E_1^{2}2^{(1-\frac{3}{p_0})i}a^{1-\frac{3}{p_0}}+||H||_{L^{p_0}(M_0)}^2 2^{i(\frac{1}{2}-\frac{3}{p_0})}a^{\frac{1}{2}-\frac{3}{p_0}})^{\frac{1}{4}} (2^i a)^{\frac{1}{2}}\\
& \leq  C ||H||_{L^{p_0}(M_0)} [(P^{\frac{1}{4}}||H||_{L^{p_0}(M_0)}^{\frac{1}{4}}E_1^{\frac{1}{8}}E_2^{\frac{1}{8}}a^{-\frac{3}{8p_0}}+PE_1)PE_1+P^{\frac{3}{2}}E_1^{\frac{5}{4}}E_2^{\frac{1}{4}}]a^{1-\frac{3}{2p_0}}2^{(1-\frac{3}{2p_0})i}\\
&\ +C||H||_{L^{p_0}(M_0)}(P^{\frac{1}{4}}||H||_{L^{p_0}(M_0)}^{\frac{1}{4}}E_1^{\frac{1}{8}}E_2^{\frac{1}{8}}a^{-\frac{3}{8p_0}}+PE_1)a^{\frac{3}{4}-\frac{3}{2p_0}}2^{(\frac{3}{4}-\frac{3}{2p_0})i}\\ & \ +C ||H||_{L^{p_0}(M_0)} P E_1^{\frac{3}{4}}E_2^{\frac{1}{4}}a^{\frac{7}{8}-\frac{3}{2p_0}}2^{(\frac{7}{8}-\frac{3}{2p_0})i}.\end{aligned}
\end{equation}  
Because of   $p_0\in (1,\frac{3}{2})$, we have 
\begin{equation} \label{5.36}
\begin{aligned}
I_2 \leq & C ||H||_{L^{p_0}(M_0)} [(P^{\frac{1}{4}}||H||_{L^{p_0}(M_0)}^{\frac{1}{4}}E_1^{\frac{1}{8}}E_2^{\frac{1}{8}}a^{-\frac{3}{8p_0}}+PE_1)PE_1+P^{\frac{3}{2}}E_1^{\frac{5}{4}}E_2^{\frac{1}{4}}]a^{1-\frac{3}{2p_0}}\\
& +C||H||_{L^{p_0}(M_0)}(P^{\frac{1}{4}}||H||_{L^{p_0}(M_0)}^{\frac{1}{4}}E_1^{\frac{1}{8}}E_2^{\frac{1}{8}}a^{-\frac{3}{8p_0}}+PE_1)a^{\frac{3}{4}-\frac{3}{2p_0}}\\ & +C ||H||_{L^{p_0}(M_0)} P E_1^{\frac{3}{4}}E_2^{\frac{1}{4}}a^{\frac{7}{8}-\frac{3}{2p_0}}\\
\leq & C P^{5}(||H||_{L^{p_0}(M_0)}^{\frac{5}{4}}E_1^{\frac{15}{2p_0}-\frac{23}{8}}E_2^{\frac{1}{8}}+||H||_{L^{p_0}(M_0)}E_1^{\frac{6}{p_0}-2}+||H||_{L^{p_0}(M_0)}E_1^{\frac{6}{p_0}-\frac{11}{4}}E_2^{\frac{1}{4}}).
\end{aligned}
\end{equation}

Consequently,  from  (\ref{5.34}) (\ref{5.36}) (\ref{HL_p1}) (\ref{HL_p2}) and (\ref{dHL_2}),  we obtain \begin{equation}
  \begin{split}
  &  \int^{\hat{T}}_{0} ||\nabla^3 H||^{\frac{1}{2}}_{L^2}||\nabla^2 H||^{\frac{1}{2}}_{L^2}ds\\& \leq C P^2 (||\nabla H||^2_{L^2(M_0)}+ ||H||_{L^{\infty}(M_0)})^{\frac{1}{4}}( ||\nabla^2 H||^2_{L^2(M_0)}+E_1E_2||H||_{L^{\infty}(M_0)})^{\frac{1}{4}}E_1^{-2}\\& \ \ \ +CP^5 (||H||_{L^{p_0}(M_0)}^{\frac{5}{4}}E_1^{\frac{15}{2p_0}-\frac{23}{8}}E_2^{\frac{1}{8}}+||H||_{L^{p_0}(M_0)}E_1^{\frac{6}{p_0}-2}+||H||_{L^{p_0}(M_0)}E_1^{\frac{6}{p_0}-\frac{11}{4}}E_2^{\frac{1}{4}})\\
  & \leq CP^{5}(\epsilon_0+\epsilon_0^{\frac{5}{4}}) +C P^2 \epsilon_0\\
  & \leq CP^5 \epsilon_0.\end{split}
      \end{equation} 
 In   Case ii), the integral $\int_0^{\hat{T}}||\nabla H||_{L^{\infty}}dt$ can be bounded by the right hand side of (\ref{5.34}).  
   This completes the proof.
\end{proof}

\section{Curvature and second fundamental form estimates}

In this section,  using the estimates obtained in Sections 4 to 7,  we improve the estimates  for the   first-order derivatives  of  the curvature and the  second-order derivatives  of the second fundamental form.  

\subsection{Curvature estimates}
\begin{proposition}\label{R0 L2}
    Under the bootstrap assumptions,  we have 
    \begin{equation} 
    ||\bar{R}_{i0j0}||^2_{L^{\infty}_{t}L^{2}(M_{t})}+||\bar{R}_{ijk0}||^2_{L^{\infty}_{t}L^{2}(M_{t})}\leq C(1+P^{7}\epsilon_0+P^3 \epsilon_0^{\frac{p_0}{3-p_0}})E_1^2.\end{equation}
\end{proposition}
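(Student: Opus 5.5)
We will prove the bound with an $L^2$ energy estimate for the spacetime curvature along $M_t$, using the Bel–Robinson structure (equivalently, the Bianchi identities written in the frame $\{N,e_i\}$). The constants $P^7\epsilon_0$ and $P^3\epsilon_0^{p_0/(3-p_0)}$ suggest the error terms are controlled by the two integrals of Lemma \ref{l5.3}, namely $\int_0^{\hat T}\|\nabla H\|_{L^\infty}$ and $\int_0^{\hat T}\|H\|_{L^\infty}\|h\|_{L^\infty}$.

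\textbf{Step 1: the energy and its evolution.} Set
\[
Q(t)=\int_{M_t}\bigl(|\bar R_{i0j0}|^2+|\bar R_{ijk0}|^2\bigr).
\]
This is comparable to $\int_{M_t}Q(N,N,N,N)$, where $Q$ is the Bel–Robinson tensor, since in vacuum $\bar Rm$ is determined by its electric and magnetic parts. The surfaces $M_t$ move with normal speed $H$ in the direction $N$. The Stokes theorem over $\mathcal M_t$, or a direct computation using $\partial_t d\mu=-H^2d\mu$ (from Proposition \ref{P43}) and $\partial_t N=\nabla H$ (from the proof of Proposition \ref{P42}), then gives
\[
\frac{d}{dt}Q=\int_{M_t}\Bigl(H\,\bar R*\bar R*h+\nabla H*\bar R*\bar R\Bigr).
\]
The divergence term $D^\alpha Q_{\alpha\beta\gamma\delta}$ vanishes in vacuum. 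What survives comes from the deformation tensor of $HN$: the component $D_iN=h$ is multiplied by the lapse $H$, and the lapse gradient contributes the $\nabla H$ term. The point to stress is that $h$ appears only multiplied by $H$. There is no bare $\bar R*h*\bar R$ term, because the normal velocity is $H$ rather than $1$.

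\textbf{Step 2: integrate in time.} Before integrating, we justify discarding boundary terms at infinity by the cutoff technique of (\ref{HLpcut}) together with the decay estimates of Section 3 and Theorem \ref{t44}. We then have
\[
\frac{d}{dt}Q\le C\bigl(\|H\|_{L^\infty}\|h\|_{L^\infty}+\|\nabla H\|_{L^\infty}\bigr)Q.
\]
One route is Gronwall. Combined with Lemma \ref{l5.3}, it gives
\[
Q(t)\le Q(0)\exp\bigl(CP\epsilon_0^{p_0/(3-p_0)}+CP^5\epsilon_0\bigr).
\]
To reproduce the stated linear form, it is cleaner to estimate directly:
\[
Q(t)\le Q(0)+C\sup_s Q(s)\int_0^t\bigl(\|Hh\|_{L^\infty}+\|\nabla H\|_{L^\infty}\bigr).
\]
Inserting the bootstrap bound $\sup_s Q\le P^2E_1^2$ from (\ref{bootd2g}) yields $Q\le E_1^2+C(P^3\epsilon_0^{p_0/(3-p_0)}+P^7\epsilon_0)E_1^2$, which is exactly the claimed bound. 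Here $Q(0)\le E_1^2$ comes from (\ref{E1}).

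\textbf{Main obstacle.} The delicate step is the derivation in Step 1. We must verify carefully that every term in $\partial_t$ of the frame components involves either $H h$ or $\nabla H$. This requires computing the evolution of $\bar R_{i0j0}$ and $\bar R_{ijk0}$ under (\ref{412}), including the contributions from rotating the normal frame, from the metric evolution $2Hh_{ij}$, and from the volume form. We then use the Bianchi identity, as in the formula for $\nabla^m\bar R_{m0k0}$ in Lemma \ref{dH st L2}, to convert the spatial divergence terms $H\nabla\bar R$ into a total derivative. Integrating that total derivative by parts moves the derivative onto $H$, producing $\nabla H*\bar R*\bar R$. Any leftover term of the form $H\nabla\bar R*\bar R$ that is not a divergence would be fatal, so this cancellation is the crucial point. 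It is the null structure the paper alludes to, and the Bel–Robinson formalism is the way I would establish it.
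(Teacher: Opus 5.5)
Your overall route is the paper's. You differentiate the frame components along the flow, use the Bianchi identities so that the top-order terms $H\,\nabla\bar Rm*\bar Rm$ assemble into a spatial divergence, and bound the remainder by $C(\|\nabla H\|_{L^\infty}+\|H\|_{L^\infty}\|h\|_{L^\infty})\|\bar Rm\|_{L^2}^2$. You then integrate with Lemma \ref{l5.3}. Your linear Step~2 with (\ref{bootd2g}) is exactly the paper's last step, and the Gronwall variant also works.

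\textbf{The gap is in the energy you chose.} The principal parts of the evolution are $\partial_t\bar R_{i0j0}=H\nabla_k\bar R_{ikj0}+\cdots$ and $\partial_t\bar R_{ijk0}=H(\nabla_j\bar R_{i0k0}-\nabla_i\bar R_{j0k0})+\cdots$. These come from $D^\alpha\bar R_{\alpha i0j}=0$ and the second Bianchi identity; compare (\ref{R00 evo}), (\ref{R0 evo}). So for $a|\bar R_{i0j0}|^2+b|\bar R_{ijk0}|^2$ the top-order part of the time derivative is
\[
2aH\,\bar R_{i0j0}\nabla_k\bar R_{ikj0}+4bH\,\bar R_{ikj0}\nabla_k\bar R_{i0j0}.
\]
This equals $\nabla_k(4bH\bar R_{ikj0}\bar R_{i0j0})-4b\nabla_kH\,\bar R_{ikj0}\bar R_{i0j0}$ only when $a=2b$. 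With your $a=b=1$ the term $2H\bar R_{ikj0}\nabla_k\bar R_{i0j0}$ is left over and is not a divergence. It is precisely the fatal $H\nabla\bar R*\bar R$ term you warned about, so the identity you state for $\frac{d}{dt}Q$ is false for your $Q$.

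The paper uses $\int_{M_t}2|\bar R_{i0j0}|^2+|\bar R_{ijk0}|^2$, which is the Bel--Robinson density up to a constant. With $E_{ij}=\bar R_{i0j0}$, $B_{ij}=\tfrac12\epsilon_{ikl}\bar R_{klj0}$ and full tensor norms, $|B|^2=\tfrac12|\bar R_{ijk0}|^2$. Hence $|E|^2+|B|^2=\tfrac12(2|\bar R_{i0j0}|^2+|\bar R_{ijk0}|^2)$, whereas your $Q$ is $|E|^2+2|B|^2$. Being comparable to $Q(N,N,N,N)$ is not enough. You must run the evolution computation for the exact Bel--Robinson combination, and only afterwards pass to your $Q$ (or to $\|\bar Rm\|_{L^2}$) by comparability.

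Two smaller points:
\begin{itemize}
\item Since $\partial_t g_{ij}=2Hh_{ij}$, one has $\partial_t d\mu=+H^2d\mu$, not $-H^2d\mu$. This is harmless, because $H^2|\bar Rm|^2\le C|H||h||\bar Rm|^2$.
\item The Stokes argument over $\mathcal M_t$ needs care. The lapse of the flow is $H$, which may vanish or change sign, so the surfaces $M_s$ need not foliate the swept region and $N$ is not a spacetime vector field there. The direct pointwise computation on $M_0\times[0,t]$, which is what the paper does, avoids this.
\end{itemize}
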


\begin{proof}
By using  Bianchi identity and direct computations, we find 

\begin{equation}\label{R00 evo}
        \begin{aligned}
        \frac{\partial}{\partial t}\bar{R}_{i0j0}=& H \nabla_k\bar{R}_{ikj0}+ (2\bar{R}_{k0j0}h_{ik}+\bar{R}_{i0k0}h_{kj}-\bar{R}_{ikjl}h_{kl}-\bar{R}_{i0j0}H)H\\& + (\bar{R}_{ikj0}+\bar{R}_{i0jk})\nabla_k H,       \end{aligned}
\end{equation}

\begin{equation}\label{R0 evo}
    \begin{aligned}
     \frac{\partial}{\partial t}\bar{R}_{ijk0}=& H (\nabla_j \bar{R}_{i0k0}-\nabla_i\bar{R}_{j0k0})+(\bar{R}_{j0kp}h_{ip}-\bar{R}_{i0kp}h_{jp}-\bar{R}_{ijp0}h_{kp})H\\& +  \bar{R}_{i0k0} \nabla_jH - \bar{R}_{j0k0} \nabla_iH+ \bar{R}_{ijkl}\nabla_lH.\end{aligned}
\end{equation}

 Then,  we have  
\begin{equation} \label{6.4}
        \begin{aligned}
        \frac{\partial}{\partial t}(2 |\bar{R}_{i0j0}|^2+|\bar{R}_{ijk0}|^2) =& \nabla_k (4 H\bar{R}_{ikj0}\bar{R}_{i0j0})+\bar{R}m\ast \bar{R}m \ast \nabla H+ \bar{R}m\ast \bar{R}m \ast H h,         \end{aligned}
\end{equation}

which implies 

\begin{equation}\label{R0 L2 est}
  \begin{aligned}
  \frac{d^{+}}{d t} \int_{M_{t}} 2 |\bar{R}_{i0j0}|^2+|\bar{R}_{ijk0}|^2  \leq C (||\nabla H||_{L^{\infty}}+||H||_{L^{\infty}} ||h||_{L^{\infty}}) ||\bar{R}m||^2_{L^{2}}.     \end{aligned}
\end{equation}

Integrating (\ref{R0 L2 est})  over $[0,  t]$ and employing   Lemma  \ref{l5.3}, we obtain 

\begin{equation}
\begin{split}
  \int_{M_{t}}2 |\bar{R}_{i0j0}|^2+|\bar{R}_{ijk0}|^2 & \leq CE_1^2+C P^2 E_1^2 \int_{0}^t||\nabla H||_{L^{\infty}}+||H||_{L^{\infty}} ||h||_{L^{\infty}}ds\\
  & \leq CE_1^2+  C(P^{7}\epsilon_0+P^3 \epsilon_0^{\frac{p_0}{3-p_0}})E_1^2,
      \end{split}
\end{equation} 
which completes the proof.

    \end{proof}

\begin{proposition}\label{dR0 L2}
    Under the bootstrap assumptions,  we have 
    \begin{equation} \label{6.7}
        \|\nabla \bar{R}_{0***}\|^2_{L^{\infty}_{t}L^2(M_{t})}\leq C(1+P^{7}\epsilon_0+P^3 \epsilon_0^{\frac{p_0}{3-p_0}})E_2^2.
    \end{equation} 
\end{proposition}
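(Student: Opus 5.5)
We follow the proof of Proposition \ref{R0 L2} one derivative higher. The plan is to derive evolution equations for $\nabla_l\bar{R}_{i0j0}$ and $\nabla_l\bar{R}_{ijk0}$ by commuting $\partial_t$ with $\nabla$, form the weighted sum $2|\nabla\bar{R}_{i0j0}|^2+|\nabla\bar{R}_{ijk0}|^2$, and show its time derivative is a spatial divergence plus terms bounded by $(\|\nabla H\|_{L^\infty}+\|Hh\|_{L^\infty})$ times quantities controlled by the bootstrap assumptions. Lemma \ref{l5.3} then closes the estimate.

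\textbf{Step 1: evolution equations.} Under (\ref{412}) the Christoffel symbols evolve by $\partial_t\Gamma=\nabla(Hh)=h\ast\nabla H+H\nabla h$. Differentiating (\ref{R00 evo}) and (\ref{R0 evo}) therefore gives
\begin{equation*}
\partial_t\nabla_l\bar{R}_{i0j0}=H\nabla_l\nabla_k\bar{R}_{ikj0}+\nabla H\ast\nabla\bar{R}m+\nabla^2H\ast\bar{R}m+Hh\ast\nabla\bar{R}m+\bar{R}m\ast(h\ast\nabla H+H\nabla h),
\end{equation*}
and the analogous formula for $\partial_t\nabla_l\bar{R}_{ijk0}$, with principal term $H\nabla_l(\nabla_j\bar{R}_{i0k0}-\nabla_i\bar{R}_{j0k0})$.

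\textbf{Step 2: cancel the principal terms.} As in (\ref{6.4}), the second-order terms combine after commuting $\nabla_l$ and $\nabla_k$ into
\begin{equation*}
\nabla_k\big(4H\nabla_l\bar{R}_{ikj0}\nabla_l\bar{R}_{i0j0}\big)-4\nabla_kH\,\nabla_l\bar{R}_{ikj0}\nabla_l\bar{R}_{i0j0}+H\,Rm\ast\bar{R}m\ast\nabla\bar{R}m.
\end{equation*}
The commutator curvature is $Rm=\bar{R}m+h\ast h$ by the Gauss equation, so $H\,Rm\ast\bar{R}m\ast\nabla\bar{R}m$ is bounded by $|H|(|\bar{R}m|+|h|^2)|\bar{R}m||\nabla\bar{R}m|$. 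The divergence term is handled by the cutoff argument of (\ref{HLpcut}), using the decay at infinity. The time derivative of the volume form contributes $H^2\ast|\nabla\bar{R}m|^2$, which is harmless.

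\textbf{Step 3: estimate the lower-order terms.}
\begin{itemize}
\item Terms of the form $(|\nabla H|+|Hh|)|\nabla\bar{R}m|^2$ are bounded by $(\|\nabla H\|_{L^\infty}+\|Hh\|_{L^\infty})P^2E_2^2$.
\item For $H|\nabla h||\bar{R}m||\nabla\bar{R}m|$ and $H|\bar{R}m|^2|\nabla\bar{R}m|$, use $\|\bar{R}m\|_{L^\infty}$ or Hölder with $L^3$ norms; the bootstrap (\ref{bootd3g}) gives $\|\bar{R}m\|_{L^3}^3\le P^2E_2^2$. This yields a bound $\|H\|_{L^\infty}\times(\ldots)$, but only if these terms are bounded by $\|Hh\|_{L^\infty}$ rather than $\|H\|_{L^\infty}$. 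I would therefore try to write them via Codazzi as $\nabla h\ast\bar{R}m$ and absorb them using $|\bar{R}m|^3$ and $|\nabla h|^3$ from (\ref{bootd3g}), together with $\int\|H\|_{L^\infty}$ from (\ref{5.26}).
\item The term $h\ast\nabla H\ast\bar{R}m\ast\nabla\bar{R}m$ is bounded by $\|\nabla H\|_{L^\infty}\|h\|_{L^3}\|\bar{R}m\|_{L^6}\|\nabla\bar{R}m\|_{L^2}$. Sobolev gives $\|\bar{R}m\|_{L^6}\le C\|\nabla\bar{R}m\|_{L^2}$, and $\|h\|_{L^3}\le CPE_1\le 1$.
\end{itemize}

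\textbf{Main obstacle.} The hard term is $\nabla^2H\ast\bar{R}m\ast\nabla\bar{R}m$: $\|\nabla^2H\|_{L^\infty}$ is not available. I would first integrate by parts to move a derivative off $\nabla^2H$, giving $\nabla H\ast(\nabla\bar{R}m\ast\nabla\bar{R}m+\bar{R}m\ast\nabla^2\bar{R}m)$. The $\nabla^2\bar{R}m$ factor is again uncontrolled. If that fails, the fallback is $\|\nabla^2H\|_{L^6}\|\bar{R}m\|_{L^3}\|\nabla\bar{R}m\|_{L^2}\le C\|\nabla^3H\|_{L^2}\,PE_2^{2/3}\ldots$, which requires time-integrability of $\|\nabla^3H\|_{L^2}$. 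I would obtain that from the dyadic decomposition $I_1,I_2$ in Lemma \ref{l5.3}, using (\ref{5.13}) and (\ref{5.33}) instead of $L^\infty$ bounds.

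\textbf{Step 4: close.} Integrating in time, each error term has the form $C(P^5\epsilon_0+P\epsilon_0^{p_0/(3-p_0)})P^2E_2^2$. Together with the initial value $CE_2^2$ this gives (\ref{6.7}).
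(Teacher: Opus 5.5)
Your skeleton matches the paper: differentiate (\ref{R00 evo})--(\ref{R0 evo}), collect the top-order terms into $\nabla_k(4H\nabla_p\bar{R}_{ikj0}\nabla_p\bar{R}_{i0j0})$, and integrate in time against Lemma \ref{l5.3}. The gap is the term you single out, $\nabla^2H\ast\bar{R}m\ast\nabla\bar{R}m$, which you leave unhandled, and your fallback does not close.

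\emph{Why the fallback fails.} Putting $\nabla^2H$ in $L^6$ and $\bar{R}m$ in $L^3$ gives $C\|\nabla^3H\|_{L^2}(PE_2)^{5/3}$. You would then need $\int_0^{\hat{T}}\|\nabla^3H\|_{L^2}\,dt$ to be a small multiple of $E_2^{1/3}$. This integral is not scale invariant, and nothing in the hypotheses makes it small:
\begin{itemize}
\item On $[0,a]$, Cauchy--Schwarz gives only $a^{1/2}(\int_0^a\|\nabla^3H\|^2_{L^2})^{1/2}$. With $a\sim E_2^{-4/3}$ and the $O(E_2^2)$ bound from integrating (\ref{6.36}), this is of size $E_2^{1/3}$ with no small factor.
\item Using (\ref{5.33}) with $a=(PE_1)^{-4}$ does no better in general, because (\ref{dHL_2}) controls the right side of (\ref{5.33}) only in product with $\|\nabla H\|^2_{L^2(M_0)}+\|H\|_{L^\infty(M_0)}$.
\item The dyadic tail built from (\ref{5.13}) needs smallness of scale-invariant quantities such as $\|H\|_{L^{p_0}}E_2^{\frac{2}{p_0}-\frac{2}{3}}$. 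These do not follow from (\ref{HL_p1})--(\ref{dHL_2}).
\end{itemize}

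\emph{The missing idea} is to swap the exponents. By (\ref{L6Sob}) and interpolation between $L^2$ and $L^6$,
$\|\bar{R}m\ast\nabla^2H\|_{L^2}\le C\|\bar{R}m\|_{L^6}\|\nabla^2H\|_{L^3}\le C\|\nabla\bar{R}m\|_{L^2}\|\nabla^2H\|_{L^2}^{1/2}\|\nabla^3H\|_{L^2}^{1/2}$.
The resulting time integrand $\|\nabla^2H\|_{L^2}^{1/2}\|\nabla^3H\|_{L^2}^{1/2}$ is scale invariant. Its time integral is exactly what the $I_1$, $I_2$ decomposition in the proof of (\ref{5.24}) bounds by $CP^5\epsilon_0$, which is how (\ref{5.22}) enters there. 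You pointed to that decomposition but applied it to the wrong quantity.

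\emph{Scaling errors in Step 3.} The same problem appears in two more places.
\begin{itemize}
\item The claim $\|h\|_{L^3}\le CPE_1$ is unjustified. $\|h\|_{L^3}$ is scale invariant while $E_1$ is not, and the bootstrap controls only $\|h\|_{L^4}$ and $\|h\|_{L^6}$. Instead use (\ref{bootd3g}) to get $\|h\ast\bar{R}m\|_{L^2}\le\|h\|_{L^6}\|\bar{R}m\|_{L^3}\le CPE_2$. This turns the $\nabla H\ast h\ast\bar{R}m$ contribution into $\|\nabla H\|_{L^\infty}P^2E_2^2$.
\item For $H\,\nabla\bar{R}m\ast\bar{R}m\ast(\nabla h+\bar{R}m)$, the $L^3$ norms from (\ref{bootd3g}) produce $\|H\|_{L^\infty}E_2^{2/3}$. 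Its time integral is again not controlled by the hypotheses, and $\|\bar{R}m\|_{L^\infty}$ is not available at this level. What works is Gagliardo--Nirenberg, $\|\bar{R}m\|_{L^4}^2\le C\|\bar{R}m\|_{L^2}^{1/2}\|\nabla\bar{R}m\|_{L^2}^{3/2}$, and likewise for $\nabla h$. This gives $CP^3\|H\|_{L^\infty}(E_1E_2)^{1/2}E_2^2$. Then (\ref{5.26}), combined with (\ref{HL_p1}) raised to the power $\frac{p_0}{3-p_0}$, gives $\int_0^{\hat{T}}\|H\|_{L^\infty}(E_1E_2)^{1/2}\,dt\le C\epsilon_0^{\frac{p_0}{3-p_0}}$.
\end{itemize}

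With these choices every time integrand is one of
\[
\|\nabla H\|_{L^\infty},\quad \|H\|_{L^\infty}\|h\|_{L^\infty},\quad \|\nabla^2H\|_{L^2}^{1/2}\|\nabla^3H\|_{L^2}^{1/2},\quad P\|H\|_{L^\infty}(E_1E_2)^{1/2}.
\]
Integrating in time then yields exactly the factor $C(1+P^7\epsilon_0+P^3\epsilon_0^{\frac{p_0}{3-p_0}})$ in front of $E_2^2$.
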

\begin{proof}
Taking covariant derivatives  on  both sides of  equations (\ref{R00 evo}) and (\ref{R0 evo}) yields 
\begin{equation}
 \begin{aligned}
        \frac{\partial}{\partial t}\nabla_{p}\bar{R}_{i0j0}
        =&H\nabla_{p}\nabla_{l}\bar{R}_{ilj0}+\nabla[\bar{R}m \ast (\nabla H+ Hh)] +\nabla \Gamma^{\prime}\ast \bar{R}m\\ =& H\nabla_{l}\nabla_{p}\bar{R}_{ilj0}+\nabla\bar{R}m \ast (\nabla H+ Hh)\\ &  +\bar{R}m \ast (\nabla^2 H+ \nabla H\ast h+H\ast \nabla h+Rm\ast H),
        \end{aligned}
\end{equation}

\begin{equation}
    \begin{aligned}
     \frac{\partial}{\partial t}\nabla_p\bar{R}_{ijk0}=& H (\nabla_p\nabla_j \bar{R}_{i0k0}-\nabla_p\nabla_i\bar{R}_{j0k0})+\nabla\bar{R}m \ast (\nabla H+ Hh)\\& + \bar{R}m \ast (\nabla^2 H+ \nabla H\ast h+H\ast \nabla h+Rm \ast H).
     \end{aligned}
\end{equation}

Hence, we have 

\begin{equation}
        \begin{aligned}
       &  \frac{\partial}{\partial t}(2 |\nabla_p\bar{R}_{i0j0}|^2+|\nabla_p\bar{R}_{ijk0}|^2)\\ =& \nabla_k (4 H\nabla_p\bar{R}_{ikj0}\nabla_p\bar{R}_{i0j0})+\nabla \bar{R}m\ast \nabla \bar{R}m \ast (\nabla H+H h)\\& +\nabla \bar{R}m\ast \bar{R}m \ast[\nabla^2 H+ \nabla H\ast h+H (\bar{R}m+\nabla h+h\ast h)].    
        \end{aligned}
\end{equation}

This implies 
\begin{equation}\label{6.11}
        \begin{aligned}
       &  \frac{d^{+}}{d t}\int_{M_t}2 |\nabla_p\bar{R}_{i0j0}|^2+|\nabla_p\bar{R}_{ijk0}|^2\\ 
       & \leq C ||\nabla \bar{R}m||^2_{L^2}(||\nabla H||_{L^\infty}+||H||_{L^\infty}||h||_{L^\infty}) + C \|\nabla \bar{R}m||_{L^2} ||\bar{R}m\ast \nabla^2 H||_{L^2}\\& \ \ \ +C ||\nabla \bar{R}m||_{L^2}( ||\nabla H||_{L^\infty}+||h||_{L^{\infty}}||H||_{L^{\infty}}) || h\ast \bar{R}m||_{L^2} \\ & \ \ \ + C\|\nabla \bar{R}m||_{L^2}  ||H||_{L^\infty} (||\bar{R}m||^{2}_{L^4}+||\nabla h||^2_{L^4}).
       \end{aligned}
\end{equation}

To estimate the right hand side of (\ref{6.11}), we need  the following inequalities
\begin{equation}
    ||\bar{R}m\ast \nabla^2 H||_{L^2}\leq C ||\bar{R}m||_{L^6}||\nabla^2 H||_{L^3}\leq C||\nabla \bar{R}m||_{L^2}||\nabla^{2}H||_{L^2}^{\frac{1}{2}}||\nabla^{3}H||_{L^2}^{\frac{1}{2}},
\end{equation}
\begin{equation}
|| h\ast \bar{R}m||_{L^2}\leq C ||h||_{L^6}||\bar{R}m||_{L^3}\leq C PE_2,  
\end{equation}
\begin{equation}
||\bar{R}m||^2_{L^4}\leq C ||\bar{R}m||^{\frac{1}{2}}_{L^2}||\nabla\bar {R}m||^{\frac{3}{2}}_{L^2}, \ \ \ 
||\nabla h||^2_{L^4}\leq C ||\nabla h||^{\frac{1}{2}}_{L^2}||\nabla^2 h||^{\frac{3}{2}}_{L^2}. 
\end{equation}
In conjunction with (\ref{bootd2g}) and  (\ref{bootd3g}), (\ref{6.11}) becomes 
\begin{equation}\label{6.15}
        \begin{aligned}
       &  \frac{d^{+}}{d t}\int_{M_t}2 |\nabla_p\bar{R}_{i0j0}|^2+|\nabla_p\bar{R}_{ijk0}|^2\\ 
       & \leq C P^2E^2_2 (||\nabla H||_{L^\infty}+||H||_{L^\infty}||h||_{L^\infty}+ ||\nabla^{2}H||_{L^2}^{\frac{1}{2}}||\nabla^{3}H||_{L^2}^{\frac{1}{2}}+P ||H||_{L^\infty}E_1^{\frac{1}{2}}E_2^{\frac{1}{2}}). 
       \end{aligned}
\end{equation}

 Integrating (\ref{6.15})  and using the proof of Lemma \ref{l5.3},  we obtain
\begin{equation}
    \int_{M_t}2 |\nabla_p\bar{R}_{i0j0}|^2+|\nabla_p\bar{R}_{ijk0}|^2\leq CE_2^2+ C(P^{7} \epsilon_0+P^3 \epsilon_0^{\frac{p_0}{3-p_0}})E_2^2.
\end{equation}
The proof is completed. 

\end{proof}

\subsection{The second fundamental form estimates}

\begin{lemma}
    Under the bootstrap assumptions,  we have 
    \begin{equation} \label{6.17}
    \begin{split} & \|h\|^4_{L^{\infty}_{t}L^{4}(M_{t})}\leq  C(1+P^{7}\epsilon_0+P^3 \epsilon_0^{\frac{p_0}{3-p_0}})E_1^2,\\
     &  ||\bar{R}_{ijkl}||_{L^2}^2\leq C(1+P^{7}\epsilon_0+P^3 \epsilon_0^{\frac{p_0}{3-p_0}})E_1^2,\\
   &  \|\nabla h\|^2_{L^{\infty}_{t}L^{2}(M_{t})}\leq C(1+P^{7}\epsilon_0+P^3 \epsilon_0^{\frac{p_0}{3-p_0}}) E_1^2.
      \end{split}\end{equation}
\end{lemma}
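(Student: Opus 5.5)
The three bounds will be proved in order, each following the pattern of Proposition \ref{R0 L2}. We differentiate the quantity in $t$ using the evolution equations (\ref{412}). We then bound the time derivative by
\[
C\bigl(\|\nabla H\|_{L^\infty}+\|H\|_{L^\infty}\|h\|_{L^\infty}\bigr)\times(\text{bootstrap quantity}),
\]
and integrate with Lemma \ref{l5.3}. Here the bootstrap quantity is $P^2E_1^2$ by (\ref{bootd2g}), and the integrated factor contributes $CP^5\epsilon_0+CP\epsilon_0^{\frac{p_0}{3-p_0}}$. This yields a bound of the form $CE_1^2+C(P^7\epsilon_0+P^3\epsilon_0^{\frac{p_0}{3-p_0}})E_1^2$.

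\emph{Step 1 ($L^4$ of $h$).} From (\ref{412}), $\partial_t h_{ij}=\nabla_i\nabla_jH+H(h^2_{ij}-\bar R_{i0j0})$. The metric evolves by $\partial_t g=2Hh$ and $\partial_t d\mu=H^2d\mu$. Hence
\[
\frac{d}{dt}\int|h|^4\le \int 4|h|^2h^{ij}\nabla_i\nabla_jH+C|H||h|\bigl(|h|^4+|\bar Rm||h|^2\bigr).
\]
For the Hessian term we do not integrate by parts. We use $|\nabla^2H|\le$ ... but that is not available in $L^\infty$. Instead we integrate by parts once to get $\int\nabla(|h|^2h)\ast\nabla H$, bounded by $\|\nabla H\|_{L^\infty}\int|h|^2|\nabla h|\le\|\nabla H\|_{L^\infty}\|h\|_{L^4}^2\|\nabla h\|_{L^2}\le \|\nabla H\|_{L^\infty}P^2E_1^2$. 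The remaining terms are at most $\|Hh\|_{L^\infty}P^2E_1^2$. Integrating in time with Lemma \ref{l5.3} gives the first bound.

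\emph{Step 2 (spatial curvature).} We use the Gauss equation (\ref{Gauss}), written with the convention that $\bar R_{ijkl}$ is expressed through $R_{ijkl}$ and $h\ast h$. The crucial point is dimension three: the Weyl part vanishes, so $R_{ijkl}$ is determined by $R_{ij}$. By (\ref{tr Gauss}), $R_{ij}=\bar R_{i0j0}-Hh_{ij}+h^2_{ij}$. Therefore
\[
|\bar R_{ijkl}|\le C\bigl(|\bar R_{i0j0}|+|h|^2\bigr).
\]
Proposition \ref{R0 L2} and Step 1 then give the second bound with no evolution equation needed.

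\emph{Step 3 ($\nabla h$).} This is the main obstacle: a direct evolution of $|\nabla h|^2$ involves $\nabla^3H$, which we control only in $L^2_t$. We therefore plan to avoid it by an elliptic argument on each slice. The Codazzi equation (\ref{Codazzi}) gives $\nabla_ih_{jk}-\nabla_jh_{ik}=\bar R_{ijk0}$, and the constraint (\ref{ECE}) gives $\mathrm{div}\,h=\nabla H$. We integrate $|\nabla h|^2$ by parts and commute derivatives (the usual div–curl identity for symmetric 2-tensors). This yields
\[
\int|\nabla h|^2\le C\int|\bar R_{ijk0}|^2+|\nabla H|^2+|Rm||h|^2.
\]
The last term is bounded by $\|Rm\|_{L^2}\|h\|_{L^4}^2$, and we use Step 2 with (\ref{tr Gauss}) to control $Rm$. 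For $\|\nabla H\|_{L^2}^2$ we use (\ref{5.2}) and (\ref{5.31}) together with the hypothesis (\ref{dHL_2}). These show $\|\nabla H\|^2_{L^2(M_t)}\le C\|\nabla H\|^2_{L^2(M_0)}+C\|H\|_{L^\infty(M_0)}+C\epsilon_0 E_1^2$ for small $t$, and the large-$t$ decay is handled by (\ref{5.2}). Both are $\le C\epsilon_0E_1^2$ by (\ref{dHL_2}). If the cross-term bookkeeping fails, the fallback is to run the evolution of $\int|\nabla h|^2$ and absorb $\int\nabla h\ast\nabla^3 H$ by integrating by parts into $\nabla^2h\ast\nabla^2H$. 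That term is then controlled by $\|\nabla^2H\|_{L^2}^{1/2}\|\nabla^3H\|_{L^2}^{1/2}$-type time integrability as in Lemma \ref{l5.3}.
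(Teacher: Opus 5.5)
Steps 1 and 2 are the paper's argument. The slice identity in Step 3 (Codazzi, $\mathrm{div}\,h=\nabla H$, one commutation) is the integrated form of the Simons identity (\ref{Simon}) that the paper pairs with $h_{ij}$, so you reach essentially the same inequality (\ref{6.23}). You diverge from the paper only in the bound on $\|\nabla H\|_{L^2(M_t)}$, and there your justification is not right as written:

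\begin{itemize}
\item Condition (\ref{dHL_2}) bounds a product of two factors, so it does not give $\|\nabla H\|^2_{L^2(M_0)}\le C\epsilon_0E_1^2$.
\item The term $\|H\|_{L^\infty(M_0)}$ coming from (\ref{5.31}) is small only after a further step. From (\ref{dHL_2}) you get $\|H\|^2_{L^\infty(M_0)}E_1E_2<\epsilon_0^4E_1^8$. From (\ref{inftySob2}) you get $\|H\|^2_{L^\infty(M_0)}\le CE_1E_2$. Multiplying the two gives $\|H\|_{L^\infty(M_0)}\le C\epsilon_0E_1^2$, but you do not state this.
\item The large-time piece from (\ref{5.2}) is controlled by (\ref{HL_p2}), not (\ref{dHL_2}). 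It leaves a term of size $P^{12/p_0-2}\epsilon_0^2E_1^2$. That fits the stated form $C(1+P^7\epsilon_0+P^3\epsilon_0^{\frac{p_0}{3-p_0}})E_1^2$ only after $P=\epsilon_0^{-1/8}$ is fixed.
\end{itemize}

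None of this smallness is needed, because you only want $\|\nabla H\|^2_{L^2(M_t)}=O(E_1^2)$. The paper gets this by applying your own Step 1 template to (\ref{5.11}). The right-hand side of (\ref{5.11}) is at most $C\|H\|^2_{L^\infty}P^2E_1^2$. Since $|H|\le\sqrt3|h|$, this is at most $C\|H\|_{L^\infty}\|h\|_{L^\infty}P^2E_1^2$. Integrate in time with (\ref{5.23}), and use $\|\nabla H\|_{L^2(M_0)}\le\sqrt3\|\nabla h\|_{L^2(M_0)}\le\sqrt3E_1$. This gives (\ref{6.24}) directly, with no splitting of the time interval and no use of (\ref{dHL_2}).

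With that replacement your proof is complete, and the fallback through the evolution of $\int|\nabla h|^2$ is unnecessary.
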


\begin{proof}  We need the evolution equation of the second fundamental form,  i.e.,  the second equation of  (\ref{412}). It follows from (\ref{412}) that  
\begin{equation} \label{6.18}
\begin{split}
\frac{d^{+}}{dt}\int_{M_t}|h|^4\leq &\int_{M_t}4 \nabla_{i}\nabla_j H h_{ij}|h|^2-4 H \bar{R}_{i0j0}h_{ij}|h|^2-4H tr (h^3)|h|^2+H^2|h|^4\\
\leq & - 4|||\nabla H||h|||_{L^2}^2+C\int_{M_t}|\nabla H||\nabla h||h|^2\\
&+C ||H||_{L^{\infty}}||h||_{L^{\infty}}(||\bar{R}_{i0j0}||_{L^2}||h||^2_{L^4}+||h||_{L^{4}}^4)\\
\leq & - 2|||\nabla H||h|||_{L^2}^2+CP^2E_1^2 (||H||_{L^{\infty}}||h||_{L^{\infty}}+||\nabla H||_{L^{\infty}}).
\end{split}
\end{equation}

Integrating (\ref{6.18}) yields 
\begin{equation} \label{6.19}
\begin{split}
||h||^4_{L^4(M_t)}+ 2 \int_{0}^t|||\nabla H||h|||^2_{L^2}ds 
\leq &E_1^2+ C(P^{7}\epsilon_0+P^3 \epsilon_0^{\frac{p_0}{3-p_0}})E_1^2,
\end{split}
\end{equation}
which implies  the first inequality in (\ref{6.17}).

Since  $\dim M_t=3$, we have 
    \begin{equation}\label{R=Ric}
  R_{ijkl}=R_{ik}g_{jl}+R_{jl}g_{ik}-R_{il}g_{jk}-R_{jk}g_{il}-\frac{R}{2}(g_{ik}g_{jl}-g_{il}g_{jk}),
\end{equation} 
 the second inequality follows from  (\ref{Gauss}) (\ref{tr Gauss}) (\ref{R=Ric}) and  (\ref{R0 L2}):  \begin{equation} \label{6.21}
\begin{split}
      ||\bar{R}_{ijkl}||_{L^2}^2\leq& C||\bar{R}_{i0j0}||^2_{L^2}+C||h||^{4}_{L^4}\\
     \leq& CE_1^2+ C(P^{7}\epsilon_0+P^3 \epsilon_0^{\frac{p_0}{3-p_0}})E_1^2.
\end{split}
\end{equation}

To prove the third  inequality in (\ref{6.17}), we need to use  Simon's identity:
 \begin{equation}\label{Simon}
    \begin{split}
    \triangle h_{ij}=&\nabla_i\nabla_jH+R_{il}h_{lj}-R_{ikjl}h_{kl}+\nabla_k(\bar{R}_{kij0})\\
    =& \nabla_i\nabla_jH-(\bar{R}_{ikjl}-h_{ij}h_{kl}+h_{il}h_{kj})h_{kl}+(\bar{R}_{i0k0}-Hh_{ik}+h^2_{ik})h_{kj}+\nabla_k(\bar{R}_{kij0}).\end{split}
        \end{equation}
Multiplying  (\ref{Simon}) with $h_{ij}$ and integrating by parts,   we have 
         \begin{equation} \label{6.23}
\int_{M_t}|\nabla h|^2
\leq C ||\bar{R}_{0ijk}||^2_{L^2}+C ||\nabla H||^2_{L^2}+C||\bar{R}_{ijkl}||_{L^2}||h||_{L^4}^2+C||h||_{L^4}^4.
        \end{equation}

In order to estimate the term $||\nabla H||^2_{L^2}$ in (\ref{6.23}),  we integrate     (\ref{5.11}), it follows that  
\begin{equation} \label{6.24}
||\nabla H||^2_{L^2(M_t)}\leq CE_1^2+CP^3 \epsilon_0^{\frac{p_0}{3-p_0}}E_1^2.
\end{equation}

Substituting (\ref{R0 L2}) (\ref{6.24})  (\ref{6.21}) and (\ref{6.19}) in (\ref{6.23}), we obtain the third inequality in (\ref{6.17}). 
The proof is completed.

\end{proof}

\begin{lemma} \label{l6.4}
    Under the bootstrap assumptions,  we have 
\begin{equation} \label{6.25}
\begin{split}
     &   \int_{M_t}|\nabla \bar{R}m|^2\leq  C(1+P^7\epsilon_0+P^3 \epsilon_0^{\frac{p_0}{3-p_0}})E_2^2,\\
      &   \int_{M_t}|\nabla^2 h|^2+|h|^6+|\bar{R}m|^3+|\nabla h|^3\leq C(1+P^{7}\epsilon_0+P^3 \epsilon_0^{\frac{p_0}{3-p_0}})E_2^2.\\
\end{split}
\end{equation}

\end{lemma}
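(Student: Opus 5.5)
We prove Lemma \ref{l6.4} the same way as the preceding lemma, one derivative higher. The $0$-components of $\nabla\bar{R}m$ are already controlled by Proposition \ref{dR0 L2}. The remaining work is to control $\nabla\bar{R}_{ijkl}$ and $\nabla^2 h$ through elliptic identities on each slice $M_t$, and $|h|^6$ through an evolution inequality.

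\textbf{The spatial curvature components.} From (\ref{Gauss}), (\ref{tr Gauss}) and (\ref{R=Ric}), $\bar{R}_{ijkl}$ is an algebraic expression in $\bar{R}_{i0j0}$, $Hh$, $h^2$ and $g$. Differentiating gives
\[
|\nabla\bar{R}_{ijkl}|\leq C(|\nabla\bar{R}_{i0j0}|+|h||\nabla h|).
\]
We bound $\|h\nabla h\|_{L^2}\leq \|h\|_{L^6}\|\nabla h\|_{L^3}$. By (\ref{L6Sob}) and (\ref{6.17}), $\|h\|_{L^6}\leq C\|\nabla h\|_{L^2}$, which is of size $E_1$ up to the factor $(1+P^7\epsilon_0+\dots)$. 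For $\|\nabla h\|_{L^3}$ we either use the bootstrap assumption (\ref{bootd3g}), or interpolate: $\|\nabla h\|_{L^3}\leq C\|\nabla h\|_{L^2}^{1/2}\|\nabla^2h\|_{L^2}^{1/2}$. Since $E_1\leq\epsilon_0$, the resulting terms are small multiples of $E_2$ (plus the $\nabla^2 h$ norm, handled below). Combined with Proposition \ref{dR0 L2}, this gives the first inequality in (\ref{6.25}).

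\textbf{Second derivatives of $h$.} We differentiate Simon's identity (\ref{Simon}), or equivalently apply Lemma \ref{l3.5}'s argument (\ref{3.13}) to $F=h$:
\[
\|\nabla^2h\|_{L^2}^2\leq C\|\triangle h\|_{L^2}^2+C(\|\bar{R}m\|_{L^2}+\|h\|_{L^4}^2)\bigl(\|\nabla^2h\|^2_{L^2}+\|\nabla h\|^2_{L^2}\bigr).
\]
The factor $PE_1\leq C\epsilon_0^{7/8}$ is small, so the $\|\nabla^2 h\|^2_{L^2}$ term is absorbed into the left side. By (\ref{Simon}),
\[
\|\triangle h\|_{L^2}\leq \|\nabla^2H\|_{L^2}+\|\nabla\bar{R}_{kij0}\|_{L^2}+\|(\bar{R}m+h^2)h\|_{L^2}.
\]
The second term is bounded by (\ref{6.7}). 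The last term is bounded by $\|\bar{R}m\|_{L^3}\|h\|_{L^6}+\|h\|_{L^6}^3$, which is small times $E_2$. The lower-order term $\|\nabla h\|_{L^2}$ is controlled by $E_1$, and scaling/absorption makes it compatible with $E_2$.

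\textbf{The remaining terms, and the main obstacle.} For $\|\nabla^2H\|_{L^2}$ we integrate (\ref{d2H L2 est}) as in (\ref{5.32})--(\ref{5.33}), using the smallness assumption (\ref{dHL_2}) and Lemma \ref{l5.3}, to get $\|\nabla^2 H\|^2_{L^2}\leq C(1+P^7\epsilon_0)E_2^2$. This is the main obstacle, because the size of $\nabla^2 H$ must be measured against $E_2$ rather than the $t$-decaying bounds of (\ref{5.13}); I would try the weighted integration of (\ref{5.32}) with initial value $\|\nabla^2 H\|_{L^2(M_0)}\leq\|\nabla^2h\|_{L^2(M_0)}\leq E_2$. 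The term $|h|^6$ is handled by an evolution estimate exactly like (\ref{6.18}), with error $CP^2E_2^2(\|Hh\|_{L^\infty}+\|\nabla H\|_{L^\infty})$, integrated via Lemma \ref{l5.3}. Finally, $\|\bar{R}m\|_{L^3}^3$ and $\|\nabla h\|_{L^3}^3$ follow by interpolation,
\[
\|F\|_{L^3}^3\leq C\|F\|_{L^2}^{3/2}\|\nabla F\|_{L^2}^{3/2},
\]
with $\|F\|_{L^2}\lesssim E_1$ and $\|\nabla F\|_{L^2}\lesssim E_2$, using (\ref{6.17}) and the bounds just obtained. This completes the second inequality in (\ref{6.25}).
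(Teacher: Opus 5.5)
There is a genuine gap. At several points you bound quantities that must be controlled by $E_2^2$ alone using $L^2$-level (i.e.\ $E_1$-level) norms, and the resulting mixed terms do not close.

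\textbf{Where $E_1$ enters.}
\begin{itemize}
\item In the first inequality you use $\|h\|_{L^6}\le C\|\nabla h\|_{L^2}\lesssim E_1$. Then $\|h\|_{L^6}^2\|\nabla h\|_{L^3}^2$ becomes $P^{4/3}E_1^2E_2^{4/3}$ via (\ref{bootd3g}), or $CPE_1^3E_2$ via interpolation.
\item At the end you get $\|\bar{R}m\|_{L^3}^3+\|\nabla h\|_{L^3}^3\lesssim E_1^{3/2}E_2^{3/2}$.
\item The absorption step for $\nabla^2 h$ leaves $C(\|\bar{R}m\|_{L^2}+\|h\|^2_{L^4})\|\nabla h\|^2_{L^2}$. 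This is of order $E_1^3$ and does not even scale like $E_2^2$. Scaling cannot remove it, because the smallness of $\|\bar Rm\|_{L^2}$ is not scale invariant.
\end{itemize}

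\textbf{Why these do not close.} The products $E_1^3E_2$ and $E_1^{3/2}E_2^{3/2}$ do scale like $E_2^2$. But bounding them by $CE_2^2$ is equivalent to $E_1^3\le CE_2$, and nothing in the hypotheses controls the scale-invariant ratio $E_1^3/E_2$. Your remark that ``$E_1\le\epsilon_0$, so these are small multiples of $E_2$'' confuses absolute smallness with a bound relative to $E_2^2$. The bootstrap (\ref{bootd3g}) closes only if the right side is at most $\frac12P^2E_2^2$, with $C$ depending only on $\alpha_0,\alpha_1$. The underlying obstruction is that in three dimensions $\|F\|_{L^3}$ is not controlled by $\|\nabla F\|_{L^2}$ alone ($\dot H^1\subset L^6$). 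So every interpolation route to the $L^3$ norms brings in an $E_1$-level norm, which is exactly why $|\bar Rm|^3$ and $|\nabla h|^3$ are included in $E_2^2$.

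\textbf{The $\nabla^2H$ step.} Your route fails here too. Integrating (\ref{5.32}) over $[0,t]$ produces the factor $e^{CP^4E_1^4t}$, which is not uniform in $t$; the paper uses (\ref{5.32}) only on $[0,(PE_1)^{-4}]$. In addition, (\ref{dHL_2}) is normalized by $E_1^8$, so it cannot yield a bound by $E_2^2$.

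\textbf{The missing idea.} Propagate the lower-order $E_2$-terms dynamically, as you already propose for $|h|^6$.
\begin{itemize}
\item The paper derives evolution inequalities for $\int|\nabla h|^3$ (\ref{6.28}) and $\int|\bar R_{0***}|^3$ (\ref{6.32}). Every error term there is $E_2^2$ times a coefficient such as $\|Hh\|_{L^\infty}+\|\nabla H\|_{L^\infty}+\|\nabla^2H\|^{1/2}_{L^2}\|\nabla^3H\|^{1/2}_{L^2}$, whose time integral is small by Lemma \ref{l5.3}.
\item $\|\bar R_{ijkl}\|_{L^3}$ then follows from the Gauss equation and $\int|h|^6$.
\item Young's inequality $\|h\|_{L^6}^2\|\nabla h\|_{L^3}^2\le\|h\|_{L^6}^6+\|\nabla h\|_{L^3}^3$ gives the first inequality.
\item For $\nabla^2H$, (\ref{6.36}) integrates the term $Rm\ast\nabla^2H\ast\nabla^2H$ by parts. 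Every error is then $\|\nabla H\|_{L^\infty}$ or $\|Hh\|_{L^\infty}$ times products of $\|\bar Rm\|_{L^3}$, $\|\nabla h\|_{L^3}$, $\|h\|_{L^6}$, $\|\nabla\bar Rm\|_{L^2}$, $\|\nabla^2h\|_{L^2}$.
\item Finally, $\nabla^2h$ is recovered from (\ref{6.39}), which is (\ref{3.13}) with $L^3$/$L^6$ coefficients rather than $L^2$-level ones.
\end{itemize}
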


\begin{proof}
 To prove the first inequality of (\ref{6.25}), in view of (\ref{6.7}), we only need to handle the purely spacial components of the curvature, i.e. $|| \nabla \bar{R}_{ijkl}||_{L^2}$. Because of   (\ref{Gauss}) (\ref{tr Gauss})  (\ref{R=Ric}) and (\ref{6.7}), 
\begin{equation} \label{6.26}
\begin{split}
|| \nabla \bar{R}_{ijkl}||^2_{L^2}& \leq C  || \nabla \bar{R}_{0i0j}||^2_{L^2}+ C ||\nabla h\ast h||^2_{L^2}\\
& \leq C(1+P^7 \epsilon_0+ P^3 \epsilon_0^{\frac{p_0}{3-p_0}}) E^2_2+ C ||h||^2_{L^6} ||\nabla h||^2_{L^3}.
\end{split}
\end{equation}
Hence, the first inequality of (\ref{6.25}) is reduced to the estimates of  $||h||_{L^6}$ and $||\nabla h||_{L^3}$.

By direct computations, we have 
\begin{equation}\label{6.27}
\begin{split}
    \frac{d^{+}}{dt}\int_{M_t}|h|^6\leq  &3\int_{M_{t}}|h|^4(\nabla^2H+H(h\ast h-\bar{R}))\ast h+\int_{M_t}H^2|h|^6\\
    \leq & C\int_{M_t}|h|^4|\nabla h||\nabla H|+|H|(|h|^7+|h|^5|\bar{R}|)\\
    \leq & C||\nabla H||_{L^{\infty}}(\int_{M_t}|h|^6)^{\frac{2}{3}}(\int_{M_{t}}|\nabla h|^3)^{\frac{1}{3}}\\
    &+C||H||_{L^{\infty}}||h||_{L^{\infty}}(\int_{M_t}|h|^6+(\int_{M_t}|h|^6)^{\frac{2}{3}}(\int_{M_{t}}|\bar{R}|^3)^{\frac{1}{3}})\\
    \leq & C(||H||_{L^{\infty}}||h||_{L^{\infty}}+||\nabla H||_{L^{\infty}})P^2E_2^2,
\end{split}
\end{equation}

and 

\begin{equation}\label{6.28}
\begin{split}
    \frac{d^{+}}{dt}\int_{M_t}|\nabla h|^3\leq  &\int_{M_{t}}|\nabla h|(\nabla^{3}H+\nabla(H(h^2+\bar{R}))+\Gamma^\prime h)\ast \nabla h+\int_{M_t}H^2|\nabla h|^3\\
   \leq & -\int_{M_{t}}|\nabla h||\nabla^2 H|^2+C\int_{M_t}|\nabla^2 H|(|\nabla h||\nabla^2 h|+|h||\bar{R}||\nabla h|)\\
   &+ C\int_{M_t}|\nabla H||\nabla h|(|h|^2|\nabla h|+|\bar{R}||h|^2)+|H||h||\nabla h|^3\\
   \leq & -\int_{M_{t}}|\nabla h||\nabla^2 H|^2+C||\nabla^2 H||_{L^{2}}^{\frac{1}{2}}||\nabla^3 H||_{L^{2}}^{\frac{1}{2}}[||\nabla^2 h||_{L^2}\\
   &+(\int_{M_t}|\nabla h|^3)^{\frac{1}{3}}(\int_{M_t}|h|^6)^{\frac{1}{6}}||\nabla \bar{R}||_{L^2}]\\
   &+C||\nabla H||_{L^{\infty}}[(\int_{M_t}|\nabla h|^2|h|^2\int_{M_t}|\bar{R}|^2|h|^2)^{\frac{1}{2}}+\int_{M_t}|\nabla h|^2|h|^2]\\
   \leq& C(||H||_{L^{\infty}}||h||_{L^{\infty}}+||\nabla^2 H||_{L^{2}}^{\frac{1}{2}}||\nabla^3 H||_{L^{2}}^{\frac{1}{2}})P^2E_2^2.
\end{split}
\end{equation}

Integrating (\ref{6.27}) and (\ref{6.28}) yields 
\begin{equation} \label{6.29}
||h||_{L^6}^6+||\nabla h||^3_{L^3} \leq C(1+P^{7}\epsilon_0+P^3 \epsilon_0^{\frac{p_0}{3-p_0}})E_2^2.
\end{equation}
This implies 
\begin{equation}
||h||_{L^6}^2||\nabla h||^2_{L^3} \leq C(1+P^{7}\epsilon_0+P^3 \epsilon_0^{\frac{p_0}{3-p_0}})E_2^2,
\end{equation}
which together with (\ref{6.26}) gives   the first inequality of (\ref{6.25}).

Now we proceed to estimate $||\bar{R}_{0\ast\ast\ast}||_{L^3}$. 
It follows from (\ref{6.4}) that 
\begin{equation} 
\begin{split}
    \frac{d^{+}}{dt}\int_{M_t}|\bar{R}_{0\ast\ast\ast}|^3\leq  &\frac{3}{2}\int_{M_{t}}|\bar{R}|(\nabla (H\bar{R})+Hh\ast \bar{R}+\nabla H \ast\bar{R})\ast \bar{R}+\int_{M_t}H^2|\bar{R}|^3\\
    \leq & C (||H||_{L^{\infty}}||h||_{L^{\infty}}+||\nabla H||_{L^{\infty}}) ||\bar{R}||_{L^3}^3\\& +C||H||_{L^{\infty}}||\nabla \bar{R}||_{L^2}||\bar{R}||^{\frac{1}{2}}_{L^2}||\bar{R}||^{\frac{3}{2}}_{L^6}.\end{split}
\end{equation}

Applying  the Sobolev inequality  $||\bar{R}||_{L^6}\leq C||\nabla \bar{R}||_{L^2}$ and bootstrap assumptions  
yields 
\begin{equation}\label{6.32}
\begin{split}
    \frac{d^{+}}{dt}\int_{M_t}|\bar{R}_{0\ast\ast\ast}|^3\leq  C(||H||_{L^{\infty}} PE_1^{\frac{1}{2}}E^{\frac{1}{2}}_{2}+||\nabla H||_{L^{\infty}})P^2E_2^2.
    \end{split}
\end{equation}

Integrating (\ref{6.32}) gives \begin{equation} \label{6.33}
\int_{M_t}|\bar{R}_{0\ast\ast\ast}|^3 \leq   C(1+P^7\epsilon_0+P^3 \epsilon_0^{\frac{p_0}{3-p_0}})E_2^2.\end{equation}

From  (\ref{Gauss}) (\ref{tr Gauss})  (\ref{R=Ric}) (\ref{6.29}) and (\ref{6.33}), the  spacial component  $\bar{R}_{ijkl}$ of the spacetime curvature can be estimated as follows   

\begin{equation}\label{6.34}
    \int_{M_t}|\bar{R}_{ijkl}|^3\leq C\int_{M_t}|\bar{R}_{0\ast\ast\ast}|^3+|h|^6 \leq  C(1+P^7\epsilon_0+P^3 \epsilon_0^{\frac{p_0}{3-p_0}})E_2^2. \end{equation}

Now it only remains to bound $||\nabla^2 h||_{L^2}$.

Note that it is convenient to apply  (\ref{Simon}) directly:  
\begin{equation}\label{6.35}
    \begin{split}
    ||\triangle h_{ij}||^2_{L^2}\leq & C ||\nabla^2H||^2_{L^2}+C||\nabla\bar{R}||_{L^2}^2+ C\int_{M_t}|\bar{R}m|^2|h|^2+|h|^6\\
     \leq &  C (1+P^7\epsilon_0+P^3 \epsilon_0^{\frac{p_0}{3-p_0}})E_2^2+C ||\nabla^2H||^2_{L^2}. \end{split}
        \end{equation}
To estimate the term $||\nabla^2H||^2_{L^2}$,  we need to use (\ref{d2H L2 evo}):  
 \begin{equation} \label{6.36}
\begin{aligned}
   & \frac{d^{+}}{d t}\int_{M_{t}}|\nabla^{2}H|^{2} \leq   \int_{M_{t}}-|\nabla^{3}H|^{2}+\nabla Rm \ast \nabla^2 H \ast \nabla H+C |Rm\ast \nabla H|^2\\ 
           &\ \ \ \ \ \ \  \ \ \ \ \ \ \ \ \ \ \ \ \ \ \ +C |\nabla(|h|^2H)|^2+\nabla H\ast \nabla (Hh) \ast \nabla^2 H  + H\ast h \ast \nabla^2 H \ast \nabla^2 H\\
                \leq&-||\nabla^{3}H||_{L^2}^{2}+C ||\nabla H||_{L^{\infty}}(||\nabla \bar{R}m||_{L^2}||\nabla^2 h||_{L^2}+||\bar{R}m||_{L^3}^{2}||\nabla h||_{L^3}\\ & +||h||_{L^6}^{4}||\nabla h||_{L^3}+ ||\nabla^2 h||_{L^2}||\nabla h \ast h||_{L^2}) 
    +C ||H||_{L^{\infty}}||h||_{L^{\infty}}(||\nabla h \ast h||^2_{L^2}+||\nabla^2 h||_{L^2}^2)\\
           \leq & \ C(||H||_{L^{\infty}}||h||_{L^{\infty}}+||\nabla H||_{L^{\infty}})P^2E_2^2.
\end{aligned}
\end{equation}
Integrating (\ref{6.36}) gives 
\begin{equation}\label{6.37}
    ||\nabla^{2}H||_{L^2}^{2}\leq C(1+P^{7}\epsilon_0+P^3 \epsilon_0^{\frac{p_0}{3-p_0}})E_2^2, 
\end{equation}
which together with (\ref{6.35}) implies 
 \begin{equation}\label{6.38}
    \begin{split}
    ||\triangle h_{ij}||^2_{L^2} \leq &  C (1+P^7\epsilon_0+P^3 \epsilon_0^{\frac{p_0}{3-p_0}})E_2^2.\end{split}
        \end{equation}

On the other hand, it follows from (\ref{3.13}) that 
 \begin{equation} \label{6.39}
    \begin{split}
            ||\nabla^{2} h||^{2}_{L^2} 
            & \leq C ||\triangle h||^2_{L^2}+C (||\bar{R}m||_{L^3}+||h||^2_{L^6})||\nabla h||^2_{L^3}+C ||\bar{R}m||_{L^3}||h||^2_{L^6}||\nabla^{2} h||_{L^2}. 
    \end{split}
    \end{equation}        
   Substituting (\ref{6.38}) (\ref{6.34}) (\ref{6.33}) (\ref{6.29}) into (\ref{6.39}), we obtain 
   \begin{equation}
      ||\nabla^2h||^2_{L^2}\leq C(1+P^7\epsilon_0+P^3 \epsilon_0^{\frac{p_0}{3-p_0}})E_2^2+C(1+P^7\epsilon_0+P^3 \epsilon_0^{\frac{p_0}{3-p_0}})E_2||\nabla^2h||_{L^2},
   \end{equation}   
      which implies 
\begin{equation}
      ||\nabla^2h||^2_{L^2}\leq C(1+P^7\epsilon_0+P^3 \epsilon_0^{\frac{p_0}{3-p_0}})E_2^2.
   \end{equation}   

This completes   the proof of Lemma \ref{l6.4}. 
  \end{proof}

 \section{Improving bootstrap assumptions}

 Based on the preceding estimates,  we can show that the  bootstrap assumptions can be improved or verified. 
 
 To  begin with,   we come to the estimate of  the evolving  metric $g_{ij}(x,t)$.   For any tangent vector $V\in T_{x}M_0$,  according to  the evolution equation $\partial_t g=2Hh$,  we have  
$$
 -2||Hh||_{L^{\infty}}g(V,V) \leq \partial_tg(V,V) \leq 2||Hh||_{L^{\infty}}g(V,V).$$
Integrating the above inequalities and using (\ref{5.23}), we have 

\begin{equation}\label{7.1}
e^{-CP \epsilon_0^{\frac{p_0}{3-p_0}}} g_0(V,V) \leq g(V,V)\leq e^{CP \epsilon_0^{\frac{p_0}{3-p_0}}}g_0(V,V),
\end{equation}
which implies 
\begin{equation}\label{7.2}
e^{-\frac{1}{100}} g_0(V,V) \leq g(V,V)\leq e^{\frac{1}{100}}g_0(V,V),
\end{equation}
if we  choose $P$ so that  $CP \epsilon_0^{\frac{1}{2}}\leq \frac{1}{100}$.  
Since $e^{\frac{1}{100}}<2$,  (\ref{bootd1g})  always  holds  as long as     $CP \epsilon_0^{\frac{1}{2}}\leq \frac{1}{100}$.

Now we summarize what we have proved under bootstrap assumptions.

Suppose the mean curvature flow satisfies  the bootstrap assumptions (\ref{bootd1g})(\ref{bootd2g})(\ref{bootd3g}) on a time interval $[0,\bar{T})$.

 In Propositions \ref{R0 L2}, \ref{dR0 L2},  we prove that there exists a positive  constant $C_1$ depending only on $\alpha_0$ and $\alpha_1$ such that 

\begin{equation}\label{7.3}
        \int_{M_t}|\bar{R}m|^2+|\nabla h|^2+|h|^4\leq  C_1(1+P^{7}\epsilon_0+P^3 \epsilon_0^{\frac{p_0}{3-p_0}})E_1^2,
\end{equation}
\begin{equation}\label{7.4}
    \int_{M_t}|\nabla \bar{R}m|^2+|\nabla^2 h|^2+|h|^6+|\bar{R}m|^3+|\nabla h|^3\leq C_1(1+P^{7}\epsilon_0+P^3 \epsilon_0^{\frac{p_0}{3-p_0}})E_2^2.
\end{equation}
By  choosing suitably large $P$, (e.g. $P=\epsilon_0^{-\frac{1}{8}}$),   we have 

\begin{equation}
    C_1(1+P^{7}\epsilon_0+P^3 \epsilon_0^{\frac{p_0}{3-p_0}})< \frac{1}{2}P^2,
\end{equation}
as $\epsilon_0$ is small. If $P=\epsilon_0^{-\frac{1}{8}}$, then $CP \epsilon_0^{\frac{1}{2}}\leq C \epsilon_0^{\frac{3}{8}}\leq \frac{1}{100}$ also holds.

Therefore, (\ref{bootd1g}) (\ref{bootd2g}) and (\ref{bootd3g}) will always  hold  for  $P=\epsilon_0^{-\frac{1}{8}}$ on an interval $[0,T)$, as long as the solution exists on $[0, T)$.

\begin{theorem} \label{t7.1}
    Under the assumptions of Theorem \ref{maxfol},  there exists a positive  constant $C=C(\alpha_0,\alpha_1)$ such that the solution to the mean curvature flow  (\ref{mcf}) satisfies 
    \begin{equation} \label{7.6}
    \begin{split}
     &  \ \ \ \ \ \ \frac{1}{2} g_0    \leq g_t \leq 2 g_0,\\
      &  \int_{M_t}|\bar{R}m|^2+|\nabla h|^2+|h|^4  \leq   C \int_{M_0}|\bar{R}m|^2+|\nabla h|^2+|h|^4,\\
 &  \int_{M_t}|\nabla \bar{R}m|^2+|\nabla^2 h|^2+|h|^6+|\bar{R}m|^3+|\nabla h|^3
 \\& \leq C \int_{M_0}|\nabla \bar{R}m|^2+|\nabla^2 h|^2+|h|^6+|\bar{R}m|^3+|\nabla h|^3.
\end{split}
\end{equation}
\end{theorem}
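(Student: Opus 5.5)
The proof is a continuity argument built on the bootstrap machinery of Sections 5--9, combined with the scaling invariance from Section 5. First I reduce to the small-data setting. Set $\lambda=E_1^{-4}$, up to a fixed multiple chosen so that the rescaled quantities are small, and replace $(M_0,g,h)$ by $(M_0,\lambda g,\sqrt{\lambda}h)$ and the flow time by $\tilde t=\lambda t$. Then $E_1^2$ scales like $\lambda^{-1/2}$ and $E_2^2$ like $\lambda^{-3/2}$. The constants $\alpha_0,\alpha_1$ are scale invariant, and so are the hypotheses (\ref{HL_p1}), (\ref{HL_p2}), (\ref{dHL_2}). So I may assume $E_1^2<\epsilon_0^2$. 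I also need $E_2^2<\epsilon_0^2$, which cannot be achieved by the same $\lambda$ for arbitrary data. However, the closing argument only uses the combinations appearing in Lemma \ref{l5.3} and in the Sobolev inequalities (e.g.\ $CPE_1<\frac12$ in Lemma \ref{l3.5}), all of which are controlled once $E_1$ is small. The conclusions (\ref{7.6}) are scale invariant (the metric comparison trivially, the integrals by homogeneity), so proving them after rescaling suffices.

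Next I set up the continuity argument. By Theorem \ref{shorttime}, Section 4, and the remark after it (boundedness of $\|\nabla^i\bar Rm\|_{L^2(M_t)}$ for short time), the quantities $E_1^2(M_t)$ and $E_2^2(M_t)$ are finite and continuous in $t$ at $t=0$. Hence (\ref{bootd1g})--(\ref{bootd3g}) hold with $P=\epsilon_0^{-1/8}$ on some $[0,\hat T)$. Let $\hat T$ be the supremum of times, below the existence time, on which they hold.

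On $[0,\hat T)$ the estimates of Sections 6--8 apply:
\begin{itemize}
\item Theorem \ref{HNash} and Lemma \ref{l5.3} give $\int_0^{\hat T}\|Hh\|_{L^\infty}\le CP\epsilon_0^{p_0/(3-p_0)}$ and $\int_0^{\hat T}\|\nabla H\|_{L^\infty}\le CP^5\epsilon_0$.
\item Feeding these into Propositions \ref{R0 L2}, \ref{dR0 L2} and the two lemmas of Section 8.2 gives (\ref{7.3}) and (\ref{7.4}).
\item Integrating $\partial_t g=2Hh$ gives (\ref{7.2}).
\end{itemize}
With $P=\epsilon_0^{-1/8}$ one has $P^7\epsilon_0=\epsilon_0^{1/8}$ and $P^3\epsilon_0^{p_0/(3-p_0)}\le \epsilon_0^{p_0/(3-p_0)-3/8}$. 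The exponent $p_0/(3-p_0)>1/2>3/8$ because $p_0>1$, so both terms are small and the constants satisfy $C_1(1+\cdots)<\frac12P^2$. The bootstrap bounds are therefore improved to strict inequalities with a factor $\frac12$. By continuity they persist past $\hat T$, unless $\hat T$ equals the maximal existence time. In either case, (\ref{7.3}), (\ref{7.4}) and (\ref{7.2}) hold on the whole existence interval with constant $C_1(1+o(1))\le 2C_1$, which is exactly (\ref{7.6}) with $C=2C_1(\alpha_0,\alpha_1)$.

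The main obstacle is the continuity step itself. One must know that $t\mapsto E_i^2(M_t)$ is continuous, or at least upper semicontinuous, on noncompact leaves. The paper's estimates are phrased via $\frac{d^+}{dt}$ with cutoffs and limits $a\to\infty$, as in (\ref{66}). I would first try to show that the integrated forms, for example (\ref{6.19}), hold on every subinterval, so that $E_i^2(M_t)\le E_i^2(M_{t_1})+o(1)$ as $t\downarrow t_1$. Combined with the asymptotic decay from Theorems \ref{t2.4}, \ref{t2.5} and \ref{t44}, this gives the needed openness. A second, milder point is checking that the Sobolev constants used (Lemmas \ref{Sobfun}--\ref{StSob}) depend only on $\alpha_0,\alpha_1$ via (\ref{bootd1g}), so that $C$ in (\ref{7.6}) has the claimed dependence.
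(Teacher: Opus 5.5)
Your argument is essentially the paper's: reduce by the scaling $(g,h)\mapsto(\lambda g,\sqrt{\lambda}h)$, $t\mapsto\lambda t$, to small data. Then close the bootstrap (\ref{bootd1g})--(\ref{bootd3g}) with $P=\epsilon_0^{-1/8}$ using (\ref{7.2})--(\ref{7.4}), and transfer back by the scale invariance of (\ref{7.6}). You also make explicit the continuity/openness step, which the paper only asserts.

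There are two slips, neither fatal.

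\begin{itemize}
\item It is false that $E_1$ and $E_2$ cannot be made small by the same $\lambda$. Both $E_1\mapsto\lambda^{-1/4}E_1$ and $E_2\mapsto\lambda^{-3/4}E_2$ decrease in $\lambda$. So any $\lambda\ge\max\{(E_1/\epsilon_0)^4,(E_2/\epsilon_0)^{4/3}\}$ gives $E_1,E_2\le\epsilon_0$ simultaneously, which is exactly what the paper does. Your unverified claim that smallness of $E_2$ is never used is therefore unnecessary.
\item Your normalization $\lambda=E_1^{-4}$ has the exponent reversed. It sends $E_1$ to $E_1^2$ rather than to something of size $\epsilon_0$.
\end{itemize}
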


\begin{proof}
When   $E_1\leq \epsilon_0$ and $E_2\leq \epsilon_0$,   the result follows from the above arguments.  Otherwise, we may scale the solution $(g,h)$ to $(\lambda g,\sqrt{\lambda}h)$ so that the conditions $E_1\leq \epsilon_0$ and $E_2\leq \epsilon_0$ hold for $(\lambda_0 g,\sqrt{\lambda_0}h)$  for some large  $\lambda_0$. In view of the scaling invariance of (\ref{7.6}), this completes the proof of Theorem \ref{t7.1}. 
\end{proof}

\section{Higher order estimates}
  Note that we have obtained a uniform $W^{1,2}$-estimate for the spacetime curvature tensor $\bar{R}m$ in Theorem \ref{t7.1}, whereas this is not enough to deduce a uniform  $L^{\infty}$-bound for $\bar{R}m$,  which was used in the proof of short time existence of mean curvature flow in Section 2.2.  In view of the Sobolev embedding theorem, the boundedness of $||\bar{R}m||_{L^{\infty}}$ can be   guaranteed by  a uniform $W^{2,2}$-estimate of $\bar{R}m$. We therefore  need  to derive some  higher-order estimates of the  curvature and the second fundamental form.

\begin{lemma} \label{l8.1}
    Under  the assumptions of Theorem \ref{maxfol},  the mean curvature flow (\ref{mcf}) satisfies 
\begin{equation} \label{8.1}
 \int_{M_{T}}|\nabla^{3}H|^{2}+\int_{0}^{T}(\int_{M_{t}} |\nabla^{4} H|^{2}) dt\leq  ||\nabla^3 H||^2_{L^2(M_0)}+C(E_2^{\frac{4}{3}}+E_1^4)E_2^2,
\end{equation}
and 
\begin{equation} \label{8.2}
    \begin{split}
& \int_{M_{T}}|\nabla^{3}H|^{2}+\int_{\frac{15}{32}T}^{T}(\int_{M_{t}} |\nabla^{4} H|^{2}) dt\\ 
     \leq &C(E_2^{\frac{4}{3}}+E_1^4+T^{-1})||H||_{L^{p_0}(M_0)}^2\{(||H||_{L^{p_0}(M_0)} E^{\frac{1}{2}}_1E^{\frac{1}{2}}_2{T^{-\frac{3}{2p_0}}}+T^{-1} + E_1^4) \\
&\times( E_1^2T^{1-\frac{3}{p_0}}+ T^{\frac{1}{2}-\frac{3}{p_0}})+ E_1^3E_2 T^{1-\frac{3}{p_0}}\}+C E_1E_2^3||H||_{L^{p_0}(M_0)}^2T^{1-\frac{3}{p_0}}\\
&+C(E_1^2E_2^2+E_2^{\frac{8}{3}})||H||_{L^{p_0}(M_0)}^2( E_1^2T^{1-\frac{3}{p_0}}+T^{\frac{1}{2}-\frac{3}{p_0}}).
    \end{split}
\end{equation}
        \end{lemma}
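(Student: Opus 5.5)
We follow the scheme of Lemma 7.2, now one derivative higher. First we differentiate (\ref{d2H evo}) once more to get the evolution of $\nabla^3H$:
\begin{equation}\nonumber
(\partial_t-\triangle)\nabla^3H=Rm\ast\nabla^3H+\nabla Rm\ast\nabla^2H+\nabla^2Rm\ast\nabla H-\nabla^3(|h|^2H)+\nabla^2(Hh)\ast\nabla H+\nabla(Hh)\ast\nabla^2H+Hh\ast\nabla^3H .
\end{equation}
Here we use $\partial_t\Gamma=\nabla(Hh)$ and $Rm=\bar Rm+h\ast h$ (Gauss). With the cutoff argument of (\ref{HLpcut})–(\ref{66}), which is justified by the decay in Theorem \ref{t44}, this gives $\frac{d^+}{dt}\int|\nabla^3H|^2\le -2\|\nabla^4H\|_{L^2}^2+\dots$.

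The error terms are handled as follows.
\begin{itemize}
\item The term with $\nabla^2Rm$ is integrated by parts once, producing $\nabla Rm\ast(\nabla H\ast\nabla^4H+\nabla^2H\ast\nabla^3H)$, so that only $\|\nabla \bar Rm\|_{L^2}$, $\|\nabla^2h\|_{L^2}$, $\|h\|_{L^6}$ and $\|\nabla h\|_{L^3}$ enter. These are bounded by $CE_2$ via Theorem \ref{t7.1}.
\item $\|\nabla H\|_{L^\infty}$ and $\|\nabla^2H\|_{L^3}$ are interpolated by Sobolev (\ref{L6Sob}), (\ref{inftySob2}) between $\|\nabla^2H\|_{L^2}$, $\|\nabla^3H\|_{L^2}$ and $\|\nabla^4 H\|_{L^2}$.
\item The term $\int Rm\ast\nabla^3H\ast\nabla^3H$ is treated as in (\ref{d2H est4}): $\le CE_1\|\nabla^3H\|_{L^2}^{1/2}\|\nabla^4H\|_{L^2}^{3/2}\le\delta\|\nabla^4H\|^2+CE_1^4\|\nabla^3H\|^2$. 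Alternatively, using $\|Rm\|_{L^3}\le CE_2^{2/3}$ together with $L^3$–$L^3$–$L^3$ Hölder and Sobolev gives $\delta\|\nabla^4H\|^2+CE_2^{4/3}\|\nabla^3 H\|^2$.
\item The terms involving $|h|^2H$ are expanded. Since $\|H\|_{L^\infty},\|\nabla H\|_{L^\infty}$ are controlled (Lemma \ref{l5.3}), each is bounded by $\delta\|\nabla^4H\|^2+C(\|Hh\|_\infty+\|\nabla H\|_\infty)E_2^2$-type quantities.
\end{itemize}

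After absorbing the $\delta$-terms we obtain
\begin{equation}\nonumber
\frac{d^+}{dt}\|\nabla^3H\|^2_{L^2}+\|\nabla^4H\|^2_{L^2}\le C(E_2^{4/3}+E_1^4)\big(\|\nabla^3H\|_{L^2}^2+E_2^2\|\nabla^2H\|^{}_{L^2}\|\nabla^3 H\|_{L^2}\big)+C\|\nabla^2H\|^2_{L^2}(E_1^2E_2^2+E_2^{8/3})+\dots
\end{equation}

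For (\ref{8.1}) we integrate from $0$ to $T$. The time integrals $\int_0^T\|\nabla^3H\|^2$ and $\int_0^T\|\nabla^2H\|^2$ are bounded using (\ref{5.2}), (\ref{5.13}), (\ref{6.37}) and the smallness integrals of Lemma \ref{l5.3}. Any multiplicative factor $e^{C\int(\|Hh\|_\infty+\|\nabla H\|_\infty)}$ stays bounded, exactly as in (\ref{5.32}).

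For (\ref{8.2}) we do the same with a time cutoff $\phi(t/T)$, vanishing for $t\le 3T/8$ and equal to $1$ for $t\ge 15T/32$. This introduces the extra term $CT^{-1}\int_{3T/8}^T\|\nabla^3H\|^2$. That term, and the analogous $\int\|\nabla^2 H\|^2$, $\int\|\nabla^3 H\|^2$ over $[3T/8,T]$, are controlled by the decay bounds (\ref{5.2}) and (\ref{5.13}), which carry the factors $\|H\|^2_{L^{p_0}(M_0)}T^{1-3/p_0}$ and $T^{1/2-3/p_0}$. Substituting these bounds gives precisely the structure of the right side of (\ref{8.2}), with $P$ absorbed into $C$ since the bootstrap now holds with $P=\epsilon_0^{-1/8}$ after scaling.

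We expect the main obstacle to be the highest-order curvature term $\nabla^2Rm\ast\nabla H\ast\nabla^3H$. Only $\nabla\bar Rm\in L^2$ is known, and in the $\nabla^3(|h|^2H)$ term only $\nabla^2h\in L^2$ is known. Our plan is to integrate by parts, moving the derivative onto $\nabla^3H$ to produce $\nabla^4H$, which is absorbable. We then place $\nabla H$ in $L^\infty$ via (\ref{5.22}), or in $L^6$ via Sobolev, so that every remaining factor is in $L^2$ or $L^3$ with bounds by $E_1,E_2$. The exponents $E_2^{4/3}$ and $E_2^{8/3}$ arise from using $\|\bar Rm\|_{L^3}\le CE_2^{2/3}$ and $\|\nabla h\|_{L^3}\le CE_2^{2/3}$ in these interpolations.
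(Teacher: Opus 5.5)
Your overall route is the paper's. You evolve $\nabla^3H$, move a derivative from $\nabla^2Rm$ and from $\nabla^3(|h|^2H)$ onto $\nabla^4H$, then interpolate and absorb. You integrate from $0$ for (\ref{8.1}); the real inputs there are (\ref{8.12}), i.e.\ (\ref{5.11}) and (\ref{6.36}) integrated from $0$, since (\ref{5.2}) and (\ref{5.13}) only control $[\frac{3T}{8},T]$. For (\ref{8.2}) you use a time cutoff plus (\ref{4.17}), (\ref{5.2}), (\ref{5.13}).

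\textbf{The gap: the $|h|^2H$ terms.} Your bullet bounds them by ``$C(\|Hh\|_{L^\infty}+\|\nabla H\|_{L^\infty})E_2^2$-type quantities'', that is, linearly in $H$, to be made small by Lemma \ref{l5.3}. This cannot give (\ref{8.2}). Every term there carries $\|H\|^2_{L^{p_0}(M_0)}$ times a power of $T$, and that only comes out if the right-hand side of the differential inequality is \emph{quadratic} in $H$. A term with a single factor $\|H\|_{L^\infty}$ or $\|\nabla H\|_{L^\infty}$, integrated over $[\frac{3T}{8},T]$, gives at most the first power of $\|H\|_{L^{p_0}(M_0)}$. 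Lemma \ref{l5.3} gives no $T$-decay at all.

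The bound is also not scale-consistent. Under $g\mapsto\lambda g$, $\frac{d}{dt}\|\nabla^3H\|^2_{L^2}$ scales like $\lambda^{-7/2}$, but $\|\nabla H\|_{L^\infty}E_2^2$ scales like $\lambda^{-5/2}$. Likewise, the term $(E_2^{4/3}+E_1^4)E_2^2\|\nabla^2H\|_{L^2}\|\nabla^3H\|_{L^2}$ in your displayed inequality scales like $\lambda^{-9/2}$. So you would not reach the homogeneous form of (\ref{8.1}) either, which the scaling argument of Theorem \ref{t8.3} needs.

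Your closing paragraph points the right way; the fix is to keep both factors of $H$, as (\ref{8.9}) does:
\[
\int_{M_t}\nabla^3H\ast\nabla^3(|h|^2H)=\int_{M_t}\nabla^2(|h|^2H)\ast\nabla^4H\le\delta\|\nabla^4H\|^2_{L^2}+C_\delta\|\nabla^2(|h|^2H)\|^2_{L^2}.
\]
Using $\|h\|_{L^\infty}\le CE_1^{1/2}E_2^{1/2}$, $\|\nabla h\|^4_{L^4}\le CE_1E_2^3$ and (\ref{5.22}),
\[
\|\nabla^2(|h|^2H)\|^2_{L^2}\le C\big(E_1^2E_2^2\|\nabla^2H\|^2_{L^2}+E_1^3E_2\|\nabla^2H\|_{L^2}\|\nabla^3H\|_{L^2}+E_1E_2^3\|H\|^2_{L^\infty}\big).
\]
Set $a=\|\nabla^2H\|_{L^2}$ and $b=\|\nabla^3H\|_{L^2}$. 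Combine the above with (\ref{8.8}), (\ref{8.10}) and the splittings $E_2^2ab\le E_2^{8/3}a^2+E_2^{4/3}b^2$ and $E_1^3E_2ab\le E_1^2E_2^2a^2+E_1^4b^2$. This gives (\ref{8.11}), every term of which is quadratic in $H$.

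These splittings, not $\|\nabla h\|_{L^3}\le CE_2^{2/3}$, are where $E_2^{8/3}$ and $E_1^2E_2^2$ come from. The term $E_1E_2^3\|H\|^2_{L^\infty}$, hidden in your ``$\dots$'', is what produces $CE_1E_2^3\|H\|^2_{L^{p_0}(M_0)}T^{1-\frac{3}{p_0}}$ in (\ref{8.2}) via (\ref{4.17}). Treat the $\partial_t\Gamma=\nabla(Hh)$ terms the same way: estimate $\|\nabla(Hh)\|_{L^2}$ alongside $\|\nabla Rm\|_{L^2}$, keeping the pair $\nabla^2H,\nabla^3H$ (or $\nabla H,\nabla^4H$) intact rather than spending a power of $H$ on Lemma \ref{l5.3}.
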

\begin{proof}  First of all, we need to derive the evolution equation of $\nabla^3H$. 

Differentiating  the formula  $$\nabla_i\nabla_j\nabla_k H=\partial_i(\nabla_j\nabla_kH)-\Gamma^{p}_{ij}\nabla_p\nabla_kH-\Gamma^{p}_{ik}\nabla_j\nabla_pH$$ with respect to $t$, we obtain 

$$\frac{\partial}{\partial t}\nabla_i\nabla_j\nabla_k H=\nabla_i (\frac{\partial }{\partial t} \nabla_j\nabla_k H)+\Gamma^{\prime}\ast \nabla^2 H.$$

Due to the  Ricci formula,
\begin{equation}
\triangle \nabla_i\nabla_j\nabla_kH=\nabla_i \triangle \nabla_j\nabla_k H+Rm\ast \nabla^3H+\nabla Rm\ast \nabla^2H,
\end{equation}
we have 
\begin{equation}\label{8.4}
(\frac{\partial}{\partial t}-\triangle) \nabla^3 H=\nabla(\frac{\partial}{\partial t}-\triangle) \nabla^2 H+Rm\ast \nabla^3H+(\nabla Rm+\Gamma^{\prime})\ast \nabla^2H.
\end{equation}
On the other hand, one can rewrite (\ref{d2H evo}) as follows: 
\begin{equation}\label{8.5}
\begin{split}
(\frac{\partial}{\partial t}-\triangle) \nabla^2 H= Rm\ast \nabla^2H+(\nabla Rm+\Gamma^{\prime})\ast \nabla H+\nabla^2(H|h|^2).
\end{split}
\end{equation}
Substituting (\ref{8.5}) into (\ref{8.4}) yields 
\begin{equation}
\begin{split}
(\frac{\partial}{\partial t}-\triangle) \nabla^3 H=& Rm\ast \nabla^3H+(\nabla Rm+\Gamma^{\prime})\ast \nabla^2 H+(\nabla^2 Rm+\nabla \Gamma^{\prime})\ast \nabla H\\ & +\nabla^3(H|h|^2),
\end{split}
\end{equation}
which implies 
\begin{equation}\label{8.7}
\begin{aligned}
   \frac{d^{+}}{d t}\int_{M_{t}}|\nabla^{3}H|^{2} \leq  & -2\int_{M_{t}}|\nabla^{4}H|^{2}+\int_{M_{t}} Rm\ast \nabla^3H \ast \nabla^3H+(\nabla Rm+\Gamma^{\prime}) \ast \nabla^2 H \ast \nabla^3 H\\ 
           &\ \ \ \ +(\nabla^2 Rm+\nabla \Gamma^{\prime})\ast \nabla H\ast \nabla^3H+\nabla^3 H \ast \nabla^3 (|h|^2H) \\
        &\ \ \ \ + \int_{M_{t}} H  h\ast  \nabla^3 H\ast \nabla^3 H.
\end{aligned}
\end{equation}

Now we  estimate the right hand side of (\ref{8.7}).  By integration by parts, applying  H$\ddot{o}$lder  and Sobolev inequalities,  we obtain 
\begin{equation}\label{8.8}
\begin{aligned}
   & \int_{M_t}(\nabla^2 Rm+\nabla \Gamma^{\prime})\ast \nabla H\ast \nabla^3H\\ = & \int_{M_t}(\nabla Rm+\Gamma^{\prime}) \ast \nabla^2 H \ast \nabla^3 H +(\nabla Rm+\Gamma^{\prime}) \ast \nabla H \ast \nabla^4 H\\
     \leq & \  {\delta} ||\nabla^4H||^2_{L^2}+C_{\delta} E_2^2||\nabla^2H||_{L^2}||\nabla^3H||_{L^2}+C_{\delta} E_2^{\frac{4}{3}}||\nabla^3H||_{L^{2}}^{2},\end{aligned}
\end{equation}
\begin{equation}\label{8.9}
\begin{aligned}    \int_{M_t} \nabla^3H \ast \nabla^3 (H|h|^2)= &  \int_{M_t} \nabla^2(H|h|^2)\ast \nabla^4 H\\
     \leq &  \  \delta  ||\nabla^4H||^2_{L^2}+C_{\delta}E_1^2E_2^2||\nabla^2 H||^2_{L^2}  +C_{\delta}E_1E_2^3||H||^2_{L^{\infty}}\\ & +C_{\delta}E_1^3E_2||\nabla^2 H||_{L^2}||\nabla^3 H||_{L^2},
    \end{aligned}
\end{equation}
and 
\begin{equation}\label{8.10}
\begin{aligned}
 &  \int_{M_{t}} (Rm+Hh) \ast \nabla^3H \ast \nabla^3H  \\ \leq & \ C ||Rm+Hh||_{L^2}||\nabla^3H||^{\frac{1}{2}}_{L^2}||\nabla^4H||^{\frac{3}{2}}_{L^2}\\  \leq & \ 
 \delta ||\nabla^4H||^2_{L^2}+ C_{\delta}  ||Rm+Hh||^4_{L^2}||\nabla^3H||^2_{L^2}.\end{aligned}
\end{equation}
Combining (\ref{8.7})(\ref{8.8})(\ref{8.9}) and (\ref{8.10}), we have 
\begin{equation}\label{8.11}
\begin{aligned}
   \frac{d^{+}}{d t}\int_{M_{t}}|\nabla^{3}H|^{2} \leq  & -||\nabla^{4}H||_{L^2}^{2}+C(E_1^4+E_2^{\frac{4}{3}}) ||\nabla^3H||^2_{L^2}\\
   &+C(E_1^2E_2^2+E_2^{\frac{8}{3}})||\nabla^2H||^2_{L^2}+CE_1E_2^3 ||H||^2_{L^{\infty}}.\end{aligned}
\end{equation}

By similar technique as in (\ref{5.12}) and using (\ref{4.17})(\ref{5.2})(\ref{5.13}),  we obtain (\ref{8.2}).

To derive  (\ref{8.1}), we integrate (\ref{5.11}) and (\ref{6.36}) from 0 to $T$: 
\begin{equation}\label{8.12}
\begin{split}
       &\int_{M_{T}}|\nabla H|^{2}+\int_{0}^{T}(\int_{M_{t}} |\nabla^{2} H|^{2}) dt\leq C E_1^{2},\\
        &\int_{M_{T}}|\nabla^{2}H|^{2}+\int_{0}^{T}(\int_{M_{t}} |\nabla^{3} H|^{2}) dt\leq C E_2^{2}.
        \end{split}
\end{equation}
Integrating (\ref{8.11}) and using (\ref{8.12}) (\ref{5.23}) gives (\ref{8.1}). 
 Lemma \ref{l8.1} thus follows

\end{proof}

\begin{proposition}\label{p8.2}
Under the assumptions of Theorem \ref{maxfol},  if at time $t=0$, $E_1\leq \epsilon_0$, $E_2\leq \epsilon_0$  and 
\begin{equation}\label{8.13}
\begin{split}
        ||H||_{L^p(M_0)}  E_2^{\frac{2}{p_0}-\frac{1}{3}} \leq  \epsilon_0,
\end{split}
\end{equation} 
\begin{equation}
\begin{split}
 ||\nabla^2 \bar{R}m||^2_{L^2}+||\nabla^3 h ||^2_{L^2} \leq \Lambda^2,\end{split}
\end{equation}
hold,  then for any $t>0$, we have 
\begin{equation} \label{8.15}
 ||\nabla^2 \bar{R}m||^2_{L^2}+||\nabla^3 h ||^2_{L^2} \leq C_2(\Lambda^2+\epsilon_0^2), 
\end{equation} 
for some constant $C_2=C_2(\alpha_0,\alpha_1)$. 

\end{proposition}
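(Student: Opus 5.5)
The plan is to mirror the argument of Propositions \ref{R0 L2}, \ref{dR0 L2} and Lemma \ref{l6.4} one derivative higher. The input from the flow is an integrable-in-time bound on $\|\nabla^2 H\|_{L^\infty}$, or at least on $\|\nabla^3 H\|_{L^2}^{1/2}\|\nabla^4 H\|_{L^2}^{1/2}$, which is the quantity produced by Lemma \ref{l8.1}.

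\textbf{Step 1: energy inequality for $\nabla^2\bar R_{0***}$.} Differentiate (\ref{R00 evo}) and (\ref{R0 evo}) twice. As in (\ref{6.4}), the top-order terms $H\nabla^3\bar R$ combine into a divergence $\nabla_k(4H\nabla^2\bar R_{ikj0}\nabla^2\bar R_{i0j0})$. This gives
\[
\frac{d^+}{dt}\int_{M_t}|\nabla^2\bar R_{0***}|^2\le C(\|\nabla H\|_{L^\infty}+\|Hh\|_{L^\infty})\|\nabla^2\bar Rm\|_{L^2}^2+\|\nabla^2\bar Rm\|_{L^2}\,\mathcal{E}(t).
\]
The error $\mathcal E$ collects the terms $\bar Rm*\nabla^3H$, $\nabla\bar Rm*\nabla^2H$, $\nabla H*\nabla h*\bar Rm$, $H\,\nabla^2(h*\bar Rm)$, and $\Gamma'$-terms of the same type. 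These are estimated by H\"older and Sobolev inequalities (Lemma \ref{L3Sob}, (\ref{inftySob2})), using the uniform $E_1,E_2$ bounds of Theorem \ref{t7.1}. For example:
\[
\|\bar Rm*\nabla^3H\|_{L^2}\le \|\bar Rm\|_{L^\infty}\|\nabla^3H\|_{L^2}\le C(E_2\|\nabla^2\bar Rm\|_{L^2})^{1/2}\|\nabla^3 H\|_{L^2},
\]
\[
\|\nabla\bar Rm*\nabla^2H\|_{L^2}\le C\|\nabla\bar Rm\|_{L^3}\|\nabla^2H\|_{L^6}\le C\|\nabla^2\bar Rm\|_{L^2}^{1/2}E_2^{1/2}\|\nabla^3H\|_{L^2}.
\]
The result has the form $y'\le a(t)\,y+b(t)\,y^{1/2}$ for $y=\|\nabla^2\bar Rm\|_{L^2}^2+\|\nabla^3h\|_{L^2}^2$. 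Here $a(t)$ is a combination of $\|\nabla H\|_\infty$, $\|Hh\|_\infty$ and $\|\nabla^3H\|_{L^2}$ (or $\|\nabla^2 H\|_{L^\infty}$), and $b(t)$ carries a factor of $\epsilon_0$.

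\textbf{Step 2: the spatial curvature and $\nabla^3 h$.} The components $\nabla^2\bar R_{ijkl}$ are recovered algebraically from (\ref{Gauss}), (\ref{tr Gauss}) and (\ref{R=Ric}). The lower-order products $\nabla^2(h*h)$ are controlled by $E_2$ together with $\|\nabla^3h\|_{L^2}$ via Gagliardo–Nirenberg, and they come with small coefficients $\epsilon_0$. For $\nabla^3 h$, differentiate Simons' identity (\ref{Simon}) once. This gives $\|\nabla\Delta h\|_{L^2}\lesssim\|\nabla^3H\|_{L^2}+\|\nabla^2\bar Rm\|_{L^2}+(\text{lower order})$. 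Then commute derivatives as in (\ref{3.13}) to bound $\|\nabla^3h\|_{L^2}$ by $\|\nabla\Delta h\|_{L^2}$ plus curvature terms, absorbing those by smallness of $E_1$. This reduces the problem to bounding $\|\nabla^2\bar R_{0***}\|_{L^2}$ and $\sup_t\|\nabla^3H\|_{L^2}$. The latter is supplied by (\ref{8.1}): it is at most $\|\nabla^3H\|^2_{L^2(M_0)}+C\epsilon_0^2$, and $\|\nabla^3 H\|_{L^2(M_0)}\le\Lambda+C\epsilon_0$ by Simons' identity at $t=0$.

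\textbf{Step 3: time integrability — the main obstacle.} Gronwall on Step 1 requires $\int_0^\infty a(t)\,dt\le C$ and $\int_0^\infty b(t)\,dt\le C\epsilon_0$. The integrals of $\|\nabla H\|_\infty$ and $\|Hh\|_\infty$ are bounded by Lemma \ref{l5.3}. The difficult part is $\int_0^\infty\|\nabla^3H\|_{L^2}^{1/2}\|\nabla^4H\|_{L^2}^{1/2}\,dt$, which controls $\|\nabla^2H\|_{L^\infty}$. I would treat it exactly as in the proof of (\ref{5.24}): split into $[0,a]$ and dyadic intervals $[2^ia,2^{i+1}a]$.

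On $[0,a]$, use (\ref{8.1}), which gives a bound in terms of $\Lambda$ and $\epsilon_0$. Since $a$ is chosen in terms of $E_2$, this piece is $O(a^{1/2}(\Lambda+\epsilon_0))$.

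On the dyadic intervals, use (\ref{8.2}), which decays like $T^{-1}\cdot T^{1/2-3/p_0}$ and similar powers. With $p_0<3/2$ the dyadic sum converges geometrically to a quantity like $\|H\|_{L^{p_0}}E_2^{2/p_0-1/3}$. Hypothesis (\ref{8.13}) is exactly what makes this at most $C\epsilon_0$, after choosing $a\sim E_2^{-4/3}$ to match the scaling $E_2^{4/3}$ in (\ref{8.2}).

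The remaining bookkeeping is checking the exponents so that every term is scale-invariant and small, possibly by first rescaling so that $E_2=\epsilon_0$. Once this integral is bounded by $C(1+\Lambda)$ and the $b$-terms are $O(\epsilon_0)$, Gronwall gives $y(t)\le C(y(0)+\epsilon_0^2)\le C_2(\Lambda^2+\epsilon_0^2)$.
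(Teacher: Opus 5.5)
Your architecture matches the paper's: an energy identity for $\nabla^2\bar R_{0***}$ with the $H\nabla^3\bar Rm$ terms in divergence form, algebraic recovery of $\nabla^2\bar R_{ijkl}$, Simons' identity for $\nabla^3h$ with $\sup_t\|\nabla^3H\|_{L^2}$ taken from (\ref{8.1}), and dyadic splitting at $t_*=E_2^{-4/3}$. But Step 3 does not close as written.

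You make $\int_0^\infty\|\nabla^3H\|_{L^2}^{1/2}\|\nabla^4H\|_{L^2}^{1/2}\,dt$ (i.e.\ $\int\|\nabla^2H\|_{L^\infty}dt$) the key quantity. You put it into the Gronwall coefficient $a(t)$ and bound it on $[0,t_*]$ through (\ref{8.1}). However, (\ref{8.1}) carries $\|\nabla^3H\|^2_{L^2(M_0)}$, which is only bounded by $C\Lambda^2$, so this integral genuinely depends on $\Lambda$. Even the sharp Hölder version $t_*^{1/2}\bigl(\int_0^{t_*}\|\nabla^3H\|_{L^2}^2\bigr)^{1/4}\bigl(\int_0^{t_*}\|\nabla^4H\|_{L^2}^2\bigr)^{1/4}\le CE_2^{-1/6}(\Lambda+\epsilon_0)^{1/2}$ (using (\ref{8.12}) for the middle factor) depends on $\Lambda$ and blows up as $E_2\to 0$. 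Your own estimate $E_2^{-2/3}(\Lambda+\epsilon_0)$ is not $O(1+\Lambda)$ either. Placed in an exponent, this gives at best a factor like $e^{C(1+\Lambda)}$, so Gronwall cannot yield (\ref{8.15}) with $C_2=C_2(\alpha_0,\alpha_1)$.

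Scaling ($g\mapsto\lambda g$, $t\mapsto\lambda t$) shows the mix-up. $\int\|\nabla^2H\|_{L^\infty}dt$ scales like $\lambda^{-1/2}$. The quantity $\|H\|_{L^{p_0}}E_2^{2/p_0-1/3}$ you expect from the dyadic sum scales like $\lambda^{-1/4}$, which is the scaling of $\int\|\nabla^3H\|_{L^2}dt$.

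The missing point is that only $\int_0^\infty\|\nabla^3H\|_{L^2}\,dt$ is needed, and it can be bounded without $\Lambda$.

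\begin{itemize}
\item In your own Step 1, the terms $\bar Rm*\nabla^3H$ and $\nabla\bar Rm*\nabla^2H$ already come out as $E_2^{1/2}y^{3/4}\|\nabla^3H\|_{L^2}$; this is the last term of (\ref{8.23}).
\item The remaining terms involve only $\|\nabla H\|_{L^\infty}$, $\|Hh\|_{L^\infty}$, $\|H\|_{L^\infty}E_1^{1/2}E_2^{1/2}$ and $\|\nabla^2H\|_{L^2}^{1/2}\|\nabla^3H\|_{L^2}^{1/2}$. Their time integrals are small by Lemma \ref{l5.3} and (\ref{5.26}).
\item On $[0,t_*]$ the paper uses (\ref{8.12}), not (\ref{8.1}): $\int_0^{t_*}\|\nabla^3H\|_{L^2}dt\le t_*^{1/2}\bigl(\int_0^\infty\|\nabla^3H\|_{L^2}^2dt\bigr)^{1/2}\le CE_2^{1/3}$.
\item The dyadic tail is handled by (\ref{8.2}).
\end{itemize}

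Together these give $\int_0^\infty\|\nabla^3H\|_{L^2}dt\le C\tilde E_1$, which is small by (\ref{8.13}), (\ref{HL_p2}) and $E_1,E_2\le\epsilon_0$. No time integral of $\nabla^4H$ is ever used. The paper then closes with the bootstrap (\ref{8.16}). With this input your Gronwall route would also close, e.g.\ after using $E_2^{1/2}y^{3/4}\le y+E_2^2$.
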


\begin{proof}  We assume 
\begin{equation} \label{8.16}
||\nabla^2 \bar{R}m||^2_{L^2}+||\nabla^3 h ||^2_{L^2} \leq Q(\Lambda^2+\epsilon_0^2), \end{equation} 
holds for some $Q\geq 1$ and  all $0\leq t<T$.

By differentiating (\ref{R00 evo}) and (\ref{R0 evo}) twice  and commuting derivatives, let $I$ be a multi-index with $|I|=2$,  we have 
\begin{equation}
 \begin{aligned}
        \frac{\partial}{\partial t}\nabla_{I}\bar{R}_{i0j0}
        =& H\nabla_{l}\nabla_{I}\bar{R}_{ilj0}+\sum_{i\leq 2} \nabla^{i}\bar{R}m \ast \nabla^{3-i} H+\sum_{i+j\leq2} \nabla^{i}\bar{R}m \ast \nabla^{2-i-j}H \ast \nabla^j h\\
        & +\sum_{i+j\leq 1} \nabla^{i}\bar{R}m \ast \nabla^{1-i-j}H \ast \nabla^j\bar{R}m +\sum_{i+j\leq 1} \nabla^{i}\bar{R}m \ast \nabla^{1-i-j}H \ast  \nabla^j(h\ast h),\end{aligned}
\end{equation}
\begin{equation}
    \begin{aligned}
     \frac{\partial}{\partial t}\nabla_{I}\bar{R}_{ijk0}=& H (\nabla_j\nabla_I \bar{R}_{i0k0}-\nabla_i\nabla_I\bar{R}_{j0k0}) + \sum_{i\leq 2} \nabla^{i}\bar{R}m \ast \nabla^{3-i} H+\sum_{i+j\leq 2} \nabla^{i}\bar{R}m \ast \nabla^{2-i-j}H \ast \nabla^j h\\
        & +\sum_{i+j\leq 1} \nabla^{i}\bar{R}m \ast \nabla^{1-i-j}H \ast \nabla^j\bar{R}m +\sum_{i+j\leq 1} \nabla^{i}\bar{R}m \ast \nabla^{1-i-j}H \ast  \nabla^j(h\ast h).     \end{aligned}
\end{equation}

Then, we have 
\begin{equation} \label{8.19}
\begin{split}
 & \frac{d}{d t} \int_{M_{t}} 2 |\nabla_{I}\bar{R}_{i0j0}|^2+|\nabla_{I}\bar{R}_{ijk0}|^2\\   \leq  &  C ||\nabla^{2}\bar{R}m||_{L^{2}}^2(||\nabla H||_{L^{\infty}}+||Hh||_{L^{\infty}}+||H||_{L^{\infty}}||\bar{R}m||_{L^3})\\
 &+C||\nabla^2 H||_{L^2}^{\frac{1}{2}}||\nabla^3H||^{\frac{1}{2}}_{L^2}||\nabla^2\bar{R}m||_{L^2}(|| \bar{R}m||_{L^{\infty}}||\nabla h||_{L^2}+||h||_{L^{\infty}}||\nabla \bar{R}m||_{L^2})\\
& + C||\nabla^{3} H||_{L^2}||\nabla\bar{R}m||_{L^2}^{\frac{1}{2}}||\nabla^2\bar{R}m||_{L^2}^{\frac{3}{2}}+\int_{M_t}H\nabla^{2}(Rm\ast h)+H\nabla(Rm\ast Rm)\ast\nabla^{2}\bar{R}m.
  \end{split}
\end{equation}

Combining (\ref{8.19}) and the following estimates (\ref{8.20}) (\ref{8.21}) and (\ref{8.22})
\begin{equation} \label{8.20}
    \begin{split}
    \int H\nabla \bar{R}m\ast \nabla h\ast \nabla^2\bar{R}m\leq & ||H||_{L^{\infty}}||\nabla^2\bar{R}m||_{L^2}||\nabla\bar{R}m||_{L^6}||\nabla h||_{L^3}\\
    \leq & C||H||_{L^{\infty}}||\nabla^2\bar{R}m||_{L^2}^2||\nabla h||_{L^2}^{\frac{1}{2}}||\nabla^2 h||_{L^2}^{\frac{1}{2}}\\
    \leq & C||H||_{L^{\infty}}E_1^{\frac{1}{2}}E_2^{\frac{1}{2}}Q^2(\Lambda+\epsilon_0)^2,
    \end{split}
\end{equation}
\begin{equation} \label{8.21}
    \begin{split}
        \int H\bar{R}m\ast\nabla^2h\ast\nabla^2\bar{R}\leq & ||H||_{L^{\infty}}||\nabla^2\bar{R}m||_{L^2}||\bar{R}m||_{L^3}||\nabla^2 h||_{L^6}\\
        \leq & C ||H||_{L^{\infty}}E_1^{\frac{1}{2}}E_2^{\frac{1}{2}}Q^2(\Lambda+\epsilon_0)^2,
    \end{split}
\end{equation}
\begin{equation} \label{8.22}
    \begin{split}
        \int H\nabla \bar{R}m\ast \bar{R}m\ast\nabla^2\bar{R}\leq & ||H||_{L^{\infty}}||\nabla^2\bar{R}m||_{L^2}||\bar{R}m||_{L^3}||\nabla \bar{R}m||_{L^6}\\ \leq & C ||H||_{L^{\infty}}E_1^{\frac{1}{2}}E_2^{\frac{1}{2}}Q^2(\Lambda+\epsilon_0)^2,
    \end{split}
\end{equation}
we have 
\begin{equation} \label{8.23}
\begin{split}
 & \frac{d}{d t} \int_{M_{t}} 2 |\nabla_{I}\bar{R}_{i0j0}|^2+|\nabla_{I}\bar{R}_{ijk0}|^2\\ 
 \leq & \  C Q^2(\Lambda+\epsilon_0)^2 (||\nabla H||_{L^{\infty}}+||Hh||_{L^{\infty}}+||H||_{L^{\infty}}E_1^{\frac{1}{2}}E_2^{\frac{1}{2}})+C Q(\Lambda+\epsilon_0)E_1^{\frac{1}{2}}E_2^{\frac{3}{2}}||\nabla H||_{L^{\infty}}\\
 &+C Q^{\frac{3}{2}}(\Lambda+\epsilon_0)^{\frac{3}{2}}E_1E_2^{\frac{1}{2}}||\nabla^2 H||_{L^2}^{\frac{1}{2}}||\nabla^3H||^{\frac{1}{2}}_{L^2}+C Q^{\frac{3}{2}}(\Lambda+\epsilon_0)^{\frac{3}{2}}E_2^{\frac{1}{2}}||\nabla^{3}H||_{L^2}. 
 \end{split}
\end{equation}

To estimate the time integral of $||\nabla^{3}H||_{L^2}$, we need to employ   (\ref{8.1}) (\ref{8.2}) to obtain 
\begin{equation} \label{8.24}
    \begin{split}
    \int_{0}^{T}||\nabla^3 H||_{L^2}\leq & C||H||_{L^{p_0}(M_0)} (||H||_{L^{p_0}(M_0)} E^{\frac{1}{2}}_1E^{\frac{1}{2}}_2{a^{-\frac{3}{2p_0}}}+a^{-1} + E_1^4)^{\frac{1}{2}} ( E_1a^{1-\frac{3}{2p_0}}+a^{\frac{3}{4}-\frac{3}{2p_0}})\\ & +C||H||_{L^{p_0}(M_0)} E_1^{\frac{3}{2}}E_2^{\frac{1}{2}} a^{1-\frac{3}{2p_0}}+CE_2{a}^{\frac{1}{2}},
    \end{split}
    \end{equation}
    which holds for all $a\leq T$.   If $T\geq E_2^{-\frac{4}{3}}$, we choose $a=E_2^{-\frac{4}{3}}$ in (\ref{8.24}). If $T< E_2^{-\frac{4}{3}}$, one can prove   
     \begin{equation}
    \begin{split}
    \int_{0}^{T}||\nabla^3 H||_{L^2}\leq & CE_2{a}^{\frac{1}{2}},
    \end{split}
    \end{equation}    
 which    holds for $a= E_2^{-\frac{4}{3}}$. For both cases,  we have  \begin{equation}  
    \begin{split}
  \int_{0}^{T}||\nabla^3 H||_{L^2}\leq & C\tilde{E}_1, 
    \end{split}
\end{equation}
where $\tilde{E}_1=\tilde{E}_1(g,h)$ is defined as follows:
\begin{equation} \label{8.26}  
    \begin{split}
  \tilde{E}= & E_2^{\frac{1}{3}}+||H||_{L^{p_0}(M_0)} E_1^{\frac{3}{2}}E_2^{\frac{2}{p_0}-\frac{5}{6}}\\
    &+||H||_{L^{p_0}(M_0)}(||H||_{L^{p_0}(M_0)} E^{\frac{1}{2}}_1E^{\frac{2}{p_0}+\frac{1}{2}}_2+E_2^{\frac{4}{3}} + E_1^4)^{\frac{1}{2}} ( E_1E_2^{\frac{2}{p_0}-\frac{4}{3}}+ E_2^{\frac{2}{p_0}-1}), 
    \end{split}
\end{equation}
which  satisfies   the following scaling invariant  property:
\begin{equation}
\tilde{E}_1(\lambda g, \sqrt{\lambda}h)=\lambda^{-\frac{1}{4}}\tilde{E}_1(g,h).
\end{equation}
Integrating (\ref{8.23}) and using (\ref{8.26}) (\ref{5.23}) (\ref{5.24}), we obtain 
\begin{equation} \label{8.29}
\begin{split}
& \int_{M_{T}} 2 |\nabla_{I}\bar{R}_{i0j0}|^2+|\nabla_{I}\bar{R}_{ijk0}|^2\\ 
 \leq & C(Q^2(\Lambda+\epsilon_0)^2+Q(\Lambda+\epsilon_0) E_1^{\frac{1}{2}}E_2^{\frac{3}{2}}) \int_{0}^{T} (||\nabla H||_{L^{\infty}}+||Hh||_{L^{\infty}})\\
 &+CQ^2(\Lambda+\epsilon_0)^2||H||^{\frac{p_0}{3-p_0}}_{L^{p}(M_0)}E_1^{\frac{1}{2}}E_2^{\frac{1}{2}}+C Q^{\frac{3}{2}}(\Lambda+\epsilon_0)^{\frac{3}{2}}E_2^{\frac{1}{2}}(E_1+\tilde{E}_1)\\
 \leq & C(\tilde{E}_1+E_1+\epsilon_0^{\frac{3}{8}}) Q^2(\Lambda+\epsilon_0)^2 .
 \end{split}
\end{equation}

On the other hand, combining (\ref{Gauss})(\ref{tr Gauss})(\ref{R=Ric}) and the following estimate 
\begin{equation} \label{8.30}
        ||\nabla^2(h\ast h)||_{L^2}\leq C||h||_{L^{\infty}}||\nabla^2h||_{L^{2}}+C||\nabla h||_{L^4}^2\leq CE_1^{\frac{1}{2}}E_2^{\frac{3}{2}},
\end{equation}
we obtain 

\begin{equation}\label{8.31}
\int_{M_{t}}  |\nabla^2\bar{R}m|^2
 \leq  C(\tilde{E}_1+E_1+\epsilon_0^{\frac{3}{8}}) Q^2(\Lambda+\epsilon_0)^2+C E_1E_2^3.
\end{equation}

Now we turn to the estimate of the second fundamental form.

Differentiating (\ref{Simon}) twice, we obtain 
\begin{equation}
\triangle \nabla^2h=\nabla^{4}H+ \sum_{i\leq 2}\nabla^{i}(\bar{R}m+h\ast h) \ast \nabla^{2-i}h+\nabla^{3}\bar{R}m,
\end{equation}
which implies 
\begin{equation}
\int_{M_t}|\nabla^{3}h|^2=\int_{M_t} \nabla^{2}h\ast \nabla^{4}H+ \sum_{i\leq 2}\nabla^{2}h\ast\nabla^{i}(\bar{R}m+h\ast h) \ast \nabla^{2-i}h+\nabla^{2}h\ast\nabla^{3}\bar{R}m.
\end{equation}

By integration by parts and Cauchy-Schwarz inequality,
\begin{equation}\label{8.34}
\begin{split}
\int_{M_t}|\nabla^{3}h|^2 \leq& C\int_{M_t} |\nabla^{3}H|^2+|\nabla^{2}\bar{R}m|^2+\sum_{i\leq2}\nabla^{2}h\ast\nabla^{i}(\bar{R}m+h\ast h) \ast \nabla^{2-i}h\\
\leq & C ||\nabla^{3}H||_{L^2}^2+C ||\nabla^{2}\bar{R}m||_{L^2}^2+C||\nabla^{2}(\bar{R}m+h\ast h)||_{L^2}||h||_{L^{\infty}}||\nabla^{2}h||_{L^{2}}\\
&+ C||(\bar{R}m+h\ast h)||_{L^{\infty}}||\nabla^{2}h||^2_{L^{2}}+C||\nabla^2 h||_{L^2}||\nabla(\bar{R}m+h\ast h)||_{L^{6}}||\nabla h||_{L^3}\\
\leq &C(\Lambda+\epsilon_0)^2+C ||\nabla^{2}\bar{R}m||_{L^2}^2+CQ(\Lambda+\epsilon_0)E_{1}^{\frac{1}{2}}E_2^{\frac{3}{2}}+CE_{2}^{\frac{5}{2}}Q^{\frac{1}{2}}(\Lambda+\epsilon_0)^{\frac{1}{2}}.
\end{split}
\end{equation}

Summing up the estimates (\ref{8.31})(\ref{8.34})(\ref{8.30}), we have 
\begin{equation}\label{8.35}
  ||\nabla^{2} \bar{R}m||^2_{L^2}+||\nabla^{3} h ||^2_{L^2}
\leq C(\tilde{E}_1+E_1+E_2+\epsilon_0^{\frac{3}{8}}) Q^2(\Lambda+\epsilon_0)^2+C(\Lambda+\epsilon_0)^2.
\end{equation}

In view of   (\ref{HL_p2}) and (\ref{8.13}),  we have  $ \tilde{E}_1\leq C(\epsilon_0)$,  
where $C(\epsilon_0)$ satisfies $\lim\limits_{\epsilon_0\rightarrow 0} C(\epsilon_0)=0$. Consequently, we have 
\begin{equation}
 ||\nabla^{2} \bar{R}m||^2_{L^2}+||\nabla^{3} h ||^2_{L^2}
\leq \frac{1}{2}Q^2(\Lambda+\epsilon_0)^2,
 \end{equation}
if  $Q$ is suitably large. This completes the proof of Proposition \ref{p8.2}. 
\end{proof}

\begin{theorem}\label{t8.3}
Under the assumptions of Theorem \ref{maxfol}, for any $t>0$, we have 
\begin{equation}\label{8.37}
\begin{split}
 & ||\nabla^2 \bar{R}m||^2_{L^2(M_t)}+||\nabla^3 h ||^2_{L^2(M_t)} \\ & \leq C \{||\nabla^2 \bar{R}m||^2_{L^2(M_0)}+||\nabla^3 h ||^2_{L^2(M_0)}+(||H||_{L^{p_0}(M_0)}  E_2^{\frac{2}{p_0}-\frac{1}{3}})^{10} +E^{10}_1+E^{\frac{10}{3}}_2\}. \end{split}
\end{equation} 

\end{theorem}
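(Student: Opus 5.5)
Theorem \ref{t8.3} is the scale-free version of Proposition \ref{p8.2}; I would deduce it by rescaling, exactly as Theorem \ref{t7.1} was obtained from the small-data bootstrap. Set
$$\Lambda_0^2=||\nabla^2 \bar{R}m||^2_{L^2(M_0)}+||\nabla^3 h ||^2_{L^2(M_0)},\qquad K=(||H||_{L^{p_0}(M_0)}E_2^{\frac{2}{p_0}-\frac{1}{3}})^{10}+E_1^{10}+E_2^{\frac{10}{3}}.$$
Under $(g,h)\mapsto(\lambda g,\sqrt\lambda h)$ (with $t\mapsto\lambda t$) one has $E_1\mapsto\lambda^{-1/4}E_1$, $E_2\mapsto\lambda^{-3/4}E_2$, $\Lambda_0^2\mapsto\lambda^{-5/2}\Lambda_0^2$, and, since $||H||_{L^{p_0}}$ scales like $\lambda^{\frac{3}{2p_0}-\frac12}$, the quantity $Q_0:=||H||_{L^{p_0}}E_2^{\frac{2}{p_0}-\frac13}$ is invariant. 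Hence every term of $K$ scales like $\lambda^{-5/2}$, as do both sides of (\ref{8.37}). So (\ref{8.37}) is scale invariant, and it suffices to prove it after one convenient normalization.

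First suppose $Q_0\ge\epsilon_0$. Then $K\ge\epsilon_0^{10}$ in every scale. Rescale so that $E_1\le\epsilon_0$ and $E_2\le\epsilon_0$; since the hypotheses of Theorem \ref{maxfol} are invariant, this is allowed. Theorem \ref{t7.1} then gives uniform bounds of $E_1,E_2$ along the flow, and I would run the argument of Proposition \ref{p8.2}, letting only $Q_0$ enter the constant. In (\ref{8.35}) the factor $\tilde{E}_1$ is then merely bounded by a power of $Q_0$ rather than small. This is the main obstacle: the absorption $C(\tilde E_1+\dots)Q^2\le \frac12Q^2$ fails. To handle it, I would instead integrate the differential inequality (\ref{8.23}) directly by a Gronwall argument in $Y(t)=||\nabla^2\bar Rm||^2+||\nabla^3h||^2$. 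The $Y^{3/4}$ and $Y^{1/2}$ terms are handled by Young's inequality. The time integrals $\int_0^\infty(||\nabla H||_{L^\infty}+||Hh||_{L^\infty})$ are controlled by Lemma \ref{l5.3}. The integral $\int||\nabla^3H||_{L^2}$ is controlled by $C\tilde E_1$. This gives $Y(t)\le C(\Lambda_0^2+\text{powers of }Q_0,E_1,E_2)$, and the tenth powers in $K$ are there precisely to dominate these polynomial losses after normalization.

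In the main case $Q_0\le\epsilon_0$, choose $\lambda$ so that
$$\epsilon_0^2=\max\{E_1^{10},E_2^{10/3},Q_0^{10}\}^{1/5}\cdot c$$
in the rescaled picture, that is, normalize $K\sim\epsilon_0^{10}$. Concretely, take $\lambda$ with $\max(E_1^{4},E_2^{4/3})=\epsilon_0^{4}$ when this dominates, so that $E_1,E_2\le\epsilon_0$ and (\ref{8.13}) holds. Then Proposition \ref{p8.2} with $\Lambda=\Lambda_0$ yields $Y(t)\le C_2(\Lambda_0^2+\epsilon_0^2)$. Here $\epsilon_0^2$ must be reexpressed through $K$. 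The delicate point is matching exponents: Proposition \ref{p8.2} produces $\epsilon_0^2$, while $K$ has weight $\lambda^{-5/2}$. I would therefore pick the normalization so that $\epsilon_0^{2}\le C\,\epsilon_0^{-c}K$, for instance by fixing $E_1^{10}+E_2^{10/3}+Q_0^{10}=\epsilon_0^{2}$ in the rescaled metric, which keeps $E_1,E_2,Q_0\le\epsilon_0^{1/5}$. I would then re-run Proposition \ref{p8.2} with $\epsilon_0^{1/5}$ in place of $\epsilon_0$, which only changes constants. Scaling back gives (\ref{8.37}), with $C$ depending on $\alpha_0,\alpha_1,p_0$.
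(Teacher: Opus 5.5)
Your overall strategy (rescale into the small-data regime of Proposition \ref{p8.2}, then convert its additive $\epsilon_0^2$ into the tenth powers) is the paper's. However, the execution rests on a wrong scaling computation. The quantity $Q_0=\|H\|_{L^{p_0}(M_0)}E_2^{\frac{2}{p_0}-\frac13}$ is \emph{not} scale invariant. Indeed $\|H\|_{L^{p_0}}\mapsto\lambda^{\frac{3}{2p_0}-\frac12}\|H\|_{L^{p_0}}$ and $E_2^{\frac{2}{p_0}-\frac13}\mapsto\lambda^{-\frac{3}{2p_0}+\frac14}E_2^{\frac{2}{p_0}-\frac13}$, so $Q_0\mapsto\lambda^{-\frac14}Q_0$, exactly like $E_1$ and $E_2^{1/3}$.

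Your own assertion that every term of $K$ scales like $\lambda^{-5/2}$, and hence that (\ref{8.37}) is scale invariant, is true only because of this; if $Q_0$ were invariant, $Q_0^{10}$ would not scale at all. Consequently:
\begin{itemize}
\item Your case $Q_0\ge\epsilon_0$ is not a scale-invariant regime, and the claim ``$K\ge\epsilon_0^{10}$ in every scale'' is false.
\item The obstacle you identify there, failure of the absorption in (\ref{8.35}), is an artifact of the error.
\item The Gronwall argument you substitute for that case is unnecessary, and as written it is unverified. Its key claim, that the tenth powers in $K$ dominate all polynomial losses, is not automatic. Only monomials $E_1^aE_2^bQ_0^c$ with $a,b,c\ge0$ and $a+3b+c=10$ are controlled by $K$ via Young's inequality. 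By contrast, $\tilde E_1$ in (\ref{8.26}), rewritten through $Q_0$, already contains $\|H\|_{L^{p_0}}E_1^{3/2}E_2^{\frac{2}{p_0}-\frac56}=Q_0E_1^{3/2}E_2^{-1/2}$.
\end{itemize}

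With the correct scaling the proof closes in one step, with no case distinction. Since $K\mapsto\lambda^{-5/2}K$, choose $\lambda$ so that $K=\epsilon_0^{10}$ for the rescaled data. Then $E_1\le\epsilon_0$, $E_2\le\epsilon_0^{3}\le\epsilon_0$ and $Q_0\le\epsilon_0$. So Proposition \ref{p8.2} applies as stated, because the hypotheses of Theorem \ref{maxfol} are scale invariant. There is no need for your detour of normalizing $K=\epsilon_0^2$ and re-running the proposition with $\epsilon_0^{1/5}$. In that scale it gives $\|\nabla^2\bar{R}m\|^2_{L^2}+\|\nabla^3h\|^2_{L^2}\le C_2(\Lambda^2+\epsilon_0^2)=C_2(\Lambda^2+\epsilon_0^{-8}K)$, where $\Lambda^2$ is the rescaled initial quantity. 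Every term scales like $\lambda^{-5/2}$, so undoing the rescaling yields (\ref{8.37}) with $C=C_2\epsilon_0^{-8}$.

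The paper reaches the same conclusion case by case. When $E_1,E_2\le\epsilon_0$ but $Q_0\ge\epsilon_0$, it rescales by $\lambda_0=(Q_0/\epsilon_0)^4\ge1$, which also keeps $E_1,E_2\le\epsilon_0$, and obtains $\lambda_0^{5/2}\epsilon_0^2=\epsilon_0^{-8}Q_0^{10}$. Analogous choices when $E_1$ or $E_2$ exceeds $\epsilon_0$ produce $E_1^{10}$ and $E_2^{10/3}$. A final rescaling with $\lambda\to0$ removes the leftover $\epsilon_0^2$.
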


\begin{proof}
First of all,  we claim the following estimate holds: 
\begin{equation}\label{8.38}
  ||\nabla^2 \bar{R}m||^2_{L^2}+||\nabla^3 h ||^2_{L^2}  \leq C  \{\Lambda^2 + E^{10}_1+ E^{\frac{10}{3}}_2+ \epsilon_0^2 +(||H||_{L^{p_0}(M_0)}  E_2^{\frac{2}{p_0}-\frac{1}{3}})^{10}\}, 
\end{equation} 
where  $\Lambda^2= ||\nabla^2 \bar{R}m||^2_{L^2(M_0)}+ ||\nabla^3 h ||^2_{L^2(M_0)}$.

When  $E_1\leq \epsilon_0$, $E_2\leq \epsilon_0$ and (\ref{8.13}) hold, (\ref{8.38}) follows from Proposition \ref{p8.2}. 

Now we assume   $E_1\leq \epsilon_0$, $E_2\leq \epsilon_0$,  but
\begin{equation}
        ||H||_{L^{p_0}(M_0)}  E_2^{\frac{2}{p_0}-\frac{1}{3}} \geq \epsilon_0.
\end{equation}

Note that $(\lambda g(t), \sqrt{\lambda} h(t))$ is still a solution to the mean curvature flow (\ref{mcf}), and 

\begin{equation}
        ||H||_{L^{p_0}(M_0, \lambda g_0)}  E_2(\lambda g_0, \sqrt{\lambda}h_0)^{\frac{2}{p_0}-\frac{1}{3}}=\lambda^{-\frac{1}{4}} ||H||_{L^{p_0}(M_0)}  E_2^{\frac{2}{p_0}-\frac{1}{3}}.
\end{equation}

If we  choose  $$\lambda_0=(||H||_{L^{p_0}(M_0)}  E_2^{\frac{2}{p_0}-\frac{1}{3}}\epsilon_0^{-1})^4 \geq 1$$
 then  (\ref{8.13}) holds for $(\lambda_0 g_0, \sqrt{\lambda_0} h_0)$. Moreover,   under this  scaling, we know  $$E_1\rightarrow \lambda_0^{-\frac{1}{4}}E_1 \leq \epsilon_0; \ \ E_2\rightarrow \lambda_0^{-\frac{3}{4}}E_2\leq \epsilon_0;  \ \ \ \Lambda\rightarrow \lambda_0^{-\frac{5}{4}} \Lambda.$$

Then,  the conclusion of Proposition \ref{p8.2}   holds for $(\lambda_0 g(t), \sqrt{\lambda_0} h(t))$, i.e. 

\begin{equation}\label{8.41}
||\nabla^{2} \bar{R}m_{\lambda_0}||^2_{L^2}+||\nabla^{3} h_{\lambda_0} ||^2_{L^2} \leq C (\lambda_0^{-\frac{5}{4}} \Lambda+\epsilon_0)^2,
\end{equation}
 which  implies 
\begin{equation} \label{8.42}
\begin{split}
 ||\nabla^2 \bar{R}m||^2_{L^2}+||\nabla^3 h ||^2_{L^2} & =   \lambda_0^{\frac{5}{2}} (||\nabla^2 \bar{R}m_{\lambda_0}||^2_{L^2}+||\nabla^3 h_{\lambda_0} ||^2_{L^2})\\
 & \leq C(\Lambda^2+  \lambda_0^{\frac{5}{2}} \epsilon_0^2). \end{split}
\end{equation} 
This proves  (\ref{8.38}) in this case.

Now we treat the claim when  $E_1\geq \epsilon_0$ or $E_2\geq \epsilon_0$.

We consider  two decreasing functions $\lambda\rightarrow \lambda^{-\frac{1}{4}}E_1$ and $\lambda \rightarrow \lambda^{-\frac{3}{4}}E_2$ for $\lambda\geq 1$. Clearly, there exists a unique  $\lambda_0\geq 1$ so that either $(E_1)_{\lambda_0}\leq (E_2)_{\lambda_0}=\epsilon_0$ or  $(E_2)_{\lambda_0}\leq (E_1)_{\lambda_0}=\epsilon_0$ holds. For  the former case, 
a scaling  argument as in (\ref{8.41})(\ref{8.42}) gives  
\begin{equation}
\begin{split}
 ||\nabla^2 \bar{R}m||^2_{L^2}+||\nabla^3 h ||^2_{L^2} \leq C (\Lambda^2+(||H||_{L^{p_0}(M_0)}  E_2^{\frac{2}{p_0}-\frac{1}{3}})^{10}+E^{\frac{10}{3}}_2). \end{split}
\end{equation} 

For the latter case,  we obtain 
\begin{equation}
\begin{split}
 ||\nabla^2 \bar{R}m||^2_{L^2}+||\nabla^3 h ||^2_{L^2} \leq C (\Lambda^2+(||H||_{L^{p_0}(M_0)}  E_2^{\frac{2}{p_0}-\frac{1}{3}})^{10}+E^{10}_1). \end{split}
\end{equation}

Now the claim (\ref{8.38}) is proved for all cases. 

To get rid of the $\epsilon_0^2$-term in (\ref{8.38}),  since (\ref{8.38}) holds for all scalings, we scale the solution and apply the same argument as in (\ref{8.41}) (\ref{8.42}) to obtain 
\begin{equation}
 ||\nabla^2 \bar{R}m||^2_{L^2}+||\nabla^3 h ||^2_{L^2} \leq C \{\Lambda^2+(||H||_{L^{p_0}(M_0)}  E_2^{\frac{2}{p_0}-\frac{1}{3}})^{10}+E^{10}_1+E^{\frac{10}{3}}_2+\lambda^{\frac{5}{2}}\epsilon_0^2\}, 
\end{equation} 
which holds for any $\lambda>0$.  Let $\lambda\rightarrow 0$, we obtain (\ref{8.37}). The proof is completed. 
\end{proof}

\begin{theorem} \label{t8.4} Under the assumptions of Theorem \ref{maxfol}, if the following holds at time $t=0$ for some $k\geq 2$:   
\begin{equation}
 \sum_{i=0}^{k}||\nabla^i \bar{R}m||^2_{L^2(M_0)}+\sum_{i=1}^{k+1}||\nabla^i h ||^2_{L^2(M_0)}< \infty,
\end{equation}
 
then there exists  $\Lambda_k>0$ depending only  on the initial data $(g_0,h_0)$ such that   for any $t>0$, we have 
\begin{equation}
\begin{split}
 \sum_{i=0}^{k}||\nabla^i \bar{R}m||^2_{L^2(M_t)}+\sum_{i=1}^{k+1}||\nabla^i h ||^2_{L^2(M_t)} \leq \Lambda_k.
 \end{split}
 \end{equation}
\end{theorem}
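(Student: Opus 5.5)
The plan is an induction on $k$, with base case $k=2$ given by Theorem \ref{t8.3} together with Theorem \ref{t7.1}. Those two results bound $\nabla^j\bar{R}m$ in $L^2$ for $j\le 2$ and $\nabla^j h$ for $j\le 3$, uniformly in $t$. Sobolev (Lemma \ref{LinftySob} and (\ref{inftySob2})) then gives uniform bounds on $\|\bar{R}m\|_{L^\infty}$, $\|h\|_{L^\infty}$, $\|\nabla h\|_{L^\infty}$ and on $\|\nabla \bar{R}m\|_{L^p}$ for $p\le 6$.

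Assume the statement holds for $k-1$, so that $\sum_{i\le k-1}\|\nabla^i\bar{R}m\|^2+\sum_{i\le k}\|\nabla^i h\|^2\le\Lambda_{k-1}$ for all $t$. The first step is to control the mean curvature at order $k+1$. Differentiate (\ref{75}) $k$ times, as was done for $\nabla^3H$ in Lemma \ref{l8.1}. This yields
\[
(\partial_t-\triangle)\nabla^{k+1}H=\sum \nabla^{a}(Rm+\Gamma')\ast\nabla^{b}H+\nabla^{k+1}(H|h|^2), \qquad a+b=k+1,\ b\ge1 .
\]
The resulting energy inequality for $\int|\nabla^{k+1}H|^2$ carries a good term $-\|\nabla^{k+2}H\|^2$. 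In the error terms, integrate by parts once so that the top-order curvature factor $\nabla^{k}\bar{R}m$ (and hence $\nabla^{k+1}h$) appears at most linearly. It multiplies $\nabla H\ast\nabla^{k+2}H$ or $\nabla^{2}H\ast\nabla^{k+1}H$. Control the lower-order factors by the inductive hypothesis, Gagliardo--Nirenberg interpolation, and the time-integrability results of Lemma \ref{l5.3} and (\ref{8.12}). This gives $\int_0^T\|\nabla^{k+1}H\|_{L^2}\,dt\le C+C\sup_t Y_k(t)^{1/2}\cdot(\text{integrable})$, where $Y_k=\|\nabla^k\bar{R}m\|^2+\|\nabla^{k+1}h\|^2$.

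The second step is a bootstrap $Y_k\le Q$ on $[0,T)$, as in Proposition \ref{p8.2}. Differentiate (\ref{R00 evo}) and (\ref{R0 evo}) $k$ times. The principal terms $H\nabla\nabla^k\bar{R}$ again form a divergence, exactly as in (\ref{6.4}). Every remaining term is linear in the top-order quantity, so that
\[
\frac{d}{dt}\int 2|\nabla^k\bar{R}_{i0j0}|^2+|\nabla^k\bar{R}_{ijk0}|^2\le C\big(\|\nabla H\|_\infty+\|Hh\|_\infty+\|H\|_\infty\big)Y_k+C\,Y_k^{1/2}\big(\|\nabla^{k+1}H\|_{L^2}+\|H\|_\infty\Lambda_{k-1}'\big).
\]
The spatial components $\bar{R}_{ijkl}$ are recovered from (\ref{Gauss}), (\ref{tr Gauss}) and (\ref{R=Ric}), using $\|\nabla^k(h\ast h)\|$ bounded by induction. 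The top derivative $\nabla^{k+1}h$ is recovered from (\ref{Simon}) differentiated $k-1$ times, as in (\ref{8.34}); this costs $\|\nabla^{k+1}H\|^2+\|\nabla^k\bar{R}m\|^2$ plus lower-order terms. Every coefficient has finite time integral: $\int_0^\infty\|H\|_\infty$ by (\ref{5.26}), the others by Lemma \ref{l5.3} and step one. Grönwall (in the form $y'\le a(t)y+b(t)y^{1/2}$) then closes the bootstrap with a constant depending on $\Lambda_{k-1}$ and the initial data. Unlike the $k=2$ case, no smallness or scaling argument is needed, because the statement allows $\Lambda_k$ to depend on the data.

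The main obstacle is the time integrability of $\|\nabla^{k+1}H\|_{L^2}$, or of $\|\nabla^{k+1}H\|^2$ as needed for the $\nabla^{k+1}h$ bound. The plan is to obtain it from the dissipative term in the $\nabla^{k}H$ energy estimate at the previous level, i.e. $\int_0^\infty\|\nabla^{k+1}H\|^2\,dt<\infty$. Proving that bound requires the $\nabla^{k}H$ estimate, which has the same structure as (\ref{8.11}), to have $L^1_t$ coefficients. This would be checked inductively, using $\int\|\nabla^{j}H\|^2dt<\infty$ for $j\le k$. Pointwise control of $\|\nabla^{k+1}H\|_{L^2}$ is then only needed in square-integrated form, which suffices once Cauchy--Schwarz is used in the $Y_k$ inequality with $b(t)^2$ integrable.
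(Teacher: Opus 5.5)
Your overall architecture is sound: induction from Theorem~\ref{t8.3}, the divergence structure of the principal terms, elliptic recovery of $\nabla^{k+1}h$ from (\ref{Simon}), and Gr\"onwall in place of the smallness/scaling argument (legitimate, since $\Lambda_k$ may depend on the data). The step on which the Gr\"onwall closure hinges, however, fails as you propose it. The $Y_k$ inequality contains terms such as $\|\nabla^{k+1}H\|_{L^2}Y_k^{1/2}$ (or $Y_k^{3/4}$, cf.\ (\ref{8.19})), so you need $\int_0^\infty\|\nabla^{k+1}H\|_{L^2}\,dt<\infty$. Dissipation at level $k$ only gives $\int_0^\infty\|\nabla^{k+1}H\|_{L^2}^2\,dt<\infty$, and on an unbounded time interval that is not enough. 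For $y'=\frac{2}{1+t}\,y^{1/2}$ the coefficient lies in $L^2\setminus L^1$, yet $y^{1/2}=y(0)^{1/2}+\log(1+t)\to\infty$. Any splitting $by^{1/2}\le \frac{b^2}{2\eta}+\frac{\eta}{2}y$ compatible with Gr\"onwall needs $\eta,\ b^2/\eta\in L^1(0,\infty)$, and by Cauchy--Schwarz this already forces $b\in L^1$. Your step one does not produce an $L^1_t$ bound either. After the integration by parts done in (\ref{8.8})--(\ref{8.9}), the $\nabla^{k+1}H$ energy inequality involves only $\nabla^{k-1}Rm$ and $\nabla^k h$ (not $Y_k$). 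It yields $\sup_t\|\nabla^{k+1}H\|_{L^2}^2$ and $\int\|\nabla^{k+2}H\|_{L^2}^2\,dt$. That suffices for the elliptic bound on $\nabla^{k+1}h$, but it gives no integrability in time of $\|\nabla^{k+1}H\|_{L^2}$.

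The missing ingredient is the time decay coming from Theorem~\ref{HNash}. As in (\ref{5.12}), (\ref{5.13}) and (\ref{8.2}), run the energy estimates for $\nabla^jH$, $j\le k$, with a time cutoff supported near $[T/4,T]$. Feed in $\|H\|_{L^\infty}\le C\|H\|_{L^{p_0}(M_0)}t^{-3/(2p_0)}$ and the windowed bound from the previous level. This gives a decay bound $\int_{T/2}^{T}\|\nabla^{k+1}H\|_{L^2}^2\,dt\le C\,T^{1-3/p_0}$ for large $T$. Cauchy--Schwarz on dyadic windows then gives $\int_{2^ia}^{2^{i+1}a}\|\nabla^{k+1}H\|_{L^2}\,dt\le C(2^ia)^{1-3/(2p_0)}$, which is summable exactly because $p_0<\frac{3}{2}$. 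Only the initial window $[0,a]$ is handled by your global $L^2_t$ bound. This is how the paper gets (\ref{8.24}) for $k=2$, and (\ref{5.24}) via (\ref{5.28})--(\ref{5.36}). The same windowed argument is needed for the other $H$-dependent coefficients at level $k$, for example $\|\nabla^jH\|_{L^\infty}\le C\|\nabla^{j+1}H\|_{L^2}^{1/2}\|\nabla^{j+2}H\|_{L^2}^{1/2}$ with $j\ge 2$; Lemma~\ref{l5.3} and (\ref{8.12}) do not cover these. With these $L^1_t$ bounds in hand, your Gr\"onwall argument closes.
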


\begin{proof} We have proved the result for  $k=2$ in Theorem \ref{t8.3}. When $k\geq 3$, the proof is similar. We omit the details. 
\end{proof}

\section{Maximal foliations}

In this section, we aim to complete the proofs of Theorem \ref{t1.5} and  Theorem \ref{maxfol}.

\begin{proof} of Theorem \ref{t1.5}.

Let $\mathcal{M}$ be the maximal development of $M_0$ in the sense of \cite{CG69}. 
Let $\bar{M}_1 \subset \mathcal{M}$ be an open submanifold constructed in Section \ref{Pre} such that $\bar{M}_1\overset{diff}{\approx} M_0\times (-\delta_1,\delta_1)$ and a time function $\tau$ on $\bar{M}_1$ defines a smooth foliation  as in (\ref{tau})  and $M_0=\{\tau=0\}$.  It should be noted that $\delta_1$ depends only on the $L^{\infty}$-bounds of the spacetime curvature and the second fundamental form of  $\tau-$foliation.    Let $[0, t_1)$ ($0<t_1\leq \infty$) be the maximal time interval  so that the mean curvature flow exists within $\bar{M}_1$ and $\nu=-\langle N, T\rangle\leq 2$, where  $N$ and $T$ are   timelike unit normals of  $\{\tau=const.\}$ and $M_{t}$ respectively (see Section \ref{Pre}). We have two cases. 

Case 1, $t_1=\infty$.

In this case,  let  $f(x,t)$ be a graphical solution of the mean curvature flow equation (\ref{mcfg}) in the above $\tau$-foliation. In view of  $\nu\leq 2$,  we have
\begin{equation} \label{9.1}
|\nabla f|^2=\nu^2-1\leq 3.
\end{equation}

From $\frac{\partial f}{\partial t}=\nu^{-1} H$ and $\int_{0}^{\infty} |H|dt<\infty$, we know the limit $ f_{\infty}(x)\triangleq \lim\limits_{t\rightarrow \infty}f(x,t)$  exists. 
Combining   (\ref{9.1}), Theorem \ref{8.4} and (\ref{61}),  we know $f(x,t)$ converges to $f_{\infty}(x)$ in $C^{k+1}$ topology ($k\geq 2$) and the graph $\{(x,f_{\infty}(x)): x\in M_0\}$ is a  maximal spacelike hypersurface.  We complete the proof in  Case 1.

Case 2, $t_{1}<\infty$.  

First of all, we remark that Theorem  \ref{t2.4} and Theorem \ref{t2.5} actually hold on the whole time interval $[0,t_1)$. We even do not need to modify the arguments.    
 
By the same argument as in Case 1, we know $f(x,t_1)\triangleq \lim\limits_{t\rightarrow t_1}f(x,t)$ exists. Moreover, the convergence can be in $C^{k+1}$-topology.  In view of Theorem \ref{t2.4} and Theorem \ref{t2.5}, we also have two cases. There exists a point $x_1\in M_0$ such that either $|f(x_1,t_1)-f(x_1,0)|=\delta_1$ or $ \nu(x_1,t_1)=2$.  In the former case, integrating $\frac{\partial f}{\partial t}=\nu^{-1} H$, we have 
\begin{equation}\label{9.2}
\delta_1=|f(x_1,t_1)-f(x_1,0)|\leq C \int_{0}^{t_1}|H|dt.
\end{equation}
For the latter case,  direct computation gives  
\begin{equation}\label{9.3}
\begin{split}
\frac{\partial}{\partial t} \nu&=H D^2\tau(N,N)+\langle T, \nabla H\rangle\\
&=HD^2\tau(N-\nu T,N-\nu T)+\langle T-\nu N, \nabla H\rangle,
\end{split}
\end{equation}
where we have used $D^2\tau(T, \ast)=0$.

Note that  \begin{equation}\label{9.4}
\begin{split}
& D^2\tau(N-\nu T, N-\nu T)=II_{\Sigma_{\tau}}(N-\nu T, N-\nu T)\leq C |N-\nu T|^2=C(\nu^2-1)\\
& \langle T-\nu N, \nabla H\rangle \leq |\nabla H|\sqrt{\nu^2-1}.\end{split}
\end{equation}
Integrating (\ref{9.3}) yields 
\begin{equation}\label{9.5}
\begin{split}
2\leq & 1 +C\int_{0}^{t_1} ||H||_{L^{\infty}}dt+C\int_{0}^{t_1}||\nabla H||_{L^{\infty}}dt.
\end{split}
\end{equation}
Combining (\ref{9.2}) and (\ref{9.5}), we have 

\begin{equation} \label{9.6}
\min\{ \delta_1, 1\}\leq C( \int_{0}^{t_1}||H||_{L^{\infty}}dt+\int_{0}^{t_1}||\nabla H||_{L^{\infty}}dt)\leq C\sqrt{t_1},
\end{equation}
where we have used  (\ref{61}) and 
\begin{equation}\int_{0}^{t_1}||\nabla H||_{L^{\infty}}dt \leq C \int^{t_1}_{0}||\nabla^2 H||^{\frac{1}{2}}_{L^2}||\nabla^3 H||^{\frac{1}{2}}_{L^2}dt \leq CE^{\frac{1}{2}}_1E^{\frac{1}{2}}_2 \sqrt{t_1}, \end{equation}
which follows from (\ref{5.28}) and (\ref{8.12}).

(\ref{9.6}) implies that $t_1$ has a positive lower bound depending only on the initial data. 

Let  $M_{t_1}$ be the new initial Cauchy surface, we solve the Cauchy problem of vacuum Einstein equation. Then  we construct  $\bar{M}_2 \subset \mathcal{M}$ as  in Section \ref{Pre} such that $\bar{M}_2\overset{diff}{\approx} M_{t_1}\times (-\delta_2,\delta_2)$, and  a time function $\tau_2$ on $\bar{M}_2$ defines a smooth foliation  as in (\ref{tau})  and $M_{t_1}=\{\tau_2=0\}$.

One can continue to  solve the graphical mean curvature flow equation (\ref{mcfg}) in $\tau_2$-foliation.

Actually, since  Theorem \ref{t2.4} and Theorem \ref{t2.5} hold at time $t_1$,   one can  use the original auxiliary function $\tilde{r}$  to define a  sequence of  exhausting domains $\Omega^2_{i}\subset \subset \bar{M}_2$ with $\partial_{s} \Omega^2_i=\{\tilde{r}=i\}\cap \bar{M}_2$. At this point, the boundary gradient estimate can be easily obtained.  Then the   Nash-Moser iteration argument gives a uniform gradient estimate on $\Omega^2_{i}$.  By extracting a convergent subsequence, we obtain  the short time existence of global graphical mean curvature flow in $\tau_{2}$-foliation.  

let $[t_{1},t_{2})$ be the maximal time interval such that the graphical mean curvature flow exists within  $\tau_2$-foliation as in (\ref{tau}) and $\nu\leq 2$.

 If $t_2=\infty$,  we are done.  If $t_2<\infty$,  we will  continue the mean curvature flow beyond time $t_2$. 
 
Note that the function $w=w^{\sigma}$  constructed in (\ref{253}) (\ref{254}) is a decreasing function of $\sigma$. In view of (\ref{256}),  we  choose   smaller $\sigma_2<\sigma_0$ in (\ref{2.53}) (\ref{2.54})  which can ensure  \begin{equation}\label{98}
\partial (M_{w^{\sigma_2}}\cap \bar{M}_2) \subset \partial \bar{M}_2. 
\end{equation}
Taking (\ref{98}) into account,  employing the  same argument of Theorem \ref{t2.4}, one can prove that when $r(x)$ is large and $t\in [t_1,t_2)$,  the mean curvature flow  lies between  $M_{w^{\sigma_2}}$ and $M_{-w^{\sigma_2}}$. In particular, the exterior part  of the mean curvature flow remains to be a graph in $t_1$-foliation and satisfies $|f(x,t)|\leq C r(x)^{-\sigma_2}$, for all $t\in [t_1,t_2)$.  Moreover, this results that the function $r(x)$ can still  be used to construct cut-off functions  when extending  the argument of Theorem \ref{t2.5} to time interval  $ [t_1,t_2)$.  Hence,  Theorem \ref{t2.5} remains to be true when $r(x)$ is large and $t\in [t_1,t_2)$.  Then the previous argument at time $t_1$ can be applied, we conclude that  the mean curvature flow can be extended beyond time $t_2$.

In general, if we are in Case 2 in the i-th step,  let $[t_{i-1},t_{i})$ be the maximal time interval such that the graphical mean curvature flow exists within  a foliation  $\bar{M}_i\overset{diff}{\approx} M_{t_{i-1}}\times (-\delta_i,\delta_i)$  as in (\ref{tau}) and $\nu\leq 2$.  According to Theorem \ref{t8.3}, $\delta_i$ can be bounded from below by a constant depending only on the initial data.

 Inductively, choosing smaller $\sigma_i<\sigma_{i-1}$ so that $\partial (M_{w^{\sigma_i}})\cap \bar{M}_i \subset \partial \bar{M}_i$,    the arguments  of  Theorems \ref{t2.4} and \ref{t2.5} can still go through.  The consequence is that there exists a  large  $r_i>r_{i-1}$  such that  when  $t\in [t_{i-1}, t_{i})$,  the exterior  part of the mean curvature flow ($r(x)\geq r_i$)  remains to be a graph in $\tau_1$-foliation, and the height function $f$ satisfies 
 \begin{equation} \label{119}
 \begin{split}
 & |f(x,t)|  \leq C r(x)^{-\sigma_i}\\
&  |\nabla f|(x,t) \leq C r(x)^{-e^{-\frac{5}{8}}(\frac{2229}{2048}+\frac{21}{32}\epsilon)} (t+1)^{\frac{1177}{2028}}, 
 \end{split}
 \end{equation}
for all $t\in [t_{i-1},t_i)$  and $r(x)\geq r_i$. 

Therefore, applying the same argument at time $t_1$,   one  can continue the mean curvature flow beyond time $t_i$.

In view of Theorem \ref{t7.1},  the  same arguments as in (\ref{9.2})(\ref{9.5})(\ref{9.6}) give us  $t_i-t_{i-1} \geq C^{-1}$, which implies \begin{equation} \label{9.8}
t_i \geq C^{-1}i, 
\end{equation}
 where $C$ is some constant depending only on the initial data.

On the other hand,   let $a=t_{i-1}$ in (\ref{5.29}) (\ref{5.35}) (\ref{5.36}), we find 

\begin{equation} \label{1111}
\int_{t_{i-1}}^{t_i}||\nabla H||_{L^{\infty}}dt \leq C t_{i-1}^{1-\frac{3}{2p_0}},
\end{equation}
and the estimate (\ref{4.17}) tells us  

\begin{equation} \label{9.10}
\int_{t_{i-1}}^{t_i}||H||_{L^{\infty}}dt\leq C \int_{t_{i-1}}^{\infty} t^{-\frac{3}{2p_0}}\leq C t_{i-1}^{1-\frac{3}{2p_0}}.
\end{equation}

 The  same arguments as in (\ref{9.2})(\ref{9.5})(\ref{9.6}) 
 imply  \begin{equation} \label{9.11}
C^{-1}\leq \min\{ \delta_i,1\} \leq  C \int_{t_{i-1}}^{t_i}||H||_{L^{\infty}}dt+C \int_{t_{i-1}}^{t_i}||\nabla H||_{L^{\infty}}dt \leq C t_{i-1}^{1-\frac{3}{2p_0}}\leq C (i-1)^{1-\frac{3}{2p_0}}.
\end{equation}

Since $1-\frac{3}{2p_0}<0$, (\ref{9.11}) is invalid if $i$ is too large.   That means, after finite number of steps, we must be in Case 1, that is to say, the mean curvature flow will converge to a maximal hypersurface. We denote it by $S$.

Note that we have to prove that $S$ is asymptotically flat in some sense.

To this end,  we note that  outside a compact set, $S$ can be written as a graph in $\tau_1$-foliation, and the hight function $f$ satisfies 

 \begin{equation}
|f|(x)\leq C {r}(x)^{-\sigma},
\end{equation}
for some $0<\sigma<\epsilon$.  Moreover, the vanishing of the mean curvature of $S$ implies 
\begin{equation}
\begin{split}
\triangle f& =-div_{S} (T)\\
&=-H_0-\frac{h_0(Df,Df)}{\sqrt{1-|Df|^2}}.
\end{split}
\end{equation}

We need to derive a gradient estimate of $f$. 

The  approach  is to apply  a  gradient estimate  for Poisson equations (see Proposition \ref{P9.1}). 
Note that the Ricci curvature of $S$ satisfies $ R_{ij}=g^{kl}h_{ik}h_{jl}-\bar{R}_{i0j0}\geq -C r^{-2-\epsilon}$.   For all $a\leq r(x)/ 2$, we have 
\begin{equation} \label{ge}
\sup_{B(x,\frac{a}{2})}|\nabla f| \leq C (\frac{Osc (f)\mid_{B(x,a)}}{a}+a \sup_{B(x,a)}(|H_0|+|h_0||\nabla f|^2)).
\end{equation}

Since $|H_0|\leq C r^{-2-\epsilon}$, $|h_0| \leq Cr^{-1-\epsilon} $, $|\nabla f|\leq C$, by taking $a=r(x)^{\frac{1}{2}}$ in (\ref{ge}), we find $|\nabla f|\leq C r(x)^{-\frac{1}{2}-\sigma}$,  which further implies 

\begin{equation} \label{F}
|H_0|+|h_0| |\nabla f|^2 \leq C r^{-2-\epsilon}.
\end{equation}

Substituting (\ref{F}) into (\ref{ge}) and choosing $a=\frac{r(x)}{2}$, it follows  that 
\begin{equation} \label{ge1}
|\nabla f|(x)\leq Cr(x)^{-1-\sigma}.
\end{equation}

Now we change to  wave coordinates $\{z^{\alpha}\}$.  Outside a compact set,  $S$ lies between $M_{w^{\sigma}}$ and $M_{-w^{\sigma}}$, and it is also a graph $z^0=u(z^1,z^2,z^3)$  with   \begin{equation}
|u|\leq C \tilde{r}^{-\sigma}.
\end{equation}

From chain rule, (\ref{ge1}) and i) in Lemma \ref{l2.2}, we have 
\begin{equation} \label{920}
\frac{\partial u}{\partial z^i}=\frac{\partial z^0}{\partial x^k}\frac{\partial x^k}{\partial z^i}+\frac{\partial z^0}{\partial \tau}\frac{\partial f}{\partial x^k}\frac{\partial x^k}{\partial z^i}=O(r^{-1-\epsilon})+(r^{-1-\delta})=O(r^{-1-\delta}).\end{equation}

Substituting $z^0=u(z^1,z^2,z^3)$ in (\ref{2.57}),  denoting  the induced metric  on $S$  in coordinates $\{z^1,z^2,z^3\}$ by $g_{ij}dz^idz^j$, we have \begin{equation} \label{921}
g_{ij}=g^{z}_{ij}+\beta_iu_j+\beta_ju_i-(\alpha^2-\beta^2)u_iu_j.
\end{equation}
It follows from (\ref{AC est}) that  $g_{ij}^{z}=\delta_{ij}+O(r^{-\epsilon})+O(r^{-1-\epsilon})=\delta_{ij}+O(r^{-\epsilon})$, $\alpha=1+O(r^{-\epsilon})$,  $\beta=O(r^{-\epsilon})$,  combining them with   (\ref{920}) deduces 
\begin{equation}
g_{ij}=\delta_{ij}+O(r^{-\epsilon}). 
\end{equation}

To obtain higher regularity of $g_{ij}$, we need the elliptic  equation (see (\ref{2.59})) of $u$,   which is derived from $H_{S}=0$:  
\begin{equation}\label{923}
div_{M_{z^0}} (U)+\frac{1}{2}(1-|U|^2)^{-1}D_{U+T}|U|^2=-H_{M_{z^0}}-\langle D_{T}T, U\rangle,
\end{equation}
where $U=\frac{\alpha D u}{1+Du\cdot \beta}$, $T=\alpha^{-1}(\frac{\partial}{\partial z^0}-\beta)$.

Note that the inverse matrix of $(\bar{g}_{\alpha\beta})$ in (\ref{2.57}) can be computed explicitly:
\begin{equation} \label{924}
\bar{g}^{00}=-\alpha^{-2}, \bar{g}^{0i}=\alpha^{-2}\beta^{i}, \bar{g}^{ij}=(g^{z})^{ij}-\alpha^{-2}\beta^i\beta^j.
\end{equation}

Because of the chain rule  and Lemma \ref{l2.2} (see (\ref{920})),  we have 
\begin{equation}\label{925}
\begin{split}
\frac{\partial \alpha\mid_{z^0=u}}{\partial z^i}&=\frac{1}{2}\alpha^{-1}[\frac{\partial}{\partial z^i}(-\bar{g}_{00}+|\beta|^2)+\frac{\partial}{\partial z^0}(-\bar{g}_{00}+|\beta|^2)\frac{\partial u}{\partial z^i}]\\
&=\alpha^{-1} (\partial \bar{g}+\partial \bar{g}\ast \partial u) \ast [\bar{g} \ast  (g^{z})^{-1}+\bar{g}^2\ast (g^{z})^{-2}] \\&=O(r^{-1-\epsilon})+O(r^{-2-\epsilon-\delta})=O(r^{-1-\epsilon}),\\
\frac{\partial} {\partial z^i}(\beta\mid_{z^0=u})&= \partial \bar{g}+\partial u\ast \partial \bar{g}=O(r^{-1-\epsilon}),\\
 \frac{\partial} {\partial z^i}(g^{z}_{kl}\mid_{z^0=u})& = \partial \bar{g}+\partial u\ast \partial \bar{g}=O(r^{-1-\epsilon}).
\end{split}
\end{equation}
We rewrite  (\ref{923})  as 
\begin{equation} \label{926}
a^{ij} \partial^2_{ij}u=F
\end{equation}
with  \begin{equation} \label{927}
\begin{split}
a^{ij}=& \frac{\alpha}{1+Du\cdot \beta}[(g^{z})^{ij}+(\frac{U_kU_l}{1-|U|^2}-\alpha^{-1}U_k\beta_l-\alpha^{-1}U_l\beta_k+\frac{\alpha^{-2}|U|^2}{1-|U|^2}\beta_k\beta_l)(g^{z})^{ik}(g^{z})^{jl}]
\end{split}
\end{equation} and 
\begin{equation}\label{928}
\begin{split}
F=& -H_{M_{z^0}}+ \partial u\ast \partial u \ast Q_{1}+\partial u\ast \partial \bar{g} \ast Q_2,
\end{split} 
\end{equation}
where $Q_1,Q_2$ are polynomials of $\alpha, \alpha^{-1}, \beta, g^{z}, (g^{z})^{-1}, \bar{g}, \bar{g}^{-1}, \partial u, \partial \bar{g},  (1-|U|^2)^{-1}, (1+Du\cdot \beta)^{-1}$.

Since $a^{ij}=\delta_{ij}+O(r^{-\epsilon})$, $F=O(r^{-2-\sigma})$, applying $L^p$-estimate of elliptic equations (see \cite{GT77}, Chapter 9),  for large $x$, we obtain  
\begin{equation}\label{929}
\begin{split}
||\partial^2 u||_{L^p (|x^{\prime}-x|\leq \frac{r}{4})}& \leq C (r^{-2} || u||_{L^p(|x^{\prime}-x|\leq \frac{r}{2})}+ r^{-1} || \partial u||_{L^p(|x^{\prime}-x|\leq \frac{r}{2})}+   || F ||_{L^p(|x^{\prime}-x|\leq \frac{r}{2})})\\
& \leq C r^{-2-\sigma} r^{\frac{3}{p}}.
\end{split}
\end{equation}

Differentiating (\ref{923}) and  making use of  (\ref{925})(\ref{927})(\ref{928}) and same $L^p$-estimate  as above, we obtain 
\begin{equation}\label{930}
\begin{split}
||\partial^3 u||_{L^p (|x^{\prime}-x|\leq \frac{r}{8})}\leq C r^{-3-\sigma} r^{\frac{3}{p}}.
\end{split}
\end{equation}

Applying Sobolev embedding theorem and (\ref{929}) (\ref{930}) gives 
\begin{equation}\label{931}
|\partial^2 u|\leq C r^{-2-\sigma}.
\end{equation}

In view of (\ref{925})(\ref{AC est}), one can further differentiate (\ref{923}) up to  three times and make use of $L^2$-estimates as above to obtain:
\begin{equation}\label{933}
\begin{split}
& ||\partial^4 u||_{L^2 (|x^{\prime}-x|\leq \frac{r}{16}))}\leq C r^{-4-\sigma} |\{|x^{\prime}-x|\leq \frac{r}{8}\}|^{\frac{1}{2}},\\  & ||\partial^5 u||_{L^2 (|x^{\prime}-x|\leq \frac{r}{32}))}\leq C r^{-5-\sigma} |\{|x^{\prime}-x|\leq \frac{r}{16}\}|^{\frac{1}{2}}, \end{split}
\end{equation}
which together with (\ref{930}) and   Sobolev embedding theorem implies 
\begin{equation}\label{934}
|\partial^3 u| \leq  C r^{-3-\sigma}.
\end{equation}
Combining (\ref{921}) (\ref{927}) (\ref{934}) and (\ref{AC est}), we obtain 
\begin{equation}
g_{ij}=\delta_{ij}+O_2(\tilde{r}^{-\epsilon}),\ \ \  k_{ij}=O_1(\tilde{r}^{-1-\epsilon}). \end{equation}

Clearly, the regularity can be improved if we impose higher regularity on the initial data (\ref{AC}). 

The proof is completed. 
\end{proof}

\begin{proposition}\label{P9.1}
Let $(M^n,g)$ be a Riemannian manifold  with $Ric \geq -(n-1)K$ for some $K>0$. Then for any $B(x_0,a)\subset\subset M^n$ with  $0<a\leq \frac{1}{\sqrt{K}}$, we have 
\begin{equation} \label{935}
\sup_{B(x_0,\frac{q}{2})} |\nabla u|\leq C (\frac{Osc (u)\mid_{B(x_0,a)}}{a}+a \sup_{B(x_0,a)}|\triangle u|),
\end{equation}
where $C$ depends  only on the isoperimetric constant of $B(x_0,a)$. 
\end{proposition}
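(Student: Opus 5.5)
The approach is a Bernstein-type maximum principle argument for $|\nabla u|^2$ with a cutoff, combined with a De Giorgi--Nash--Moser iteration to handle the forcing term $\triangle u$. The isoperimetric-constant dependence indicates the Moser route, as in the proofs of Theorems \ref{shorttime} and \ref{t2.5}. Note that $\frac{q}{2}$ in (\ref{935}) should read $\frac{a}{2}$.

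First I reduce to unit scale. Replacing $g$ by $a^{-2}g$ and $u$ by $a^{-1}u$, the hypotheses become $Ric\geq -(n-1)Ka^2\geq-(n-1)$ on $B(x_0,1)$. The quantities $\frac{Osc(u)}{a}$, $a\sup|\triangle u|$ and $|\nabla u|$ are all scale invariant, so it suffices to treat $a=1$. I may also subtract a constant so that $\sup_{B(x_0,1)}|u|\leq Osc(u)$.

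Set $A=Osc(u)+\sup_{B(x_0,1)}|\triangle u|$ and $w=|\nabla u|^2$. Bochner's formula gives
\[
\tfrac12\triangle w=|\nabla^2u|^2+\langle\nabla u,\nabla\triangle u\rangle+Ric(\nabla u,\nabla u)\geq |\nabla^2 u|^2+\langle\nabla u,\nabla \triangle u\rangle-(n-1)w.
\]
Since only $\sup|\triangle u|$ is controlled, not its gradient, I will never use the term $\nabla\triangle u$ pointwise. Instead I integrate by parts, writing $\int \eta^2 w^{q-1}\langle\nabla u,\nabla\triangle u\rangle=-\int\triangle u\,\mathrm{div}(\eta^2w^{q-1}\nabla u)$. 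The divergence produces $\triangle u$ itself (bounded by $A$) together with terms $|\nabla^2u|\,w^{q-1}$ and $w^{q-1}|\nabla \eta|$. The $|\nabla^2 u|$ pieces are absorbed by $\int\eta^2 w^{q-1}|\nabla^2u|^2$ via Cauchy--Schwarz, which costs $C q^2A^2\int\eta^2 w^{q-1}$. I then use Kato's inequality $|\nabla w|^2\leq 4w|\nabla^2u|^2$ to convert the Hessian term into $|\nabla w^{q/2}|^2$. The result is a reverse Poincaré inequality
\[
\int|\nabla(\eta w^{q/2})|^2\leq Cq^2\int(|\nabla\eta|^2+\eta^2)w^q+Cq^2A^2\int\eta^2w^{q-1}.
\]

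The Sobolev inequality on $B(x_0,1)$ is available from the isoperimetric constant via Federer--Fleming, just as Lemma \ref{Sobfun} follows from (\ref{II}). Combining it with the reverse Poincaré inequality, a Moser iteration over shrinking balls $R_l\downarrow\frac12$, with bookkeeping as in (\ref{2.31})--(\ref{2.38}), yields
\[
\sup_{B(x_0,\frac12)}w\leq C\Big(\int_{B(x_0,\frac34)}w^{2}\Big)^{1/2}+CA^2,
\]
or some similar bound with an $L^p$ starting norm. To close, I need an initial integral bound of $w$ in terms of $A$. Multiplying $\triangle u$ by $\eta^2u$ and integrating gives $\int\eta^2|\nabla u|^2\leq C\sup|u|^2+C\sup|u|\,\sup|\triangle u|\leq CA^2$. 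I then bootstrap from $L^1$ to $L^2$ of $w$ by running the iteration starting at $q=1$, or by the standard interpolation trick $\sup w\leq \frac12\sup w+C\int w$ on nested balls, which needs only $\int w\leq CA^2$. Taking square roots gives (\ref{935}).

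The main obstacle is the step with $\nabla\triangle u$ together with the volume control. The integration by parts must close without any bound on $\nabla\triangle u$. I also need a volume doubling/noncollapsing estimate on $B(x_0,1)$ so that the constants in the iteration and the $\int w\leq CA^2$ bound are uniform. Volume doubling follows from Bishop--Gromov since $Ka^2\leq1$, and the lower volume bound follows from the isoperimetric constant. I would first check the $q=1$ case of the reverse Poincaré inequality carefully, since that is where the absorption is tightest.
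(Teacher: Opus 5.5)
Your proposal is correct and follows the route the paper indicates: rescale to $K=a=1$ (reading $\frac{q}{2}$ as $\frac{a}{2}$), then combine Bochner's formula with a Nash--Moser iteration, which the paper states without giving details. Integrating by parts to avoid $\nabla\triangle u$, applying Kato's inequality, and starting the iteration from $\int\eta^2|\nabla u|^2\leq CA^2$ (obtained by testing against $\eta^2u$) correctly supply those omitted details; the $A^2\int\eta^2w^{q-1}$ term is handled most cleanly by iterating on $w+A^2$ rather than on $w$.
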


By scaling invariance,  we only need to prove the result when $K=1$, $a=1$,$ Osc (u)\mid_{B(x_0,1)}\leq 1$,  $\sup_{B(x_0,1)} |\triangle u|\leq 1$. The proof is a standard application of Nash-Moser iteration and Bochner formula. We omit the details.

Now we are in a position to prove Theorem \ref{maxfol}

\begin{proof} of Theorem \ref{maxfol}.

Recall that the first variation $\mathcal{L}H(X)$ of the mean curvature function along an arbitrary vector field $X$ reads
\begin{equation} \label{936}
    \mathcal{L}H(X)=-\triangle\langle X,N\rangle+(|h|^2+\bar{R}ic(N,N))\langle X,N\rangle+\langle X,\nabla H\rangle,
\end{equation}
where $N$ is the unit  normal vector field.    

Let $S$ be the maximal hypersurface constructed in Theorem \ref{t1.5}. Since the curvature of $S$ is bounded, one can perform the same construction in Section 2.1 to give a foliation  as in (\ref{tau}) 
\begin{equation}\label{937}
    S_{\tau}\triangleq \{\tau=const.\}, \tau\in (-\delta^{\prime}_1, \delta^{\prime}_1), \ \ \ S=S_0. 
\end{equation}
 Let $\{(x,f(x)):x\in S\}$ be a spacelike submanifold which is a  graph over $S$ in the above foliation, we  denote its  mean curvature function by  $H_{f}$. Then,   $H_{f}$ is a smooth map from  $W^{3,q}(S_0)\oplus \mathbb{R}$ to $W^{1,q}(S_0)$ ($q>3$), where  $W^{3,q}(S_0)\oplus \mathbb{R}$ consists of those functions $\xi+c$, $\xi\in W^{3,q}(S_0)$, $c\in \mathbb{R}$.  

From  (\ref{936}), the linearized operator  of the  mean curvature for graphical variations  at $S$  is given by 
\begin{equation} \label{938}
LH\mid_{f=0}(w)=\triangle w-|h|^{2} w. \end{equation}

By standard elliptic estimate, $LH\mid_{f=0}:W^{3,q}(S_0) \rightarrow W^{1,q}(S_0)$ is invertible. 

By implicit function theorem, there exists $\epsilon>0$, for any $\tau\in (-\epsilon,\epsilon)$, there exists a unique $f_{\tau}\in W^{3,q}(S_0)$ satisfying  $H_{f_{\tau}+\tau}=0$.  Let $\Sigma_{\tau}=\{(x,f_{\tau}+\tau):x\in S_0\}$. 

In view of (\ref{936}) and (\ref{938}), differentiating the equation $H_{f_{\tau}+\tau}=0$ with respect to $\tau$ gives 

\begin{equation} \label{939}
0=(-\triangle+|h|^2) [(1+\frac{\partial f_{\tau}}{\partial \tau})\langle N, \frac{\partial}{\partial \tau}\rangle].
\end{equation}

If we set  $u= (1+\frac{\partial f_{\tau}}{\partial \tau})\langle N, \frac{\partial}{\partial \tau}\rangle$, then $\triangle u=|h|^2 u$. One can prove that $u\rightarrow -1$ at spatial infinity.   Multiplying  both sides of (\ref{939}) with $u_{+}$ (the positive part of $u$) and integrating by parts,  we obtain 
\begin{equation}
\int_{\Sigma_{\tau}} |\nabla u_{+}|^2+|h|^2 |u_{+}|^2=0
\end{equation}
which implies $u_{+}=0$, i.e., $u\leq 0$.  Note that the Ricci curvature of $\Sigma_{\tau}$ is bounded from below (see (\ref{tr Gauss})). Applying  Yau's  gradient estimate to the equation $\triangle u=|h|^2u$ (see \cite{Y75} Theorem $3^{\prime}$ or  Theorem 3), we have $|\nabla u|\leq C (-u)$,  which implies that $u$ has no zeros, i.e., $u<0$, from which it follows 
 \begin{equation}
 \frac{\partial}{\partial \tau}(\tau+f_{\tau})>0, \ \ \text{on} \ S_0\times (-\epsilon, \epsilon). 
 \end{equation}
 
 That means, $\Sigma_{\tau}=\{(x,f_{\tau}+\tau):x\in S_0\}$, $\tau\in (-\epsilon,\epsilon)$ forms a smooth maximal foliation.  

 After  scaling, we may assume 
 
 \begin{equation} \label{942}
 E_1=\int_{\Sigma_0} |\bar{R}m|^2+|\nabla h|^2+|h|^4=1.
 \end{equation}

 Let $n$ be the lapse function defined by $n^{-2}=-\bar{g}(D\tau, D\tau)$.   After reparametrization, we may assume $n$ approaches to  1 at spatial infinity.  By exploiting the main result in \cite{KRS15},  there exist  constants $\tau_0$, $n_0$ depending only $\alpha_0$ and $\alpha_1$ such that the maximal foliation can be extended to a larger and uniform  interval  $(-\tau_0, \tau_0)$, and  the lapse function $n$ satisfies 
 \begin{equation} \label{943}
 n_0 \leq n \leq n^{-1}_0
 \end{equation}
 on $\Sigma_{\tau}$, for all $\tau\in (-\tau_0,\tau_0)$.

For large $t$, the mean curvature flow $M_t$ can be written as a graph $
M_t=\{(x,f(x,t)):x\in \Sigma_0\}
$ over $\Sigma_0$ with $\tau$ as the hight function $\tau\mid_{M_t}\triangleq f$.  The mean curvature flow (\ref{mcf}) is equivalent to 

\begin{equation}
  \frac{\partial f}{\partial t}= \frac{H}{n\sqrt{1+n^2|\nabla f|^2}},
  \end{equation}
where $H$ is the mean curvature of $M_t$. 
 
Owing to the significant result (\ref{943}), we obtain 
\begin{equation}
|\frac{\partial f}{\partial t}(x,t)| \leq n_0^{-1}|H|,
\end{equation}
 which implies 

 \begin{equation}
 |f(x,t_0)|\leq n_0^{-1} \int_{t_0}^{\infty}||H||_{L^{\infty}}dt \leq  C||H||_{L^{p_0}(M_0)}^{\frac{p_0}{3-p_0}},
 \end{equation}
taking (\ref{5.26}) into account. In view of (\ref{HL_p2}) and (\ref{942}), we obtain 
 \begin{equation} \label{947}
 |f(x,t_0)| \leq  C\epsilon_0^{\frac{p_0}{3-p_0}} \leq \tau_0, 
 \end{equation}
 if we choose a smaller $\epsilon_0$. Consequently,  as the right hand side estimate of (\ref{947}) is uniform (independent of $t_0$), one can take $t_0$ to be $0$.  In other words, the maximal foliation constructed above covers the initial hypersurface $M_0$. The estimate (\ref{1.16})  follows  from Theorem \ref{t7.1}.

 The proof of Theorem \ref{maxfol} is completed. 
\end{proof}


\end{document}